\documentclass[a4paper, 10pt]{amsart}

\usepackage{indentfirst}
\usepackage{mathrsfs}
\usepackage{amsmath}
\usepackage{amsthm}
\usepackage{amssymb}
\usepackage{tikz-cd}
\usepackage{mathtools}
\usepackage{array}

\usepackage[dvips,%
    includehead,%
    includefoot,%
    nomarginpar,%
    lmargin=1.3in,%
    rmargin=1.3in,%
    tmargin=1.4in,%
    bmargin=1.4in,%
  ]{geometry}

\usepackage{xcolor}
\definecolor{darkred}{RGB}{139,0,0}
\usepackage[hidelinks, colorlinks, citecolor=blue, linkcolor=darkred]{hyperref}

\newtheorem{Def}{Definition}[section]
\newtheorem{Prop}[Def]{Proposition}
\newtheorem{Lem}[Def]{Lemma}
\newtheorem{Thm}[Def]{Theorem}
\newtheorem{Cor}[Def]{Corollary}
\newtheorem{Rmk}[Def]{Remark}

\begin{document}
	
	\title[Mapping class groups of certain spin 5-manifolds]{Mapping class groups of simply connected closed spin 5-manifolds with no 2- and 3-torsion in homology}
	
	\author{Huize Jin}
	\address{Academy of Mathematics and Systems Sciences, Chinese Academy of Sciences, Beijing 100190, China}
	\email{jinhuize@amss.ac.cn}
	
	\date{}
	
	\begin{abstract}
		We show that the Torelli group of a simply connected closed 5-manifold $M$, which is spin and has no 2-torsion elements in homology, is isomorphic to the bordism group ${\Omega}^{\mathrm{Spin}}_6(K(H_2(M), 2))$. For $H_2(M)$ with no 2- and 3-torsion we determine this bordism group, and give explicit constructions for the generators of the Torelli group. Furthermore, for the $g$-fold connected sum ${\#}^g(S^2 \times S^3)$ we completely determine its mapping class group. We apply our results to compute the stabilization and abelianization of the mapping class group of ${\#}^g(S^2 \times S^3)$, determine the group of isotopy classes of diffeomorphisms of $M$ that are homotopic to the identity, and study the embeddings of $S^3$ in $S^2 \times S^3$.
	\end{abstract}

	\maketitle
	
	\tableofcontents

    \section{Introduction}
    
        Given a closed oriented smooth manifold $M$, let $\mathrm{Diff}^{+}(M)$ be the oriented diffeomorphism group of $M$, equipped with $C^{\infty}$-topology. The \emph{mapping class group} of $M$, \[\mathrm{MCG}(M) \coloneqq \pi_0 \mathrm{Diff}^{+}(M),\] is the group of isotopy classes of orientation preserving diffeomorphisms of $M$. The subgroup $\mathcal{I}(M)$ of $\mathrm{MCG}(M)$, consisting of diffeomorphisms that induce the identity homomorphism on $H_*(M)$, is called the \emph{Torelli group} of $M$. One may also consider the diffeomorphism group $\mathrm{Diff}(M)$ and the \emph{full mapping class group} \[\mathrm{MCG}^{\pm}(M) \coloneqq \pi_0\mathrm{Diff}(M)\] consisting of (isotopy classes of) all diffeomorphisms of $M$. Moreover, given a subspace $D \subset M$ let $\mathrm{Diff}(M, D)$ be the group of diffeomorphisms that fix $D$ pointwise. If $M$ is compact and has nonempty boundary, let $\mathrm{Diff}(M, \partial)$ be the group of diffeomorphisms that fix some collar neighborhood of the boundary pointwise. There are corresponding versions of relative mapping class groups \[\mathrm{MCG}(M, D) \coloneqq \pi_0\mathrm{Diff}(M, D), \; \mathrm{MCG}(M, \partial) \coloneqq \pi_0\mathrm{Diff}(M, \partial)\] and relative Torelli groups $\mathcal{I}(M, D)$, $\mathcal{I}(M, \partial)$.
        
        Suppose $M$ is of dimension $\geq 5$. There are several points of view to understand the role of $\mathrm{MCG}(M)$ in the topology of $M$. The determination of the mapping class group $\mathrm{MCG}(M)$ is the same as the isotopy classification of orientation preserving diffeomorphisms of $M$, which is a natural subsequent step to the characterization of the oriented diffeomorphism type of $M$. Moreover, $\mathrm{MCG}(M) = \pi_0 \mathrm{Diff}^{+}(M)$ is the group of path components of $\mathrm{Diff}^{+}(M)$ and thus contains homotopic information of $\mathrm{Diff}^{+}(M)$, the latter is a major object of interest in geometric topology, and is accessible by methods and techniques from various branches of mathematics (see for example \cite{Kup, WW01, BM20}). Despite of its significance, a complete description of $\mathrm{MCG}(M)$ can be hard to achieve. When $M$ is simply connected, the seminal work of Sullivan \cite{Sul77} implies that $\mathrm{MCG}(M)$ is finitely presentable, so one may have a better chance to compute $\mathrm{MCG}(M)$ under this assumption.
        
        In this paper we study the mapping class groups of simply connected closed 5-manifolds that are spin and having no 2- and 3-torsion elements in the second homology group. For these manifolds we could compute the Torelli groups explicitly and provide a complete set of concretely constructed generators for these groups. Our main tool is Kreck's modified surgery theory \cite{Kre99, Kre}. We summarize our main results in \ref{results} and sketch our methods in \ref{methods}.
        
        \subsection{The results}\label{results}
        
        The primary result of this paper is an explicit computation of Torelli groups of simply connected closed spin 5-manifolds with no 2- and 3-torsion elements in the second homology group. The simplest 5-manifold of such type is $S^5$. By $h$-cobordism theorem and Cerf's pseudoisotopy theorem \cite[Theorem 1]{Cer71}, for $n \geq 5$ $\mathrm{MCG}(S^n)$ is isomorphic to the Kervaire\textendash Milnor group $\Theta_{n+1}$ of smooth homotopy spheres, so $\mathrm{MCG}(S^5) = \Theta_6$ is trivial. Thus we focus on the case where the second homology of the 5-manifold is nontrivial, since otherwise this manifold is a smooth homotopy sphere which is diffeomorphic to $S^5$ by the fact that $\Theta_5$ is trivial \cite{KM63}.
        
        \begin{Thm}\label{thm:1}
        	Let $M$ be a simply connected closed spin 5-manifold.
        	\begin{itemize}
        		\item[(1)] There is a short exact sequence \[0 \rightarrow \mathcal{I}(M) \rightarrow \mathrm{MCG}(M) \xrightarrow{\phi} \mathrm{Aut}(H_2(M), L_M) \rightarrow 1,\] where $\mathrm{Aut}(H_2(M), L_M)$ denotes the group of automorphisms of $H_2(M)$ that preserve the linking form $L_M$, and $\phi$ maps an isotopy class of diffeomorphisms to its induced automorphism of $H_2(M)$.
        		
        		\item[(2)] Suppose $H_2(M)$ has no 2-torsion elements. There is an isomorphism of abelian groups \[\mathcal{I}(M) \cong {\Omega}^{\mathrm{Spin}}_6(K(H_2(M), 2)).\] The generators of $\mathcal{I}(M)$ are given in \ref{5.3}.
        		
        		\item[(3)] Suppose $H_2(M)$ has no 2- and 3-torsion elements. The action of $\mathrm{Aut}(H_2(M), L_M)$ on $\mathcal{I}(M)$ is the standard action on ${\Omega}^{\mathrm{Spin}}_6(K(H_2(M), 2))$.
        	\end{itemize}
        \end{Thm}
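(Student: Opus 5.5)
For part (1), surjectivity of $\phi$ comes from Barden's classification of simply connected spin 5-manifolds. Given $\alpha\in\mathrm{Aut}(H_2(M),L_M)$, I would write $M$ as the boundary of a handlebody $W$ obtained by attaching 3-handles to $D^6$ along framed embeddings of $S^2$ in $S^5$, in the way Barden does. The torsion part of $H_2(M)$ is then governed by the linking form. Barden's realization theorem says that every automorphism of $H_2(M)$ preserving $L_M$ (together with $w_2$, which vanishes here since $M$ is spin) is induced by a diffeomorphism. I would take that as the input and add only a remark that orientation is preserved. Exactness in the middle is then just the definition of $\mathcal{I}(M)$. The one thing to check is that acting trivially on $H_2$ already forces trivial action on all of $H_*(M)$. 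This holds by Poincaré duality and universal coefficients, because $H_3(M)\cong H^2(M)$ is determined by $H_2$ and the top class is fixed since $f$ is orientation preserving.

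For part (2) I would use the mapping torus. Given $f$ in $\mathcal{I}(M)$, form $M_f$, a closed 6-manifold fibered over $S^1$. It is spin, and I would choose a spin structure compatible with the product structure near a fiber. Since $f_*=\mathrm{id}$ on $H_2$, the fundamental class map $M\to K(H_2(M),2)$ extends over $M_f$ to a map $c:M_f\to K(H_2(M),2)$. The choice of extension is controlled by $H^1(S^1;H_2(M))$, and I would normalize it by requiring that $c$ restrict to the identity on $H_2$ of a fiber. This defines
\[\Psi:\mathcal{I}(M)\to\Omega^{\mathrm{Spin}}_6(K(H_2(M),2)),\quad f\mapsto [M_f,c].\]
Using $M_{fg}$ bordant to $M_f\sqcup M_g$ via the pair of pants, $\Psi$ is a homomorphism. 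Injectivity is the hard part. If $\Psi(f)=0$, there is a spin null-bordism $W$ over $K(H_2(M),2)$. Cutting $M_f$ open gives a relative bordism from $M\times I$ to itself, with ends glued by $f$. Here I would apply Kreck's modified surgery: $M\to K(H_2(M),2)$ is a normal 2-smoothing (a 3-equivalence onto the normal 2-type $K(H_2,2)\times B\mathrm{Spin}$). So one can surger $W$ relative to the boundary to make it an $s$-cobordism, hence an $h$-cobordism since $\pi_1=0$, between $M$ and itself restricting to $f$ on one end. The obstruction lies in Kreck's $l_7$ monoid, i.e.\ the surgery dimension is $7$ (odd) for a 6-dimensional bordism between 5-manifolds, so this works in dimension $7=2q+1$ with $q=3$. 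I expect this to be the main obstacle: showing the obstruction is elementary, using the no-2-torsion hypothesis to control the quadratic refinement of the form on $\ker(H_3(W,M)\to\dots)$. Once $W$ is an $h$-cobordism, $f$ is pseudo-isotopic to the identity and hence isotopic by Cerf. For surjectivity, I would represent any class by $(N,c)$. Then I would do surgery on $N$ plus a cut along a fiber-like copy of $M$; equivalently, I would show that every class admits an embedded $M\subset N$ whose complement is an $h$-cobordism after surgery, and read off the monodromy. A cleaner route is to realize explicit generators (Dehn twist-type diffeomorphisms supported near embedded $S^2\times D^3$ and $S^3\times D^2$, plus $\Theta_6=0$), computing their images in the bordism group via Atiyah–Hirzebruch. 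Abelianness then follows from injectivity.

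For part (3), naturality of mapping tori gives $\Psi(gfg^{-1})=[M_{gfg^{-1}},c]$. The diffeomorphism $M_f\cong M_{gfg^{-1}}$ induced by $g$ changes the reference map to $K(H_2,2)$ by $K(g_*,2)$. So conjugation corresponds to the standard action, up to the ambiguity in choosing $c$. The no-3-torsion hypothesis is where I expect to show that this ambiguity, coming from $H^1(S^1;H_2)$ and the $p_1/3$-type invariants in the AHSS, vanishes.
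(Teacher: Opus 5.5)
\textbf{Overall.} Your part (1), the homomorphism property, and the injectivity half of (2) follow the paper's argument. That covers the mapping torus, the null-bordism viewed as a relative bordism between two copies of $M\times I$, Kreck's obstruction in $l_7$, the relative $h$-cobordism and Cerf. The genuine gap is \emph{surjectivity} of $\Psi$, which carries most of the weight of (2) and which you only gesture at.

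\textbf{Why neither of your surjectivity routes works as stated.} Your first route (surger $(M\times I)\sqcup N$ rel boundary into an $h$-cobordism and read off the monodromy) restates the goal. It is a surgery problem with obstruction in $l_6(e)$. For closed $M$ with $H_3(M)\cong\mathbb{Z}^g$ and torsion in $H_2(M)$, you give no reason that this obstruction is elementary. Your second route supplies no computation:
\begin{itemize}
\item the Atiyah--Hirzebruch spectral sequence identifies the group, not the images of particular mapping tori;
\item twists about $S^3\times D^2$ are isotopic to the identity since $\pi_2(\mathrm{SO}(4))=0$, and $\Theta_6=0$ contributes nothing;
\item for the torsion pieces $T$, $H_6(K(T,2))$ and the mixed K\"unneth terms you do not say which diffeomorphisms hit them or how their invariants would be evaluated.
\end{itemize}

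\textbf{The missing idea: reduce to a handlebody.} The paper proceeds in three steps.
\begin{itemize}
\item For $\natural^g(S^2\times D^3)$ the ends have $H_3=0$ and include isomorphically on $H_2$. So the $l_6$-obstruction of $(\natural^g(S^2\times D^3)\times I)\sqcup N$ is elementary (indeed zero). This gives a surjection $\Phi\colon\Omega^{\mathrm{Spin}}_6((\mathbb{CP}^\infty)^g)\to\mathrm{MCG}(\natural^g(S^2\times D^3),\partial)$.
\item Explicit Dehn-twist computations on $M_g$ then give $\mathrm{MCG}(\natural^g(S^2\times D^3),\partial)\cong\mathcal{I}(M_g)\cong\Omega^{\mathrm{Spin}}_6((\mathbb{CP}^\infty)^g)$. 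Here $p_1$ is computed via the disc bundle $D(\xi_\beta)$, and the cubic form via Hopf invariants of the action on $\pi_3(M_g)$.
\item By Barden, $M$ is a twisted double of $\natural^{g+2s}(S^2\times D^3)$. Extension by the identity lands in $\mathcal{I}(M)$ compatibly with $\delta_*$ on bordism. Finally, $\delta_*\colon\Omega^{\mathrm{Spin}}_6((\mathbb{CP}^\infty)^{g+2s})\to\Omega^{\mathrm{Spin}}_6(K(H_2(M),2))$ is surjective by an AHSS comparison. This uses that $\mathbb{CP}^\infty\to K(\mathbb{Z}/p^r,2)$ is onto on $H_4$ and $H_6$ for odd $p$.
\end{itemize}

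\textbf{Where the no-2-torsion hypothesis enters.} You misplace it: injectivity needs no torsion assumption at all. The hypothesis is needed in the $\delta_*$ step above. It is also needed to make $\Psi$ well defined on $\mathcal{I}(M)$ in the first place. $M_f$ has two spin structures, and ``compatible with the product structure near a fiber'' does not pick one in an isotopy-invariant way. The paper works on $\mathcal{I}(M,D)$ with the spin structure trivial on $S^1\times D$. It then uses Wall's sequence $\pi_1(\mathrm{SO}(5))\to\mathcal{I}(M,D)\to\mathcal{I}(M)\to 1$ and the absence of 2-torsion in the bordism group. By contrast, $c$ is unique: $H^2(M_f;H_2(M))\to H^2(M;H_2(M))$ is injective because $H^1(M)=0$.

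\textbf{Part (3).} Your naturality argument is a sound alternative once spin structures are checked. Choose $f_h$ fixing $D$ pointwise. Then the diffeomorphism $M_f\to M_{h\cdot f}$ is the identity on $S^1\times D$, so it preserves the preferred spin structures. It also carries $\nu_{h\cdot f}$ to $\delta_h\circ\nu_f$ up to homotopy. This proves (3) directly, even without the no-3-torsion hypothesis. The paper instead compares $\nu_*(p_1\cap[N])$ and $\nu_*[N]$, which detect the bordism class only when there is no 3-torsion. That, not any ambiguity in $c$, is why 3-torsion is excluded there.
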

        
        By the \emph{standard action} we mean the action defined as follows: let $h \in \mathrm{Aut}(H_2(M), L_M)$, there is a self-map $\delta_h$ of $K(H_2(M), 2)$, unique up to homotopy, which induces $h$ on the second homology. Then there is a well-defined action \[h \cdot [N, \nu] \coloneqq [N, {\delta_h} \circ {\nu}]\] of $\mathrm{Aut}(H_2(M), L_M)$ on the bordism group ${\Omega}^{\mathrm{Spin}}_6(K(H_2(M), 2))$.
        
        We could solve the extension problem in part (1) of Theorem \ref{thm:1} for $M_g \coloneqq {\#}^g(S^2 \times S^3)$, the $g$-fold connected sum of $S^2 \times S^3$, by showing that it splits using geometric methods. Therefore we have a complete description of the group structure of $\mathrm{MCG}(M_g)$. Note that if $M = M_g$ then the linking form vanishes by definition, and $\mathrm{Aut}(H_2(M_g), L_{M_g})$ is isomorphic to the general linear group $\mathrm{GL}_g(\mathbb{Z})$.
        
        To formulate our result on $\mathrm{MCG}(M_g)$ we recall the definition of Dehn twists. Let $M$ be an $n$-dimensional manifold and $S : S^k \times D^{n-k} \hookrightarrow \mathring{M}$ be an embedding in the interior of $M$. Let $\beta \in \pi_{n-k}(\mathrm{SO}(k+1))$, then $\beta$ is represented by a smooth map $f : D^{n-k} \rightarrow \mathrm{SO}(k+1)$ mapping an open neighborhood of $\partial D^{n-k}$ to the identity matrix. The \emph{Dehn twist about $S$ and $\beta$} is the diffeomorphism \[\mathbf{t}_{S, \beta} : M \rightarrow M\] restricting to the identity map on the complement of $S(S^k \times D^{n-k})$, and mapping $S(S^k \times D^{n-k})$ to itself by \[S(x, y) \mapsto S(f(y) \cdot x, y).\] Clearly the isotopy class of $\mathbf{t}_{S, \beta}$ depends only on the isotopy class of $S$ and the homotopy class $\beta$. When $n = 5$, the isotopy class of an embedding $S : S^2 \times D^3 \hookrightarrow M$ depends only on the isotopy class of the embedding $S |_{S^2 \times \{0\}}$ of its core sphere (since $\pi_2(\mathrm{SO}(3)) = 0$) which is determined by its homotopy class by Haefliger's embedding theorem \cite{Hae61-results}. Thus, given embeddings $S_1, \cdots , S_m : S^2 \times D^3 \hookrightarrow M$, the embedding $S^2 \times D^3 \hookrightarrow M^5$ with core sphere being the connected sum of the core spheres of $S_1, \cdots , S_m$ is unique up to isotopy (such a connected sum exists since the codimension is high enough). We denote (the isotopy class of) this embedding by $S_1 \# \cdots \# S_m$.
        
        For $k = 1, \cdots , g$ let $S_k : S^2 \times D^3 \hookrightarrow M_g$ be the standard embedding in the $k$-th $S^2 \times S^3$ summand. Let $\alpha \in \pi_3(\mathrm{SO}(3)) \cong \mathbb{Z}$ be the generator such that the first Pontryagin number of the 3-dimensional vector bundle over $S^4$ with clutching function $\alpha$ is 4.
        
        \begin{Thm}\label{thm:2}
        	The short exact sequence \[0 \rightarrow \mathcal{I}(M_g) \rightarrow \mathrm{MCG}(M_g) \rightarrow \mathrm{GL}_g(\mathbb{Z}) \rightarrow 1\] splits. In other words, there is an isomorphism of groups \[\mathrm{MCG}(M_g) \cong \mathcal{I}(M_g) \rtimes \mathrm{GL}_g(\mathbb{Z})\] where the Torelli group \[\mathcal{I}(M_g) \cong {\Omega}^{\mathrm{Spin}}_6((\mathbb{CP}^{\infty})^g) \cong \mathbb{Z}^{2g + 2{\binom{g}{2}} + {\binom{g}{3}}}\] is freely generated by the following Dehn twists
        	\begin{align*}
        		& \mathbf{t}_{S_k, \alpha}, & 1 \leq k \leq g; \\
        		& \mathbf{t}_{S_k \# S_k, \alpha}, & 1 \leq k \leq g; \\
        		& \mathbf{t}_{S_k \# S_l, \alpha}, & 1 \leq k < l \leq g; \\
        		& \mathbf{t}_{S_k \# S_k \# S_l, \alpha}, & 1 \leq k < l \leq g; \\
        		& \mathbf{t}_{S_k \# S_l \# S_h, \alpha}, & 1 \leq k < l < h \leq g.
        	\end{align*}
        \end{Thm}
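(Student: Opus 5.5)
Theorem \ref{thm:1} already supplies the exact sequence $0 \to \mathcal{I}(M_g) \to \mathrm{MCG}(M_g) \to \mathrm{GL}_g(\mathbb{Z}) \to 1$ and the isomorphism $\mathcal{I}(M_g) \cong \Omega^{\mathrm{Spin}}_6((\mathbb{CP}^\infty)^g)$, so three things remain: compute this bordism group, show that the listed Dehn twists map to a basis of it, and build a splitting $\mathrm{GL}_g(\mathbb{Z}) \to \mathrm{MCG}(M_g)$.

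\textbf{The bordism group.} Since $\Omega^{\mathrm{Spin}}_*$ vanishes in degrees $3,5$ and $\Omega^{\mathrm{Spin}}_1,\Omega^{\mathrm{Spin}}_2$ are $2$-torsion, the Atiyah--Hirzebruch spectral sequence for $(\mathbb{CP}^\infty)^g$ has $E^2_{p,q}$ torsion-free except along $q=1,2$. The relevant terms are $E^2_{6,0}=H_6$, $E^2_{2,4}=H_2\otimes\Omega_4^{\mathrm{Spin}}$, and the $\mathbb{Z}/2$ terms $E^2_{5,1}=0$, $E^2_{4,2}$. The differentials $d_2$ are dual to $Sq^2$ composed with reduction. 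I expect these to kill the $2$-torsion, which leaves a free group of rank $\mathrm{rk} H_6 + \mathrm{rk} H_2 = \binom{g+2}{3} + g$, i.e.\ $2g+2\binom g2+\binom g3$. Rather than tracking the extensions in the spectral sequence, I would detect elements with characteristic numbers. For $[N,\nu]$ with $x_i=\nu^*c_i\in H^2(N)$, use the integers $\langle x_ix_jx_k,[N]\rangle$ and $\langle x_i p_1,[N]\rangle$, then apply integrality constraints from the index theorem (e.g.\ $\hat A(N)e^{x}$ integral) to locate the lattice. The ranks match; the divisibility bookkeeping is where the lattice is pinned down.

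\textbf{Images of the Dehn twists.} Next I would make the isomorphism of Theorem \ref{thm:1}(2) explicit on a twist. For $f=\mathbf{t}_{S,\alpha}$, the mapping torus $T_f$ is a spin $6$-manifold with a map to $K(H_2,2)$. After surgering the circle direction, it becomes the bordism class of $M_g\times[0,1]\cup$ (trace of the twist). Equivalently, the difference element is obtained by replacing a neighborhood $S^2\times D^4$ of $S\times I$ in $M\times S^1$ by a twisted one, which amounts to plumbing in the $S^2$-bundle over $S^4$ with clutching $\alpha$. Its characteristic numbers are then computable. If the core class is $u=\sum a_ic_i$ (dually), I expect the cubic part to be $\langle x_ix_jx_k\rangle = c\, a_ia_ja_k$ and the $p_1$ part to be proportional to $a_i$, with constants fixed by $p_1=4$. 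The core class of $S_k$ is $e_k$, of $S_k\#S_k$ is $2e_k$, of $S_k\#S_l$ is $e_k+e_l$, and so on. The resulting vectors form a finite-difference basis for polynomials of the form ``cubic plus linear'' evaluated at $u$: a cubic form $u\mapsto u_iu_ju_k$ is determined by its values at $e_k$, $2e_k$, $e_k+e_l$, $2e_k+e_l$, $e_k+e_l+e_h$. I would check unimodularity against the lattice found in the previous step. This is the main obstacle, since the normalization of the constants and the integrality lattice have to agree exactly.

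\textbf{Splitting.} Realize $\mathrm{GL}_g(\mathbb{Z})$ geometrically. Write $M_g=\partial(\natural^g S^2\times D^4)$, or as the boundary of a $3$-handlebody $W_g=\natural^g(D^3\times S^3)$ with the $S^3$ factors dual. Diffeomorphisms of $\natural^g (S^2\times D^4)$ realizing a given matrix on $H_2$ exist by handle slides. Handle slides are not canonical, so instead I would realize each matrix by a diffeomorphism of $M_g$ preserving a fixed framed configuration, e.g.\ via the identification $M_g\cong\partial(V\times D^3)$, where $V=\#^g S^1\times S^2$ minus a ball\ldots\ A cleaner route: show that the action of $\mathrm{GL}_g(\mathbb{Z})$ on $\mathcal{I}(M_g)$ is the standard action (Theorem \ref{thm:1}(3)). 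Then define a section on the standard generators (elementary matrices, sign changes, permutations) by explicit diffeomorphisms preserving the standard embeddings. Finally, verify the defining relations of $\mathrm{GL}_g(\mathbb{Z})$ (Steinberg-type relations) in $\mathrm{MCG}(M_g)$ by checking that each relation word lies in $\mathcal{I}(M_g)$ and has vanishing characteristic numbers. Those invariants are computed on the mapping torus, where the chosen diffeomorphisms preserve a spin structure and the $p_1$ of the mapping torus vanishes by the explicit construction.
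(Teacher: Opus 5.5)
Your plan for the Torelli group follows the paper's route. You detect classes by the twisted $\hat A$-genera $\hat A_k$ and the cubic numbers, write $(M_g)_{\mathbf t_{S,\alpha}}=\partial\bigl((M_g\times D^2)\cup_{S^2\times D^4}D(\xi_\alpha)\bigr)$, and check that the resulting vectors form a basis. Your guessed formulas are right: $\langle p_1\cup x_k,[(M_g)_f]\rangle=4d_k$ and the cubic numbers are $d_id_jd_k$, so $c=1$. The cubic form is where the real work lies, and the paper obtains it from the action on $\pi_3(M_g)$ via Hopf invariants (Lemmas~\ref{lem:homotopyclass} and~\ref{lem:cup}).

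Two corrections are needed here.
\begin{itemize}
\item You cannot quote Theorem~\ref{thm:1}(2) for $M_g$, because the paper deduces the general case from the $M_g$ case. For $M_g$ the isomorphism must come from injectivity of $\rho$ (Lemma~\ref{lem:mtc}, Corollary~\ref{cor:forgetful}) together with surjectivity supplied by your Dehn twist computation.
\item The $d_2$'s do not kill the $2$-torsion. $E^\infty_{4,2}\cong(\mathbb Z/2)^g$ survives; it is dual to $\ker Sq^2$ on $H^4$, which is spanned by the $x_k^2$. Also $E^\infty_{6,0}=H_g$ is a proper sublattice of $H_6$, cut out by the parity condition $\langle x_k^2x_l\rangle\equiv\langle x_kx_l^2\rangle \bmod 2$. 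The group is torsion-free only because $0\to E^\infty_{2,4}\to F\to E^\infty_{4,2}\to 0$ is non-split: $\hat A_k$ is $2$ on $S^2\times K3$ but $1$ on $\rho(\mathbf t_{S_k\#S_k,\alpha}-8\mathbf t_{S_k,\alpha})$. You need this to know your characteristic-number map is injective.
\end{itemize}

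The genuine gap is the splitting. Picking lifts of generators of $\mathrm{GL}_g(\mathbb Z)$ and then verifying the relations is the entire content of the claim. For arbitrary lifts, the relation words evaluate to elements of $\mathcal I(M_g)$ forming a $2$-cocycle that represents the extension class in $H^2(\mathrm{GL}_g(\mathbb Z);\mathcal I(M_g))$, and you give no mechanism forcing them to vanish. The reasons you offer do not supply one:
\begin{itemize}
\item every diffeomorphism of $M_g$ preserves its unique spin structure, so that observation carries no information;
\item there is no reason for $p_1$ of the mapping torus of a relation word $w$ to vanish;
\item even if $p_1$ vanished, the invariant $\nu_*[(M_g)_w]$ remains, and it is governed by how $w$ moves the classes $[T_k]\in\pi_3(M_g)$, which you do not control.
\end{itemize}
Also, diffeomorphisms acting nontrivially on $H_2$ cannot preserve the standard embeddings, and no explicit ones are available; the paper notes none is known in general.

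The missing idea is to use manifolds that bound. Your abandoned first attempt was the right one, because non-canonicity of handle slides is harmless. Let $Y=\natural^g(S^2\times D^4)$ and let $F\in\mathrm{Diff}^+(Y)$ restrict to $f$ with $f_*=\mathrm{id}$ on $H_2(M_g)$. Then $(M_g)_f=\partial Y_F$. The Wang sequence gives $H_2(Y_F)\cong H_2(M_g)$ and $H^1(Y_F;\mathbb Z/2)\cong H^1((M_g)_f;\mathbb Z/2)$, so $\nu_f$ and the preferred spin structure extend over $Y_F$. Hence $\rho([f])=0$, so $[f]=1$ by injectivity of $\rho$. The image of $\pi_0\mathrm{Diff}^+(Y)\to\mathrm{MCG}(M_g)$ therefore meets $\mathcal I(M_g)$ trivially and surjects onto $\mathrm{GL}_g(\mathbb Z)$, which gives a section.

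The paper encodes the same bounding idea as a crossed splitting $\chi([f])=[X_f,\tilde\nu_f]$, with $X_f=Y\cup_f\overline{Y}$. It proves $\chi|_{\mathcal I(M_g)}=\rho$ and the cocycle identity (Lemmas~\ref{lem:double1} and~\ref{lem:double2}), then applies Lemma~\ref{lem:split}.
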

        
        For example, we can describe $\mathrm{MCG}(S^2 \times S^3)$ as follows. Here the orientation preserving diffeomorphism $R$ of $S^2 \times S^3$ is defined by $(x_1, x_2, x_3, y_1, y_2, y_3, y_4) \mapsto (-x_1, x_2, x_3, -y_1, y_2, y_3, y_4)$, where the coordinates are given by the standard embeddings  $S^2 \hookrightarrow \mathbb{R}^3$ and $S^3 \hookrightarrow \mathbb{R}^4$.
        
        \begin{Cor}\label{cor:product}
        	The mapping class group $\mathrm{MCG}(S^2 \times S^3)$ is isomorphic to $(\mathbb{Z} \times \mathbb{Z}) \rtimes \mathbb{Z}/2$, where the action of the nontrivial element of $\mathbb{Z}/2$ on $\mathbb{Z} \times \mathbb{Z}$ is given by sending an element to its inverse. In other words, $\mathrm{MCG}(S^2 \times S^3)$ has the following presentation: \[\left\langle a, b, v | ab = ba, v^2 = 1, vav = a^{-1}, vbv = b^{-1} \right\rangle,\] where $a$ and $b$ can be represented by the Dehn twists $\mathbf{t}_{S_1, \alpha}$ and $\mathbf{t}_{S_1 \# S_1, \alpha}$ respectively, and $v$ can be represented by $R$. It follows that $\mathrm{MCG}(S^2 \times S^3)$ is residually finite and virtually torsion-free, and has trivial center.
        \end{Cor}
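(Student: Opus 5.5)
Specializing Theorem \ref{thm:2} to $g=1$ gives a split short exact sequence
\[0 \rightarrow \mathcal{I}(S^2\times S^3) \rightarrow \mathrm{MCG}(S^2\times S^3) \rightarrow \mathrm{GL}_1(\mathbb{Z}) = \mathbb{Z}/2 \rightarrow 1.\]
Here $\mathcal{I}(S^2\times S^3)\cong \mathbb{Z}^{2}$, since $2g+2\binom{g}{2}+\binom{g}{3}=2$ when $g=1$. It is freely generated by $a=\mathbf{t}_{S_1,\alpha}$ and $b=\mathbf{t}_{S_1\#S_1,\alpha}$. The plan is therefore to identify a splitting and compute the conjugation action on these two generators.

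\textbf{The splitting.} The map $R$ acts on the $S^2$ factor by a reflection, so it induces $-1$ on $H_2$. It also acts on the $S^3$ factor by a reflection, so it induces $-1$ on $H_3$. Hence $R$ is orientation preserving, maps to the generator of $\mathrm{GL}_1(\mathbb{Z})$, and satisfies $R^2=\mathrm{id}$. So $v=[R]$ has order $2$ and $v\mapsto$ generator defines a splitting. (We cannot have $R$ isotopic to the identity, since it acts nontrivially on homology.)

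\textbf{The action.} By Theorem \ref{thm:1}(3), $\mathrm{GL}_1(\mathbb{Z})$ acts on $\mathcal{I}\cong\Omega_6^{\mathrm{Spin}}(\mathbb{CP}^\infty)$ by the standard action. For $h=-1$ the map $\delta_h$ is complex conjugation on $\mathbb{CP}^\infty$; it sends $\nu^*x\mapsto -\nu^*x$. The torsion-free group $\Omega_6^{\mathrm{Spin}}(\mathbb{CP}^\infty)\cong\mathbb{Z}^2$ is detected by characteristic numbers. I would use ones of the form $\langle x^3,[N]\rangle$ and $\langle x\,p_1,[N]\rangle$, or integral combinations of them via the Atiyah–Singer/$\hat A$ integrality. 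Each of these is odd in $x$, so the action is multiplication by $-1$.

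Alternatively one can argue geometrically. We have $R\,\mathbf{t}_{S,\alpha}R^{-1}=\mathbf{t}_{R\circ S,\alpha'}$, where $R\circ S$ is the sphere $S_1$ with reversed orientation, and $\alpha'$ is $\alpha$ conjugated by the reflections. A Dehn twist along an orientation-reversed core sphere with conjugated clutching element is inverse to the original. This gives $vav=a^{-1}$, and similarly $vbv=b^{-1}$, since $R(S_1\#S_1)=\overline{S_1}\#\overline{S_1}$.

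\textbf{The presentation and the group properties.} The relations above give the stated presentation. From it:
\begin{itemize}
\item Residual finiteness and virtual torsion-freeness are immediate, since the group contains $\mathbb{Z}^2$ with index $2$ and is finitely generated and virtually abelian.
\item For the trivial center: an element $(z,1)$ is central only if $-w=w$ for all $w$, which is impossible. An element $(z,0)$ central must satisfy $vzv=z$, i.e. $z=-z$, so $z=0$.
\end{itemize}

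The main obstacle is verifying that the action is exactly inversion on both generators. The characteristic-number route needs care in checking that the chosen invariants detect $\Omega_6^{\mathrm{Spin}}(\mathbb{CP}^\infty)$ integrally and that all of them are odd under $x\mapsto -x$. I would first check this on explicit representatives of $a$ and $b$.
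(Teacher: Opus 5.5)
Your argument is correct and is essentially the paper's. Theorem \ref{thm:2} at $g=1$ gives the split extension with $\mathcal{I}(S^2\times S^3)\cong\mathbb{Z}^2$ freely generated by $a,b$, and $R$ is an involutive lift of $-1\in\mathrm{GL}_1(\mathbb{Z})$. By Theorem \ref{thm:action}, the standard action of $-1$ negates both detecting invariants $\hat{A}_1$ and $\langle \nu^*(x^3),[N]\rangle$ of Proposition \ref{prop:bordismfree}, so it acts by inversion.

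The detection issue you flag as the main obstacle is already settled by that proposition (and since the group is torsion-free, rational detection would suffice anyway). Your geometric alternative also works, provided you note that the sign comes from $R$ reflecting the normal $D^3$ factor: conjugation by a reflection of $S^2$ acts trivially on $\pi_3(\mathrm{SO}(3))$.
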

        
        \begin{Rmk}
        	\rm At this point we compare our computation to relevant results in the literature.
        	\begin{itemize}
        		\item[(1)] Using techniques of pseudoisotopy Sato computed $\mathrm{MCG}(S^p \times S^q)$ for certain $p$ and $q$ \cite{Sat69}. In particular his results imply that there is a splitting exact sequence \[1 \rightarrow \mathcal{I}(S^2 \times S^3) \rightarrow \mathrm{MCG}(S^2 \times S^3) \rightarrow \mathbb{Z}/2 \rightarrow 0,\] and $\mathcal{I}(S^2 \times S^3)$ is isomorphic to $FC^3_3$, which is Haefliger's group of isotopy classes of framed 3-knots in $S^6$ and is isomorphic to $\mathbb{Z}^2$ by \cite[Theorem 5.17]{Hae66}. Our result extends Sato's computation of $\mathrm{MCG}(S^2 \times S^3)$ in the sense that we could determine the group structure and provide explicit constructions for generators of $\mathcal{I}(S^2 \times S^3)$.
        		
        		\item[(2)] Diffeomorphisms of $S^p \times S^q$ have also been considered by Turner \cite{Tur69} and Levine \cite{Lev69}. For $p < q < 2p-3$, Turner proved that $\mathcal{I}(S^p \times S^q)$ is isomorphic to a certain semi-direct product $(\pi_q(\mathrm{SO}(p+1)) \times \Theta_{p+q+1}) \rtimes \pi_p(\mathrm{SO})$ \cite[Theorem 3.10]{Tur69}. Independently, for $q < 2p-1$ and $q \geq 3$, Levine \cite[Proposition 5.2]{Lev69} proved that a diffeomorphism of $S^p \times S^q$ (not required to be orientation preserving) is pseudoisotopic to a composition of diffeomorphisms of the following three types:
        		\begin{itemize}
        			\item[(i)] $(x, y) \mapsto (\beta(y) \cdot x, y)$, where $\beta : S^q \rightarrow \mathrm{O}(p+1)$ is a smooth map;
        			
        			\item[(ii)] $(x, y) \mapsto (x, {\beta}'(x) \cdot y)$, where ${\beta}' : S^p \rightarrow \mathrm{O}(q+1)$ is a smooth map;
        			
        			\item[(iii)] a diffeomorphism supported in an embedded $(p+q)$-disc.
        		\end{itemize}
        		Note that $S^p \times S^{p+1}$ is included in Turner's result for $p \geq 5$ and in Levine's result for $p \geq 3$. Our result shows that $\mathcal{I}(S^2 \times S^3)$ is not a semi-direct product of such type, and one of its generators, $\mathbf{t}_{S_1 \# S_1, \alpha}$, cannot be written as such a composition. In this sense $S^2 \times S^3$ differs from $S^p \times S^{p+1}$ with $p \geq 3$.
        		
        		\item[(3)] For ${\#}^g(S^p \times S^p)$ where $p \geq 3$, there is an exact sequence \[1 \rightarrow \mathcal{I}({\#}^g(S^p \times S^p)) \rightarrow \mathrm{MCG}({\#}^g(S^p \times S^p)) \rightarrow G_g \rightarrow 1,\] where $G_g$ is a certain group encoding the action of diffeomorphisms on homology \cite[Theorem 2]{Kre79}. For $p$ odd this extension problem was studied by Krannich. His result \cite[Corollary C]{Kra20}, which extends earlier results of Crowley \cite{Cro11} and Galatius\textendash Randal-Williams \cite{GRW16-quotient}, states that this exact sequence does not split for $g \geq 2$, and splits for $g = 1$ and $p \neq 3, 7$. For $g = 1$, when $p = 3$ it does not split by \cite{Kry03}, and the case $p = 7$ is open. Our result indicates that the behaviors of $\mathrm{MCG}({\#}^g(S^p \times S^{p+1}))$ and $\mathrm{MCG}({\#}^g(S^p \times S^p))$ are quite different.
        	\end{itemize}
        \end{Rmk}
        
        \begin{Rmk}
        	\rm The surjectivity of $\phi: \mathrm{MCG}(M) \rightarrow \mathrm{Aut}(H_2(M), L_M)$ in Theorem \ref{thm:1} is due to Barden \cite[Theorem 2.2]{Bar65}. In general no explicit construction for diffeomorphisms which act nontrivially on $H_2(M)$ is known (for $M = M_g$, Trent Lucas has communicated his elegant construction to me). Nonetheless, we show that these diffeomorphisms cannot be Dehn twists (see \ref{5.5}).
        \end{Rmk}
        
        Using our computation of $\mathrm{MCG}(M_g)$ we can determine its abelianization.
        
        \begin{Cor}\label{cor:abelian}
        	Let $g \geq 1$.
        	\begin{itemize}
        		\item[(1)] The abelianization of $\mathrm{MCG}(M_g)$ is: \[H_1(\mathrm{MCG}(M_g)) \cong \begin{cases}
        			{(\mathbb{Z}/2)}^3, & g = 1,2; \\
        			{(\mathbb{Z}/2)}^2, & g = 3; \\
        			\mathbb{Z}/2, & g \geq 4.
        		\end{cases}\]
        		In particular, for $g \geq 4$ the homomorphism $\phi : \mathrm{MCG}(M_g) \rightarrow \mathrm{GL}_g(\mathbb{Z})$ induces an isomorphism $H_1(\mathrm{MCG}(M_g)) \rightarrow H_1(\mathrm{GL}_g(\mathbb{Z}))$.
        		
        		\item[(2)] We have $H^1(B\mathrm{Diff}^+(M_g)) = 0$ and $\mathrm{torsion}(H^2(B\mathrm{Diff}^+(M_g))) \cong H_1(\mathrm{MCG}(M_g))$ is nontrivial.
        	\end{itemize}
        \end{Cor}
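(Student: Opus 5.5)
Since Theorem \ref{thm:2} gives $\mathrm{MCG}(M_g)\cong \mathcal{I}(M_g)\rtimes \mathrm{GL}_g(\mathbb{Z})$, I would compute the abelianization of a semidirect product by the standard formula
\[
H_1(\mathcal{I}\rtimes G)\cong H_1(\mathcal{I})_{G}\oplus H_1(G),
\]
where $H_1(\mathcal{I})_G$ denotes the coinvariants. By Theorem \ref{thm:1}(3), $G=\mathrm{GL}_g(\mathbb{Z})$ acts on $\mathcal{I}(M_g)\cong \Omega^{\mathrm{Spin}}_6((\mathbb{CP}^\infty)^g)$ through the standard action. For $H_1(\mathrm{GL}_g(\mathbb{Z}))$ I will quote the classical values: $(\mathbb{Z}/2)^2$ for $g=1$ (where it is $\{\pm1\}=\mathbb{Z}/2$; I will check this case separately), $(\mathbb{Z}/2)^2$ for $g=2$, and $\mathbb{Z}/2$ (the determinant) for $g\ge 3$. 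For $g=1$ the group is simply $\mathbb{Z}/2$. I would double-check these against the claimed totals: $g=1$ gives $(\mathbb{Z}/2)^3$, so the coinvariants must be $(\mathbb{Z}/2)^2$, which matches Corollary \ref{cor:product}, where $v$ inverts both $a$ and $b$.

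The main work is computing the coinvariants of the rank $2g+2\binom g2+\binom g3$ lattice. I would identify $\Omega^{\mathrm{Spin}}_6(K(\mathbb{Z}^g,2))$ with characteristic-number data: a class $[N,\nu]$ with $x_i=\nu^*e_i\in H^2(N)$ is detected by the numbers $\langle x_ix_jx_k,[N]\rangle$ and $\langle p_1 x_i,[N]\rangle$, subject to spin integrality (the Dehn-twist generators $\mathbf{t}_{S_k,\alpha}$, $\mathbf{t}_{S_k\#S_k,\alpha}$, etc. correspond to a basis adapted to these invariants). Equivalently, up to finite index the group is built from $\mathrm{Sym}^3$-type and linear pieces of $H^2$, with the action induced by $h\in\mathrm{GL}_g(\mathbb{Z})$ on $H^2(K,\mathbb{Z})$.

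Taking coinvariants then reduces to explicit elementary matrices acting on the basis elements. The matrix $-1\in\mathrm{GL}_g$ acts by $-1$ on everything, since the group is odd-degree in $x$. Hence the coinvariants are $2$-torsion, and I would compute them modulo $2$. A transvection $e_k\mapsto e_k+e_l$ sends the generator for $S_k$ to a combination of those for $S_k$, $S_l$ and the $S_k\#S_l$-type classes. The resulting relations kill most generators: the $\binom g3$ classes, the mixed classes, and so on. For large $g$ everything should die, giving trivial coinvariants and a total of $\mathbb{Z}/2$. For $g=2$ I expect $\mathbb{Z}/2$ to survive, and for $g=3$ the same $\mathbb{Z}/2$ to survive, to be reconciled with $(\mathbb{Z}/2)^2$ once $H_1(\mathrm{GL}_3)=\mathbb{Z}/2$ is added.

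The hard step is getting the precise integral action formula on the chosen basis, since spin integrality introduces halves such as $(x^3-x\,p_1)/6$-type combinations. Parity conditions depend on these, and that is what makes the answer depend on $g$ up to $4$.

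For part (2), $B\mathrm{Diff}^+(M_g)$ has $\pi_1=\mathrm{MCG}(M_g)$. Therefore $H^1(B\mathrm{Diff}^+(M_g);\mathbb{Z})=\mathrm{Hom}(H_1,\mathbb{Z})=0$, because $H_1$ is finite. By the universal coefficient theorem, the torsion of $H^2$ is $\mathrm{Ext}(H_1(B\mathrm{Diff}^+(M_g)),\mathbb{Z})\cong H_1(\mathrm{MCG}(M_g))$, which is nonzero by part (1).
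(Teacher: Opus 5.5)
Your framework matches the paper's: $H_1(\mathcal{I}\rtimes G)\cong \mathcal{I}_G\oplus H_1(G)$, the values of $H_1(\mathrm{GL}_g(\mathbb{Z}))$ (after your self-correction for $g=1$), the case $g=1$ via Corollary~\ref{cor:product}, and the universal-coefficient argument for part (2) are all fine. Your remark that $-I$ acts by $-1$ is also correct: every invariant is odd in $x$, so the coinvariants are $2$-torsion, which is a useful shortcut.

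However, the entire $g$-dependence of the statement lies in computing $\mathcal{I}(M_g)_{\mathrm{GL}_g(\mathbb{Z})}$. By Lemma~\ref{lem:coinvariant} this is $\mathbb{Z}/2$ for $g=2,3$ and $0$ for $g\ge 4$, and you only predict the outcome (``should die'', ``I expect''). A proof needs four steps:
\begin{itemize}
\item[(i)] The reduction $\mathcal{I}(M_g)_G\cong (H_g)_G$ for $g\ge 2$. This follows by right exactness, because the kernel of the Hurewicz map is a submodule on which $G$ acts linearly via $(\hat A_1,\dots,\hat A_g)$, so its coinvariants vanish.
\item[(ii)] An upper bound: the classes $2(x_k^2x_l)^*$ vanish, and all other basis elements collapse to one class of order $\le 2$. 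Alternatively, use $h\cdot\mathbf{t}_{S,\alpha}=\mathbf{t}_{f_h\circ S,\alpha}$ together with the fact that primitive classes form a single $\mathrm{GL}_g(\mathbb{Z})$-orbit.
\item[(iii)] For $g=2,3$, a $G$-invariant surjection $H_g\to\mathbb{Z}/2$. Writing $T_{ijk}=\langle x_ix_jx_k,c\rangle$, one such map is $c\mapsto \sum_k T_{kkk}+\sum_{k<l}T_{kll}+\sum_{k<l<h}T_{klh} \bmod 2$. Its invariance under transvections uses the spin congruence $T_{kkl}\equiv T_{kll}\pmod 2$.
\item[(iv)] For $g\ge 4$, a relation killing that class. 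For example, the transvection $x_2\mapsto x_2+x_1$ sends $(x_1x_3x_4)^*$ to $(x_1x_3x_4)^*+(x_2x_3x_4)^*$. Consistently, the invariant in (iii) fails for $g=4$: under $x_1\mapsto x_1+x_2$ it changes by exactly $T_{234}$.
\end{itemize}

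Two of your concrete claims are also wrong. First, the relations do not kill the $\binom g3$ classes when $g=3$. The class $(x_1x_2x_3)^*$ has invariant $1$ in (iii), so it represents the surviving $\mathbb{Z}/2$. Killing it needs a fourth index, and that is precisely what separates $g=3$ from $g\ge 4$.

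Second, working ``up to finite index'' with $\mathrm{Sym}^3$-type pieces loses the answer. The answer is entirely $2$-primary, while $H_g$ has index $2^{\binom g2}$ in $H_6((\mathbb{CP}^\infty)^g)$. For $g=2$ in the full lattice, the transvection $x_1\mapsto x_1+x_2$ sends $(x_1^2x_2)^*$ to $(x_1^2x_2)^*+3(x_1^3)^*$; together with $2(x_1^3)^*=0$ this kills $(x_1^3)^*$. In $H_g$, by contrast, only $2(x_1^2x_2)^*$ is available, the resulting relation $6(x_1^3)^*=0$ is vacuous, and $(x_1^3)^*$ survives. So the computation must be carried out on the exact sublattice $H_g$ cut out by the spin condition.
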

        
        In particular oriented smooth fiber bundles with fiber $M_g$ have no 1-dimensional characteristic classes, but may have 2-dimensional characteristic classes of order 2.
        
        Our computation leads to several applications. The first application considers the stable phenomena of diffeomorphisms of 5-manifolds. Namely, there is a well-defined homomorphism $\mathrm{MCG}(M) \rightarrow \mathrm{MCG}(M \# (S^2 \times S^3))$ induced by taking connected sum with $id_{S^2 \times S^3}$ (see Section \ref{sec:7} for details). It fits into the following commutative diagram \[\begin{tikzcd}
        0 \arrow[r] & \mathcal{I}(M) \arrow[d] \arrow[r] & \mathrm{MCG}(M) \arrow[d] \arrow[r, "\phi"] & \mathrm{Aut}(H_2(M), L_M) \arrow[d] \arrow[r] & 1 \\
        0 \arrow[r] & \mathcal{I}(M \# (S^2 \times S^3)) \arrow[r] & \mathrm{MCG}(M \# (S^2 \times S^3)) \arrow[r, "\phi"] & \mathrm{Aut}(H_2(M) \oplus \mathbb{Z}, L_M) \arrow[r] & 1.
        \end{tikzcd}\] Two diffeomorphisms are \emph{stably isotopic} if they are isotopic after taking connected sums with $id_{S^2 \times S^3}$'s. Our computation implies the following two results.
        
        \begin{Cor}\label{cor:stable}
        	Let $M$ be a simply connected closed spin 5-manifold with no 2- and 3-torsion elements in homology. Then two orientation preserving diffeomorphisms of $M$ are not stably isotopic if they are not isotopic.
        \end{Cor}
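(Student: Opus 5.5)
The claim is that the stabilization map $\sigma : \mathrm{MCG}(M) \rightarrow \mathrm{MCG}(M \# (S^2 \times S^3))$ is injective; iterating then handles any number of $S^2 \times S^3$ summands. I will apply the five lemma, or really just a diagram chase, to the commutative diagram of short exact sequences displayed before the statement, whose rows come from Theorem \ref{thm:1}(1).

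The right-hand vertical map sends $h \in \mathrm{Aut}(H_2(M), L_M)$ to $h \oplus \mathrm{id}_{\mathbb{Z}}$, and this is obviously injective. So it suffices to show that the left-hand vertical map $\mathcal{I}(M) \rightarrow \mathcal{I}(M \# (S^2 \times S^3))$ is injective. Granting that, take $f$ with $\sigma(f)$ trivial. Then $\phi(f)=1$ by the injectivity on the right, so $f \in \mathcal{I}(M)$, and injectivity on the left forces $f=1$.

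For the left map I use Theorem \ref{thm:1}(2). Since $M$ and $M \# (S^2 \times S^3)$ both have no 2-torsion, it identifies both Torelli groups with $\Omega^{\mathrm{Spin}}_6(K(H,2))$, where $H = H_2(M)$ and $H \oplus \mathbb{Z}$ respectively. The main step is to check that, under these identifications, stabilization corresponds to the map induced by the inclusion $K(H,2) \rightarrow K(H,2) \times K(\mathbb{Z},2) = K(H \oplus \mathbb{Z},2)$. To do this I would go back to how the isomorphism in \ref{5.3} is built. A Torelli element $f$ goes to the bordism class of its mapping torus $M_f$ (or a modified-surgery variant), together with the map to $K(H,2)$ classifying the extension of the identification of $H_2$. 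The mapping torus of $f \# \mathrm{id}$ is $M_f$ fibre-summed with $S^1 \times S^2 \times S^3$ along $S^1 \times D^5$. Up to spin bordism over $K(H \oplus \mathbb{Z},2)$, this is the disjoint union of $M_f$, mapped in via the inclusion, and a piece that is null-bordant. For that piece I would argue that $S^1 \times S^2 \times S^3 = \partial(S^1 \times S^2 \times D^4)$, and that the $K(\mathbb{Z},2)$ class extends over this bounding manifold. One also has to check that the fibre-sum cobordism carries the reference maps compatibly.

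Once this identification is in place, the inclusion $K(H,2) \rightarrow K(H \oplus \mathbb{Z},2)$ has a retraction given by the projection. The induced map on $\Omega^{\mathrm{Spin}}_6$ is therefore split injective, and the left vertical map is injective. I expect the main obstacle to be the naturality check: one must confirm that the isomorphism of Theorem \ref{thm:1}(2) really is compatible with connected sum with $\mathrm{id}_{S^2 \times S^3}$. If the mapping-torus description turns out to be awkward, a fallback is to use the explicit generators from \ref{5.3} in the case $M = M_g$, as in Theorem \ref{thm:2}. Stabilizing sends each listed Dehn twist generator of $\mathcal{I}(M_g)$ to the generator with the same label in $\mathcal{I}(M_{g+1})$, so free generators go to part of a free basis, which gives injectivity directly. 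For general $M$ one would use the analogous generators, which are natural under the inclusion $H \subset H \oplus \mathbb{Z}$.
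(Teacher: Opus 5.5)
Your proof is correct. Its skeleton matches the paper's: $h\mapsto h\oplus\mathrm{id}_{\mathbb{Z}}$ is injective, the stabilization is injective on Torelli groups, and a diagram chase through the ladder of short exact sequences finishes.

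The difference is in how you get injectivity of $\mathcal{I}(M)\to\mathcal{I}(M\#(S^2\times S^3))$. The paper only says this follows ``from the computation of $\mathcal{I}(M)$.'' That is most naturally read as your fallback: the explicit bases of subsection~\ref{5.3}, which exist only when there is no 3-torsion, are carried to parts of bases with the same orders.

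Your main argument instead shows that $\rho$ intertwines stabilization with $\iota_*$, where $\iota\colon K(H,2)\to K(H\oplus\mathbb{Z},2)$ is the inclusion. You then conclude using the retraction given by projection. This route needs no explicit generators, and it does not need $\rho$ to be surjective. It only uses injectivity of $\rho$ on $\mathcal{I}(M)$ (Lemma~\ref{lem:mtc} with Corollary~\ref{cor:forgetful}). So it actually proves the statement assuming only that $H_2(M)$ has no 2-torsion.

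The naturality check you flag does go through:
\begin{itemize}
\item The round-handle cobordism $(M_f\times I)\cup_{S^1\times D^5}(S^1\times S^2\times S^3\times I)$ has $H_1\cong\mathbb{Z}$, which is free, and $H_2\cong H\oplus\mathbb{Z}$ by Mayer--Vietoris. The same holds for $M_{f\#\mathrm{id}}$. Hence maps from these spaces to $K(H\oplus\mathbb{Z},2)$ are determined up to homotopy by their effect on $H_2$, and the reference maps extend compatibly.
\item The spin structures glue, because both preferred structures restrict to the same structure on $S^1\times D^5$.
\item $S^1\times S^2\times S^3=\partial(S^1\times S^2\times D^4)$, with the product spin structure, and the reference map extends via projection to $S^2$.
\end{itemize}
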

        
        \begin{Cor}\label{cor:homologystable}
        	The group $H_1(\mathrm{MCG}(M_g))$ stabilizes when $g \geq 4$.
        \end{Cor}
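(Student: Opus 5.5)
The plan is to deduce the statement from the explicit description of $\mathrm{MCG}(M_g)$ in Theorem \ref{thm:2} together with the computation in Corollary \ref{cor:abelian}. We must show that the stabilization map $s\colon \mathrm{MCG}(M_g) \to \mathrm{MCG}(M_{g+1})$ induces an isomorphism on $H_1$ for $g \geq 4$. Since both groups are $\mathbb{Z}/2$ by Corollary \ref{cor:abelian}(1), it suffices to show that $H_1(s)$ is surjective, or equivalently nontrivial.

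For this I will use the commutative diagram stated before Corollary \ref{cor:stable}, with $M = M_g$. Its right-hand square shows that $\phi \circ s = \iota \circ \phi$, where $\iota\colon \mathrm{GL}_g(\mathbb{Z}) \to \mathrm{GL}_{g+1}(\mathbb{Z})$ is the block inclusion $A \mapsto A \oplus 1$. To justify this, I will check that connected sum with $id_{S^2\times S^3}$ acts on $H_2(M_g) \oplus \mathbb{Z}$ as the induced automorphism on the first summand and as the identity on the new summand. This is immediate, because the new summand's homology is supported away from the disc where the connected sum is performed.

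Applying $H_1$ gives a commutative square. Corollary \ref{cor:abelian}(1) says that for $g \geq 4$ the vertical maps $H_1(\phi)$ are isomorphisms $H_1(\mathrm{MCG}(M_g)) \to H_1(\mathrm{GL}_g(\mathbb{Z}))$, and likewise for $g+1$. It therefore remains to show that $H_1(\iota)\colon H_1(\mathrm{GL}_g(\mathbb{Z})) \to H_1(\mathrm{GL}_{g+1}(\mathbb{Z}))$ is an isomorphism for $g \geq 4$.

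This is classical. For $g \geq 3$ we have $[\mathrm{GL}_g(\mathbb{Z}), \mathrm{GL}_g(\mathbb{Z})] = \mathrm{SL}_g(\mathbb{Z})$, since $\mathrm{SL}_g(\mathbb{Z})$ is perfect and generated by elementary matrices, which are commutators. Hence the determinant induces $H_1(\mathrm{GL}_g(\mathbb{Z})) \cong \mathbb{Z}/2$. The determinant is compatible with $\iota$, because $\det(A \oplus 1) = \det A$. So $H_1(\iota)$ is the identity on $\mathbb{Z}/2$, and commutativity of the square shows that $H_1(s)$ is an isomorphism.

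The only substantive input is Corollary \ref{cor:abelian}(1). I expect the main (mild) obstacle to be the commutativity of the diagram, namely that $s$ is a well-defined homomorphism compatible with $\phi$. I take this as given from the discussion preceding the corollaries and Section \ref{sec:7}.
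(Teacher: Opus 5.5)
Your proposal is correct and follows the paper's argument. The paper also uses the commutative square relating the stabilization map to $\mathrm{GL}_g(\mathbb{Z})\to\mathrm{GL}_{g+1}(\mathbb{Z})$, with $\phi_*$ an isomorphism on $H_1$ for $g\ge 4$ by Lemma \ref{lem:coinvariant}. The only difference is that you check $H_1(\mathrm{GL}_g(\mathbb{Z}))\to H_1(\mathrm{GL}_{g+1}(\mathbb{Z}))$ is an isomorphism directly via the determinant, whereas the paper cites Charney's homological stability theorem.
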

        
        As the second application we compare diffeomorphisms and homotopy equivalences of $M$. We state a simplified version of our result for $M_g$ and refer to Section \ref{sec:8} for more general statements. Let $\mathcal{I}^h(M)$ be the \emph{homotopy Torelli group} of $M$, namely the group of homotopy classes of homotopy equivalences of $M$ that induce the identity on $H_*(M)$. There is an obvious forgetful homomorphism $\psi : \mathcal{I}(M) \rightarrow \mathcal{I}^h(M)$. Let $\mathcal{S}(M)$ be the (smooth) surgery structure set of $M$. Let $K(M_g)$ be the rank $g$ subgroup of $\mathcal{I}(M_g)$ freely generated by the isotopy classes $2(\mathbf{t}_{S_1 \# S_1, \alpha} - 8\mathbf{t}_{S_1, \alpha}), \cdots, 2(\mathbf{t}_{S_g \# S_g, \alpha} - 8\mathbf{t}_{S_g, \alpha})$.
        
        \begin{Thm}\label{thm:3}
        	There is an exact sequence (of based sets at $\mathcal{S}(M_g)$, of groups at other terms) \[0 \rightarrow K(M_g) \rightarrow \mathcal{I}(M_g) \xrightarrow{\psi} \mathcal{I}^h(M_g) \rightarrow \mathcal{S}(M_g) \rightarrow 0.\] The surgery structure set $\mathcal{S}(M_g)$ is a finite set containing $2^g$ elements.
        \end{Thm}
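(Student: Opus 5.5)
The plan is to derive the sequence from the smooth surgery exact sequence for $M_g$, restricted to self-equivalences. A homotopy self-equivalence $f$ of $M_g$ defines the element $[M_g, f] \in \mathcal{S}(M_g)$. This gives a map $\mathcal{I}^h(M_g) \to \mathcal{S}(M_g)$, and its fibre over the base point consists of the classes represented by diffeomorphisms. A diffeomorphism homotopic to a homology-trivial map is itself homology-trivial, so the image of $\psi$ is exactly the set of elements of $\mathcal{I}^h(M_g)$ that map to the base point. This gives exactness at $\mathcal{I}^h(M_g)$.

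\textbf{Surjectivity onto $\mathcal{S}(M_g)$.} By Barden's classification, every element $[N, f]$ of $\mathcal{S}(M_g)$ has $N$ simply connected and spin with $H_2(N) \cong \mathbb{Z}^g$, so $N \cong M_g$. Composing $f$ with a diffeomorphism realizing an element of $\mathrm{GL}_g(\mathbb{Z})$ (part (1) of Theorem \ref{thm:1}), we may assume $f$ induces the identity on homology.

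\textbf{Counting $\mathcal{S}(M_g)$.} The surgery exact sequence reads
\[
L_6(1) \to \mathcal{S}(M_g) \to [M_g, G/O] \to L_5(1) = 0 .
\]
Since $L_6(1) = \mathbb{Z}/2$ acts trivially on the structure set (by the Kervaire plumbing argument in dimension $5$), we get $\mathcal{S}(M_g) \cong [M_g, G/O]$. Through dimension $5$, $G/O$ agrees with $G/PL$, whose low Postnikov stages are $K(\mathbb{Z}/2,2)$ and $K(\mathbb{Z},4)$ glued by the $k$-invariant $\delta Sq^2$. Hence
\[
[M_g, G/O] \text{ is an extension of } H^4(M_g;\mathbb{Z}) = 0 \text{ by } H^2(M_g;\mathbb{Z}/2) = (\mathbb{Z}/2)^g ,
\]
with the $\delta Sq^2$ obstruction vanishing since $H^5(M_g;\mathbb{Z}) = \mathbb{Z}$ is torsion-free and $Sq^2$ vanishes on $H^3$ of a spin $5$-manifold. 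So $\mathcal{S}(M_g)$ has $2^g$ elements.

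\textbf{Kernel of $\psi$.} This is the main obstacle. My approach is to compare normal invariants. A diffeomorphism that is homotopic to the identity determines, via the homotopy, an element of $\mathcal{S}(M_g \times I, \partial)$. Using Theorem \ref{thm:1}(2), $\mathcal{I}(M_g) \cong \Omega_6^{\mathrm{Spin}}((\mathbb{CP}^\infty)^g)$, and $\psi$ should be identified with a map
\[
\Omega_6^{\mathrm{Spin}}\bigl(K(\mathbb{Z}^g,2)\bigr) \to \mathcal{I}^h(M_g).
\]
The kernel should be the subgroup detected only by characteristic numbers that homotopy equivalences cannot see, namely Pontryagin-type data on the $S_k$ summands.

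Concretely, I would use the Dehn twist generators of Theorem \ref{thm:2}. The twist $\mathbf{t}_{S_k,\alpha}$ is homotopic to the composite $M_g \to M_g \vee S^5 \to M_g$ built from $J(\alpha)$ composed with the collapse map. The same description applies to $\mathbf{t}_{S_k \# S_k,\alpha}$ with the class $2x_k$. The resulting homotopy class is computed by the image of $\alpha$ in $\pi_3(SO(3)) \to \pi_3(G_3)$ together with the composite $S^5 \to S^2$ involving $\eta$-type Hopf invariants.

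For $g=1$, I would show that $\psi$ sends the free generators $a, b$ to elements whose relations are generated exactly by $2(b - 8a)$. To do this I would compute the homotopy classes in $[S^2 \times S^3, S^2 \times S^3]$ via the action of $\pi_3(S^2)$ and $\pi_5$ terms, where quadratic dependence on the degree gives the factor $8$ and the $\mathbb{Z}/2$ in $\pi_5(S^3)$ gives the factor $2$. The mixed twists ($k<l$, triple sums) are then shown to map injectively, which forces $\ker \psi = K(M_g)$.

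Exactness at $\mathcal{I}(M_g)$ is then this kernel computation. Injectivity of $K(M_g) \to \mathcal{I}(M_g)$ holds because it is a subgroup.
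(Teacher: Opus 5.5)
Exactness at $\mathcal{I}^h(M_g)$ is handled as in the paper. Your surjectivity argument is correct and takes a different route from the paper. You argue directly: a manifold homotopy equivalent to $M_g$ is simply connected, spin, with $H_2\cong\mathbb{Z}^g$, hence diffeomorphic to $M_g$, and Barden's realization of $\mathrm{GL}_g(\mathbb{Z})$ then makes the equivalence homology-trivial. The paper instead obtains surjectivity only at the end, from a counting argument. Your computation $|\mathcal{S}(M_g)|=|[M_g,G/O]|=2^g$ agrees with the paper's remark. The $\delta Sq^2$ obstruction vanishes already because $H^4(M_g;\mathbb{Z}/2)=0$; the comment about $Sq^2$ on $H^3$ plays no role.

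The genuine gap is the computation of $\ker\psi$, which is the real content of the theorem.

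First, the concrete claim your plan rests on is false. A composite $M_g\to M_g\vee S^5\to M_g$ through the top-cell collapse is the identity on the $4$-skeleton $\vee_g(S^2\vee S^3)$, so it acts trivially on $\pi_3(M_g)$. But by Section 4, $\mathbf{t}_{S_k,\alpha}$ sends $[T_k]\mapsto[S_k\circ\eta]+[T_k]$, and $\mathbf{t}_{S_k\#S_k,\alpha}$ sends $[T_k]\mapsto 8[S_k\circ\eta]+[T_k]$. Only the combinations $\mathbf{t}_{S_k\#S_k,\alpha}-8\mathbf{t}_{S_k,\alpha}$, which generate $\overline{K}(M_g)$, have that form.

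Second, nothing in the proposal establishes the three facts you actually need:
\begin{itemize}
\item[(i)] $\ker\psi\subseteq\overline{K}(M_g)$;
\item[(ii)] $2\overline{K}(M_g)\subseteq\ker\psi$;
\item[(iii)] $\psi(\overline{K}(M_g))$ has exactly $2^g$ elements, so that no element of $\overline{K}(M_g)\setminus 2\overline{K}(M_g)$ is homotopic to the identity.
\end{itemize}
The phrase ``compute the homotopy classes in $[S^2\times S^3,S^2\times S^3]$'' names exactly the hard step and leaves it undone. Showing that certain generators ``map injectively'' does not determine the kernel of a homomorphism on the whole group. Also, for $g=1$ the relevant class is the generator of the $\pi_5(S^2)$ summand of $\pi_5(S^2\times S^3)$, not of $\pi_5(S^3)$.

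The missing input is Baues--Buth. Their homomorphism $t:\mathcal{I}^h(M)\to H_6(K(H_2(M),2))$ is compatible with $[f]\mapsto\nu_{f*}[M_f]$ and has kernel $\mathrm{haut}(M|\dot M)\cong(\mathbb{Z}/2)^{2g}$. The paper uses it as follows.
\begin{itemize}
\item Compatibility, together with injectivity of $H_M\to H_M^h$, gives (i).
\item $\psi(\overline{K}(M_g))$ lies in the elementary abelian group $\ker t$. This gives (ii) and $|\psi(\overline{K})|\le 2^g$.
\item $\mathrm{haut}(M|\dot M)/\psi(\overline{K})$ injects into $\mathcal{S}(M_g)$, so $2^{2g}/|\psi(\overline{K})|\le 2^g$.
\item Together these force $|\psi(\overline{K})|=2^g$, which is (iii), and hence $\ker\psi=2\overline{K}(M_g)=K(M_g)$.
\end{itemize}
So (iii) comes from counting, not from computing homotopy classes. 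Once Baues--Buth is available, your own surjectivity argument and your count of $\mathcal{S}(M_g)$ slot directly into this argument.
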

        
        In particular $K(M_g)$ consists of all diffeomorphisms of $M_g$ that are homotopic to $id_{M_g}$. More generally, in Section \ref{sec:8} we compute the group of isotopy classes of diffeomorphisms homotopic to the identity for $M$ with no 2- and 3-torsion in homology.
        
        As the third application we obtain some information on the problem of which element of $\pi_3(S^2 \times S^3)$ can be represented by an embedding of $S^3$. Let $S_1 : S^2 \hookrightarrow S^2 \times S^3$ and $T_1 : S^3 \hookrightarrow S^2 \times S^3$ be the standard embeddings, let $\eta : S^3 \rightarrow S^2$ be the Hopf map, then $\pi_3(S^2 \times S^3)$ is the rank 2 free abelian group with basis $\{[S_1 \circ \eta], [T_1]\}$.
        
        \begin{Thm}\label{thm:4}
        	Denote an element $a[S_1 \circ \eta] + b[T_1] \in \pi_3(S^2 \times S^3)$ by $(a, b)$.
        	\begin{itemize}
        		\item[(1)] The homotopy classes $(a, 1)$, $(a, -1)$ can be represented by an embedding of $S^3$ for each integer $a$. Such an embedding is homotopic to $f \circ T_1$ for some diffeomorphism $f$ of $S^2 \times S^3$.
        		
        		\item[(2)] For $b = 2$ or $b \geq 3$ odd, there exists $a \in \{0, 1, \cdots, b-1\}$ such that the homotopy classes $(a + kb, b)$, $(a + kb, -b)$ can be represented by an embedding of $S^3$ for each integer $k$. Such an embedding is not homotopic to $f \circ T_1$ for any diffeomorphism $f$ of $S^2 \times S^3$.
        	\end{itemize}
        \end{Thm}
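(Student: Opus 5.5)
Part (1) comes from Theorem \ref{thm:2}. Since $\mathrm{GL}_1(\mathbb{Z}) = \{\pm 1\}$, a diffeomorphism $f$ of $S^2 \times S^3$ acts on $\pi_3(S^2\times S^3)$ preserving the subgroup generated by $[S_1\circ\eta]$: it is the image of $\pi_3(S^2)$ and maps onto the kernel of the Hurewicz map to $H_3$. So $f_*$ has the form $(a,b)\mapsto(\epsilon a + c\,b,\ \epsilon' b)$. Here $\epsilon'=\pm1$ is the action on $H_3$, and $\epsilon$ is determined by the action on $H_2$ through $\eta$: $[S_1\circ\eta]\mapsto \epsilon^2[S_1\circ\eta]=[S_1\circ\eta]$ because Hopf invariant is quadratic, so in fact $\epsilon=1$.

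The plan is to compute $c$ for the generators of the Torelli group. I claim that the Dehn twist $\mathbf{t}_{S_1,\alpha}$ sends $[T_1]$ to $[T_1]+\lambda[S_1\circ\eta]$ with $\lambda$ a unit. I would check this by restricting the twist to $S^2\times S^3$ with the product structure: it is $(x,y)\mapsto(\alpha(y)x,y)$. The composite with $T_1=\{x_0\}\times S^3$ is $y\mapsto(\alpha(y)x_0,y)$, and its $S^2$-component is the map $S^3\to \mathrm{SO}(3)\to S^2$. The evaluation $\mathrm{SO}(3)\to S^2$ sends the generator of $\pi_3\mathrm{SO}(3)$ to $\eta$ up to sign (via $S^3\to\mathrm{SO}(3)$, a double cover, composed with the Hopf fibration). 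With the normalization $p_1=4$, $\alpha$ is presumably this generator, so $c=\pm1$ per twist. Iterating twists then gives $(a,1)$ for all $a$, and $R$ or an orientation reversal handles the sign $-1$. Since $f\circ T_1$ is an embedding, part (1) follows.

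Part (2) needs a different construction. For the existence claim I would take a "cabling" type embedding: an embedding of $S^3$ representing $b[T_1]$, for instance by tubing together $b$ parallel copies $\{x_i\}\times S^3$ (with orientation adjustments for sign $-b$) along arcs in the complement. This gives $(a_0,b)$ for some $a_0$, and its reduction modulo $b$ determines $a$. Composing with powers of $\mathbf{t}_{S_1,\alpha}$ adds $k b[S_1\circ\eta]$ by the formula above, since $c$ is multiplied by $b$. This yields all $(a_0+kb,\pm b)$.

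The non-homotopy statement follows because any $f\circ T_1$ has $b=\pm1$ by the computation above. The restriction to $b=2$ or odd $b$ presumably arises from the tubing construction: one must show that the tube sum actually exists as an embedding, and that $a_0$ can be controlled. This is the main obstacle. It amounts to computing the $\eta$-component of the tube sum, which is a quadratic, self-linking type correction of $\binom{b}{2}$-shape, and the statement only asserts existence of some residue $a$, so no precise value is needed. If the tube sum fails to be well defined, I would instead embed $S^3$ as the boundary of a neighborhood in $S^3$-direction via Haefliger-type arguments in the metastable range ($3$-spheres in $5$-manifolds, simply connected, where homotopy classes with suitable conditions embed). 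I would aim for the tubing construction first.
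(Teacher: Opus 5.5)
Your part (1) is essentially the paper's argument. The paper also reads off the action of $\mathbf{t}_{S_1,\alpha}$ on $[T_1]$ from the evaluation map $\mathrm{SO}(3)\to S^2$ (Lemma~\ref{lem:homotopyeffect}). It uses $R$, with $R_*[S_1\circ\eta]=[S_1\circ\eta]$ and $R_*[T_1]=-[T_1]$, for the sign. It then concludes that $(a,b)=[f\circ T_1]$ for some diffeomorphism $f$ exactly when $|b|=1$.

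For part (2) you take a genuinely different route, and it is correct and in fact proves more than the theorem.

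\emph{The paper's route.} The paper never constructs the embedding directly. It observes that surgery on an embedded $S^3$ of class $(a,\pm b)$ gives a closed spin $5$-manifold $M'$ with $\pi_1\cong\mathbb{Z}/b$, $H_2=H_4=0$ and $H_3\cong\mathbb{Z}/b$. Conversely, surgery on a generator of $\pi_1(M')$ for such an $M'$ yields $S^2\times S^3$ by Smale--Barden, and the belt sphere has class $(a,\pm b)$. The paper then inserts known examples: lens spaces $L_b(\ell_1,\ell_2,\ell_3)$, which are spin only for $b$ odd, and the Geiges--Thomas manifold $Q_0$ for $b=2$. That is the sole source of the hypothesis on $b$.

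\emph{Your route.} Your tubing construction needs none of this, and the step you call the main obstacle is routine.
\begin{itemize}
\item[(i)] The $b$ disjoint spheres $\{x_i\}\times S^3$ can be joined by ambient connected sum along disjoint arcs. The oriented $3$-planes in the $4$-dimensional normal space of an arc form a connected space, so the tubes can be chosen compatibly with the orientations. The result is an embedded $S^3$.
\item[(ii)] There is no quadratic correction. Such corrections arise only when precomposing a sum of $2$-spheres with $\eta$, as in Lemma~\ref{lem:homotopyclass}. A tube sum of $3$-spheres represents the sum of the classes in $\pi_3$, by the same neck-collapsing argument the paper uses for $\pi_2$. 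Equivalently, if you take the arcs in $S^2\times\{y_0\}$, the projection to $S^2$ lands in a disc around a tree joining the $x_i$.
\item[(iii)] Hence the tubed sphere represents exactly $(0,b)$. An explicit model is $\{(z,y)\in D^2\times S^3 : z^b=\epsilon y_1\}$.
\end{itemize}

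Composing with $\mathbf{t}_{S_1,\alpha}^k$ and with $R$ then gives all $(kb,\pm b)$ for every $b\ge 2$, with $a=0$. The non-homotopy claim follows as you say. So your guess that the restriction on $b$ comes from the tubing is mistaken, and the Haefliger-type fallback is unnecessary. What your approach buys is an elementary, explicit and stronger result. What the paper's approach buys is the correspondence with spin $5$-manifolds having $\pi_1\cong\mathbb{Z}/b$, at the cost of depending on which such manifolds one has at hand.

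One small point on signs. To get the same residue for $+b$ and $-b$ in general, use $R$, which sends $(a_0,b)$ to $(a_0,-b)$. Reversing the orientation of the embedded sphere instead sends $(a_0,b)$ to $(-a_0,-b)$. Since $a_0=0$ here, this does not affect your argument.
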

        
        Given an embedding $T' : S^3 \hookrightarrow S^2 \times S^3$, the map $\mathrm{Diff}(S^2 \times S^3) \rightarrow \mathrm{Emb}(S^3, S^2 \times S^3)$ given by $f \mapsto f \circ T'$ is a locally trivial Serre fibration (see for example \cite{Pal60}). Here $\mathrm{Emb}(S^3, S^2 \times S^3)$ denotes the space of embeddings of $S^3$ in $S^2 \times S^3$, equipped with $C^{\infty}$-topology. A consequence of Theorem \ref{thm:4} is the following.
        
        \begin{Cor}\label{cor:fibration}
        	The Serre fibration $\mathrm{Diff}(S^2 \times S^3) \rightarrow \mathrm{Emb}(S^3, S^2 \times S^3)$ is not surjective.
        \end{Cor}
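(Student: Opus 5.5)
The plan is to derive Corollary \ref{cor:fibration} directly from part (2) of Theorem \ref{thm:4}. Take $T' = T_1$, the standard embedding $S^3 \hookrightarrow S^2 \times S^3$, as the base point for the orbit map $\mathrm{Diff}(S^2 \times S^3) \rightarrow \mathrm{Emb}(S^3, S^2 \times S^3)$, $f \mapsto f \circ T_1$. Its image is exactly the set of embeddings of the form $f \circ T_1$ with $f$ a diffeomorphism. Non-surjectivity therefore amounts to exhibiting one embedding $T : S^3 \hookrightarrow S^2 \times S^3$ that is not equal to $f \circ T_1$ for any diffeomorphism $f$.

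First I apply Theorem \ref{thm:4}(2) with $b = 2$. It gives some $a \in \{0,1\}$ and an embedding $T$ representing $(a, 2) \in \pi_3(S^2 \times S^3)$. The same part of the theorem says that such an embedding is not homotopic to $f \circ T_1$ for any diffeomorphism $f$ of $S^2 \times S^3$. A fortiori $T \neq f \circ T_1$ for every $f$, so $T$ is not in the image.

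If one wants a self-contained homotopy-theoretic check rather than relying on the "not homotopic" clause, the argument is as follows. Any diffeomorphism $f$ induces an automorphism of $H_3(S^2 \times S^3) \cong \mathbb{Z}$, so $f \circ T_1$ has Hurewicz image $\pm[T_1]$. Hence its $b$-coordinate is $\pm 1$, whereas $T$ has $b = 2$. Here I use that the Hurewicz map $\pi_3 \to H_3$ kills $[S_1 \circ \eta]$ and sends $[T_1]$ to a generator.

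The only subtlety is which version of the fibration is meant: the statement uses $\mathrm{Diff}$ rather than $\mathrm{Diff}^+$, and the base point could be any $T'$. The homological argument above covers all diffeomorphisms, including orientation-reversing ones. For an arbitrary base point $T'$ the same idea works: the image consists of embeddings whose $b$-coordinate is $\pm b(T')$. Theorem \ref{thm:4} provides embeddings with $b = 1$ and with $b = 2$, so every orbit misses at least one embedding. Thus no step is a real obstacle; everything rests on Theorem \ref{thm:4}(2).
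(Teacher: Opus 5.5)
Your proposal is correct and is essentially the paper's argument: the paper also observes that the absolute value of the $[T_1]$-coefficient of $[f \circ T']$ equals that of $[T']$ for every $f \in \mathrm{Diff}(S^2 \times S^3)$, and combines this with the embeddings provided by Theorem \ref{thm:4}, concluding non-surjectivity already on $\pi_0$. Your Hurewicz argument, that $f_* = \pm 1$ on $H_3(S^2 \times S^3)$ while $[S_1 \circ \eta]$ maps to zero, is a clean justification of this invariance, which the paper states without separate proof.
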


        \subsection{The methods}\label{methods}
        
        We sketch the method of using modified surgery to study the Torelli group of high-dimensional manifolds. For simplicity we restrict ourselves to the case of simply connected closed spin 5-manifolds. Let $M$ be such a manifold. The starting point is to consider the mapping torus \[M_f \coloneqq M \times I \, / \, (x, 0) \sim (f(x), 1)\] of an orientation preserving diffeomorphism $f$ of $M$. This is a closed orientable 6-manifold whose diffeomorphism type is determined by the isotopy class of $f$.
        
        We view $M_f$ as $(M \times I) \cup_{(id_M \cup f)} (M \times I)$, namely the result of gluing two copies of $M \times I$ via $id_M$ on two copies of $M \times \{0\}$ and via $f$ on two copies of $M \times \{1\}$. Let $p : B \rightarrow B\mathrm{SO}$ be the fibration appearing in the normal $2$-type of $M$. When $f$ lies in the relative Torelli group $\mathcal{I}(M, D)$, where $D \subset M$ is an embedded 5-disc, we could construct a normal $B$-structure $\overline{\nu} : M_f \rightarrow B$ such that the induced normal $B$-structures on the two copies of $M \times I$ are normal $2$-smoothings. This construction defines a map
        \begin{align*}
        	\rho : \mathcal{I}(M, D) & \rightarrow \Omega_6(B, p) \\
        	[f] & \mapsto [M_f, \overline{\nu}].
        \end{align*}
        The two versions of Torelli groups, $\mathcal{I}(M)$ and $\mathcal{I}(M, D)$, are connected by an exact sequence \[\mathbb{Z}/2 \rightarrow \mathcal{I}(M, D) \rightarrow \mathcal{I}(M) \rightarrow 1 \tag{1}\label{wal63seq}\] due to Wall \cite[page 265]{Wal63-classification2} (see Lemma \ref{lem:wallseq}).
        
        Kreck\textendash Su made the following key observation (compare \cite[page 514]{KS25}). Suppose $(M_f, \overline{\nu})$ bounds in $\Omega_6(B, p)$, then by modified surgery theory, there exists an $h$-cobordism $W$ from $M \times I$ to itself, such that the two copies of $M \times \{0\}$ are $h$-cobordant via $M \times I$ and the two copies of $M \times \{1\}$ are $h$-cobordant via the mapping cylinder of $f$. By $h$-cobordism theorem, there is a diffeomorphism $W \rightarrow M \times I \times I$ extending $id_{M \times I} : M \times I \rightarrow M \times I \times \{0\}$. The restriction of this diffeomorphism, \[\partial W \setminus (M \times I) \rightarrow \partial (M \times I \times I) \setminus (M \times I \times \{0\}),\] yields a pseudoisotopy between $id_M$ and $f$. Cerf's pseudoisotopy theorem then implies that $f$ is isotopic to $id_M$. All these diffeomrphisms and isotopies could be made to fix the disc $D$ pointwise. In our case we could show that $\rho$ is a homomorphism, thus it is injective.
        
        Kreck\textendash Su's observation suggests investigating the bordism group $\Omega_6(B, p)$. We compute this group and determine the relevant bordism invariants using Atiyah\textendash Hirzebruch spectral sequence. When $H_2(M)$ has no 2-torsion elements this bordism group has no 2-torsion either, so Wall's exact sequence (\ref{wal63seq}) implies that $\mathcal{I}(M, D) = \mathcal{I}(M)$. For $M = M_g$, we calculate the bordism invariants for the mapping tori of certain Dehn twists using explicit geometric methods, where the Hopf invariant plays a fundamental role. This leads to the determination of $\mathcal{I}(M_g)$ and its generators.
        
        Based on the determination of $\mathcal{I}(M_g)$ we could compute $\mathcal{I}(M)$. Using modified surgery we show that $\mathrm{MCG}({\natural}^g(S^2 \times D^3), \partial)$, the relative mapping class group of the $g$-fold boundary connected sum of $S^2 \times D^3$, is isomorphic to $\mathcal{I}(M_g)$. Barden's result \cite[Theorem 2.3]{Bar65} implies that $M$ is a twisted double of ${\natural}^g(S^2 \times D^3)$ for certain $g$, so there is a homomorphism $\mathrm{MCG}({\natural}^g(S^2 \times D^3), \partial) \rightarrow \mathcal{I}(M)$ induced by extending a diffeomorphism by identity. The computation of $\mathcal{I}(M)$ and the determination of its generators result from analyzing this homomorphism using bordism groups and spectral sequences.
        
        The major advantage of this modified surgery approach is that it enables us to define isotopy invariants for diffeomorphisms in the Torelli group, in terms of certain differential topological invariants (in our case, bordism invariants) of their mapping tori. (For a similar but more complicated result for certain 6-manifolds, making use of the Kreck\textendash Stolz invariants for certain 7-manifolds, see \cite{KS25}.) These invariants, or more generally, considering the bordism properties of mapping tori, converts the isotopy classification of diffeomorphisms into bordism theoretic constructions and differential topological calculations.
        
        \begin{Rmk}
        	\rm The modified surgery approach described in this subsection applies to non-spin 5-manifolds after mild modification (compare \cite[Section 10]{KS25}). Moreover one may use this approach to study the mapping class groups of closed $(4q+1)$-dimensional manifolds (see the statement of \cite[Theorem 5]{Kre99}).
        \end{Rmk}

        \subsection{Structure of the paper}
        
        In Section \ref{sec:2} we define the map $\rho$. Section \ref{sec:3} is devoted to the determination of relevant bordism groups and their bordism invariants. In Section \ref{sec:4} we compute $\mathcal{I}(M_g)$. In Section \ref{sec:5} we prove Theorem \ref{thm:1}, completing our computation of Torelli groups. In Section \ref{sec:6} we show that $\mathrm{MCG}(M_g)$ is a semi-direct product by a bordism theoretic construction, completing the proof of Theorem \ref{thm:2}, and compute the abelianization of $\mathrm{MCG}(M_g)$. The study of stabilization of $\mathrm{MCG}(M)$ is contained in Section \ref{sec:7}. The comparison to homotopy equivalences of $M$ is contained in Section \ref{sec:8}. Finally we prove Theorem \ref{thm:4} and Corollary \ref{cor:fibration} in Section \ref{sec:9}.
        
        \subsection*{Acknowledgments}
        
        I would like to express my deep gratitude to my advisor Yang Su for his consistent guidance, for many enlightening and encouraging discussions, and for making it possible for me to do mathematics at the very beginning. This work is partially supported by NSFC 12471069 and the Beijing Natural Science Foundation (grant 1262024).

    \section{The mapping torus construction}\label{sec:2}
    
      Let $M$ be a simply connected closed oriented 5-manifold, $f$ be a diffeomorphism of $M$ acting trivially on $H_*(M)$. Let $M_f$ be the mapping torus of $f$. We denote the inclusion $M = M \times \{\frac{1}{2}\} \hookrightarrow M_f$ by $i$. The Wang sequence of the fibration $M \xrightarrow{i} M_f \rightarrow S^1$ implies that $i^* : H^2(M_f; \mathbb{Z}/2) \rightarrow H^2(M; \mathbb{Z}/2)$ and $i_* : H_2(M) \rightarrow H_2(M_f)$ are isomorphisms. In particular $M_f$ is spin if and only if $M$ is spin, since $i^*w_2(M_f) = w_2(M)$.
      
      From now on we assume $M$ is spin. We will need the notion of \emph{normal $k$-type}, for the definition we refer to \cite{Kre99}. The normal $2$-type of $M$ is \[K(H_2(M), 2) \times B\mathrm{Spin} \rightarrow B\mathrm{Spin} \rightarrow B\mathrm{SO}\] where the first map is the projection to the second factor, and the second map is induced by the covering map. We denote this fibration by $p : B \rightarrow B\mathrm{SO}$. The bordism group $\Omega_6(B, p)$ is $\Omega^{\mathrm{Spin}}_6(K(H_2(M), 2))$.
      
      Now choose an embedded 5-disc $D \subset M$ and suppose $f$ represents an element of $\mathcal{I}(M, D)$. There is an embedded submanifold $S^1 \times D \subset M_f$. The ``preferred" spin structure on $M_f$ is taken to be the spin structure which restricts to the trivial spin structure on $S^1 \times D$. Let $\nu_{\mathrm{Spin}} : M_f \rightarrow B\mathrm{Spin}$ be the classifying map of the preferred spin structure lifting the normal Gauss map of $M_f$. Let $\nu_f : M_f \rightarrow K(H_2(M), 2)$ be a map inducing $i_*^{-1} : H_2(M_f) \rightarrow H_2(M)$, which is unique up to homotopy. The map \[\overline{\nu} \coloneqq (\nu_f, \nu_{\mathrm{Spin}}) : M_f \rightarrow K(H_2(M), 2) \times B\mathrm{Spin}\] determines a well-defined normal $B$-structure of $M_f$, which restricts to normal $2$-smoothings on the two copies of $M \times I$ under the identification $M_f = (M \times I) \cup_{(id_M \cup f)} (M \times I)$. This enables us to define a map (which we call the \emph{mapping torus construction})
      \begin{align*}
      	\rho : \mathcal{I}(M, D) & \rightarrow \Omega^{\mathrm{Spin}}_6(K(H_2(M), 2)) \\
      	[f] & \mapsto [M_f, \nu_f].
      \end{align*}
      
      \begin{Lem}\label{lem:mtc}
      	The mapping torus construction $\rho$ is an injective homomorphism.
      \end{Lem}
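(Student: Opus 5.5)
We prove the two assertions separately: first that $\rho$ is well defined and a homomorphism, then that its kernel is trivial. Injectivity will follow from Kreck\textendash Su's observation as sketched in \ref{methods}, since a homomorphism with trivial kernel is injective.

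\emph{Well-definedness.} If $f$ and $f'$ are isotopic relative to $D$, the isotopy gives a diffeomorphism $M_f \cong M_{f'}$. This diffeomorphism is the identity on $S^1 \times D$, so it respects the preferred spin structures. It also commutes with the inclusions $i$, hence preserves $\nu_f$ up to homotopy. Therefore $[M_f,\overline{\nu}] = [M_{f'},\overline{\nu}']$.

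\emph{Homomorphism.} Take $f,g \in \mathcal{I}(M,D)$ and build the standard bordism $W$ from $M_f \sqcup M_g$ to $M_{fg}$.
\begin{itemize}
\item Start with $M\times P$, where $P$ is a pair of pants (a disc with two holes). Glue it along three arcs using $f$, $g$ and the identity, or equivalently take the flat $M$-bundle over $P$ with monodromies $f$ and $g$ around the two legs.
\item The boundary of $W$ is $M_f \sqcup M_g \sqcup -M_{fg}$.
\item $W$ contains $P\times D$ as a trivial subbundle, because the monodromies fix $D$. Give $W$ the spin structure that is trivial on $P\times D$; it restricts to the preferred spin structures on all three boundary mapping tori.
\item The Wang/Serre spectral sequence argument applies since the monodromies act trivially on $H_*(M)$. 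It gives $H_2(M)\cong H_2(W)$ via the fibre inclusion. Hence there is a map $W\to K(H_2(M),2)$ restricting to $\nu_f$, $\nu_g$ and $\nu_{fg}$.
\end{itemize}
Consequently $\rho(fg)=\rho(f)+\rho(g)$.

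\emph{Injectivity.} Suppose $[M_f,\overline\nu]=0$. Decompose $M_f=(M\times I)\cup_{id\cup f}(M\times I)$. Both pieces carry normal $2$-smoothings in $B$; this holds because $M$ is simply connected, $\nu_f$ is a $3$-equivalence onto the relevant Postnikov piece, and $M \times I \to B$ is therefore $3$-connected. Let $W$ be a $B$-null-bordism of $M_f$. Since $\dim W = 7$, we perform surgery below the middle dimension on $W$ relative to its boundary to make $W\to B$ a $3$-equivalence. Then $W$ becomes a $B$-bordism rel boundary between the two copies of $M\times I$, whose boundaries are identified by $id\cup f$.

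By Kreck's theorem \cite[Theorem 4]{Kre99}, $W$ is bordant rel boundary to an $s$-cobordism (an $h$-cobordism, since $\pi_1=1$) if and only if the obstruction $\theta(W)$ in the odd-dimensional group $l_7(1)$ is elementary. This is the main obstacle. In odd dimensions $2q+1=7$ with trivial fundamental group, I expect $\theta(W)$ to be elementary after further surgeries, as in Kreck's treatment of $(4q+1)$-dimensional manifolds (\cite[Theorem 5]{Kre99} and the analysis in \cite{KS25}). Concretely, the plan is to do surgery on $W$ to kill the kernel in degree $3$. The remaining obstruction is a form on a finitely generated free kernel, and simply-connectedness together with vanishing of $L_7(1)$ should make it elementary.

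Given the $h$-cobordism $W$ between $M\times I$ and $M\times I$, the $h$-cobordism theorem in dimension $7$ gives $W\cong M\times I\times I$ extending the identity on one end. This diffeomorphism can be taken to fix $D\times I\times I$, since everything is trivial near $S^1\times D$. Restricting it to the complementary boundary yields a pseudoisotopy rel $D$ from $id_M$ to $f$. Cerf's theorem (with $\pi_1=1$ and $\dim M=5\ge5$) then upgrades this pseudoisotopy to an isotopy rel $D$. Hence $[f]=1$ in $\mathcal{I}(M,D)$, so the kernel of $\rho$ is trivial and $\rho$ is injective.
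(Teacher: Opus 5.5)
Your argument has the same architecture as the paper's proof. Homomorphism: the $M$-bundle over a pair of pants, with the spin structure trivial on $P\times D$ and $H_2(M)\cong H_2(W)$ from the Serre spectral sequence. Injectivity: a $B$-null-bordism of $M_f$ viewed as a $B$-bordism rel boundary from $M\times I$ to itself, followed by the $h$-cobordism theorem and Cerf.

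\textbf{Main gap.} The gap is the step you flag yourself, that $\theta(W)\in l_7(e)$ is elementary. You mention \cite[Theorem 5]{Kre99} in passing, but the mechanism you actually propose does not work. That mechanism is to kill the degree-$3$ kernel and then use simple connectivity together with $L_7(e)=0$. The obstruction lives in the monoid $l_7(e)$, and elementariness is not detected by the Wall group. Here is a counterexample to that mechanism:
\begin{itemize}
\item $S^6$ and $S^3\times S^3$ are both normal $2$-smoothings in $B\mathrm{Spin}$: they are $2$-connected and $\pi_3(B\mathrm{Spin})=0$.
\item They are $B$-bordant, since $\Omega^{\mathrm{Spin}}_6=0$.
\item They are not $h$-cobordant, since their $H_3$ differ.
\item Hence, by \cite[Theorem 4]{Kre99}, no $7$-dimensional $B$-bordism between them has elementary obstruction, although everything is simply connected and $L_7(e)=0$.
\end{itemize}
An argument using only those two inputs cannot distinguish this situation from yours. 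What makes $\theta(W)$ elementary in the mapping-torus setting is specific to it. Here $q=3$ is odd, and both ends are the same manifold $M\times I$, glued rel boundary. The paper obtains elementariness by invoking \cite[Theorems 4 and 5]{Kre99} directly, not by an analysis of the kernel. You need that input with its hypotheses checked, or a genuine analysis of a representative of $\theta(W)$.

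\textbf{Smaller point.} After the $h$-cobordism theorem you say the diffeomorphism $W\cong M\times I\times I$ can be taken to fix $D\times I\times I$ ``since everything is trivial near $S^1\times D$.'' But $W$ is an arbitrary null-bordism, so nothing is trivial there a priori. The paper instead extends $S^1\times D\subset M_f$ to an embedded $D^2\times D\subset W$, which is a product $h$-cobordism from $D\times I$ to itself. This is where $\pi_1(W)=1$ enters. It is also where the choice of the preferred spin structure enters: it guarantees that the framing of the circle extends over the disc. The paper then applies the relative $h$-cobordism theorem to the simply connected complement. With these two repairs your proof coincides with the paper's.
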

      
      \begin{proof}
      	First we show that $\rho$ is a homomorphism. Let $[f], [f'] \in \mathcal{I}(M, D)$. Let $W$ be the compact smooth manifold obtained from $M \times I \times I$ by identifying $M \times \{0\} \times [0, 1/3]$ with $M \times \{0\} \times [2/3, 1]$ via \[M \times \{0\} \times [2/3, 1] \rightarrow M \times \{0\} \times [0, 1/3], \; (x, t) \mapsto (f(x), 1-t)\] and identifying $M \times \{1\} \times [0, 1/3]$ with $M \times \{1\} \times [2/3, 1]$ via \[M \times \{1\} \times [0, 1/3] \rightarrow M \times \{1\} \times [2/3, 1], \; (x, t) \mapsto (f'(x), 1-t).\] This is a smooth fiber bundle over $D^2 \setminus (\mathring{D^2} \sqcup \mathring{D^2})$ with fiber $M$. The disjoint union $M_f \sqcup M_{f'}$ is oriented bordant to $M_{f \circ f'}$ via $W$ (see \cite[page 656]{Kre79}).
      	
      	The Leray\textendash Serre spectral sequence of the fibration $M \hookrightarrow W \rightarrow D^2 \setminus (\mathring{D^2} \sqcup \mathring{D^2})$ implies that the inclusion map $M \hookrightarrow W$ induces an isomorphism $H^2(W; \mathbb{Z}/2) \rightarrow H^2(M; \mathbb{Z}/2)$, and the projection $W \rightarrow D^2 \setminus (\mathring{D^2} \sqcup \mathring{D^2})$ induces an isomorphism $H^1(D^2 \setminus (\mathring{D^2} \sqcup \mathring{D^2}); \mathbb{Z}/2) \rightarrow H^1(W; \mathbb{Z}/2)$. So $W$ is spin, and the trivial spin structure of $(D^2 \setminus (\mathring{D^2} \sqcup \mathring{D^2})) \times D \subset W$ extends to a spin structure of $W$, which restricts to the preferred spin structures on the boundary of $W$. Thus $W$ is a spin bordism between $M_f \sqcup M_{f'}$ and $M_{f \circ f'}$. Moreover, $M \hookrightarrow W$ factors through $M \xhookrightarrow{i} M_f \hookrightarrow W$, so the inclusion map $M_f \hookrightarrow W$ induces an isomorphism $H_2(M_f) \rightarrow H_2(W)$. Similarly we see that the inclusion maps of $M_{f'}$ and $M_{f \circ f'}$ also induce isomorphisms on the second homology groups. Composing the isomorphism $H_2(W) \rightarrow H_2(M_f)$ to ${\nu_f}_* : H_2(M_f) \rightarrow H_2(M)$ we obtain an isomorphism $H_2(W) \rightarrow H_2(M)$ which determines a map $\widetilde{\nu} : W \rightarrow K(H_2(M), 2)$ extending $\nu_f$. The map $\widetilde{\nu}$ is well-defined up to homotopy. Note that the inclusion maps $M \hookrightarrow M_f \hookrightarrow W$, $M \hookrightarrow M_{f'} \hookrightarrow W$ and $M \hookrightarrow M_{f \circ f'} \hookrightarrow W$ are homotopic to each other, so $\widetilde{\nu}$ can be chosen so that it also extends $\nu_{f'}$ and $\nu_{f \circ f'}$. Now $(W, \widetilde{\nu})$ is a normal $B$-bordism between $(M_f, \overline{\nu}) \sqcup (M_{f'}, \overline{\nu})$ and $(M_{f \circ f'}, \overline{\nu})$, thus $\rho([f]) + \rho([f']) = \rho([f \circ f'])$.
      	
      	Then we show that $\rho$ is injective. Suppose $[M_f, \overline{\nu}] = 0 \in \Omega_6(B, p)$. Choose a coboundary $(W, \widetilde{\nu})$ of $(M_f, \overline{\nu})$. View it as a normal $B$-bordism, relative to boundary, between $M \times I$ and $M \times I$. By \cite[Theorems 4 and 5]{Kre99}, the modified surgery obstruction of $(W, \widetilde{\nu})$ is elementary, thus there is another coboundary of $(M_f, \overline{\nu})$, which is an $h$-cobordism relative to boundary. We still denote this coboundary by $W$. By $h$-cobordism theorem we have a diffeomorphism $W \cong M \times I \times I$. The embedding $S^1 \times D \hookrightarrow M_f$ extends to an embedding $D^2 \times D \hookrightarrow W$, which is an $h$-cobordism between $D \times I$ and $D \times I$. Since $M$ (and $M \setminus D$) is simply connected, by relative $h$-cobordism theorem there is a dffeomorphism $M \times I \rightarrow M \times I$ extending $id : D \times I \rightarrow D \times I$, restricting to $id_M$ on $M \times \{0\}$ and restricting to $f$ on $M \times \{1\}$. Thus $f |_{M \setminus \mathring{D}}$ is pseudoisotopic, relative to boundary, to $id_{M \setminus \mathring{D}}$. Cerf's pseudoisotopy theorem implies that $f |_{M \setminus \mathring{D}}$ is isotopic, relative to boundary, to $id_{M \setminus \mathring{D}}$. Gluing $D$ back we obtain an isotopy between $f$ and $id_M$, fixing $D$ pointwise. So $[f] = [id_M] \in \mathcal{I}(M, D)$. Therefore $\rho$ is injective.
      \end{proof}
      
      The forgetful homomorphism $\mathrm{MCG}(M, D) \rightarrow \mathrm{MCG}(M)$, which restricts to $\mathcal{I}(M, D) \rightarrow \mathcal{I}(M)$, is surjective by the disc theorem and isotopy extension theorem. We have the following result of Wall \cite[page 265]{Wal63-classification2} concerning the kernel of this homomorphism.
      
      \begin{Lem}[Wall]\label{lem:wallseq}
      	Let $M$ be a simply connected oriented $n$-manifold with $n \geq 5$, $D \subset M$ be an embedded $n$-disc. Then there is a commutative diagram of exact sequences \[\begin{tikzcd}
      		\pi_1(\mathrm{SO}(n)) \arrow[r] & \mathrm{MCG}(M, D) \arrow[r] & \mathrm{MCG}(M) \arrow[r] & 1 \\
      		\pi_1(\mathrm{SO}(n)) \arrow[r] \arrow[u, "="] & \mathcal{I}(M, D) \arrow[r] \arrow[u] & \mathcal{I}(M) \arrow[r] \arrow[u] & 1
      	\end{tikzcd}\]
      	where the maps $\mathrm{MCG}(M, D) \rightarrow \mathrm{MCG}(M)$ and $\mathcal{I}(M, D) \rightarrow \mathcal{I}(M)$ are given by the forgetful homomorphism.
      \end{Lem}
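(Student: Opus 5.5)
Both exact sequences will come from the fibration obtained by restricting diffeomorphisms to the disc, with the two rows compared by naturality. Consider the restriction (evaluation-of-germ) map
\[ \mathrm{Diff}^{+}(M) \longrightarrow \mathrm{Emb}^{+}(D, M), \qquad f \mapsto f|_D .\]
By the Cerf--Palais fibration theorem this is a locally trivial fibration, and its fiber over the inclusion $D \hookrightarrow M$ is $\mathrm{Diff}(M, D)$. The long exact sequence of homotopy groups ends with
\[ \pi_1(\mathrm{Emb}^{+}(D, M)) \to \pi_0\mathrm{Diff}(M, D) \to \pi_0\mathrm{Diff}^{+}(M) \to \pi_0\mathrm{Emb}^{+}(D,M). \]
By the disc theorem (Palais), $\mathrm{Emb}^{+}(D, M)$ is path connected, so the forgetful map is surjective. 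This is the surjectivity already noted before the lemma.

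Next I will identify $\pi_1$ of the embedding space. Shrinking discs and taking derivatives at the centre gives a homotopy equivalence
\[ \mathrm{Emb}^{+}(D, M) \simeq \mathrm{Fr}^{+}(M), \]
the oriented frame bundle of $M$. This is a principal $\mathrm{SO}(n)$-bundle over $M$. Its homotopy sequence
\[ \pi_1(\mathrm{SO}(n)) \to \pi_1(\mathrm{Fr}^{+}(M)) \to \pi_1(M) = 1 \]
shows that $\pi_1(\mathrm{SO}(n)) \to \pi_1(\mathrm{Fr}^{+}(M))$ is surjective. This is where simple connectivity of $M$ enters. Composing with the boundary map yields a homomorphism $\pi_1(\mathrm{SO}(n)) \to \mathrm{MCG}(M, D)$ whose image is exactly the kernel of the forgetful map. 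Concretely, the image of the generator is the ``disc twist'': rotate the fibers of a collar $S^{n-1}\times I$ of $\partial D$ along a loop in $\mathrm{SO}(n)$. Exactness at $\mathrm{MCG}(M,D)$ is standard for the tail of a fibration sequence, since the boundary image consists of elements that become isotopic to the identity once $D$ is allowed to move. The boundary map is a homomorphism because its image is central-type: it is generated by diffeomorphisms supported near $\partial D$. So the top row is an exact sequence of groups.

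For the bottom row, note that the disc twist is isotopic to the identity in $\mathrm{Diff}(M)$. It therefore acts trivially on $H_*(M)$, so its class lies in $\mathcal{I}(M, D)$. The same boundary map thus lands in $\mathcal{I}(M, D)$, which gives the left vertical equality and commutativity of the left square. The right square commutes because both horizontal maps are forgetful and the vertical maps are inclusions.

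Exactness of the bottom row then follows by diagram chase from the top row:
\begin{itemize}
\item \emph{Surjectivity.} An $f \in \mathcal{I}(M)$ can be isotoped to fix $D$ pointwise, and the isotopy does not change its action on homology, so the result lies in $\mathcal{I}(M,D)$.
\item \emph{Exactness at $\mathcal{I}(M,D)$.} If an element of $\mathcal{I}(M,D)$ dies in $\mathcal{I}(M) \subset \mathrm{MCG}(M)$, the top row says it is the image of some element of $\pi_1(\mathrm{SO}(n))$.
\end{itemize}

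The main obstacle is the careful identification of the connecting map together with the homotopy equivalence $\mathrm{Emb}(D,M)\simeq \mathrm{Fr}(M)$, in particular checking that every loop of frames is realized by a disc twist. I would handle this with the linearization retraction: use isotopies $D \to$ smaller discs, then the derivative at the centre.
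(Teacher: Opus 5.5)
Your argument is correct and is essentially the paper's. The paper uses the delooped fibrations $\mathrm{Fr}^{+}(M) \to B\mathrm{Diff}(M, D) \to B\mathrm{Diff}^{+}(M)$ and $\mathrm{SO}(n) \to \mathrm{Fr}^{+}(M) \to M$, which amounts to your restriction fibration $\mathrm{Diff}(M,D) \to \mathrm{Diff}^{+}(M) \to \mathrm{Emb}^{+}(D,M) \simeq \mathrm{Fr}^{+}(M)$; it then identifies the image of the generator as the collar twist and notes that this twist lies in $\mathcal{I}(M,D)$. The only soft spot is your justification that the boundary map is a homomorphism, since ``central-type'' is not a valid reason. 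It follows instead from the standard fact for $\pi_1(G/H) \to \pi_0(H)$, or directly from the paper's delooped fibration, where $\pi_1(\mathrm{Fr}^{+}(M)) \to \pi_1(B\mathrm{Diff}(M,D))$ is induced by a map of spaces.
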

      
      \begin{proof}
      	The exact sequence in the first row is derived from the homotopy exact sequences of the following homotopy fibrations \begin{align*}
      		\mathrm{Fr}^{+}(M) \rightarrow B\mathrm{Diff}(M, D) & \rightarrow B\mathrm{Diff}^{+}(M), \\
      		\mathrm{SO}(n) \rightarrow \mathrm{Fr}^{+}(M) & \rightarrow M,
      	\end{align*} where $\mathrm{Fr}^{+}(M)$ is the oriented frame bundle of the tangent bundle of $M$. (See \cite[proof of Lemma 1.5]{GRW16-quotient} and \cite[proof of Lemma 1.1]{Kra20}.) The nontrivial element of $\pi_1(\mathrm{SO}(n) \cong \mathbb{Z}/2$, represented by a smooth map $\beta : I \rightarrow \mathrm{SO}(n)$, is mapped to the relative isotopy class of the following diffeomorphism: choose an identification of a collar neighborhood of $M \setminus \mathring{D}$ with $S^{n-1} \times I$, define
      	\begin{align*}
      	S^{n-1} \times I & \rightarrow S^{n-1} \times I \\
      	(x, t) & \mapsto (\beta(t) \cdot x, t)
      	\end{align*}
      	and extend it to a diffeomorphism of $M$ by identity \cite[page 657]{Kre79}. Clearly this relative isotopy class lies in $\mathcal{I}(M, D)$, so we obtain the exact sequence in the second row.
      \end{proof}

    \section{The spin bordism groups of certain Eilenberg\textendash MacLane spaces}\label{sec:3}
    
      In light of Lemma \ref{lem:mtc} we study the bordism group $\Omega^{\mathrm{Spin}}_6(K(H_2(M), 2))$ where $M$ is a simply connected closed spin 5-manifold. For $H_2(M)$ having no 2- and 3-torsion we could determine the isomorphism type and the bordism invariants of this bordism group. We also study a certain homomorphism between these groups induced by the quotient homomorphism.
    
    \subsection{Low-dimensional homology of Eilenberg\textendash MacLane spaces}
    
      First we recall the low-dimensional integral homology groups of $K(\mathbb{Z}/{p^r}, 2)$, where $p$ is a prime number and $r$ is a positive integer. The cases of dimension $\leq 6$ were determined by Eilenberg and MacLane in \cite[Chapter IV]{EM54}. The 7-dimensional homology group appears to be much more difficult to investigate using their methods (see \cite[page 116]{EM54}). We compute this group using the Leray\textendash Serre spectral sequence of the homotopy fibration $K(\mathbb{Z}/{p^r}, 1) \rightarrow * \rightarrow K(\mathbb{Z}/{p^r}, 2)$ and the mod $p$ cohomology of $K(\mathbb{Z}/{p^r}, n)$, the latter was determined by Cartan \cite{Car54} for $p \geq 3$ and by Serre \cite{Ser53-mod2} for $p = 2$. Precisely, for $p \neq 2$ or $3$ the spectral sequence and the integral homology of dimension $\leq 6$ implies that the 7-dimensional homology is trivial. For $p = 2$ or $3$, the spectral sequence implies that the 7-dimensional homology is a finite abelian group which is either trivial or $\mathbb{Z}/p$, and the mod $p$ cohomology tells us that it is $\mathbb{Z}/{p^k}$ for some $k > 0$, hence it is $\mathbb{Z}/p$. In summary we have the following.
      \begin{table}[h]
      	\centering
      	\begin{tabular}{c|cccccccc}
      		$ $ & $H_0$ & $H_1$ & $H_2$ & $H_3$ & $H_4$ & $H_5$ & $H_6$ & $H_7$ \\
      		\hline
      		$p = 2$ & $\mathbb{Z}$ & $0$ & $\mathbb{Z}/{2^r}$ & $0$ & $\mathbb{Z}/{2^{r+1}}$ & $\mathbb{Z}/2$ & $\mathbb{Z}/{2^r}$ & $\mathbb{Z}/2$ \\
      		$p = 3$ & $\mathbb{Z}$ & $0$ & $\mathbb{Z}/{3^r}$ & $0$ & $\mathbb{Z}/{3^r}$ & $0$ & $\mathbb{Z}/{3^{r+1}}$ & $\mathbb{Z}/3$ \\
      		otherwise & $\mathbb{Z}$ & $0$ & $\mathbb{Z}/{p^r}$ & $0$ & $\mathbb{Z}/{p^r}$ & $0$ & $\mathbb{Z}/{p^r}$ & $0$
      	\end{tabular}
      \end{table}
    
      The low-dimensional homology of $K(H_2(M), 2)$ can be calculated using K\"unneth theorem. For example the results for $K({(\mathbb{Z}/{p^r})}^2, 2)$ are as follows.
      \begin{table}[h]
    	\centering
    	\begin{tabular}{c|cccccc}
    		$ $ & $H_0$ & $H_1$ & $H_2$ & $H_3$ & $H_4$ & $H_5$ \\
    		\hline
    		$p = 2$ & $\mathbb{Z}$ & $0$ & $(\mathbb{Z}/{2^r})^{2}$ & $0$ & $\mathbb{Z}/{2^r} \oplus (\mathbb{Z}/{2^{r+1}})^{2}$ & $(\mathbb{Z}/2)^{2} \oplus \mathbb{Z}/{2^r}$ \\
    		$p = 3$ & $\mathbb{Z}$ & $0$ & $(\mathbb{Z}/{3^r})^{2}$ & $0$ & $(\mathbb{Z}/{3^r})^{3}$ & $\mathbb{Z}/{3^r}$ \\
    		otherwise & $\mathbb{Z}$ & $0$ & $(\mathbb{Z}/{p^r})^{2}$ & $0$ & $(\mathbb{Z}/{p^r})^{3}$ & $\mathbb{Z}/{p^r}$ \\
    	\end{tabular}
      \end{table}
    
      \begin{table}[h]
    	\centering
    	\begin{tabular}{c|cc}
    		$ $ & $H_6$ & $H_7$ \\
    		\hline
    		$p = 2$ & $(\mathbb{Z}/{2^r})^4$ & $(\mathbb{Z}/2)^4 \oplus (\mathbb{Z}/{2^r})^2$ \\
    		$p = 3$ & $(\mathbb{Z}/{3^r})^{2} \oplus (\mathbb{Z}/{3^{r+1}})^{2}$ & $(\mathbb{Z}/3)^{2} \oplus (\mathbb{Z}/{3^r})^{2}$ \\
    		otherwise & $(\mathbb{Z}/{p^r})^{4}$ & $(\mathbb{Z}/{p^r})^{2}$ \\
    	\end{tabular}
      \end{table}

    \subsection{Computation of $\Omega^{\mathrm{Spin}}_6(K(H_2(M), 2))$}\label{bordismgroup}
    
      We use the Atiyah\textendash Hirzebruch spectral sequence to determine $\Omega^{\mathrm{Spin}}_6(K(H_2(M), 2))$ and its bordism invariants, which are, informally speaking, given by (the Poincar\'e dual of) the first Pontryagin class and the fundamental class. The low-dimensional spin cobordism groups $\Omega^{\mathrm{Spin}}_*$ are as follows \cite{Mil63-spin}.
      \begin{table}[h]
    	\centering
    	\begin{tabular}{c|ccccccccc}
    		$i$ & $0$ & $1$ & $2$ & $3$ & $4$ & $5$ & $6$ & $7$ & $8$ \\
    		\hline
    		$\Omega^{\mathrm{Spin}}_i$ & $\mathbb{Z}$ & $\mathbb{Z}/2$ & $\mathbb{Z}/2$ & $0$ & $\mathbb{Z}$ & $0$ & $0$ & $0$ & $\mathbb{Z} \oplus \mathbb{Z}$ \\
    	\end{tabular}
      \end{table}
      
      We shall consider two special cases, namely $H_2(M) \cong \mathbb{Z}^g$ and $H_2(M) \cong {(\mathbb{Z}/{p^r})}^2$, before dealing with the general case. We consider $H_2(M) \cong \mathbb{Z}^g$ first. Let $x_k \in H^2({(\mathbb{CP}^{\infty})}^g)$ be the Kronecker dual of the generator $[{\mathbb{CP}}^1]_k$ corresponding to the $k$-th ${\mathbb{CP}}^{\infty}$ factor. Then $H_6({({\mathbb{CP}}^{\infty})}^g)$ is a free abelian group of rank $g + 2{\binom{g}{2}} + {\binom{g}{3}}$, generated by the Kronecker duals \begin{align*}
      	& (x_1^3)^*, \cdots, (x_g^3)^*, \\ & ({x_1^2}{x_2})^*, \cdots, ({x_{g-1}^2}{x_g})^*, ({x_1}{x_2^2})^*, \cdots, ({x_{g-1}}{x_g^2})^*, \\
      	& ({x_1}{x_2}{x_3})^*, \cdots, ({x_{g-2}}{x_{g-1}}{x_g})^*.
      \end{align*} Let $H_g$ be the subgroup of $H_6({({\mathbb{CP}}^{\infty})}^g)$ of the same rank freely generated by \begin{align*}
      & (x_1^3)^*, \cdots, (x_g^3)^*, \\ & 2({x_1^2}{x_2})^*, \cdots, 2({x_{g-1}^2}{x_g})^*, ({x_1^2}{x_2})^* + ({x_1}{x_2^2})^*, \cdots, ({x_{g-1}^2}{x_g})^* + ({x_{g-1}}{x_g^2})^*, \\
      & ({x_1}{x_2}{x_3})^*, \cdots, ({x_{g-2}}{x_{g-1}}{x_g})^*.
      \end{align*} In \cite[Section 8]{Kre09} an explicit computation of $\Omega^{\mathrm{Spin}}_6(\mathbb{CP}^{\infty})$ using the Atiyah\textendash Hirzebruch spectral sequence was supplied, which states that there is an isomorphism \[\Omega^{\mathrm{Spin}}_6(\mathbb{CP}^{\infty}) \xrightarrow{\cong} \mathbb{Z} \oplus H_6(\mathbb{CP}^{\infty}) \cong \mathbb{Z} \oplus \mathbb{Z}\] given by \[[N, \nu] \mapsto \left(\left\langle\frac{-p_1(N) \cup {\nu}^*(x_1) + 4{\nu}^*(x_1^3)}{24}, [N]\right\rangle, {\nu}_*([N])\right).\] The first component of this isomorphism is called a \emph{twisted $\hat{A}$-genus} \cite[Section 2]{Kre09}. This computation easily generalizes to $g > 1$ and we obtain the following.
      
      \begin{Prop}\label{prop:bordismfree}
      	There is an isomorphism \[\Omega^{\mathrm{Spin}}_6({(\mathbb{CP}^{\infty})}^g) \xrightarrow{\cong} \mathbb{Z}^g \oplus H_g \cong \mathbb{Z}^{2g + 2{\binom{g}{2}} + {\binom{g}{3}}}\] given by \[[N, \nu] \mapsto (\hat{A}_1([N, \nu]), \cdots, \hat{A}_g([N, \nu]), {\nu}_*([N])),\] where $\hat{A}_k : \Omega^{\mathrm{Spin}}_6({(\mathbb{CP}^{\infty})}^g) \rightarrow \mathbb{Z}$ is defined as \[\hat{A}_k([N, \nu]) \coloneqq \left\langle\frac{-p_1(N) \cup {\nu}^*(x_k) + 4{\nu}^*({x_k^3})}{24}, [N]\right\rangle.\]
      \end{Prop}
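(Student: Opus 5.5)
We compute $\Omega^{\mathrm{Spin}}_6((\mathbb{CP}^\infty)^g)$ with the Atiyah\textendash Hirzebruch spectral sequence, following the computation of \cite[Section 8]{Kre09} for $g=1$.

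\textbf{The $E^2$-page.} Set $X=(\mathbb{CP}^\infty)^g$. Since $H_*(X)$ is free and concentrated in even degrees, the entries on the line $p+q=6$ are:
\begin{itemize}
\item $E^2_{6,0}=H_6(X)$, which is free of rank $g+2\binom{g}{2}+\binom{g}{3}$;
\item $E^2_{4,2}=H_4(X;\mathbb{Z}/2)$;
\item $E^2_{5,1}=H_5(X;\mathbb{Z}/2)=0$;
\item $E^2_{2,4}=H_2(X)\cong\mathbb{Z}^g$.
\end{itemize}
All remaining entries vanish because $\Omega^{\mathrm{Spin}}_3=\Omega^{\mathrm{Spin}}_5=\Omega^{\mathrm{Spin}}_6=0$.

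\textbf{The low-row differentials.} These are dual to $Sq^2$:
\begin{itemize}
\item $d_2:E_{p,0}\to E_{p-2,1}$ is reduction mod $2$ composed with the dual of $Sq^2$;
\item $d_2:E_{p,1}\to E_{p-2,2}$ is the dual of $Sq^2$.
\end{itemize}
On $H^*(X;\mathbb{Z}/2)=\mathbb{Z}/2[x_1,\dots,x_g]$ we have $Sq^2(m)=(\deg m/2)\,x\cdot m$, so each differential can be computed monomial by monomial.

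\textbf{The group $E^\infty_{4,2}$.} It is the cokernel of $d_2:E_{6,1}=H_6(X;\mathbb{Z}/2)\to H_4(X;\mathbb{Z}/2)$. The map $Sq^2:H^4\to H^6$ sends $x_kx_l\mapsto x_k^2x_l+x_kx_l^2$ and $x_k^2\mapsto 0$; it is injective on the $x_kx_l$ with $k\neq l$. So the cokernel of its dual is spanned by the duals of $x_k^2$, possibly further reduced by $d_3$ from $E_{7,0}$.

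\textbf{The group $E^\infty_{6,0}$.} It is the kernel of $d_2$ on $H_6(X)$ followed by $d_3$. The first $d_2$ has the form $H_6(X)\to H_4(X;\mathbb{Z}/2)$ via $(Sq^2)^*$, then into $E_{4,1}$. A class $\sum c_m m^*$ survives iff $\langle Sq^2 y,\sum c_m m^*\rangle$ is even for all $y\in H^4$. Applying this to $y=x_kx_l$ forces $c_{x_k^2x_l}+c_{x_kx_l^2}$ to be even. This cuts $H_6(X)$ down to exactly the subgroup $H_g$ listed above. I expect the further differentials into $E_{3,2}$ and $E_{2,3}$ to vanish, since those entries are zero after $E^3$.

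\textbf{The extension problem.} The filtration has the shape $0\to\mathbb{Z}^g\to F\to \Omega_6\to H_g\to 0$, where $F$ contains the $E^\infty_{4,2}$-part, a $(\mathbb{Z}/2)$-part. This extension is the main obstacle. Rather than resolving it abstractly, I would use the invariants. Each $\hat A_k$ is integer-valued: it is the index of the Dirac operator twisted by the line bundle $\nu^*L_k$, by the Atiyah\textendash Singer index theorem. So the map in the statement is a well-defined homomorphism into $\mathbb{Z}^g\oplus H_g$. Note that $\nu_*[N]\in H_g$ by edge-homomorphism considerations.

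\textbf{Detecting the bottom row.} Evaluate on the generators of $E^\infty_{2,4}$, namely $[K3\times S^2\text{-type}]$: take a Kummer surface $K$ times $\mathbb{CP}^1$, with $\nu$ the projection to the $k$-th factor. Here $p_1(N)=p_1(K)$ and $\langle p_1(K),[K]\rangle=-48$, so $\hat A_k=48/24=2$. Hmm, this gives $2$, not $1$. So the $E^\infty_{4,2}$ classes must extend nontrivially: twice the $(\mathbb{Z}/2)$-generator equals this element. This matches the $g=1$ result of \cite{Kre09}, where $\mathbb{Z}\oplus\mathbb{Z}$ is torsion-free.

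\textbf{Surjectivity and conclusion.} To get surjectivity onto $\mathbb{Z}^g\oplus H_g$, I will reduce to the case $g=1$ via the product projections and diagonal-type maps $\mathbb{CP}^\infty\to(\mathbb{CP}^\infty)^g$. Then I will realize the remaining elements of $H_g$ (the classes $x_kx_lx_h$ and the mixed classes) by products $\mathbb{CP}^1\times\mathbb{CP}^1\times\mathbb{CP}^1$ or $\mathbb{CP}^1\times\mathbb{CP}^2$-type spin manifolds mapped in appropriately. Finally, combine the rank count from the spectral sequence, the torsion-freeness coming from $\hat A_k$ detecting the $E^\infty_{4,2}$-part with odd values, and a triangular determinant argument. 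These show that the homomorphism is injective with image everything.
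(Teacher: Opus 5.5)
Your outline follows the paper's proof. It uses the Atiyah--Hirzebruch filtration $H_2(X)\subset A\subset\Omega^{\mathrm{Spin}}_6(X)$, where $A=\ker h$, $A/H_2(X)\cong(\mathbb{Z}/2)^g$ and $\Omega^{\mathrm{Spin}}_6(X)/A=E^\infty_{6,0}=H_g$. It computes via $S^2\times K3$ that $(\hat A_k)$ restricts to $\times 2$ on $H_2(X)$. It then needs surjectivity of $(\hat A_k)\colon A\to\mathbb{Z}^g$ together with the five lemma.

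That surjectivity is the one non-formal step, and you do not actually justify it. Your sentence ``this gives $2$, not $1$, so the $E^\infty_{4,2}$ classes must extend nontrivially'' is a non sequitur. The value $2$ is equally consistent with a split extension $A\cong\mathbb{Z}^g\oplus(\mathbb{Z}/2)^g$, on which $(\hat A_k)$ would have image $2\mathbb{Z}^g$. What you need is a class with $\nu_*[N]=0$ and $\hat A_k=1$; the paper takes this from Kreck's $g=1$ computation. To make your ``reduction to $g=1$'' precise, push a $g=1$ class forward along the inclusion $\iota_k$ of the $k$-th factor; the projections go the wrong way. Naturality gives $\hat A_k(\iota_{k*}y)=\hat A(y)$, $\hat A_j(\iota_{k*}y)=0$ for $j\ne k$, and $h(\iota_{k*}y)=\iota_{k*}h(y)$.

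Explicitly, $\mathbb{CP}^3$ is spin with $p_1=4u^2$. So $[\mathbb{CP}^3,\nu]$ with $\nu^*x_k=au$ and $\nu^*x_j=0$ for $j\ne k$ has $\hat A_k=a(a^2-1)/6$ and $\nu_*[\mathbb{CP}^3]=a^3(x_k^3)^*$. Hence $[\mathbb{CP}^3,2u]-8[\mathbb{CP}^3,u]$ lies in $A$ with $\hat A_k=1$. Then $(\hat A_k)$ maps $A$ onto $\mathbb{Z}^g$ and $H_2(X)$ onto $2\mathbb{Z}^g$. The induced map $(\mathbb{Z}/2)^g\to(\mathbb{Z}/2)^g$ is therefore onto, hence bijective, and the five lemma gives $A\cong\mathbb{Z}^g$. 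Together with $h$ onto $H_g$, this yields the stated isomorphism.

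Two smaller points. First, you do not need to realize the classes of $H_g$ by explicit manifolds, because $h$ already surjects onto $E^\infty_{6,0}=H_g$. As proposed, that step would also fail for the odd mixed classes $(x_k^2x_l)^*+(x_kx_l^2)^*$. The manifold $\mathbb{CP}^1\times\mathbb{CP}^2$ is not spin, and on $S^2\times S^2\times S^2$ every value $\langle a^2b,[N]\rangle$ is even; you would need $\mathbb{CP}^3$ mapped diagonally. Second, no $d_3$ from $E_{7,0}$ can shrink $E^\infty_{4,2}$, since $H_7(X)=0$.
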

      
      \begin{proof}
      	Making use of \cite[Lemma in page 751]{Tei93}, we see from the Atiyah\textendash Hirzebruch spectral sequence that there are short exact sequences \[0 \rightarrow H_2({(\mathbb{CP}^{\infty})}^g) \rightarrow A \rightarrow {(\mathbb{Z}/2)}^g \rightarrow 0\] and \[0 \rightarrow A \rightarrow \Omega^{\mathrm{Spin}}_6({(\mathbb{CP}^{\infty})}^g) \xrightarrow{h} H_g \rightarrow 0\] where $h$ denotes the Hurewicz homomorphism $[N, \nu] \mapsto \nu_*([N])$. An obvious generalization of Kreck's argument \cite[Section 8]{Kre09} shows that there is a commutative diagram of short exact sequences
      	\[\begin{tikzcd}
      		0 \arrow[r] & H_2({(\mathbb{CP}^{\infty})}^g) \arrow[r] \arrow[d, "="] & A \arrow[r] \arrow[d, "(\hat{A}_k)"] & {(\mathbb{Z}/2)}^g \arrow[r] \arrow[d] & 0 \\
      		0 \arrow[r] & \mathbb{Z}^g \arrow[r, "\times 2"] & \mathbb{Z}^g \arrow[r] & {(\mathbb{Z}/2)}^g \arrow[r] & 0
      	\end{tikzcd}\] where $(\hat{A}_k) \coloneqq (\hat{A}_1, \cdots, \hat{A}_g) : A \rightarrow \mathbb{Z}^g$ is surjective. Thus ${(\mathbb{Z}/2)}^g \rightarrow {(\mathbb{Z}/2)}^g$ is an isomorphism. By 5-lemma $(\hat{A}_k) : A \rightarrow \mathbb{Z}^g$ is an isomorphism.
      \end{proof}
      
      We consider the case $H_2(M) \cong {(\mathbb{Z}/{p^r})}^2$ next. If $p \neq 2$, the Atiyah\textendash Hirzebruch spectral sequence yields an exact sequence \[0 \rightarrow H_2(K({(\mathbb{Z}/{p^r})}^2, 2))/{\mathrm{im} d^4_{7,0}} \rightarrow \Omega^{\mathrm{Spin}}_6(K({(\mathbb{Z}/{p^r})}^2, 2)) \xrightarrow{h} H_6(K({(\mathbb{Z}/{p^r})}^2, 2)) \rightarrow 0,\] where $h$ is the Hurewicz homomorphism, and the homomorphism \[H_2(K({(\mathbb{Z}/{p^r})}^2, 2)) \rightarrow H_2(K({(\mathbb{Z}/{p^r})}^2, 2))/{\mathrm{im} d^4_{7,0}} \rightarrow \Omega^{\mathrm{Spin}}_6(K({(\mathbb{Z}/{p^r})}^2, 2))\] is described as follows. Identify $H_2(K({(\mathbb{Z}/{p^r})}^2, 2))$ with $\pi_2(K({(\mathbb{Z}/{p^r})}^2, 2)) = {(\mathbb{Z}/{p^r})}^2$ via the Hurewicz isomorphism. Let the maps ${\ell}_1$, ${\ell}_2 : S^2 \rightarrow K({(\mathbb{Z}/{p^r})}^2, 2)$ represent the generators $(1, 0)$, $(0, 1) \in {(\mathbb{Z}/{p^r})}^2$ respectively. Let $K$ be a K3 surface, define the maps \[{\phi}_i : S^2 \times K \rightarrow S^2 \xrightarrow{{\ell}_i} K({(\mathbb{Z}/{p^r})}^2, 2), \; i = 1, 2.\] The homomorphism $H_2(K({(\mathbb{Z}/{p^r})}^2, 2)) \rightarrow \Omega^{\mathrm{Spin}}_6(K({(\mathbb{Z}/{p^r})}^2, 2))$ maps ${{\ell}_i}_*[S^2]$ to the bordism class $[S^2 \times K, {\phi}_i]$ for $i = 1,2$.
      
      It is known that $\langle p_1(K), [K] \rangle = -48$, so $p_1(S^2 \times K) \cap [S^2 \times K] = -48[S^2] \in H_2(S^2 \times K)$, thus \begin{align*}
      	& {\phi_1}_*(p_1(S^2 \times K) \cap [S^2 \times K]) = (-48, 0) \in {(\mathbb{Z}/{p^r})}^2, \\
      	& {\phi_2}_*(p_1(S^2 \times K) \cap [S^2 \times K]) = (0, -48) \in {(\mathbb{Z}/{p^r})}^2.
      \end{align*} If $p \neq 2, 3$, then $-48$ and $p^r$ are coprime to each other, so there is a nonzero integer $a_{p,r}$ such that $a_{p,r}({\phi_1}_*(p_1(S^2 \times K) \cap [S^2 \times K])) = (1, 0)$ and $a_{p,r}({\phi_2}_*(p_1(S^2 \times K) \cap [S^2 \times K])) = (0, 1)$. Thus for $p \neq 2, 3$ the differential $d^4_{7,0} = 0$ and we have proved the following.
      
      \begin{Prop}\label{prop:bordismtorsion}
      	For $p \neq 2, 3$ there is an isomorphism \[\Omega^{\mathrm{Spin}}_6(K({(\mathbb{Z}/{p^r})}^2, 2)) \xrightarrow{\cong} H_2(K({(\mathbb{Z}/{p^r})}^2, 2)) \oplus H_6(K({(\mathbb{Z}/{p^r})}^2, 2)) \cong {(\mathbb{Z}/{p^r})}^6\] given by \[[N, \nu] \mapsto (a_{p,r}{\nu}_*(p_1(N) \cap [N]), {\nu}_*([N])).\]
      \end{Prop}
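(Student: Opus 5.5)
We argue with the Atiyah\textendash Hirzebruch spectral sequence $E^2_{s,t} = H_s(K((\mathbb{Z}/p^r)^2,2); \Omega^{\mathrm{Spin}}_t) \Rightarrow \Omega^{\mathrm{Spin}}_{s+t}(K((\mathbb{Z}/p^r)^2,2))$ along the total degree $6$ line, using the homology tables above. With $p$ odd, every group $H_s(\,\cdot\,;\mathbb{Z}/2)$ for $s \geq 1$ vanishes, because the reduced homology is $p$-torsion. So in total degree $6$ the only terms that can be nonzero are $E^2_{6,0} = H_6 \cong (\mathbb{Z}/p^r)^4$ and $E^2_{2,4} = H_2 \otimes \Omega^{\mathrm{Spin}}_4 \cong (\mathbb{Z}/p^r)^2$. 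The terms $E_{5,1}, E_{4,2}$ vanish since they have $\mathbb{Z}/2$ coefficients, and $E_{3,3}, E_{1,5}, E_{0,6}$ vanish because $H_3 = H_1 = 0$ and $\Omega^{\mathrm{Spin}}_6 = 0$.

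Next we check that $E^2_{6,0}$ survives. Any differential out of it lands in some $E_{6-k,k-1}$ with $k \geq 2$, and all of these are zero: $H_4 \otimes \Omega^{\mathrm{Spin}}_1$ has $\mathbb{Z}/2$ coefficients, and $H_3 = 0$, $\Omega^{\mathrm{Spin}}_3 = 0$, $H_1 = 0$, $\Omega^{\mathrm{Spin}}_5 = 0$. The edge homomorphism is the Hurewicz map $h$, so $h$ is surjective.

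On $E_{2,4}$, the only differential coming in that could be nonzero is $d^4_{6,1}$ from $H_6 \otimes \mathbb{Z}/2 = 0$, which is trivial, and $d^4_{7,0}$ from $H_7$, as already noted in the text. Rather than computing $d^4_{7,0}$ directly, we show it vanishes using the K3 construction set up before the statement. The inclusion $E^\infty_{2,4} \to \Omega^{\mathrm{Spin}}_6$ sends $\ell_{i*}[S^2] \otimes [K]$ to $[S^2\times K, \phi_i]$. Define $\Phi\colon [N,\nu] \mapsto a_{p,r}\,\nu_*(p_1(N)\cap[N]) \in H_2 \cong (\mathbb{Z}/p^r)^2$. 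This is a well-defined bordism invariant: $p_1$ is natural, and if $N = \partial W$ then the pushforward of $p_1(N)\cap[N] = \partial(p_1(W)\cap[W])$ vanishes in the homology of the target. The computation $\phi_{i*}(p_1\cap[S^2\times K]) = -48\,e_i$, together with the choice of $a_{p,r}$ with $-48a_{p,r} \equiv 1 \bmod p^r$ (possible since $p \neq 2,3$), gives $\Phi[S^2\times K,\phi_i] = e_i$. Hence the composite $E^2_{2,4} \to E^\infty_{2,4} \to \Omega^{\mathrm{Spin}}_6 \xrightarrow{\Phi} H_2$ is the identity. Therefore $\mathrm{im}\, d^4_{7,0} = 0$, $E^\infty_{2,4} = H_2$, and we get the short exact sequence $0 \to H_2 \to \Omega^{\mathrm{Spin}}_6 \xrightarrow{h} H_6 \to 0$, which $\Phi$ splits.

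Consequently $(\Phi, h)\colon \Omega^{\mathrm{Spin}}_6 \to H_2 \oplus H_6$ is an isomorphism. It is injective: if $h(x) = 0$ then $x$ lies in the image of $H_2$, on which $\Phi$ is the identity, so $\Phi(x) = 0$ forces $x = 0$. It is surjective by the five lemma, or by a direct count. This gives $(\mathbb{Z}/p^r)^2 \oplus (\mathbb{Z}/p^r)^4 = (\mathbb{Z}/p^r)^6$.

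The main obstacle is making sure that $\Phi$ is well defined as a homomorphism on bordism, and that the filtration identification of $E^\infty_{2,4}$ with the classes $[S^2\times K,\phi_i]$ is correct. The latter is the standard description of the AHSS edge for products with a generator of $\Omega^{\mathrm{Spin}}_4$.
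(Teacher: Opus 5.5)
Your proposal is correct and is essentially the paper's argument: the Atiyah--Hirzebruch spectral sequence gives $0\to H_2/\mathrm{im}\,d\to\Omega^{\mathrm{Spin}}_6\xrightarrow{h}H_6\to 0$, and the classes $[S^2\times K,\phi_i]$, together with $\langle p_1(K),[K]\rangle=-48$ being invertible modulo $p^r$, show both that the differential from $E_{7,0}$ vanishes and that $\Phi$ splits the sequence. You make explicit what the paper leaves implicit, namely the vanishing of the other $E^2$-terms for $p$ odd and the bordism invariance of $\Phi$; the one small point is that with the convention $d^r\colon E^r_{s,t}\to E^r_{s-r,t+r-1}$ the differential from $E_{7,0}$ to $E_{2,4}$ is $d^5$, a labeling you inherited from the paper that affects nothing.
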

       
      Now we consider the general case. Let $M$ be a simply connected closed spin 5-manifold. By the classification of simply connected 5-manifolds \cite{Sma62, Bar65}, $H_2(M)$ is isomorphic to \[\mathbb{Z}^g \oplus {(\mathbb{Z}/{p_1^{r_1}})}^2 \oplus \cdots \oplus {(\mathbb{Z}/{p_s^{r_s}})}^2,\] where $p_1, \cdots, p_s$ are prime numbers. Therefore \[K(H_2(M), 2) = (\mathbb{CP}^{\infty})^g \times K({(\mathbb{Z}/{p_1^{r_1}})}^2, 2) \times \cdots \times K({(\mathbb{Z}/{p_s^{r_s}})}^2, 2).\] Let $\pi_{p_i, r_i} : K(H_2(M), 2) \rightarrow K({(\mathbb{Z}/{p_i^{r_i}})}^2, 2)$ be the proiection map, let $T$ be the torsion subgroup of $H_2(M)$, and let $H_M$ be the subgroup of $H_6(K(H_2(M), 2))$ given by 
      \begin{align}
      	H_M \coloneqq H_g & \oplus H_6(K(T, 2))  \tag{2}\label{fundamentalclass}\\ 
      	& \oplus (H_2({(\mathbb{CP}^{\infty})}^g) \otimes H_4(K(T, 2))) \nonumber\\
      	& \oplus (H_4({(\mathbb{CP}^{\infty})}^g) \otimes H_2(K(T, 2))). \nonumber
      \end{align}
      
      \begin{Prop}\label{prop:bordism}
      	Let $M$ be a simply connected closed spin 5-manifold.
      	\begin{itemize}
      		\item[(1)] Suppose $H_2(M)$ has no 2- and 3-torsion, in other words, suppose $p_1 \geq 5$. Then there is an isomorphism \[\Omega^{\mathrm{Spin}}_6(K(H_2(M), 2)) \xrightarrow{\cong} \mathbb{Z}^g \oplus H_2(K({(\mathbb{Z}/{{p_1}^{r_1}})}^2, 2)) \oplus \cdots \oplus H_2(K({(\mathbb{Z}/{{p_s}^{r_s}})}^2, 2)) \oplus H_M\] given by  \begin{align*}
      			[N, \nu] \mapsto ( & \hat{A}_1([N, \nu]), \cdots, \hat{A}_g([N, \nu]), \\
      			& a_{p_1, r_1}{\pi_{p_1, r_1}}_*{\nu}_*(p_1(N) \cap [N]), \cdots, a_{p_s, r_s}{\pi_{p_s, r_s}}_*{\nu}_*(p_1(N) \cap [N]), {\nu}_*([N])).
      		\end{align*}
      		
      		\item[(2)] Suppose $H_2(M)$ has no 2-torsion. Then there is a short exact sequence \[0 \rightarrow \mathbb{Z}^g \oplus (T/{\mathrm{im} d^4_{7,0}}) \rightarrow \Omega^{\mathrm{Spin}}_6(K(H_2(M), 2)) \xrightarrow{h} H_M \rightarrow 0.\]
      	\end{itemize}
      \end{Prop}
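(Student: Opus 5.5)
We run the Atiyah\textendash Hirzebruch spectral sequence for $\Omega^{\mathrm{Spin}}_*(X)$ with $X = K(H_2(M),2) = (\mathbb{CP}^\infty)^g \times K(T,2)$, where $K(T,2) = \prod_i K((\mathbb{Z}/p_i^{r_i})^2,2)$. The $E^2$-terms on the line $p+q=6$ are $H_6(X)$, $H_5(X;\mathbb{Z}/2)$, $H_4(X;\mathbb{Z}/2)$ and $H_2(X;\Omega^{\mathrm{Spin}}_4)=H_2(X)$. The Künneth formula, together with the tables for $K(\mathbb{Z}/p^r,2)$, shows that once $T$ has no $2$-torsion, the mod $2$ homology of $X$ agrees with that of $(\mathbb{CP}^\infty)^g$.

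So the first step is to compare with the free case through the projection $q : X \rightarrow (\mathbb{CP}^\infty)^g$ and the inclusion $j : (\mathbb{CP}^\infty)^g \rightarrow X$. By naturality, the rows $q=1,2$ and the differentials $d^2$ on them are the same as in the proof of Proposition \ref{prop:bordismfree}. Hence the contributions of $H_4(\,;\mathbb{Z}/2)$ and $H_5(\,;\mathbb{Z}/2)$, and the image of $h$ restricted to the free part, are exactly as in the case $T=0$.

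The second step identifies the image of the Hurewicz map $h : \Omega_6 \rightarrow H_6(X)$ with $H_M$. By Künneth, $H_6(X)$ decomposes into $H_6((\mathbb{CP}^\infty)^g)$, $H_6(K(T,2))$, the tensor terms $H_2\otimes H_4$ and $H_4\otimes H_2$, and Tor terms. The Tor terms all involve torsion of $H_*((\mathbb{CP}^\infty)^g)$, which vanishes. The differentials out of $E_{6,0}$ are $d^2 : H_6(X)\rightarrow H_4(X;\mathbb{Z}/2)$ (reduction followed by the dual of $Sq^2$) and $d^3$ into $H_3(X;\mathbb{Z}/2)=0$. On the odd torsion summands they vanish, since the target is a $2$-group. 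On the free summand they are computed by the free case, which already gives $H_g$. Thus the $E^\infty_{6,0}$ term, which equals $\mathrm{im}\, h$, is $H_M$.

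The third step treats the bottom row $H_2(X)=\mathbb{Z}^g\oplus T$ at $E_{2,4}$. The incoming differentials are $d^2$ from $H_4(X;\mathbb{Z}/2)$ (along the row $q=1$, which is handled as in the free case), $d^3$ from $E_{5,2}$, and $d^4_{7,0}$ from $H_7(X)$. On the free part, the analysis of Proposition \ref{prop:bordismfree} together with Teichner's lemma gives $A \cong \mathbb{Z}^g$ via the $\hat{A}_k$. The $2$-primary pieces cannot hit $T$ because $T$ has odd order. This yields the extension $0\rightarrow \mathbb{Z}^g\oplus T/\mathrm{im}\, d^4_{7,0}\rightarrow \Omega_6 \rightarrow H_M\rightarrow 0$, which proves (2).

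For (1), with $p_i\geq 5$, we need $d^4_{7,0}$ to vanish on $T$ and the extension to split with the stated invariants. For the first point we use the K3 argument before Proposition \ref{prop:bordismtorsion}. The class $[S^2\times K,\ell\circ \mathrm{pr}]$ has $\nu_*(p_1\cap[N]) = -48\,\ell$, and $-48$ is invertible mod $p_i^{r_i}$. So the composite $T\rightarrow\Omega_6\rightarrow T$, given by $a_{p_i,r_i}{\pi_{p_i,r_i}}_*\nu_*(p_1\cap[N])$, is the identity, and $T$ injects as a direct summand. The free part of $E_{2,4}$ is detected by the $\hat{A}_k$ as in Proposition \ref{prop:bordismfree}. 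Since $H_M$ is a direct sum of a free group and an odd-torsion group, it remains to show that the displayed map is bijective, which we do by a five-lemma argument.

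The main obstacle is checking that the invariants are well defined and compatible: that $\hat{A}_k$ is integral on all of $\Omega_6(X)$, not only on the image of $(\mathbb{CP}^\infty)^g$, and that the $\hat{A}_k$ together with the $p_1$-invariants vanish, or are controlled, on classes lifting $H_M$. We would first try the following. Integrality of $\hat{A}_k$ follows because $\hat{A}_k$ factors through $q_*$. The map $\Omega_6(X)\rightarrow\Omega_6((\mathbb{CP}^\infty)^g)\oplus\bigoplus_i \Omega_6(K((\mathbb{Z}/p_i^{r_i})^2,2))\oplus H_M$ then has a lower-triangular structure. With respect to the filtration it is the identity on the graded pieces, so by the five lemma it is an isomorphism onto $\mathbb{Z}^g\oplus T\oplus H_M$.
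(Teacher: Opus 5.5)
Your proposal is correct and follows essentially the paper's route. Both use Teichner's lemma in the Atiyah--Hirzebruch spectral sequence, the K3-surface computation showing that $T \to \Omega^{\mathrm{Spin}}_6(K(H_2(M),2)) \to T$ is the identity (so the differential into $T$ vanishes and $T$ splits off), the invariants $\hat{A}_k$ imported from Proposition \ref{prop:bordismfree}, and a five-lemma argument for $(\tau, h)$. Your retraction onto $(\mathbb{CP}^{\infty})^g$ simply makes explicit the paper's ``the proofs assemble''.

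There are two harmless slips. First, no $d^2$ enters $E_{2,4}$ from the row $q=1$; only differentials from $E_{5,2}$, $E_{6,1}$ and $E_{7,0}$ can. Second, $\tau$ is not the identity on the associated graded, because $\hat{A}_k$ is multiplication by $2$ on the image of $H_2((\mathbb{CP}^{\infty})^g)$. The isomorphism $A_M \cong \mathbb{Z}^g \oplus T$ comes from the extension by $(\mathbb{Z}/2)^g$, exactly as in the paper's diagram.
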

      
      \begin{proof}
      	Using \cite[Lemma in page 751]{Tei93} we obtain from Atiyah\textendash Hirzebruch spectral sequence the following short exact sequences \[0 \rightarrow H_2({(\mathbb{CP}^{\infty})}^g) \oplus (T/{\mathrm{im} d^4_{7,0}}) \rightarrow A_M \rightarrow {(\mathbb{Z}/2)}^g \rightarrow 0\] and \[0 \rightarrow A_M \rightarrow \Omega^{\mathrm{Spin}}_6(K(H_2(M), 2)) \xrightarrow{h} H_M \rightarrow 0.\] If $H_2(M)$ has no 2- and 3-torsion, let $\tau$ be the homomorphism $\Omega^{\mathrm{Spin}}_6(K(H_2(M), 2)) \rightarrow \mathbb{Z}^g \oplus T$ defined as \begin{align*}
      	[N, \nu] \mapsto ( & \hat{A}_1([N, \nu]), \cdots, \hat{A}_g([N, \nu]), \\
      	& a_{p_1, r_1}{\pi_{p_1, r_1}}_*{\nu}_*(p_1(N) \cap [N]), \cdots, a_{p_s, r_s}{\pi_{p_s, r_s}}_*{\nu}_*(p_1(N) \cap [N])).
      	\end{align*} Then there is a commutative diagram \[\begin{tikzcd}
      	H_2({(\mathbb{CP}^{\infty})}^g) \oplus (T/{\mathrm{im} d^4_{7,0}}) \arrow[r] & A_M \arrow[d, "\tau"] \\
      	\mathbb{Z}^g \oplus T \arrow[u, "{(id, q)}"] \arrow[r, "{(\times 2, id)}"] & \mathbb{Z}^g \oplus T
      	\end{tikzcd}\] which shows that the quotient homomorphism $q : T \rightarrow T/{\mathrm{im} d^4_{7,0}}$ is injective, hence the differential $d^4_{7,0}$ is trivial and we obtain a commutative diagram of short exact sequences
      	\[\begin{tikzcd}
      	0 \arrow[r] & H_2({(\mathbb{CP}^{\infty})}^g) \oplus T \arrow[r] \arrow[d, "="] & A_M \arrow[d, "\tau"] \arrow[r] & {(\mathbb{Z}/2)}^g \arrow[d] \arrow[r] & 0 \\
      	0 \arrow[r] & \mathbb{Z}^g \oplus T \arrow[r, "{(\times 2, id)}"] & \mathbb{Z}^g \oplus T \arrow[r] & {(\mathbb{Z}/2)}^g \arrow[r] & 0
      	\end{tikzcd}\] The proofs of Proposition \ref{prop:bordismfree} and Proposition \ref{prop:bordismtorsion} assemble to show that $\tau$ is surjective, thus ${(\mathbb{Z}/2)}^g \rightarrow {(\mathbb{Z}/2)}^g$ is an isomorphism. By 5-lemma $\tau : A_M \rightarrow \mathbb{Z}^g \oplus T$ is an isomorphism. Thus there is a commutative diagram \[\begin{tikzcd}
      	0 \arrow[r] & A_M \arrow[r] \arrow[dr, "\cong"'] & \Omega^{\mathrm{Spin}}_6(K(H_2(M), 2)) \arrow[r, "h"] \arrow[d, "\tau"] & H_M \arrow[r] & 0 \\
      	& & \mathbb{Z}^g \oplus T & &
      \end{tikzcd}\] which completes the proof of (1). For $H_2(M)$ with nontrivial 3-torsion we have a commutative diagram of short exact sequences \[\begin{tikzcd}
      0 \arrow[r] & H_2({(\mathbb{CP}^{\infty})}^g) \oplus (T/{\mathrm{im} d^4_{7,0}}) \arrow[r] \arrow[d, "{(id, 0)}"] & A_M \arrow[d, "{(\hat{A}_k)}"] \arrow[r] & {(\mathbb{Z}/2)}^g \arrow[d] \arrow[r] & 0 \\
      0 \arrow[r] & \mathbb{Z}^g \arrow[r, "{\times 2}"] & \mathbb{Z}^g \arrow[r] & {(\mathbb{Z}/2)}^g \arrow[r] & 0.
      \end{tikzcd}\] As in the proof of Proposition \ref{prop:bordismfree}, Kreck's argument \cite[Section 8]{Kre09} shows that $(\hat{A}_k)$ is surjective and thus $A_M$ contains $\mathbb{Z}^g$ as a direct summand. The exact sequence \[0 \rightarrow H_2({(\mathbb{CP}^{\infty})}^g) \oplus (T/{\mathrm{im} d^4_{7,0}}) \rightarrow A_M \rightarrow {(\mathbb{Z}/2)}^g \rightarrow 0,\] where $H_2({(\mathbb{CP}^{\infty})}^g)$ is mapped to the $\mathbb{Z}^g$ summand by $\times 2$, implies that $A_M \cong \mathbb{Z}^g \oplus (T/{\mathrm{im} d^4_{7,0}})$. This proves (2).
      \end{proof}
      
      Thus the bordism group $\Omega^{\mathrm{Spin}}_6(K(H_2(M), 2))$ is abstractly isomorphic to $H_2(M) \oplus H_M \cong H_2(M) \oplus H_6(K(H_2(M), 2))$ when $H_2(M)$ has no 2- and 3-torsion. This group can be calculated using K\"unneth theorem, so the isomorphism type of $\Omega^{\mathrm{Spin}}_6(K(H_2(M), 2))$ is explicitly determined (see \ref{5.3}). When $H_2(M)$ contains nontrivial 2- or 3-torsion, the Atiyah\textendash Hirzebruch spectral sequence is more complicated and the computation of $\Omega^{\mathrm{Spin}}_6(K(H_2(M), 2))$ will be considered in a subsequent paper.
      
      \begin{Cor}\label{cor:forgetful}
      	Let $M$ be a simply connected closed spin 5-manifold with no 2-torsion in homology, then $\mathrm{MCG}(M, D) = \mathrm{MCG}(M)$ and $\mathcal{I}(M, D) = \mathcal{I}(M)$.
      \end{Cor}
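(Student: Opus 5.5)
The idea is to combine Wall's exact sequence (Lemma \ref{lem:wallseq}) with the injectivity of the mapping torus construction (Lemma \ref{lem:mtc}) and the structure of the bordism group from Proposition \ref{prop:bordism}(2). By Lemma \ref{lem:wallseq}, the kernel of the forgetful map $\mathcal{I}(M, D) \rightarrow \mathcal{I}(M)$ is the image of $\pi_1(\mathrm{SO}(5)) \cong \mathbb{Z}/2$. The same image is the kernel of $\mathrm{MCG}(M, D) \rightarrow \mathrm{MCG}(M)$, and it lies inside $\mathcal{I}(M, D)$. So it is enough to show that this image is trivial inside $\mathcal{I}(M, D)$. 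Its generator is the ``boundary twist'' $\tau_\beta$, and we must show $\tau_\beta$ is isotopic to the identity relative to $D$. Once this holds, both forgetful maps are injective. They are already surjective, so they are isomorphisms.

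Since $\rho : \mathcal{I}(M, D) \rightarrow \Omega^{\mathrm{Spin}}_6(K(H_2(M), 2))$ is an injective homomorphism, $[\tau_\beta]$ has order at most $2$ in a subgroup of this bordism group. It therefore suffices to show that $\Omega^{\mathrm{Spin}}_6(K(H_2(M), 2))$ has no elements of order $2$ when $H_2(M)$ has no $2$-torsion. Then $\rho([\tau_\beta]) = 0$, and injectivity gives $[\tau_\beta] = 1$.

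For the bordism group I would use Proposition \ref{prop:bordism}(2), the short exact sequence
\[0 \rightarrow \mathbb{Z}^g \oplus (T/\mathrm{im}\, d^4_{7,0}) \rightarrow \Omega^{\mathrm{Spin}}_6(K(H_2(M), 2)) \rightarrow H_M \rightarrow 0.\]
The left term has no $2$-torsion, because $T$ has odd order and so does its quotient. For $H_M$, I would read off from the tables and the K\"unneth theorem that each summand of \eqref{fundamentalclass} is either free ($H_g$) or a sum of groups $\mathbb{Z}/p_i^{k}$ with $p_i$ odd. The mixed tensor terms involve only $H_2(K(T,2))$ and $H_4(K(T,2))$, which are odd torsion. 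For $H_6(K(T,2))$, the K\"unneth formula also contributes Tor terms built from odd-order groups, and these are again odd torsion. Hence $H_M$ has no $2$-torsion. In an extension $0 \rightarrow A \rightarrow E \rightarrow C \rightarrow 0$ where neither $A$ nor $C$ has $2$-torsion, suppose $2x = 0$. Then the image of $x$ is $0$ in $C$, so $x \in A$, and hence $x = 0$. So $E$ has no $2$-torsion.

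The main point needing care is the claim that $H_M$, and more precisely $H_6(K(T,2))$ including its K\"unneth Tor contributions, has no $2$-torsion. This follows because every homology group of $K(\mathbb{Z}/p^r,2)$ with $p$ odd in degrees $\le 6$ is a $p$-group, by the table. Everything else is formal.
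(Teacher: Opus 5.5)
Your proposal is correct and follows the paper's proof: Wall's exact sequence (Lemma \ref{lem:wallseq}), injectivity of $\rho$ (Lemma \ref{lem:mtc}), and the fact that $\Omega^{\mathrm{Spin}}_6(K(H_2(M),2))$ has no $2$-torsion. The only difference is that the paper asserts this last fact in one line. You justify it explicitly using the extension in Proposition \ref{prop:bordism}(2) and the observation that $H_M$ and $\mathbb{Z}^g \oplus (T/\mathrm{im}\, d^4_{7,0})$ have no $2$-torsion.
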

      
      \begin{proof}
      	The bordism group $\Omega^{\mathrm{Spin}}_6(K(H_2(M), 2))$ has no 2-torsion. So is $\mathcal{I}(M, D)$ since it can be viewed as a subgroup of $\Omega^{\mathrm{Spin}}_6(K(H_2(M), 2))$ via $\rho$. The corollary follows from Lemma \ref{lem:wallseq} since $\pi_1(\mathrm{SO}(5)) \cong \mathbb{Z}/2$.
      \end{proof}
      
    \subsection{The canonical quotient homomorphism}
      
      Let $H$ denotes the abelian group \[\mathbb{Z}^g \oplus {(\mathbb{Z}/{p_1^{r_1}})}^2 \oplus \cdots \oplus {(\mathbb{Z}/{p_s^{r_s}})}^2,\] where $p_1, \cdots, p_s$ are prime numbers such that $p_i \geq 3$ for each $i$. Let $\delta : \mathbb{Z}^{g + 2s} \rightarrow H$ be the canonical quotient homomorphism, in other words, the product of copies of the identity map $id : \mathbb{Z} \rightarrow \mathbb{Z}$ and the canonical quotient homomorphisms $\mathbb{Z} \rightarrow \mathbb{Z}/{p^r}$ given by $1 \mapsto 1$. The corresponding map between Eilenberg\textendash MacLane spaces, ${(\mathbb{CP}^{\infty})}^{g + 2s} \rightarrow K(H, 2)$, is unique up to homotopy and is still denoted by $\delta$. In this subsection we investigate the induced homomorphism \begin{align*}
      \delta_* : \Omega^{\mathrm{Spin}}_6({(\mathbb{CP}^{\infty})}^{g + 2s}) & \rightarrow \Omega^{\mathrm{Spin}}_6(K(H, 2)) \\
      [N, \nu] & \mapsto [N, \delta \circ \nu].
      \end{align*} This homomorphism will be used in Section \ref{sec:5}.
      
      First we consider $\delta : \mathbb{CP}^{\infty} \rightarrow K(\mathbb{Z}/{p^r}, 2)$ induced by the canonical map $\mathbb{Z} \rightarrow \mathbb{Z}/{p^r}$.
      
      \begin{Lem}\label{lem:cyclic}
      	Suppose $p \geq 3$. The induced homomorphisms \[\delta_* : H_4(\mathbb{CP}^{\infty}) \rightarrow H_4(K(\mathbb{Z}/{p^r}, 2)), \quad \delta_* : H_6(\mathbb{CP}^{\infty}) \rightarrow H_6(K(\mathbb{Z}/{p^r}, 2))\] are surjective maps between cyclic groups.
      \end{Lem}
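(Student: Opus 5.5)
By the table above, $H_4(K(\mathbb{Z}/p^r,2))\cong\mathbb{Z}/p^r$ and $H_6(K(\mathbb{Z}/p^r,2))$ is $\mathbb{Z}/p^r$ for $p\ge5$ and $\mathbb{Z}/3^{r+1}$ for $p=3$. So the targets are cyclic, and so are the sources $H_4(\mathbb{CP}^\infty)\cong H_6(\mathbb{CP}^\infty)\cong\mathbb{Z}$. It remains to show surjectivity. Since the targets are finite cyclic $p$-groups, it is enough to show the map is surjective after reducing mod $p$ (Nakayama-type argument: a subgroup of a cyclic $p$-group that surjects onto the quotient by $p$ is everything). I will work dually with mod $p$ cohomology: it suffices to show that $\delta^*:H^k(K(\mathbb{Z}/p^r,2);\mathbb{Z}/p)\to H^k(\mathbb{CP}^\infty;\mathbb{Z}/p)$ is nonzero on the classes detecting the top of $H_k(K;\mathbb{Z})$, for $k=4,6$.

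\emph{Strategy via the cap/Kronecker pairing.} Let $\iota\in H^2(K(\mathbb{Z}/p^r,2);\mathbb{Z}/p)$ be the reduction of the fundamental class. Then $\delta^*\iota=x\bmod p$, where $x$ generates $H^2(\mathbb{CP}^\infty)$. Hence $\delta^*(\iota^2)=x^2$ and $\delta^*(\iota^3)=x^3$ are nonzero mod $p$. Let $u\in H_4(\mathbb{CP}^\infty)$ be the generator; then $\langle \iota^2,\delta_*u\rangle=\langle x^2,u\rangle=1$. Consequently $\delta_*u$ is nonzero modulo $p$, so its image in $H_4(K;\mathbb{Z})\otimes\mathbb{Z}/p\hookrightarrow H_4(K;\mathbb{Z}/p)$ is nonzero. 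For this I need the Kronecker pairing on $H_4(K;\mathbb{Z}/p)$, together with the fact that the Bockstein term $\mathrm{Tor}(H_3,\mathbb{Z}/p)=0$ since $H_3=0$. Thus $\delta_*u$ is not in $pH_4(K)$. This makes it a generator of the cyclic $p$-group, and the degree $4$ case is done. For $p\ge5$ the same argument with $\iota^3$ and $H_5=0$ handles degree $6$.

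\emph{The main obstacle is degree $6$ with $p=3$.} Here $H_5(K(\mathbb{Z}/3^r,2))=0$ still holds by the table, so $H_6(K;\mathbb{Z})\otimes\mathbb{Z}/3\hookrightarrow H_6(K;\mathbb{Z}/3)$ remains injective. The pairing $\langle\iota^3,\delta_*v\rangle=\langle x^3,v\rangle=1$, with $v$ the generator of $H_6(\mathbb{CP}^\infty)$, therefore still shows that $\delta_*v\notin 3H_6(K)$. Because $H_6(K)\cong\mathbb{Z}/3^{r+1}$ is cyclic, any element outside $3H_6(K)$ generates it. So the argument goes through uniformly, and the only points to double-check are the injectivity of $H_k(K)\otimes\mathbb{Z}/p\to H_k(K;\mathbb{Z}/p)$ (universal coefficients, using $H_{k-1}$ free or zero) and that the mod $p$ Kronecker pairing of the reduced class with $\iota^k$ is computed by naturality as above.

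If instead one worried about $\iota^3$ being zero mod $3$, the pairing computation shows it cannot be, since $\delta^*\iota^3=x^3\neq0$. So no appeal to Cartan's computation is needed beyond the table.
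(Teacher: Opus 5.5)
Your argument is correct and is essentially the paper's proof: reduce mod $p$, use $\delta^*\iota = x \bmod p$ so that $\iota^2$ and $\iota^3$ pair to $1$ with $\delta_*$ of the generators of $H_4(\mathbb{CP}^{\infty})$ and $H_6(\mathbb{CP}^{\infty})$, and conclude that an element of a cyclic $p$-group which is nonzero mod $p$ generates it. You differ from the paper in two small ways. First, you do not need Cartan's identification of $H^4$ and $H^6(K(\mathbb{Z}/p^r,2);\mathbb{Z}/p)$ as spanned by $\iota^2$ and $\iota^3$, which the paper uses to phrase the mod $p$ maps as isomorphisms. Second, your appeal to $\mathrm{Tor}(H_{k-1},\mathbb{Z}/p)=0$ is superfluous. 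The map $H_k\otimes\mathbb{Z}/p\to H_k(-;\mathbb{Z}/p)$ is injective unconditionally, and bilinearity of the pairing $H^k(-;\mathbb{Z}/p)\times H_k(-;\mathbb{Z})\to\mathbb{Z}/p$ already rules out $\delta_*u\in pH_k$ directly.
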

      
      \begin{proof}
      	We shall first study \[\delta_* : H_4(\mathbb{CP}^{\infty}; \mathbb{Z}/p) \rightarrow H_4(K(\mathbb{Z}/{p^r}, 2); \mathbb{Z}/p), \quad \delta_* : H_6(\mathbb{CP}^{\infty}; \mathbb{Z}/p) \rightarrow H_6(K(\mathbb{Z}/{p^r}, 2); \mathbb{Z}/p)\] using mod $p$ cohomology ring structure. Let $x \in H^2(\mathbb{CP}^{\infty}; \mathbb{Z}/p)$ and $\ell \in H^2(K(\mathbb{Z}/{p^r}, 2); \mathbb{Z}/p)$ be the cohomology fundamental classes of Eilenberg\textendash MacLane spaces, by universal coefficient theorem we have $\delta^*(\ell) = x$. According to Cartan (\cite{Car54}, see also \cite[Theorem 10.3]{May70}), $H^4(K(\mathbb{Z}/{p^r}, 2); \mathbb{Z}/p) \cong \mathbb{Z}/p$ is generated by $\ell^2$, and $H^6(K(\mathbb{Z}/{p^r}, 2); \mathbb{Z}/p) \cong \mathbb{Z}/p$ is generated by $\ell^3$. So $\delta$ induces isomorphisms \[H_4(\mathbb{CP}^{\infty}; \mathbb{Z}/p) \cong H_4(K(\mathbb{Z}/{p^r}, 2); \mathbb{Z}/p), \quad H_6(\mathbb{CP}^{\infty}; \mathbb{Z}/p) \cong H_6(K(\mathbb{Z}/{p^r}, 2); \mathbb{Z}/p),\] mapping the Kronecker duals of $x^2$ and $x^3$ to the Kronecker duals of $\ell^2$ and $\ell^3$ respectively. The canonical map $\mathbb{Z} \rightarrow \mathbb{Z}/p$ between the coefficients induces a commutative diagram
      	\[\begin{tikzcd}
      		H_*(\mathbb{CP}^{\infty}) \arrow[r] \arrow[d, "\delta_*"] & H_*(\mathbb{CP}^{\infty}; \mathbb{Z}/p) \arrow[d, "\delta_*"] \\
      		H_*(K(\mathbb{Z}/{p^r}, 2)) \arrow[r] & H_*(K(\mathbb{Z}/{p^r}, 2); \mathbb{Z}/p)
      	\end{tikzcd}\] where $H_*(\mathbb{CP}^{\infty}; \mathbb{Z}/p) = H_*(\mathbb{CP}^{\infty}) \otimes \mathbb{Z}/p$ and $H_*(K(\mathbb{Z}/{p^r}, 2); \mathbb{Z}/p) = H_*(K(\mathbb{Z}/{p^r}, 2)) \otimes \mathbb{Z}/p$ for $* = 4,6$. The canonical generator of $H_4(\mathbb{CP}^{\infty})$ is mapped by $\delta_*$ to $n \in \mathbb{Z}/{p^r} = H_4(K(\mathbb{Z}/{p^r}, 2))$, which is mapped to the Kronecker dual of $\ell^2$ in $H_4(K(\mathbb{Z}/{p^r}, 2); \mathbb{Z}/p)$ by the commutative diagram. Thus $n = m + kp$ for some integer $k, m$ satisfying $1 \leq m \leq p-1$, so $n$ is coprime to $p^r$ and hence is a generator of $H_4(K(\mathbb{Z}/{p^r}, 2))$. Similarly we see that $\delta_* : H_6(\mathbb{CP}^{\infty}) \rightarrow H_6(K(\mathbb{Z}/{p^r}, 2))$ maps the canonical generator of $H_6(\mathbb{CP}^{\infty})$ to a generator of $H_6(K(\mathbb{Z}/{p^r}, 2))$. This completes the proof.
      \end{proof}
      
      In general $\delta : {(\mathbb{CP}^{\infty})}^{g + 2s} \rightarrow K(H, 2)$ is the product of $id : \mathbb{CP}^{\infty} \rightarrow \mathbb{CP}^{\infty}$ and the canonical maps $\mathbb{CP}^{\infty} \rightarrow K(\mathbb{Z}/{p^r}, 2)$. Using this fact we can prove the following.
      
      \begin{Prop}\label{prop:quotient}
      	The homomorphism \[\delta_* : \Omega^{\mathrm{Spin}}_6({(\mathbb{CP}^{\infty})}^{g + 2s}) \rightarrow \Omega^{\mathrm{Spin}}_6(K(H, 2))\] is surjective.
      \end{Prop}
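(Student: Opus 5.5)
The plan is a diagram chase through the two-step filtration of $\Omega^{\mathrm{Spin}}_6$ given by the Atiyah\textendash Hirzebruch spectral sequence. Since every $p_i \geq 3$, $H$ has no 2-torsion, so Proposition \ref{prop:bordism}(2) gives a short exact sequence
\[0 \rightarrow A_H \rightarrow \Omega^{\mathrm{Spin}}_6(K(H, 2)) \xrightarrow{h} H_H \rightarrow 0, \qquad A_H \cong \mathbb{Z}^g \oplus (T/\mathrm{im}\, d^4_{7,0}).\]
The source has the analogous sequence from Proposition \ref{prop:bordismfree}, with kernel $A$ and quotient $H_{g+2s}$. By naturality of the Hurewicz map, $h \circ \delta_* = \delta_* \circ h$. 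It therefore suffices to prove two things: (a) $\delta_*(H_{g+2s}) = H_H$ inside $H_6(K(H,2))$, and (b) $A_H \subset \mathrm{im}(\delta_*)$. Given these, take $y$ in the target, choose $x$ with $\delta_* h(x) = h(y)$, and note that $y - \delta_*(x) \in A_H$ is also in the image.

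For (a), I will decompose $H_6(K(H,2))$ with the K\"unneth theorem. All Tor terms in degree $6$ vanish: the relevant groups $H_1$, $H_3$, $H_5$ of the factors are zero for odd primes (the table gives $H_5 = 0$ for $p \geq 3$). Hence $H_6(K(H,2))$ is a sum of tensor products $H_{2a} \otimes H_{2b} \otimes \cdots$ of even-degree homology of the factors $\mathbb{CP}^\infty$ and $K(\mathbb{Z}/p_i^{r_i},2)$. On each factor, $\delta_*$ is surjective in degrees $2,4,6$: in degree 2 this is the quotient map, and in degrees 4 and 6 it is Lemma \ref{lem:cyclic}. A tensor product of surjections is surjective, so $\delta_*$ maps $H_6((\mathbb{CP}^\infty)^{g+2s})$ onto $H_H$.

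The remaining issue is that $H_{g+2s}$ is only a subgroup: it contains $2(x_k^2x_l)^*$ and $(x_k^2x_l)^*+(x_kx_l^2)^*$ rather than $(x_k^2x_l)^*$ itself. I will check that the factor 2 is harmless. For a pair of indices involving at least one torsion factor, the image lands in a $p$-group with $p$ odd, so $2$ acts invertibly there. Hence the image of $2(x_k^2x_l)^*$ generates the same subgroup as the image of $(x_k^2x_l)^*$, and $(x_kx_l^2)^*$ is then recovered by subtraction. For pairs with both indices free, $\delta$ is the identity, so the image is exactly $H_g$. This bookkeeping is the step I expect to need the most care: I must match the summands of \eqref{fundamentalclass} one by one with the images, and confirm that the cross terms $H_2 \otimes H_4$ and $H_4 \otimes H_2$ are hit.

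For (b), I will use two sources of elements in the image. First, the bottom filtration piece $T/\mathrm{im}\, d^4_{7,0}$, together with the image of $H_2$ of the free factors, is generated by the classes $[S^2 \times K, \ell \circ \mathrm{pr}]$ for $\ell : S^2 \rightarrow K(H,2)$. Every such $\ell$ factors through $\delta$, because $\delta_*$ is onto on $\pi_2$, so these classes lie in the image. Second, the rest of $A_H$, namely the $(\mathbb{Z}/2)^g$ quotient detected by $\hat{A}_k$, comes from $\Omega^{\mathrm{Spin}}_6((\mathbb{CP}^\infty)^g)$ via the inclusion of the free factors. Since $\delta$ restricted to $(\mathbb{CP}^\infty)^g \times \{*\}$ is exactly this inclusion, these classes are also in the image. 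Combining the two gives $A_H \subset \mathrm{im}(\delta_*)$, and the chase above completes the proof.
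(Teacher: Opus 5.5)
Your proposal is correct and follows essentially the same route as the paper. The paper also compares the Atiyah--Hirzebruch filtrations of source and target: it gets surjectivity on the kernel of $h$ from surjectivity on $H_2$ together with the projection $(\mathbb{Z}/2)^{g+2s}\to(\mathbb{Z}/2)^g$, gets surjectivity on the Hurewicz quotient $H_{g+2s}\to\overline{H}$ from Lemma \ref{lem:cyclic}, K\"unneth and the absence of 2-torsion, and then concludes by the five lemma; your explicit chase, your treatment of the factor $2$ in $2(x_k^2x_l)^*$, and your description of the bottom piece via the classes $[S^2\times K,\ell\circ\mathrm{pr}]$ are more detailed versions of those same steps.
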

      
      \begin{proof}
      	Let $T$ be the torsion subgroup of $H$, therefore $H = \mathbb{Z}^g \oplus T$. The map $\delta : {(\mathbb{CP}^{\infty})}^{g + 2s} \rightarrow K(H, 2)$ induces a morphism between the corresponding Atiyah\textendash Hirzebruch spectral sequences, which yields the following commutative diagrams \[\begin{tikzcd}
      	0 \arrow[r] & \mathbb{Z}^{g + 2s} \arrow[r, "{\times 2}"] \arrow[d, "\delta"] & \mathbb{Z}^{g + 2s} \arrow[r] \arrow[d, "\delta_*"] & {(\mathbb{Z}/2)}^{g + 2s} \arrow[r] \arrow[d, "\delta_*"] & 0 \\
      	0 \arrow[r] & \mathbb{Z}^g \oplus (T/{\mathrm{im} d^4_{7,0}}) \arrow[r, "{(\times 2, id)}"] & \mathbb{Z}^g \oplus (T/{\mathrm{im} d^4_{7,0}}) \arrow[r] & {(\mathbb{Z}/2)}^g \arrow[r] & 0
      	\end{tikzcd}\] and \[\begin{tikzcd}
      	0 \arrow[r] & \mathbb{Z}^{g + 2s} \arrow[r] \arrow[d, "\delta_*"] & \Omega^{\mathrm{Spin}}_6({(\mathbb{CP}^{\infty})}^{g + 2s}) \arrow[r, "h"] \arrow[d, "\delta_*"] & H_{g + 2s} \arrow[r] \arrow[d, "\delta_*"] & 0 \\
      	0 \arrow[r] & \mathbb{Z}^g \oplus (T/{\mathrm{im} d^4_{7,0}}) \arrow[r] & \Omega^{\mathrm{Spin}}_6(K(H, 2)) \arrow[r, "h"] & \overline{H} \arrow[r] & 0
      	\end{tikzcd}\] where $\overline{H}$ denotes the group \[H_g \oplus H_6(K(T, 2)) \oplus (H_2({(\mathbb{CP}^{\infty})}^g) \otimes H_4(K(T, 2))) \oplus (H_4({(\mathbb{CP}^{\infty})}^g) \otimes H_2(K(T, 2))).\] The homomorphism $\delta_* : {(\mathbb{Z}/2)}^{g + 2s} \rightarrow {(\mathbb{Z}/2)}^g$ is the projection onto the first $g$ components and thus is surjective. Hence the middle vertical map in the first diagram, which is the left vertical map in the second diagram, is surjective. By Lemma \ref{lem:cyclic} and K\"unneth theorem, the homomorphism $\delta_* : H_{g+2s} \rightarrow \overline{H}$ is surjective (recall that $H$ has no 2-torsion). The proof is completed by 5-lemma.
      \end{proof}

    \section{The Torelli group of $M_g$}\label{sec:4}
      
      Recall that $M_g$ denotes the $g$-fold connected sum ${\#}^g(S^2 \times S^3)$. In this section we show that the map $\rho : \mathcal{I}(M_g) \rightarrow \Omega^{\mathrm{Spin}}_6({(\mathbb{CP}^{\infty})}^g)$ given by Corollary \ref{cor:forgetful} is an isomorphism. More precisely, (the isotopy classes of) the Dehn twists listed in Theorem \ref{thm:2} are in $\mathcal{I}(M_g)$ by the following Lemma \ref{lem:homologyeffect}, and we show that these Dehn twists are mapped to a basis of $\Omega^{\mathrm{Spin}}_6({(\mathbb{CP}^{\infty})}^g)$, by calculating the bordism invariants of their mapping torus constructions by explicit geometric considerations. As a byproduct we determine the action of $\mathcal{I}(M_g)$ on $\pi_3(M_g)$.
      
      \begin{Lem}\label{lem:homologyeffect}
      	Let $S : S^2 \times D^3 \hookrightarrow M$ be an embedding in a simply connected closed 5-manifold $M$, and let $\beta \in \pi_3(\mathrm{SO}(3))$. Then the Dehn twist $\mathbf{t}_{S, \beta}$ acts trivially on $H_*(M)$.
      \end{Lem}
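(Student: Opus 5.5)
We need to show that $\mathbf{t}_{S,\beta}$ acts as the identity on $H_*(M)$. Since $M$ is a simply connected closed oriented 5-manifold, $H_0$, $H_1$, $H_4$ (which is $\cong H^1 = 0$) and $H_5$ are handled trivially: any orientation preserving diffeomorphism fixes $[M]$, and the Dehn twist is orientation preserving because it is isotopic to the identity on each fibre $S^2 \times \{y\}$ through $\mathrm{SO}(3)$. By Poincaré duality and the universal coefficient theorem, $H_3(M) \cong H^2(M) \cong \mathrm{Hom}(H_2(M),\mathbb{Z})$. The action on $H_3$ is therefore determined by the action on $H_2$ and on the torsion linking data. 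In fact $f_*$ on $H^2$ is the dual of $f_*^{-1}$ on $H_2$ together with the Poincaré duality compatibility. So it suffices to prove that $(\mathbf{t}_{S,\beta})_*$ is the identity on $H_2(M)$.

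\textbf{Key step.} Let $U = S(S^2 \times D^3)$ and $C = M \setminus \mathring{U}$. Write $t = \mathbf{t}_{S,\beta}$; it is the identity on $C$. We use the Mayer–Vietoris-type argument via the difference class. For any class $z \in H_2(M)$, the cycle $t_*z - z$ is supported in $U$ relative to its boundary. More precisely, there is the standard variation map
\[
\mathrm{Var}: H_2(M, C) \cong H_2(U, \partial U) \to H_2(U) \to H_2(M), \qquad t_* z - z = \mathrm{Var}(j z),
\]
where $j : H_2(M) \to H_2(M, C)$. By excision and Lefschetz duality, $H_2(U, \partial U) \cong H^3(S^2 \times D^3) = 0$. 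Hence $t_*z = z$. The same argument works in every degree. The relevant groups are $H_k(U,\partial U) \cong H^{5-k}(S^2)$, which are nonzero only for $k = 3, 5$. So $t_* - \mathrm{id}$ could be nonzero only on $H_3$ (and on $H_5$, which is handled by the orientation argument).

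\textbf{Main obstacle: $H_3$.} On $H_3$ the variation factors through
\[
H_3(U,\partial U) \cong H^2(S^2) \cong \mathbb{Z} \to H_3(U) = 0,
\]
since $H_3(S^2 \times D^3) = H_3(S^2) = 0$. So the variation is zero there as well, and $t_* = \mathrm{id}$ on all of $H_*(M)$.

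The one point needing care is justifying the variation formula. I would do this at the chain level. Take a cycle $c$ in general position with respect to $\partial U$. Then $t(c) - c$ is a chain supported in $U$, and its boundary $t(\partial(c \cap U)) - \partial(c \cap U)$ vanishes, because $t$ is the identity near $\partial U$. So $t(c) - c$ is a cycle in $U$, and its class in $H_*(U)$ depends only on the relative class of $c \cap U$ in $H_*(U, \partial U)$. This yields the factorization through $H_*(U) = H_*(S^2)$, which vanishes in degrees $3$ and $4$; in degree $2$ the relative group already vanishes. Simple connectivity of $M$ is not even needed beyond the preliminary reductions.
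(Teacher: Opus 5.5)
Your argument is correct, but it takes a different route from the paper. The paper works only in degree $2$. Since $M$ is simply connected, Hurewicz and Whitney let every class of $H_2(M)=\pi_2(M)$ be represented by an embedded $2$-sphere. By general position ($2+2<5$) such a sphere misses the support of the twist, after shrinking the tube, which does not change the isotopy class. So $H_2(M)$ is fixed, and Poincar\'e duality ($H_3(M)\cong H^2(M)\cong \mathrm{Hom}(H_2(M),\mathbb{Z})$, $H_4(M)\cong H^1(M)=0$) propagates this to all of $H_*(M)$.

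You instead use the variation homomorphism $\mathrm{Var}\colon H_k(U,\partial U)\to H_k(U)$ and observe that it vanishes in every degree for degree reasons alone. Indeed $H_k(U,\partial U)\cong H^{5-k}(S^2)$ is nonzero only for $k=3,5$, while $H_k(U)\cong H_k(S^2)$ is nonzero only for $k=0,2$.

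This buys you more than the paper's argument. You need neither simple connectivity, nor Hurewicz, Whitney and general position, nor the duality bookkeeping; the case $k=5$ even gives orientation preservation for free. The conclusion also holds for any diffeomorphism supported in the interior of an embedded $S^2\times D^3$ in any $5$-manifold, not just Dehn twists.

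The price is the chain-level justification of the variation formula. Your sketch is adequate, and it is made rigorous by subdividing a cycle with respect to the cover $\{\mathring U,\, M\setminus U_0\}$, where $U_0\subset\mathring U$ is a compact set containing the support of the twist.

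Consequently your preliminary reduction to $H_2$ via Poincar\'e duality is superfluous. The remark about ``torsion linking data'' is also unnecessary, since $H_3(M)\cong\mathrm{Hom}(H_2(M),\mathbb{Z})$ is free when $H_1(M)=0$.
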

      
      \begin{proof}
      	Clearly $\mathbf{t}_{S, \beta}$ preserves the orientation of $M$. By Hurewicz theorem and Whitney's embedding theorem we may assume $H_2(M) = \pi_2(M)$ is generated by embeddings of $S^2$, which by transversality are disjoint with the image of $S$, and thus are fixed by $\mathbf{t}_{S, \beta}$. Hence $\mathbf{t}_{S, \beta}$ acts trivially on $H_2(M)$, and therefore acts trivially on $H_*(M)$ by Poincar\'e duality.
      \end{proof}
      
      In Section \ref{sec:3} we have seen that the bordism invariants \[\Omega^{\mathrm{Spin}}_6({(\mathbb{CP}^{\infty})}^g) \xrightarrow{\cong} \mathbb{Z}^{2g + 2{\binom{g}{2}} + {\binom{g}{3}}}\] are given by \begin{align*}
      	[N, \nu] \mapsto ( & \hat{A}_1([N, \nu]), \cdots, \hat{A}_g([N, \nu]), \\
      	& \langle {\nu}^*(x_1^3), [N] \rangle, \cdots, \langle {\nu}^*(x_g^3), [N] \rangle, \\
      	& \langle {\nu}^*({x_1}{x_2^2}), [N] \rangle, \cdots, \langle {\nu}^*({x_{g-1}}{x_g^2}), [N] \rangle, \\
      	& \langle \frac{{\nu}^*({x_1^2}{x_2}) - {\nu}^*({x_1}{x_2^2})}{2}, [N] \rangle, \cdots, \langle \frac{{\nu}^*({x_{g-1}^2}{x_g}) - {\nu}^*({x_{g-1}}{x_g^2})}{2}, [N] \rangle, \\
      	& \langle {\nu}^*({x_1}{x_2}{x_3}), [N] \rangle, \cdots, \langle {\nu}^*({x_{g-2}}{x_{g-1}}{x_g}), [N] \rangle).
      \end{align*} If $[N, \nu] = [{(M_g)}_f, {\nu}_f]$ where $f$ is a diffeomorphism whose isotopy class lies in $\mathcal{I}(M_g)$, then the cohomology class ${{\nu}_f}^*(x_k) \in H^2({(M_g)}_f)$ is the Kronecker dual of $i_*{S_k}_*([S^2])$, thus these cohomology classes form a basis of $H^2({(M_g)}_f)$. To calculate the bordism invariants it suffices to determine, with respect to this basis, the first Pontryagin class $p_1({(M_g)}_f)$ (or more precisely, the evaluation $\langle p_1({(M_g)}_f) \cup {{\nu}_f}^*(x_k), [{(M_g)}_f] \rangle$ for each $k$) and the cubic form \[H^2({(M_g)}_f) \times H^2({(M_g)}_f) \times H^2({(M_g)}_f) \rightarrow \mathbb{Z}\] defined by $(x, y, z) \rightarrow \langle x \cup y \cup z, [{(M_g)_f}] \rangle$.
      
    \subsection{The first Pontryagin class}
    
      First we calculate $\langle p_1({(M_g)}_f) \cup {{\nu}_f}^*(x_k), [{(M_g)}_f] \rangle$ when $f$ is a Dehn twist about an embedding of $S^2 \times D^3$. We shall need the following geometric observation which is an analogue of \cite[Lemma 7.11]{KS25}. The proof is evident.
      
      \begin{Lem}
      	Let $S : S^2 \times D^3 \hookrightarrow M$ be an embedding in a closed 5-manifold $M$, and let $\beta \in \pi_3(\mathrm{SO}(3))$. Let $\mathbf{t}_{S, \beta}$ be the Dehn twist about $S$ and $\beta$. Choose an embedding $\ell : I \hookrightarrow S^1$ and identity $S^2 \times D^4$ with the image of the embedding $S \times \ell : S^2 \times D^3 \times I \hookrightarrow M \times S^1$. Then:
      	\begin{itemize}
      		\item[(1)] We have \[M_{\mathbf{t}_{S, \beta}} = (M \times S^1 \setminus S^2 \times D^4) \cup_{\varphi} (S^2 \times D^4)\] where $\varphi : S^2 \times S^3 \rightarrow S^2 \times S^3$ is given by $(x, y) \mapsto (\beta(y) \cdot x, y)$.
      		
      		\item[(2)] Let $\xi_{\beta}$ be the 3-dimensional vector bundle over $S^4$ with clutching function $\beta$. Let $D(\xi_{\beta})$ denotes the total space of the associated unit disc bundle, and $\partial D(\xi_{\beta})$ denotes the total space of the associated unit sphere bundle. Then $\partial D(\xi_{\beta}) = (S^2 \times D^4) \cup_{\varphi} (S^2 \times D^4)$ and $M_{\mathbf{t}_{S, \beta}} = \partial ((M \times D^2) \cup_{S^2 \times D^4} D(\xi_{\beta}))$.
      	\end{itemize}
      \end{Lem}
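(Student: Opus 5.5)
For (1), I will write the mapping torus as $M_{\mathbf{t}_{S,\beta}} = M \times I/(x,0)\sim(\mathbf{t}_{S,\beta}(x),1)$ and compare it with $M \times S^1$. Let $\ell(I)=[a,b]\subset S^1$ be an interval away from the gluing point, and choose the parametrization of the mapping torus so that the twisting happens over $[a,b]$. Concretely, I will use an isotopy-invariant description: $M_{\mathbf{t}_{S,\beta}}$ is obtained from $M\times[0,1]$ by gluing the ends via $\mathbf{t}_{S,\beta}$. The map $\mathbf{t}_{S,\beta}$ is the identity off $S(S^2\times D^3)$, so $M_{\mathbf{t}_{S,\beta}}$ and $M\times S^1$ agree outside $S(S^2\times D^3)\times(\text{neighbourhood of the gluing point})$.

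Inside, I will cut $M\times S^1$ along the copy $S^2\times D^3\times I = S^2\times D^4$, where $D^3\times I$ is smoothed to a 4-disc. The remaining piece $M\times S^1\setminus S^2\times D^4$ is identical in both manifolds. The piece $S^2\times D^3\times I$ is reglued along $\partial(D^3\times I)=S^3$ by the map $(x,y)\mapsto(f(y)\cdot x,y)$. This map is the identity except on the face $D^3\times\{1\}$, where it is the twist by $f$. Since $f$ equals the identity near $\partial D^3$, this map is smooth and is precisely $\varphi$ with $\beta$ represented by $f$ extended by the identity over the rest of $S^3$. This proves (1), and the isotopy class of $\varphi$ depends only on $\beta$.

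For (2), the sphere bundle of $\xi_\beta$ is obtained by clutching two trivial bundles $S^2\times D^4$ over the hemispheres of $S^4$ along $S^3$ via $\beta$. That gives $\partial D(\xi_\beta)=(S^2\times D^4)\cup_\varphi(S^2\times D^4)$ directly from the definition of the clutching construction.

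Next I will form $X=(M\times D^2)\cup D(\xi_\beta)$. Here the attaching region $S^2\times D^4\subset \partial(M\times D^2)=M\times S^1$ is the one from (1). It is identified with the part of $\partial D(\xi_\beta)$ lying over one hemisphere, namely the restriction $D^3\times D^4$ of the disc bundle, whose boundary part is $S^2\times D^4$. In other words, $X$ is obtained by attaching a handle-like piece along a codimension-0 submanifold of the boundary, with corners rounded. Its boundary is then
\[(M\times S^1\setminus S^2\times D^4)\cup(\partial D(\xi_\beta)\setminus S^2\times D^4).\]
The second part is the other copy of $S^2\times D^4$, glued by $\varphi$. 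By (1), this boundary is exactly $M_{\mathbf{t}_{S,\beta}}$.

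The only delicate point is bookkeeping. I need to check that the gluing map on $\partial(S^2\times D^4)$ appears as $\varphi$ and not $\varphi^{-1}$, and that orientation conventions match. Either sign gives a diffeomorphic manifold after replacing $\beta$ by $-\beta$ consistently, so I will fix the convention by matching the identification used in (1).
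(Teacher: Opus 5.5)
Your argument is correct and is the cut-and-reglue plus clutching argument that the paper treats as evident; the paper gives no written proof and only notes the analogy with Kreck--Su's Lemma 7.11. One point to tidy is where $[a,b]$ sits: you first put it away from the gluing point and then speak of a neighbourhood of the gluing point. Either choice works, because reparametrizing the mapping torus (applying $\mathbf{t}_{S,\beta}$ on the part of $M\times[0,1]$ beyond $b$) exhibits $M_{\mathbf{t}_{S,\beta}}$ as $M\times S^1$ with $S(S^2\times D^3)\times[a,b]$ reglued by the identity on every face except $S(S^2\times D^3)\times\{b\}$, where the gluing is $\mathbf{t}_{S,\beta}^{\pm 1}$.
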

      
      Denote the oriented manifold $(M_g \times D^2) \cup_{S^2 \times D^4} D(\xi_{\beta})$ by $W_{S, \beta}$ so that ${(M_g)}_{\mathbf{t}_{S, \beta}} = \partial W_{S, \beta}$. The Mayer\textendash Vietoris sequence implies that $H_4(W_{S, \beta}) \cong \mathbb{Z}$, and it is generated by the fundamental class $[S^4]$ of the base of the bundle $\xi_{\beta}$ (realized as the zero section of $\xi_{\beta}$, thus a submanifold of $D(\xi_{\beta})$), and there is an exact sequence \[0 \rightarrow H_3(M_g) \xrightarrow{\cong} H_3(W_{S, \beta}) \xrightarrow{0} H_2(S^2) \xrightarrow{S_*} H_2(M_g) \rightarrow H_2(W_{S, \beta}) \rightarrow 0\] if $S_*$ is nontrivial. By dimensional reason the Lefschetz duality yields a commutative diagram \[\begin{tikzcd}
      H^4(W_{S, \beta}) \arrow[d, "\cong"] \arrow[r, "j^*"] & H^4({(M_g)}_{\mathbf{t}_{S, \beta}}) \arrow[d, "\cong"] \arrow[r] & H^5(W_{S, \beta}, {(M_g)}_{\mathbf{t}_{S, \beta}}) \arrow[d, "\cong"] \\
      H_3(W_{S, \beta}, {(M_g)}_{\mathbf{t}_{S, \beta}}) \arrow[r] & H_2({(M_g)}_{\mathbf{t}_{S, \beta}}) \arrow[r] & H_2(W_{S, \beta}) \\
      & H_2(M_g) \arrow[u, "\cong"', "i_*"] \arrow[ur] &
      \end{tikzcd}\] where $j$ denotes the inclusion map. Note that $H^4(W_{S, \beta}) \cong \mathbb{Z}$ and is generated by ${[S^4]}^*$, the Kronecker dual of $[S^4]$, so the commutative diagram implies that the Poincar\'e dual of $i_*S_*([S^2])$ is $\pm j^*({[S^4]}^*)$. In fact the ambiguous sign ``$\pm$" can be removed.
      
      \begin{Lem}\label{lem:dual}
      	The Poincar\'e dual of $i_*S_*([S^2])$ is $j^*({[S^4]}^*)$.
      \end{Lem}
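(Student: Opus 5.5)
The sign can be fixed by computing a single intersection number. By the diagram above, the Poincaré dual $\mathrm{PD}(i_*S_*[S^2])$ lies in $H^4((M_g)_{\mathbf{t}_{S,\beta}})$. This group pairs perfectly, modulo torsion (here it is free), with $H_4((M_g)_{\mathbf{t}_{S,\beta}})$. So it suffices to find one class $\Sigma \in H_4((M_g)_{\mathbf{t}_{S,\beta}})$ with
\[\langle j^*([S^4]^*), \Sigma\rangle = \langle \mathrm{PD}(i_*S_*[S^2]), \Sigma\rangle = \Sigma \cdot i_*S_*[S^2] = \pm 1,\]
and then check that the two signs agree. The unsigned equality $\mathrm{PD}(i_*S_*[S^2]) = \pm j^*([S^4]^*)$ is already known, so one nonzero evaluation determines the sign.

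For $\Sigma$ we take a geometric dual of the core sphere inside the region where the twist is supported. Choose a point $x_0 \in S^2$ and consider the $4$-disc $\{x_0\} \times D^4 \subset S^2 \times D^4 = S(S^2 \times D^3) \times \ell \subset M_g \times S^1$. Its boundary $\{x_0\}\times S^3$ bounds in the complement precisely because $S_*[S^2]$ has a dual $3$-class. Concretely, take $T = \{pt\}\times S^3$ in the relevant $S^2\times S^3$ summand (shifted off $\ell$). Then the class $\Sigma = T \times [S^1]$ in $M_g\times S^1$ restricted appropriately, or rather its image in the mapping torus built from the fiberwise class $[T]\in H_3(M_g)$ swept around the circle, gives a class in $H_4((M_g)_{\mathbf{t}_{S,\beta}})$. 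This class exists by the Wang sequence, since $\mathbf{t}_{S,\beta}$ acts trivially on $H_3$ by Lemma~\ref{lem:homologyeffect}. Its intersection with the fiber sphere $i_*S_*[S^2]$ equals the intersection number $T\cdot S(S^2)=\pm1$ in $M_g$, with orientations compared via the product orientation $M\times S^1$.

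Next we compute $\langle j^*([S^4]^*),\Sigma\rangle = \langle [S^4]^*, j_*\Sigma\rangle$, which amounts to writing $j_*\Sigma \in H_4(W_{S,\beta}) \cong \mathbb{Z}[S^4]$ as a multiple of $[S^4]$. This multiple is the intersection number in $W_{S,\beta}$ of $j_*\Sigma$ with a relative class dual to $[S^4]$. For that dual we take the fiber disc $D^3_{pt}$ of $D(\xi_\beta)$ extended into $M_g\times D^2$ as $S(\{x\}\times D^3)$-type material; equivalently, we use the Lefschetz-duality square already drawn. Tracing that square, $[S^4]^*$ corresponds to the relative class whose boundary is $i_*S_*[S^2]$, and that boundary is a fiber $S^2$ of $\partial D(\xi_\beta)$. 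Pushing $\Sigma$ to intersect this relative $3$-chain shows that the pairing equals the intersection number of $\Sigma$ with that boundary sphere inside $\partial W$. By the boundary formula for intersection forms under the outward-normal-first convention, this is the same as the number computed in the previous paragraph.

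The main obstacle is orientation bookkeeping. We have to check that the orientation of $W_{S,\beta}$ induced from $M_g\times D^2$ (with $\partial$ given outward normal first), the orientation of $D(\xi_\beta)$ and $S^4$ compatible with the gluing along $S^2\times D^4$, and the Lefschetz-duality sign conventions all produce $+1$ and not $-1$. The plan is to fix explicit orientations on $S^2\times D^4$ coming from $S^2\times D^3\times I$, verify them once at a point where all the chains meet transversally, and observe that the sign is independent of $\beta$. Taking $\beta=0$ reduces the check to the untwisted model $S^2\times S^4\setminus$ a neighbourhood, where it is a direct local computation.
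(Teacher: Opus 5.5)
Your argument is essentially the paper's proof. The fiber disc $\bar\ell(D^3)$ of $D(\xi_\beta)$, which meets $S^4$ once and is bounded by a sphere isotopic to $i\circ S$, computes $j_*$ of a pushed-in closed $4$-cycle dual to ${\nu_{\mathbf{t}_{S,\beta}}}^*(x_k)$; your swept $T_k$ is homologous to the paper's Thom submanifold $N_k$. The only difference is organisational: you invoke the already-known $\pm$ and test on one class, whereas the paper evaluates on the whole dual basis and obtains $j^*([S^4]^*)\cap[(M_g)_{\mathbf{t}_{S,\beta}}]=i_*S_*([S^2])$ outright.

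Two small corrections. First, $T_k\cdot S(S^2)=d_k$ rather than $\pm1$ (it is $2$ for $S_k\# S_k$), so pick $k$ with $d_k\neq 0$. Second, the Lefschetz square by itself only gives the boundary of the dual relative class up to sign, so your sentence "tracing that square" is circular as written. The $+$ actually rests on the fiber-disc orientation check ($\bar\ell(D^3)\cdot S^4=+1$ with boundary oriented as $\ell$), which the paper also asserts without detail.
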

      
      We postpone the proof of this lemma to the end of this subsection and proceed. Suppose $[S] = d_1[S_1] + \cdots + d_g[S_g] \in \pi_2(M_g)$, then \[i_*S_*([S^2]) = \sum_{k = 1}^{g} d_ki_*{S_k}_*([S^2]).\] Recall that $\alpha \in \pi_3(\mathrm{SO}(3))$ is the preferred generator, suppose $\beta = n\alpha$, then \[\langle p_1(W_{S, \beta}), [S^4] \rangle = \langle p_1(D(\xi_{\beta})), [S^4] \rangle = \langle p_1(\xi_{\beta}), [S^4] \rangle = 4n\] since $H^*(W_{S, \beta})$ has no 2-torsion. Thus \[p_1({(M_g)}_{\mathbf{t}_{S, \beta}}) = j^*(p_1(W_{S, \beta})) = 4nj^*({[S^4]}^*).\] Since ${{\nu}_{\mathbf{t}_{S, \beta}}}^*(x_k)$ is the Kronecker dual of $i_*{S_k}_*([S^2])$, Lemma \ref{lem:dual} implies the following.
      
      \begin{Lem}\label{lem:pontryaginclass}
      	For $S$ and $\beta$ as above and for each $k$, we have \[\langle p_1({(M_g)}_{\mathbf{t}_{S, \beta}}) \cup {{\nu}_{\mathbf{t}_{S, \beta}}}^*(x_k), [{(M_g)}_{\mathbf{t}_{S, \beta}}] \rangle = 4nd_k.\]
      \end{Lem}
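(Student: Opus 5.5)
The plan is to combine the computation of $p_1$ just carried out with Lemma \ref{lem:dual}, and then evaluate on the basis $i_*{S_k}_*([S^2])$. Since $H^*(W_{S, \beta})$ has no 2-torsion and the stable tangent bundle of the boundary is the restriction of that of $W_{S, \beta}$ (the normal bundle of $\partial W$ in $W$ is trivial), we have $p_1({(M_g)}_{\mathbf{t}_{S, \beta}}) = j^*(p_1(W_{S, \beta}))$. Because $H^4(W_{S, \beta}) \cong \mathbb{Z}$ is generated by ${[S^4]}^*$ and $\langle p_1(W_{S, \beta}), [S^4] \rangle = 4n$, this gives $p_1(W_{S, \beta}) = 4n{[S^4]}^*$ and hence $p_1({(M_g)}_{\mathbf{t}_{S, \beta}}) = 4nj^*({[S^4]}^*)$, exactly as recorded before the statement.

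By Lemma \ref{lem:dual}, $j^*({[S^4]}^*)$ is the Poincar\'e dual of $i_*S_*([S^2]) = \sum_l d_l\, i_*{S_l}_*([S^2])$. Write $N = {(M_g)}_{\mathbf{t}_{S, \beta}}$ and let $PD$ denote Poincar\'e duality on $N$. Then
\[\langle p_1(N) \cup {\nu_{\mathbf{t}_{S, \beta}}}^*(x_k), [N] \rangle = 4n \langle {\nu_{\mathbf{t}_{S, \beta}}}^*(x_k), PD^{-1}(j^*{[S^4]}^*) \rangle = 4n \sum_l d_l \langle {\nu_{\mathbf{t}_{S, \beta}}}^*(x_k), i_*{S_l}_*([S^2]) \rangle.\]
Here we use $\langle a \cup b, [N] \rangle = \pm\langle b, a \cap [N] \rangle$, and the sign is $+1$ since $a$ has even degree. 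Since ${\nu_{\mathbf{t}_{S, \beta}}}^*(x_k)$ is the Kronecker dual of $i_*{S_k}_*([S^2])$ with respect to the basis $\{i_*{S_l}_*([S^2])\}$, the last pairing is $\delta_{kl}$, so the sum equals $4nd_k$.

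The case where $S_*$ is trivial, that is, all $d_k = 0$, needs a separate check, since the exact sequence above assumed $S_*$ nontrivial. In that case I would argue directly. The relevant class is the Poincar\'e dual of $i_*S_*[S^2] = 0$; alternatively, $S$ is null-homotopic, so it is isotopic to an embedding inside a disc and $\mathbf{t}_{S,\beta}$ is supported in a disc. In either form $p_1 \cup \nu^*(x_k)$ evaluates to $0$, matching $4nd_k = 0$. I would simply check that Lemma \ref{lem:dual}'s conclusion still holds here, or treat this case separately along these lines.

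The main obstacle is not this lemma itself, which is formal given the preceding discussion, but keeping the signs and orientations consistent. One must be sure that $p_1(N)$ is genuinely $j^*$ of $p_1(W_{S, \beta})$ with the boundary orientation, and that Lemma \ref{lem:dual} fixes the sign. Since the statement explicitly relies on Lemma \ref{lem:dual}, I take the sign as settled there, and the remaining argument is the pairing computation above.
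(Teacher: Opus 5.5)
Your argument is correct and follows the paper's route: $p_1$ of the mapping torus is $j^*(p_1(W_{S,\beta})) = 4n\,j^*([S^4]^*)$. By Lemma \ref{lem:dual} this class is Poincar\'e dual to $\sum_l d_l\, i_*{S_l}_*([S^2])$, and pairing it with the Kronecker-dual class ${\nu_{\mathbf{t}_{S,\beta}}}^*(x_k)$ gives $4nd_k$. Your separate treatment of the case $S_* = 0$ does no harm but is not needed, since the intersection-number proof of Lemma \ref{lem:dual} never uses $S_* \neq 0$; in that case it simply yields $j^*([S^4]^*) = 0$.
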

      
      Finally we prove Lemma \ref{lem:dual} and thus complete the calculation of the Pontryagin class.
      
      \begin{proof}[Proof of Lemma \ref{lem:dual}]
      	Still suppose $[S] = d_1[S_1] + \cdots + d_g[S_g]$. Note that the embedding $\ell : S^2 = S^2 \times \{*\} \hookrightarrow S^2 \times D^4 \hookrightarrow {(M_g)}_{\mathbf{t}_{S, \beta}}$ is isotopic to $i \circ S$, so we have ${\ell}_*([S^2]) = \sum_{k = 1}^{g} d_ki_*{S_k}_*([S^2])$. By a well-known result of Thom \cite[Theorem II. 27]{Tho54}, the Poincar\'e dual of ${{\nu}_{\mathbf{t}_{S, \beta}}}^*(x_k)$ can be represented by the fundamental class of a 4-dimensional closed submanifold $N_k \subset {(M_g)}_{\mathbf{t}_{S, \beta}}$ for each $k$. The intersection number of submanifolds $\ell(S^2)$ and $N_k$ is $d_k$. Note that $\ell$ extends to an embedding $\overline{\ell} : D^3 \hookrightarrow D(\xi_{\beta}) \hookrightarrow W_{S, \beta}$ of a fiber of the unit disc bundle. Using a collar neighborhood we may isotope $N_k$ into the interior of $W_{S, \beta}$ and obtain a closed submanifold $N_k' \subset W_{S, \beta}$, whose fundamental class represents $j_*([N_k]) \in H_4(W_{S, \beta})$. Then the intersection number of $\overline{\ell}(D^3)$ and $N_k'$ is $d_k$. The intersection number of $\overline{\ell}(D^3)$ and the base $S^4$ is 1, so ${\overline{\ell}}_*([D^3, S^2]) \in H_3(W_{S, \beta}, {(M_g)}_{\mathbf{t}_{S, \beta}})$ is the Lefschetz dual of ${[S^4]}^*$, thus $j_*([N_k]) = d_k[S^4]$. It follows that \begin{align*}
      		d_k = \langle {[S^4]}^*, j_*([N_k]) \rangle & = \langle {{\nu}_{\mathbf{t}_{S, \beta}}}^*(x_k) \cup j^*({[S^4]}^*), [{(M_g)}_{\mathbf{t}_{S, \beta}}] \rangle \\
      		& = \langle {{\nu}_{\mathbf{t}_{S, \beta}}}^*(x_k), j^*({[S^4]}^*) \cap [{(M_g)}_{\mathbf{t}_{S, \beta}}] \rangle,
      	\end{align*} thus \[j^*({[S^4]}^*) \cap [{(M_g)}_{\mathbf{t}_{S, \beta}}] = \sum_{k = 1}^{g} d_ki_*{S_k}_*([S^2]) = i_*S_*([S^2])\] by the fact that ${{\nu}_{\mathbf{t}_{S, \beta}}}^*(x_k)$ is the Kronecker dual of $i_*{S_k}_*([S^2])$.
      \end{proof}
    
    \subsection{The cubic form}\label{4.2}
    
      In this subsection we calculate the cubic form on $H^2({(M_g)}_{\mathbf{t}_{S, \alpha}})$ induced by cup product, where ${\mathbf{t}_{S, \alpha}}$ is one of the Dehn twists listed in Theorem \ref{thm:2}. We begin by sketching the strategy to make the calculation a bit more transparent. A diffeomorphism of $M_g$ is homotopic to a cellular map and we denote by $f$ its restriction on the 3-skeleton $\vee_g(S^2 \vee S^3)$ of $M_g$. The map $f$ acts trivially on $H_*(\vee_g(S^2 \vee S^3))$ if the original diffeomorphism acts trivially on $H_*(M_g)$. The CW structure of the mapping torus $(\vee_g(S^2 \vee S^3))_f$, which is determined by the action of $f$ on $\pi_3(\vee_g(S^2 \vee S^3))$, enables us to calculate the cup product \[H^2((\vee_g(S^2 \vee S^3))_f) \times H^2((\vee_g(S^2 \vee S^3))_f) \xrightarrow{\cup} H^4((\vee_g(S^2 \vee S^3))_f)\] using Hopf invariant. This determines the cubic form on the cohomology of the mapping torus of the original diffeomorphism.
      
    \subsubsection{Action of Dehn twists on $\pi_3(M_g)$}
      
      For $k = 1, \cdots, g$ let $T_k : S^3 \hookrightarrow M_g$ be the standard embedding of $S^3$ in the $k$-th $S^2 \times S^3$ summand. The homotopy group \[\pi_3(M_g) = \pi_3(\vee_g(S^2 \vee S^3)) \cong \mathbb{Z}^{2g + {\binom{g}{2}}}\] is freely generated by $[S_1 \circ \eta], \cdots, [S_g \circ \eta], [T_1], \cdots, [T_g], [S_1, S_2], \cdots, [S_{g_1}, S_g]$. Here $\eta$ denotes the Hopf map and $[S_k, S_l]$ denotes the Whitehead product of $[S_k]$ and $[S_l]$. Let $f$ be a self-map of $M_g$ acting trivially on homology, then $f \circ S_k \simeq S_k$ for each $k$, thus $f$ acts trivially on $[S_k \circ \eta]$ and $[S_k, S_l]$. Now we suppose $f$ is a Dehn twist about a 2-sphere. The action of such $f$ is given by the following lemma.
      
      \begin{Lem}\label{lem:homotopyeffect}
      	Let $M$ be a compact oriented 5-manifold, $S : S^2 \times D^3 \hookrightarrow M$ and $T : S^3 \hookrightarrow M$ be embeddings in the interior of $M$, and let $\mathbf{t}_{S, \beta}$ be the Dehn twist about $S$ and $\beta \in \pi_3(\mathrm{SO}(3))$. Suppose the intersection number of $S(S^2)$ and $T(S^3)$ is $n$. Then \[{\mathbf{t}_{S, \beta}}_*([S]) = [S], \quad {\mathbf{t}_{S, \beta}}_*([T]) = \frac{n}{4}\langle p_1(\xi_{\beta}), [S^4] \rangle \cdot [S \circ \eta] + [T],\] where $\xi_{\beta}$ is the 3-dimensional vector bundle over $S^4$ with clutching function $\beta$, and $\eta : S^3 \rightarrow S^2$ is the Hopf map.
      \end{Lem}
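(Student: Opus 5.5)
The first identity is immediate: the Dehn twist $\mathbf{t}_{S,\beta}$ restricts on the core sphere $S(S^2\times\{y\})$ to $x\mapsto f(y)\cdot x$. Take $y$ near $\partial D^3$, where $f(y)$ is the identity. Then $\mathbf{t}_{S,\beta}\circ S|_{S^2\times\{y\}}=S|_{S^2\times\{y\}}$, and this sphere is isotopic to the core. Hence ${\mathbf{t}_{S,\beta}}_*([S])=[S]$.

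For $[T]$ I will put $T$ in general position with respect to $S$. By transversality I can arrange that $T(S^3)$ meets $S(S^2\times D^3)$ in exactly $m$ fibres $S(\{x_j\}\times D^3)$, $j=1,\dots,m$, with signs $\epsilon_j=\pm1$ and $\sum\epsilon_j=n$. Outside these discs the twist is the identity. On the $j$-th disc, $\mathbf{t}_{S,\beta}\circ T$ is given by $y\mapsto S(f(y)\cdot x_j,y)$.

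Thus $\mathbf{t}_{S,\beta}\circ T$ differs from $T$ only on $m$ small $3$-discs $D_j$. On each $D_j$ the two maps agree on $\partial D_j$. The difference element, obtained by gluing the two maps along $\partial D_j$, is the map $S^3=D^3\cup D^3\to S(S^2\times D^3)\simeq S^2$ given by $y\mapsto f(y)\cdot x_j$ on one half and constant on the other. Up to the orientation $\epsilon_j$, this is the image under the evaluation map $\mathrm{ev}_*:\pi_3(\mathrm{SO}(3))\to\pi_3(S^2)$, $\gamma\mapsto\gamma\cdot x_j$, composed with $S$.

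Since difference elements are additive, and all these classes are based via paths in the simply connected region near $S$ (or via the obvious basing if $\pi_1(M)\neq0$), I get
\[{\mathbf{t}_{S,\beta}}_*([T])=[T]+n\,S_*(\mathrm{ev}_*\beta).\]
More precisely, $\mathbf{t}_{S,\beta}\circ T$ is homotopic to the pinch map $S^3\to S^3\vee\bigvee_j S^3$ followed by $T\vee\bigvee_j \epsilon_j\, S\circ\mathrm{ev}(\beta)$. The sign check is the routine orientation bookkeeping at each intersection point.

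It remains to identify $\mathrm{ev}_*:\pi_3(\mathrm{SO}(3))\cong\mathbb{Z}\to\pi_3(S^2)\cong\mathbb{Z}$, and this is the main obstacle. I will use the fibration $\mathrm{SO}(2)\to\mathrm{SO}(3)\to S^2$, whose projection is exactly $\mathrm{ev}$. Its homotopy exact sequence shows that $\mathrm{ev}_*$ is an isomorphism on $\pi_3$, because $\pi_3(S^1)=\pi_2(S^1)=0$. Hence $\mathrm{ev}_*(\alpha)=\pm[\eta]$.

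Fixing the sign and the normalization amounts to checking that for the preferred generator $\alpha$ (with $p_1(\xi_\alpha)=4$) we have $\mathrm{ev}_*\alpha=[\eta]$ with a plus sign. I would do this with the double cover $S^3\to\mathrm{SO}(3)$, which is an isomorphism on $\pi_3$, composed with the projection to $S^2$; that composite is the Hopf map. I then need that the generator of $\pi_3(\mathrm{SO}(3))$ coming from $S^3$ is the clutching function of a bundle with $|p_1|=4$. This holds because the associated $3$-plane bundle is the adjoint bundle of the quaternionic line bundle, with $p_1=-4c_2=\pm4$.

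Linearity in $\beta$, together with $\beta=\frac14\langle p_1(\xi_\beta),[S^4]\rangle\alpha$, then gives the formula. Any residual sign is absorbed by the convention chosen for $\alpha$ and the orientation of $\eta$.
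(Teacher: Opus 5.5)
Your proposal is correct and follows the paper's proof. As in the paper, you localize the change of $[T]$ at the intersection points, where each point contributes $S\circ\mathrm{ev}(\beta)$ for the evaluation map $\mathrm{ev}\colon\mathrm{SO}(3)\to S^2$. You then use, as the paper does, that $\mathrm{ev}_*$ is an isomorphism on $\pi_3$ sending $\alpha$ to $[\eta]$, together with $\beta\mapsto\frac{1}{4}\langle p_1(\xi_\beta),[S^4]\rangle$.

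You differ in two small ways. First, you replace the paper's Whitney-trick reduction to exactly $n$ intersection points by a signed transverse count plus additivity of difference elements. This is cleaner and avoids the Whitney trick, whose standard hypotheses involve $\pi_1(M\setminus T(S^3))$ because $T(S^3)$ has codimension $2$. Second, you give arguments for the facts about $\pi_3(\mathrm{SO}(3))$ that the paper only cites.

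Two caveats, both shared with the paper:
1. The sign $\mathrm{ev}_*\alpha=+[\eta]$ is fixed by the orientation conventions of the clutching construction. It cannot be freely ``absorbed'' into $\alpha$, whose sign is already fixed by $p_1(\xi_\alpha)=4$.
2. Your parenthetical about basing when $\pi_1(M)\neq 0$ does not work. Contributions at different intersection points can differ by the $\pi_1(M)$-action, so the integer formula really needs $M$ simply connected. That condition holds in the application to $M_g$.
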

      
      \begin{proof}
      	Using Whitney trick we may assume $S(S^2)$ and $T(S^3)$ intersect at $n$ points transversally. The lemma follows from the facts that there is an isomorphism $\pi_3(\mathrm{SO}(3)) \rightarrow \mathbb{Z}$ given by $\beta \mapsto \frac{1}{4}\langle p_1(\xi_{\beta}), [S^4] \rangle$, and the effect of $\mathbf{t}_{S, \beta}$ on $S(S^2)$ is encoded by a map $\mathrm{SO}(3) \rightarrow S^2$ (defined in, for example, \cite[page 115]{Ste51}) which induces an isomorphism $\pi_3(\mathrm{SO}(3)) \rightarrow \pi_3(S^2)$ mapping the preferred generator $\alpha$ to $[\eta]$.
      \end{proof}
      
      Now we determine the action of the Dehn twists listed in Theorem \ref{thm:2} on $\pi_3(M_g)$. It suffices to determine their actions on $[T_1], \cdots, [T_g]$. By Lemma \ref{lem:homotopyeffect} the only nontrivial actions are as follows:
      \begin{align*}
      	\mathbf{t}_{S_k, \alpha} : \; & [T_k] \mapsto [S_k \circ \eta] + [T_k]; \\
      	\mathbf{t}_{S_k \# S_k, \alpha} : \; & [T_k] \mapsto 2[(S_k \# S_k) \circ \eta] + [T_k]; \\
      	\mathbf{t}_{S_k \# S_l, \alpha} : \; & [T_k] \mapsto [(S_k \# S_l) \circ \eta] + [T_k], \\
      	& [T_l] \mapsto [(S_k \# S_l) \circ \eta] + [T_l]; \\
      	\mathbf{t}_{S_k \# S_k \# S_l, \alpha} : \; & [T_k] \mapsto 2[(S_k \# S_k \# S_l) \circ \eta] + [T_k], \\
      	& [T_l] \mapsto [(S_k \# S_k \# S_l) \circ \eta] + [T_l]; \\
      	\mathbf{t}_{S_k \# S_l \# S_h, \alpha} : \; & [T_k] \mapsto [(S_k \# S_l \# S_h) \circ \eta] + [T_k], \\
      	& [T_l] \mapsto [(S_k \# S_l \# S_h) \circ \eta] + [T_l], \\
      	& [T_h] \mapsto [(S_k \# S_l \# S_h) \circ \eta] + [T_h].
      \end{align*}
      
      Consider the embedding $({\#}^{d_1} S_1) \# \cdots \# ({\#}^{d_g} S_g) : S^2 \hookrightarrow M_g$ where $d_k \neq 0$ for some $k$. We calculate the homotopy class $[\bigl(({\#}^{d_1} S_1) \# \cdots \# ({\#}^{d_g} S_g)\bigr) \circ \eta] \in \pi_3(M_g)$ using (the geometric interpretation of) Hopf invariant. Recall that the Hopf invariant $\pi_3(S^2) \rightarrow \mathbb{Z}$ is an isomorphism mapping $[\eta]$ to 1, and, given a map $f$, choose two regular values $x$, $y$ of a smooth approximation of $f$, let $X$ and $Y$ be the preimage of $x$ and $y$ respectively, then the Hopf invariant of $f$ is equal to the linking number of $X$ and $Y$.
      
      Note that for embeddings $S, \, S' : S^2 \hookrightarrow M_g$ we have $[S \# S'] = [S] + [S'] \in \pi_2(M_g)$, since we may fill in the ``neck" of the connected sum (which is formed by thickening a curve joining $S(S^2)$ and $S'(S^2)$ and taking the boundary) with a 3-disc and collapse it into a point, this defines a homotopy equivalence of $M_g$ which is homotopic to $id_{M_g}$, and the composition of $S \# S'$ and this homotopy equivalence is a map representing the homotopy class $[S] + [S']$. Thus it suffices to determine the homotopy class of the map \[S^3 \xrightarrow{\eta} S^2 \xrightarrow{\pi_{d_1 + \cdots + d_g}} {(\vee_{d_1} S^2)} \vee \cdots \vee {(\vee_{d_g} S^2)} \xrightarrow{{(\vee_{d_1} id)} \vee \cdots \vee {(\vee_{d_g} id)}} \vee_g S^2 \xrightarrow{S_1 \vee \cdots \vee S_g} M_g\] where $\pi_d$ denotes the quotient map $S^2 \rightarrow \vee_d S^2$ for each $d > 0$. A concrete construction of $\pi_d$ is given by choosing $d$ disjoint embedded 2-discs in $S^2$ and collapsing the complement of the interior of these discs into a point.
      
      We shall first determine the homotopy class $[\pi_d \circ \eta] \in \pi_3(\vee_d S^2)$. Let $i_k : S^2 \hookrightarrow \vee_d S^2$ be the inclusion of the $k$-th copy of $S^2$, then the homotopy group $\pi_3(\vee_d S^2)$ is freely generated by $[i_1 \circ \eta], \cdots, [i_d \circ \eta]$ and the Whitehead products $[i_1, i_2], \cdots, [i_{d-1}, i_d]$. Suppose \[[\pi_d \circ \eta] = \sum_{1 \leq k \leq d} a_k[i_k \circ \eta] + \sum_{1 \leq k < l \leq d} a_{k,l}[i_k, i_l].\] For each $k$ there is a commutative diagram \[\begin{tikzcd}
      	& & S^2 \\
      	S^3 \arrow[r, "\eta"] & S^2 \arrow[r, "\pi_d"] \arrow[ur, "p_k"] & \vee_d S^2 \arrow[u, "q_k"']
      \end{tikzcd}\] where $q_k$ is the map collapsing the complement of $i_k(S^2)$ into a point, and $p_k$ is the map collapsing the corresponding discs into a point. Note that $p_k$ is homotopic to $id_{S^2}$. Since the induced homomorphism ${q_k}_* : \pi_3(\vee_d S^2) \rightarrow \pi_3(S^2)$ maps $[i_k \circ \eta]$ to $[\eta]$ and maps other generators to zero, by the commutative diagram we have $a_k = 1$ for each $k$. We furthermore consider for each $k < l$ the following commutative diagram \[\begin{tikzcd}
      & & S^2 \vee S^2 \arrow[r, "id \vee id"] & S^2 \\
      S^3 \arrow[r, "\eta"] & S^2 \arrow[r, "\pi_d"] \arrow[ur, "p_{k,l}"] & \vee_d S^2 \arrow[u, "q_{k,l}"'] &
      \end{tikzcd}\] where $q_{k,l}$ is the map collapsing the complement of $i_k(S^2) \vee i_l(S^2)$ into a point, and $p_{k,l}$ is the map collapsing the corresponding discs into a point. We note that $p_{k,l}$ is homotopic to the quotient map $\pi_2 : S^2 \rightarrow S^2 \vee S^2$. The induced homomorphism ${q_{k,l}}_* : \pi_3(\vee_d S^2) \rightarrow \pi_3(S^2 \vee S^2)$ maps $[\pi_d \circ \eta]$ to $[i_k \circ \eta] + [i_l \circ \eta] + a_{k,l}[i_k, i_l]$, the latter homotopy class is mapped by ${(id \vee id)}_*$ to $2[\eta] + a_{k,l}[id, id] = (2 + 2a_{k,l})[\eta]$. Using the aforementioned geometric interpretation, we see that the Hopf invariant of $[(\vee_d id) \circ \pi_d \circ \eta]$ is $d^2$, thus $[(id \vee id) \circ \pi_2 \circ \eta] = 4[\eta]$. Therefore we have $a_{k,l} = 1$. Thus \[[\pi_d \circ \eta] = \sum_{1 \leq k \leq d} [i_k \circ \eta] + \sum_{1 \leq k < l \leq d} [i_k, i_l].\] It is then clear that the homomorphism \[({(\vee_{d_1} id)} \vee \cdots \vee {(\vee_{d_g} id)})_* : \pi_3({(\vee_{d_1} S^2)} \vee \cdots \vee {(\vee_{d_g} S^2)}) \rightarrow \pi_3(\vee_g S^2)\] maps $[\pi_{d_1 + \cdots + d_g} \circ \eta]$ to the homotopy class \[d_1[i_1 \circ \eta] + \cdots + d_g[i_g \circ \eta] + \binom{d_1}{2}[i_1, i_1] + \cdots + \binom{d_g}{2}[i_g, i_g] + d_1d_2[i_1, i_2] + \cdots + d_{g-1}d_g[i_{g-1}, i_g]\] which is \[\sum_{1 \leq k \leq g} d_k^2[i_k \circ \eta] + \sum_{1 \leq k < l \leq g} d_kd_l[i_k, i_l]\] since $[i_k, i_k] = 2[i_k \circ \eta]$. The obvious effect of the induced homomorphism of $S_1 \vee \cdots \vee S_g$ proves the following.
      
      \begin{Lem}\label{lem:homotopyclass}
      	We have \[[\bigl(({\#}^{d_1} S_1) \# \cdots \# ({\#}^{d_g} S_g)\bigr) \circ \eta] = \sum_{1 \leq k \leq g} d_k^2[S_k \circ \eta] + \sum_{1 \leq k < l \leq g} d_kd_l[S_k, S_l].\]
      \end{Lem}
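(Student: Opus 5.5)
The plan is to reduce the computation to a wedge of 2-spheres and then push forward along $S_1 \vee \cdots \vee S_g$. This is essentially the argument sketched just before the statement, organized in three steps.

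\textbf{Step 1: reduce to a pinch map.} First I check that $[S \# S'] = [S] + [S'] \in \pi_2(M_g)$. Collapsing a 3-disc that fills the neck gives a self-map of $M_g$ homotopic to the identity, and it carries the connected sum to the sum of the two maps. Applying this repeatedly, the map $\bigl((\#^{d_1} S_1) \# \cdots \# (\#^{d_g} S_g)\bigr) \circ \eta$ is homotopic to
\[(S_1 \vee \cdots \vee S_g) \circ \bigl((\vee_{d_1} id) \vee \cdots \vee (\vee_{d_g} id)\bigr) \circ \pi_d \circ \eta, \qquad d = d_1 + \cdots + d_g,\]
where $\pi_d$ is the pinch map $S^2 \to \vee_d S^2$. (Negative $d_k$ are handled by composing with a reflection and using $[\bar{S} \circ \eta] = [S \circ \eta]$; the final formula is even in each sign.) So it suffices to compute $[\pi_d \circ \eta]$ in the basis of $\pi_3(\vee_d S^2)$ given by the $[i_k \circ \eta]$ and the Whitehead products $[i_k, i_l]$ with $k < l$ (Hilton's theorem).

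\textbf{Step 2: compute $[\pi_d \circ \eta]$.} This is the main computational step. Write $[\pi_d \circ \eta] = \sum_k a_k [i_k \circ \eta] + \sum_{k<l} a_{k,l} [i_k, i_l]$.
\begin{itemize}
\item To find $a_k$, collapse all spheres except the $k$-th. The composite $S^2 \to S^2$ is homotopic to the identity, so $a_k = 1$.
\item To find $a_{k,l}$, collapse onto $i_k \vee i_l$ and then fold by $id \vee id$. The fold sends $[i_k, i_l]$ to $[id, id] = 2[\eta]$, so the image is $(2 + 2a_{k,l})[\eta]$.
\end{itemize}
Compare this with a direct computation: $(id \vee id) \circ \pi_2$ has degree $2$, and the geometric description of the Hopf invariant shows it scales by the square of the degree. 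Preimages of regular values are two disjoint fibres of $\eta$, linked pairwise once, which gives $4$. Hence $a_{k,l} = 1$.

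\textbf{Step 3: push forward.} Applying the folds $\vee_{d_k} id$ gives $\sum_k d_k [i_k \circ \eta]$, plus $\binom{d_k}{2} [i_k, i_k]$ for each $k$, plus $d_k d_l [i_k, i_l]$ for $k < l$. Using $[i_k, i_k] = 2[i_k \circ \eta]$, the coefficient of $[i_k \circ \eta]$ becomes $d_k + d_k(d_k - 1) = d_k^2$. Composing with $S_1 \vee \cdots \vee S_g$, which sends $i_k \mapsto S_k$ and $[i_k, i_l] \mapsto [S_k, S_l]$ by naturality of Whitehead products, yields the formula in the statement.

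The only delicate points are the Hopf invariant of $(\vee_d id) \circ \pi_d \circ \eta$ in Step 2 and the sign and orientation conventions for negative $d_k$ in Step 1.
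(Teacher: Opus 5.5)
Your proposal is correct and follows the paper's proof step for step: additivity $[S \# S'] = [S] + [S']$ via collapsing the filled neck, then $a_k$ and $a_{k,l}$ from the collapse maps onto one or two wedge summands together with the Hopf invariant $d^2$ of $(\vee_d id) \circ \pi_d \circ \eta$, then folding with $[i_k, i_k] = 2[i_k \circ \eta]$. One small correction to your aside on negative $d_k$, a case the paper does not treat explicitly: the formula is not even in each sign, because the cross terms $d_k d_l [S_k, S_l]$ flip sign, so besides $[\overline{S}_k \circ \eta] = [S_k \circ \eta]$ you also need bilinearity of the Whitehead product, $[\overline{S}_k, S_l] = -[S_k, S_l]$.
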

      
      Combining Lemma \ref{lem:homotopyeffect} and Lemma \ref{lem:homotopyclass} one could determine the action of a Dehn twist about a 2-sphere on $\pi_3(M_g)$. In fact, we have seen in \ref{results} that such a Dehn twist is isotopic to a Dehn twist about $({\#}^{d_1} S_1) \# \cdots \# ({\#}^{d_g} S_g)$ and $\beta = n\alpha$ for some $d_k$ and $n$. The action of the latter Dehn twist is given by \[[T_k] \mapsto \frac{n}{4}\Bigl(\sum_{1 \leq k \leq g} d_k^3[S_k \circ \eta] + \sum_{1 \leq k < l \leq g} d_k^2d_l[S_k, S_l]\Bigr) + [T_k]\] for each $k$. In particular, the only nontrivial actions of the Dehn twists listed in Theorem \ref{thm:2} on $\pi_3(M_g)$ are as follows:
      \begin{align*}
      	\mathbf{t}_{S_k, \alpha} : \; & [T_k] \mapsto [S_k \circ \eta] + [T_k]; \\
      	\mathbf{t}_{S_k \# S_k, \alpha} : \; & [T_k] \mapsto 8[S_k \circ \eta] + [T_k]; \\
      	\mathbf{t}_{S_k \# S_l, \alpha} : \; & [T_k] \mapsto [S_k \circ \eta] + [S_l \circ \eta] + [S_k, S_l] + [T_k], \\
      	& [T_l] \mapsto [S_k \circ \eta] + [S_l \circ \eta] + [S_k, S_l] + [T_l]; \\
      	\mathbf{t}_{S_k \# S_k \# S_l, \alpha} : \; & [T_k] \mapsto 8[S_k \circ \eta] + 2[S_l \circ \eta] + 4[S_k, S_l] + [T_k], \\
      	& [T_l] \mapsto 4[S_k \circ \eta] + [S_l \circ \eta] + 2[S_k, S_l] + [T_l]; \\
      	\mathbf{t}_{S_k \# S_l \# S_h, \alpha} : \; & [T_k] \mapsto [S_k \circ \eta] + [S_l \circ \eta] + [S_h \circ \eta] + [S_k, S_l] + [S_k, S_h] + [S_l, S_h] + [T_k], \\
      	& [T_l] \mapsto [S_k \circ \eta] + [S_l \circ \eta] + [S_h \circ \eta] + [S_k, S_l] + [S_k, S_h] + [S_l, S_h] + [T_l], \\
      	& [T_h] \mapsto [S_k \circ \eta] + [S_l \circ \eta] + [S_h \circ \eta] + [S_k, S_l] + [S_k, S_h] + [S_l, S_h] + [T_h].
      \end{align*}
      
    \subsubsection{The cup product of $H^*((\vee_g(S^2 \vee S^3))_f)$}
      For brevity we denote $\vee_g(S^2 \vee S^3)$ by $V_g$. Let $i_k : S^2 \hookrightarrow V_g$ be the inclusion of the $k$-th copy of $S^2$, and let $j_k : S^3 \hookrightarrow V_g$ be the inclusion of the $k$-th copy of $S^3$. If we view $V_g$ as the 3-skeleton of $M_g$ then $i_k \simeq S_k$ and $j_k \simeq T_k$. Let $f : V_g \rightarrow V_g$ be a cellular map acting trivially on homology, then $f \circ i_k \simeq i_k$ and we have \[f = f \circ id_{V_g} = f \circ (i_1 \vee \cdots \vee i_g \vee j_1 \vee \cdots \vee j_g) \simeq i_1 \vee \cdots \vee i_g \vee (f \circ j_1) \vee \cdots \vee (f \circ j_g).\] Denote $i_k(S^2)$ by $S^2_k$, and denote $j_k(S^3)$ by $S^3_k$. The CW structure of the mapping torus ${(V_g)}_f$ is \[\Bigl(S^1 \vee S^2_1 \vee \cdots \vee S^2_g \vee S^3_1 \vee \cdots \vee S^3_g\Bigr) \cup_{[\ell, i_1]} e^3_1 \cup \cdots \cup_{[\ell, i_g]} e^3_g \cup_{\gamma_1} e^4_1 \cup \cdots \cup_{\gamma_g} e^4_g\] where $e^n_k$ is an $n$-cell for $n = 3, 4$ and each $k$, the map $\ell$ is the inclusion of the $S^1$, and the attaching map $\gamma_k$ is $[\ell, j_k] + [f \circ j_k] - [j_k]$. There is a ``Wang sequence" for mapping tori of general maps (see \cite[Example 2.48]{Hat02}), from which we compute $H_*({(V_g)}_f)$ and deduce that $H_2({(V_g)}_f) \cong \mathbb{Z}^g$ is freely generated by the cells $[S^2_k]$, and $H_4({(V_g)}_f) \cong \mathbb{Z}^g$ is freely generated by the cells $[e^4_k]$ (in fact the boundary maps in the cellular chain complex of ${(V_g)}_f$ are zero). For each $k$ let $\overline{x}_k \in H^2({(V_g)}_f)$ be the Kronecker dual of $[S^2_k]$, let $\overline{y}_k \in H^4({(V_g)}_f)$ be the Kronecker dual of $[e^4_k]$. Then $H^2({(V_g)}_f) \cong \mathbb{Z}^g$ is freely generated by $\overline{x}_k$'s and $H^4({(V_g)}_f) \cong \mathbb{Z}^g$ is freely generated by $\overline{y}_k$'s. To calculate the cup product \[H^2((V_g)_f) \times H^2((V_g)_f) \xrightarrow{\cup} H^4((V_g)_f)\] it suffices to calculate the cohomology classes $\overline{x}_k\overline{x}_l \in H^4((V_g)_f)$ for $k \leq l$. Thus we may replace ${(V_g)}_f$ by the quotient complex \[Q_f \coloneqq \Bigl(S^2_1 \vee \cdots \vee S^2_g\Bigr) \cup_{\sigma_1} e^4_1 \cup \cdots \cup_{\sigma_g} e^4_g\] (here $\sigma_k = [f \circ j_k] - [j_k]$) obtained from the subcomplex of ${(V_g)}_f$, \[\Bigl(S^1 \vee S^2_1 \vee \cdots \vee S^2_g \vee S^3_1 \vee \cdots \vee S^3_g\Bigr) \cup_{\gamma_1} e^4_1 \cup \cdots \cup_{\gamma_g} e^4_g,\] by collapsing $S^1 \vee S^3_1 \vee \cdots \vee S^3_g$ to a point, since their second and fourth (co)homology groups are identical respectively.
    
      The method is to consider certain quotient spaces of $Q_f$ whose cohomology ring can be calculated using Hopf invariant. These quotient spaces are obtained by collapsing irrelevant 2-cells of $Q_f$ to a point. Precisely, for each $k$ we suppose \[\sigma_k = \sum_{1 \leq l \leq g} d_{k,l}[i_l \circ \eta] + \sum_{1 \leq l < h \leq g} d_{k,l,h}[i_l, i_h].\] To calculate $\overline{x}_k^2$ we consider the quotient map \[q_k : Q_f \rightarrow Q_k \coloneqq S^2_k \cup_{d_{1,k} [\eta]} e^4_1 \cup \cdots \cup_{d_{g,k} [\eta]} e^4_g\] given by collapsing $S^2_1 \vee \cdots \vee S^2_{k-1} \vee S^2_{k+1} \vee \cdots \vee S^2_g$ to a point. The induced homomorphisms of $q_k$ on homology are \begin{align*}
      	& {q_k}_* : H_2(Q_f) \rightarrow H_2(Q_k), \quad [S^2_k] \mapsto [S^2_k], \quad [S^2_t] \mapsto 0, \quad t \neq k; \\
      	& {q_k}_* : H_4(Q_f) \rightarrow H_4(Q_k), \quad [e^4_t] \mapsto [e^4_t], \quad 1 \leq t \leq g,
      \end{align*} so ${q_k}^* : H^2(Q_k) \rightarrow H^2(Q_f)$ maps $x$, the Kronecker dual of $[S^2_k] \in H_2(Q_k)$, to $\overline{x}_k$, and $H^4(Q_k)$ and $H^2(Q_f)$ are identical via ${q_k}^*$. For each $t$ consider the subcomplex \[S^2_k \cup_{d_{t,k} [\eta]} e^4_t \subset Q_k.\] The induced homomorphism of the inclusion map $H^4(Q_k) \rightarrow H^4(S^2_k \cup_{d_{t,k} [\eta]} e^4_t)$ maps $\overline{y}_t$ to $y$, the Kronecker dual of $[e^4_t]$, and maps other generators to 0. Since the Hopf invariant of $d_{t,k} [\eta]$ is $d_{t,k}$, we have $x^2 = d_{t,k}y$, thus \[\overline{x}_k^2 = \sum_{1 \leq t \leq g} d_{t,k}\overline{y}_t.\] Similarly, to calculate $\overline{x}_k\overline{x}_l$ we consider the quotient map \[q_{k,l} : Q_f \rightarrow Q_{k,l} \coloneqq \Bigl(S^2_k \vee S^2_l\Bigr) \cup_{\sigma'_1} e^4_1 \cup \cdots \cup_{\sigma'_g} e^4_g\] where $\sigma'_t = d_{t,k}[i_k \circ \eta] + d_{t,l}[i_l \circ \eta] + d_{t,k,l}[i_k, i_l]$ for each $t$. We furthermore consider for each $t$ the subcomplex \[\Bigl(S^2_k \vee S^2_l\Bigr) \cup_{\sigma'_t} e^4_t \subset Q_{k,l}\] and the quotient map \[\Bigl(S^2_k \vee S^2_l\Bigr) \cup_{\sigma'_t} e^4_t \rightarrow S^2 \cup_{(d_{t,k} + d_{t,l} + 2d_{t,k,l})[\eta]} e^4_t\] given by mapping $S^2_k$ and $S^2_l$ to $S^2$ via $id_{S^2} \vee id_{S^2}$. Similar to the calculation of $\overline{x}_k^2$, by checking the effect of these maps on homology we deduce that \[{(\overline{x}_k + \overline{x}_l)}^2 = \sum_{1 \leq t \leq g} (d_{t,k} + d_{t,l} + 2d_{t,k,l})\overline{y}_t,\] thus \[\overline{x}_k\overline{x}_l = \sum_{1 \leq t \leq g} d_{t,k,l}\overline{y}_t.\]
      
      We summarize these calculations into a lemma.
      
      \begin{Lem}\label{lem:cup}
      	Let $f : V_g \rightarrow V_g$ be a cellular map acting trivially on homology. Suppose \[f_*([j_k]) = \sum_{1 \leq l \leq g} d_{k,l}[i_l \circ \eta] + \sum_{1 \leq l < h \leq g} d_{k,l,h}[i_l, i_h] + [j_k] \in \pi_3(V_g)\] for each $k$. Then the cup products of elements of $H^2({(V_g)}_f)$ are given by \[\overline{x}_k^2 = \sum_{1 \leq t \leq g} d_{t,k}\overline{y}_t, \quad \overline{x}_k\overline{x}_l = \sum_{1 \leq t \leq g} d_{t,k,l}\overline{y}_t.\]
      \end{Lem}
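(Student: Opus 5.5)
The plan is to assemble Lemma~\ref{lem:cup} from the two local computations carried out just before its statement, making each step rigorous. First I will fix the CW structure of $(V_g)_f$. Since $f\circ i_k\simeq i_k$, I may homotope $f$ so that $f\circ i_k=i_k$ exactly. Then the mapping torus is $S^1\times$(the 0-cell) with $S^2_k\times S^1$ and $S^3_k\times S^1$ attached. This gives 3-cells with attaching maps $[\ell,i_k]$ and 4-cells with attaching maps $\gamma_k=[\ell,j_k]+[f\circ j_k]-[j_k]$. Using the cellular chain complex (all boundaries vanish because $f_*=\mathrm{id}$ on homology and the Whitehead products and Hopf-type terms are null-homologous), I identify the bases $\overline{x}_k$ of $H^2$ and $\overline{y}_k$ of $H^4$. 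Next I collapse the subcomplex $S^1\vee\bigvee_k S^3_k$ to get $Q_f$, with 4-cells attached by $\sigma_k=[f\circ j_k]-[j_k]$. The 3-cells are not in the subcomplex I keep, so I discard them first; they only affect $H^3$ and $H^3$-restrictions. The quotient map then induces isomorphisms on $H^2$ and $H^4$ compatible with cup product, so it suffices to compute in $Q_f$.

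In $Q_f$ I compute $\overline{x}_k^2$ by collapsing all 2-spheres except $S^2_k$. Under the collapse $\bigvee S^2\to S^2_k$, the class $[i_l\circ\eta]$ goes to $0$ for $l\ne k$, every Whitehead product $[i_l,i_h]$ goes to $0$, and $[i_k\circ\eta]$ is preserved. So $Q_k=S^2_k\cup_{d_{1,k}\eta}e^4_1\cup\cdots$. Restricting to each subcomplex $S^2_k\cup_{d_{t,k}\eta}e^4_t$ and using the defining property of the Hopf invariant, $x^2=H(\text{attaching map})\,y$, gives the coefficient of $\overline{y}_t$ as $d_{t,k}$. Naturality of cup products along $q_k$ and the inclusions then yields $\overline{x}_k^2=\sum_t d_{t,k}\overline{y}_t$.

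For the mixed product I collapse to $S^2_k\vee S^2_l$, keeping $d_{t,k}[i_k\circ\eta]+d_{t,l}[i_l\circ\eta]+d_{t,k,l}[i_k,i_l]$. I then apply the fold map $S^2\vee S^2\to S^2$, which sends $[i_k,i_l]$ to $[\iota,\iota]=2\eta$. This gives an attaching map of Hopf invariant $d_{t,k}+d_{t,l}+2d_{t,k,l}$. The fold map pulls $x$ back to $\overline{x}_k+\overline{x}_l$, so $(\overline{x}_k+\overline{x}_l)^2=\sum_t(d_{t,k}+d_{t,l}+2d_{t,k,l})\overline{y}_t$. Expanding by bilinearity and commutativity, then subtracting the squares already computed, gives $2\overline{x}_k\overline{x}_l=2\sum_t d_{t,k,l}\overline{y}_t$. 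Since $H^4$ is free, I can divide by 2.

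The main obstacle I expect is the bookkeeping in the fold-map step: checking that the induced map on $H^4$ of the map from the two-sphere subcomplex to $S^2\cup e^4_t$ is the identity on the $e^4_t$ generator, i.e.\ that the cell map has degree one. I will handle this by building the map cellularly as the identity on the 4-cell, which is legitimate because the composite attaching map agrees with the new one by definition.
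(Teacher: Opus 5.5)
Your proposal is correct and follows the paper's argument essentially step for step. You reduce from $(V_g)_f$ to the quotient complex $Q_f$, compute $\overline{x}_k^2$ by collapsing to $Q_k$ and reading off Hopf invariants on each $S^2_k\cup e^4_t$, and obtain $\overline{x}_k\overline{x}_l$ by pushing along the fold map $S^2\vee S^2\to S^2$ (using $[\iota,\iota]=2\eta$) and then polarizing. Your explicit remarks that $H^4$ is free, so one may divide by $2$, and that the cell map can be taken to be the identity on $e^4_t$, only make precise what the paper summarizes as ``checking the effect of these maps on homology.''
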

      
      In particular we write down the results for $f$ whose nontrivial action on $\pi_3(V_g)$ is of one of the following types:
      \begin{align*}
    	  \text{type I} : \; & [j_k] \mapsto [i_k \circ \eta] + [j_k]; \\
    	  \text{type II} : \; & [j_k] \mapsto 8[i_k \circ \eta] + [j_k]; \\
    	  \text{type III} : \; & [j_k] \mapsto [i_k \circ \eta] + [i_l \circ \eta] + [i_k, i_l] + [j_k], \\
    	  & [j_l] \mapsto [i_k \circ \eta] + [i_l \circ \eta] + [i_k, i_l] + [j_l]; \\
    	  \text{type IV} : \; & [j_k] \mapsto 8[i_k \circ \eta] + 2[i_l \circ \eta] + 4[i_k, i_l] + [j_k], \\
    	  & [j_l] \mapsto 4[i_k \circ \eta] + [i_l \circ \eta] + 2[i_k, i_l] + [j_l]; \\
    	  \text{type V} : \; & [j_k] \mapsto [i_k \circ \eta] + [i_l \circ \eta] + [i_h \circ \eta] + [i_k, i_l] + [i_k, i_h] + [i_l, i_h] + [j_k], \\
    	  & [j_l] \mapsto [i_k \circ \eta] + [i_l \circ \eta] + [i_h \circ \eta] + [i_k, i_l] + [i_k, i_h] + [i_l, i_h] + [j_l], \\
    	  & [j_h] \mapsto [i_k \circ \eta] + [i_l \circ \eta] + [i_h \circ \eta] + [i_k, i_l] + [i_k, i_h] + [i_l, i_h] + [j_h].
      \end{align*}
      For these maps the nonzero cup products of elements of $H^2({(V_g)}_f)$ are as follows:
      \begin{align*}
      	\text{type I} : \; & \overline{x}_k^2 = \overline{y}_k; \\
      	\text{type II} : \; & \overline{x}_k^2 = 8\overline{y}_k; \\
      	\text{type III} : \; & \overline{x}_k^2 = \overline{x}_l^2 = \overline{x}_k\overline{x}_l = \overline{y}_k + \overline{y}_l; \\
      	\text{type IV} : \; & \overline{x}_k^2 = 8\overline{y}_k + 4\overline{y}_l, \\
      	& \overline{x}_l^2 = 2\overline{y}_k + \overline{y}_l, \\
      	& \overline{x}_k\overline{x}_l = 4\overline{y}_k + 2\overline{y}_l; \\
      	\text{type V} : \; & \overline{x}_k^2 = \overline{x}_l^2 = \overline{x}_h^2 = \overline{x}_k\overline{x}_l = \overline{x}_k\overline{x}_h = \overline{x}_l\overline{x}_h = \overline{y}_k + \overline{y}_l + \overline{y}_h.
      \end{align*}
      
    \subsubsection{Determination of the cubic form}
    
      Let $f'$ be a diffeomorphism of $M_g$ acting trivially on homology, and let $f$ be a cellular map homotopic to $f'$. Then the mapping tori ${(M_g)}_{f'}$ and ${(M_g)}_f$ are homotopy equivalent relative to the subspace $M_g \times \{\frac{1}{2}\}$ (see for example \cite[Proposition 0.18]{Hat02}). Such a homotopy equivalence commutes with the inclusion maps $M_g = M_g \times \{\frac{1}{2}\} \hookrightarrow {(M_g)}_{f'}$ and $M_g = M_g \times \{\frac{1}{2}\} \hookrightarrow {(M_g)}_f$. We identify the (co)homology groups of ${(M_g)}_{f'}$ and ${(M_g)}_f$ via the induced isomorphisms of such a homotopy equivalence.
      
      Recall that $V_g$ denotes the 3-skeleton of $M_g$, and we still denote the restriction of $f$ on $V_g$ by $f : V_g \rightarrow V_g$. The mapping torus ${(V_g)}_f$ is a subcomplex of ${(M_g)}_f$. Using the Wang sequence for mapping tori \cite[Example 2.48]{Hat02}, we see that the inclusion map ${(V_g)}_f \hookrightarrow {(M_g)}_f$ induces isomorphisms on (co)homology groups of dimension up to 4, and moreover, it maps the homology class ${i_k}_*([S^2]) = [S^2_k] \in H_2({(V_g)}_f)$ to $i_*{S_k}_*([S^2]) \in H_2({(M_g)}_f)$ for each $k$. Identifying these (co)homology groups via the induced isomorphisms of the inclusion map, we obtain an isomorphism \[H^2({(M_g)}_{f'}) \rightarrow H^2({(V_g)}_f)\] mapping ${\nu_{f'}}^*(x_k)$ to $\overline{x}_k$ for each $k$, and an isomorphism \[H^4({(M_g)}_{f'}) \rightarrow H^4({(V_g)}_f)\] mapping the Poincar\'e dual of $i_*{S_k}_*([S^2])$ to $\overline{y}_k$ for each $k$. The effect of the latter isomorphism could be seen as follows. Note that it suffices to show that the dual isomorphism $H_4({(V_g)}_f) \rightarrow H_4({(M_g)}_{f'})$ maps $[e^4_k]$ to the Poincar\'e dual of ${\nu_{f'}}^*(x_k)$, which by \cite[Theorem II. 27]{Tho54} can be represented by the fundamental class of a 4-dimensional closed submanifold $N_k \subset {(M_g)}_{f'}$. The naturality of the Wang sequence yields a commutative diagram \[\begin{tikzcd}
      H_4({(V_g)}_f) \arrow[r, "\partial"] \arrow[d] & H_3(V_g) \arrow[d] \\
      H_4({(M_g)}_{f'}) \arrow[r, "\partial"] & H_3(M_g).
      \end{tikzcd}\] By the construction of Wang sequence we see that $\partial [e^4_k] = [S^3_k] \in H_3(V_g) = H_3(M_g)$. The homology class $\partial [N_k] \in H_3(M_g)$ is represented by the following 3-dimensional submanifold: there is a 4-dimensional submanifold of $M_g \times I$ with boundary, denoted by $\overline{N}_k$, which is mapped to $N_k$ by the quotient map $M_g \times I \rightarrow {(M_g)}_{f'}$. Since $N_k$ represents the Poincar\'e dual of ${\nu_{f'}}^*(x_k)$, the intersection number of $N_k$ and $S^3_k$ is 1, thus $N_k \cap (M_g \times \{t\})$ is nonempty for each $t$. Denote the boundary component $\overline{N}_k \cap (M_g \times \{0\})$ by $L_k$, which is a 3-dimensional closed submanifold of $M_g$. The fundamental class of $L_k$ is $\partial [N_k]$. The intersection number of $N_k$ and $S^3_k$ is 1 and the intersection number of $N_k$ and $S^3_l$ is zero for $l \neq k$, the same assertion holds for $L_k$ and $S^3_l$ ($1 \leq l \leq g$) considered as submanifolds of $M_g$. Thus $L_k$ is homologous to $S^3_k$, and by the commutative diagram we conclude that $[e^4_k]$ is mapped to $[N_k]$.
      
      These isomorphisms are induced by maps between spaces and thus preserve the cup product structure. Therefore the cubic form on $H^2({(M_g)}_f)$, where $f$ is a Dehn twist listed in Theorem \ref{thm:2}, is determined by Lemma \ref{lem:homotopyclass} and Lemma \ref{lem:cup} (more presicely, by the cup products of mapping tori of maps of type I - type V, which were listed below Lemma \ref{lem:cup}). We summarize the results as follows, where $[{(M_g)}_f]^*$ denotes the Kronecker dual of $[{(M_g)}_f]$, and $\overline{x}_k$ stands for ${\nu_f}^*(x_k)$.
      
      \begin{Lem}\label{lem:cubic}
      	The nonzero values of the cubic form on $H^2({(M_g)}_f)$, where $f$ is a Dehn twist listed in Theorem \ref{thm:2}, are as follows:
      	\begin{align*}
      		\mathbf{t}_{S_k, \alpha} : \; & \overline{x}_k^3 = [{(M_g)}_f]^*; \\
      		\mathbf{t}_{S_k \# S_k, \alpha} : \; & \overline{x}_k^3 = 8[{(M_g)}_f]^*; \\
      		\mathbf{t}_{S_k \# S_l, \alpha} : \; & \overline{x}_k^3 = \overline{x}_l^3 = \overline{x}_k^2\overline{x}_l = \overline{x}_k\overline{x}_l^2 = [{(M_g)}_f]^*; \\
      		\mathbf{t}_{S_k \# S_k \# S_l, \alpha} : \; & \overline{x}_k^3 = 8[{(M_g)}_f]^*, \\
      		& \overline{x}_l^3 = [{(M_g)}_f]^*, \\
      		& \overline{x}_k^2\overline{x}_l = 4[{(M_g)}_f]^*, \\
      		& \overline{x}_k\overline{x}_l^2 = 2[{(M_g)}_f]^*; \\
      		\mathbf{t}_{S_k \# S_l \# S_h, \alpha} : \; & \overline{x}_k^3 = \overline{x}_l^3 = \overline{x}_h^3 = \overline{x}_k^2\overline{x}_l = \overline{x}_k\overline{x}_l^2 = \overline{x}_k^2\overline{x}_h = \overline{x}_k\overline{x}_h^2 = \overline{x}_l^2\overline{x}_h = \overline{x}_l\overline{x}_h^2 = \overline{x}_k\overline{x}_l\overline{x}_h = [{(M_g)}_f]^*.
      	\end{align*}
      \end{Lem}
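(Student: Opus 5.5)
The plan is to obtain Lemma \ref{lem:cubic} by pulling the cup products of Lemma \ref{lem:cup} back to $H^*((M_g)_f)$ and then pairing with the fundamental class. Because of the identifications made just above, $\overline{x}_k$ corresponds to ${\nu_f}^*(x_k)$ and $\overline{y}_t$ corresponds to the Poincar\'e dual of $i_*{S_t}_*([S^2])$. These identifications preserve cup products, so for each listed Dehn twist $f$ the products $\overline{x}_k\overline{x}_l \in H^4$ are exactly those given for types I--V.

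First, I match each Dehn twist to its type. The action on $\pi_3(M_g)$ was computed from Lemma \ref{lem:homotopyeffect} and Lemma \ref{lem:homotopyclass}. Under $i_k\simeq S_k$ and $j_k \simeq T_k$, the cellular restriction to $V_g$ has the following types:
\begin{itemize}
\item $\mathbf{t}_{S_k,\alpha}$ is type I;
\item $\mathbf{t}_{S_k\#S_k,\alpha}$ is type II;
\item $\mathbf{t}_{S_k\#S_l,\alpha}$ is type III;
\item $\mathbf{t}_{S_k\#S_k\#S_l,\alpha}$ is type IV;
\item $\mathbf{t}_{S_k\#S_l\#S_h,\alpha}$ is type V.
\end{itemize}
One point needs checking. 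The homotopy class of the restricted cellular map on the wedge is determined by the images of the $i_k$ and $j_k$. This holds because $f\simeq i_1\vee\cdots\vee (f\circ j_g)$, as noted before Lemma \ref{lem:cup}.

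Second, I evaluate triple products. For any $k,l,m$ we have $\overline{x}_k\overline{x}_l\overline{x}_m = \overline{x}_m\cup(\overline{x}_k\overline{x}_l)$. Writing $\overline{x}_k\overline{x}_l=\sum_t c_t\,\overline{y}_t$, it remains to compute $\langle \overline{x}_m\cup \overline{y}_t,[(M_g)_f]\rangle$. Since $\overline{y}_t$ is Poincar\'e dual to $i_*{S_t}_*([S^2])$, this pairing equals $\langle \overline{x}_m, i_*{S_t}_*([S^2])\rangle=\delta_{mt}$, because ${\nu_f}^*(x_m)$ is the Kronecker dual of $i_*{S_m}_*([S^2])$. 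Hence $\langle\overline{x}_k\overline{x}_l\overline{x}_m,[(M_g)_f]\rangle = c_m$, the $\overline{y}_m$-coefficient of $\overline{x}_k\overline{x}_l$.

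Third, I read off the coefficients. Type I gives $\overline{x}_k^3=1$. Type II gives $8$. Type III gives coefficient $1$ on both $\overline{y}_k$ and $\overline{y}_l$, so $\overline{x}_k^3=\overline{x}_k^2\overline{x}_l=\overline{x}_k\overline{x}_l^2=\overline{x}_l^3=1$. For type IV:
\begin{itemize}
\item $\overline{x}_k^3$ is the $\overline{y}_k$-coefficient of $\overline{x}_k^2$, which is $8$;
\item $\overline{x}_l^3$ is the $\overline{y}_l$-coefficient of $\overline{x}_l^2$, which is $1$;
\item $\overline{x}_k^2\overline{x}_l$ is the $\overline{y}_l$-coefficient of $\overline{x}_k^2$, which is $4$, and this agrees with the $\overline{y}_k$-coefficient of $\overline{x}_k\overline{x}_l$;
\item $\overline{x}_k\overline{x}_l^2$ is $2$.
\end{itemize}
Type V gives $1$ for every monomial in $k,l,h$. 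All products involving other indices vanish, since the corresponding $\overline{x}$'s have zero products.

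The main obstacle is the sign and identification bookkeeping. One has to confirm that $\overline{y}_t$ really corresponds to the Poincar\'e dual and not its negative, and that the fundamental class orientation is compatible. This was settled in the discussion preceding the lemma via the Wang sequence and $\partial[e^4_k]=[S^3_k]$. The consistency checks in type IV, where the same triple product is computed in two different ways, give a useful verification that the identifications are coherent.
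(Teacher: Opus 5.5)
Your proposal is correct and follows the paper's own argument. You match each listed Dehn twist to a map of type I--V using Lemmas \ref{lem:homotopyeffect} and \ref{lem:homotopyclass}, transport the products of Lemma \ref{lem:cup} to $H^*((M_g)_f)$ through the cup-product-preserving identifications of $\overline{x}_k$ with ${\nu_f}^*(x_k)$ and of $\overline{y}_t$ with the Poincar\'e dual of $i_*{S_t}_*([S^2])$, and read off coefficients via $\langle \overline{x}_m\cup\overline{y}_t,[(M_g)_f]\rangle=\delta_{mt}$. That last evaluation is exactly the step the paper leaves implicit; note only that your type IV symmetry check cannot detect a global sign error in $\overline{y}_t$, which is instead fixed by the Wang-sequence identification $\partial[e^4_k]=[S^3_k]$ (and is consistent with integrality of the $\hat{A}_k$).
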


    \subsection{The group $\mathcal{I}(M_g)$}
    
      The bordism invariants of the mapping tori of the Dehn twists listed in Theorem \ref{thm:2} are determined by Lemma \ref{lem:pontryaginclass} and Lemma \ref{lem:cubic}. The result shows that the mapping torus constructions of these Dehn twists form a basis of ${\Omega}^{\mathrm{Spin}}_6((\mathbb{CP}^{\infty})^g)$, thus $\rho : \mathcal{I}(M_g) \rightarrow {\Omega}^{\mathrm{Spin}}_6((\mathbb{CP}^{\infty})^g)$ is an isomorphism and these Dehn twists form a basis of $\mathcal{I}(M_g)$. We list the values of the bordism invariants for $g \leq 3$ below, and summarize the results of this section into the following theorem. To simplify the notation, we write $\overline{x}_k^3$ for the evaluation of ${\nu_f}^*(x_k^3)$ on the fundamental class of the mapping torus (and the other notations should be interpreted similarly),  and $(\overline{x}_k^2\overline{x}_l)'$ stands for $\frac{1}{2}(\overline{x}_k^2\overline{x}_l - \overline{x}_k\overline{x}_l^2)$.
      \begin{table}[h]
      	\centering
      	\begin{tabular}{c|cc}
      		& $\hat{A}_1$ & $\overline{x}_1^3$ \\
      		\hline
      		$\mathbf{t}_{S_1, \alpha}$ & 0 & 1 \\
      		$\mathbf{t}_{S_1 \# S_1, \alpha}$ & 1 & 8
      	\end{tabular}
      \end{table}
      \begin{table}[h]
      	\centering
      	\begin{tabular}{c|cccccc}
      		& $\hat{A}_1$ & $\hat{A}_2$ & $\overline{x}_1^3$ & $\overline{x}_2^3$ & $\overline{x}_1\overline{x}_2^2$ & $(\overline{x}_1^2\overline{x}_2)'$\\
      		\hline
      		$\mathbf{t}_{S_1, \alpha}$ & 0 & 0 & 1 & 0 & 0 & 0 \\
      		$\mathbf{t}_{S_2, \alpha}$ & 0 & 0 & 0 & 1 & 0 & 0 \\
      		$\mathbf{t}_{S_1 \# S_1, \alpha}$ & 1 & 0 & 8 & 0 & 0 & 0 \\
      		$\mathbf{t}_{S_2 \# S_2, \alpha}$ & 0 & 1 & 0 & 8 & 0 & 0 \\
      		$\mathbf{t}_{S_1 \# S_2, \alpha}$ & 0 & 0 & 1 & 1 & 1 & 0 \\
      		$\mathbf{t}_{S_1 \# S_1 \# S_2, \alpha}$ & 1 & 0 & 8 & 1 & 2 & 1
      	\end{tabular}
      \end{table}
      \begin{table}[h]
      	\centering
      	\begin{tabular}{c|ccccccccccccc}
      		& $\hat{A}_1$ & $\hat{A}_2$ & $\hat{A}_3$ & $\overline{x}_1^3$ & $\overline{x}_2^3$ & $\overline{x}_3^3$ & $\overline{x}_1\overline{x}_2^2$ & $\overline{x}_1\overline{x}_3^2$ & $\overline{x}_2\overline{x}_3^2$ & $(\overline{x}_1^2\overline{x}_2)'$ & $(\overline{x}_1^2\overline{x}_3)'$ & $(\overline{x}_2^2\overline{x}_3)'$ & $\overline{x}_1\overline{x}_2\overline{x}_3$ \\
      		\hline
      		$\mathbf{t}_{S_1, \alpha}$ & 0 & 0 & 0 & 1 & 0 & 0 & 0 & 0 & 0 & 0 & 0 & 0 & 0 \\
      		$\mathbf{t}_{S_2, \alpha}$ & 0 & 0 & 0 & 0 & 1 & 0 & 0 & 0 & 0 & 0 & 0 & 0 & 0 \\
      		$\mathbf{t}_{S_3, \alpha}$ & 0 & 0 & 0 & 0 & 0 & 1 & 0 & 0 & 0 & 0 & 0 & 0 & 0 \\
      		$\mathbf{t}_{S_1 \# S_1, \alpha}$ & 1 & 0 & 0 & 8 & 0 & 0 & 0 & 0 & 0 & 0 & 0 & 0 & 0 \\
      		$\mathbf{t}_{S_2 \# S_2, \alpha}$ & 0 & 1 & 0 & 0 & 8 & 0 & 0 & 0 & 0 & 0 & 0 & 0 & 0 \\
      		$\mathbf{t}_{S_3 \# S_3, \alpha}$ & 0 & 0 & 1 & 0 & 0 & 8 & 0 & 0 & 0 & 0 & 0 & 0 & 0 \\
      		$\mathbf{t}_{S_1 \# S_2, \alpha}$ & 0 & 0 & 0 & 1 & 1 & 0 & 1 & 0 & 0 & 0 & 0 & 0 & 0 \\
      		$\mathbf{t}_{S_1 \# S_3, \alpha}$ & 0 & 0 & 0 & 1 & 0 & 1 & 0 & 1 & 0 & 0 & 0 & 0 & 0 \\
      		$\mathbf{t}_{S_2 \# S_3, \alpha}$ & 0 & 0 & 0 & 0 & 1 & 1 & 0 & 0 & 1 & 0 & 0 & 0 & 0 \\
      		$\mathbf{t}_{S_1 \# S_1 \# S_2, \alpha}$ & 1 & 0 & 0 & 8 & 1 & 0 & 2 & 0 & 0 & 1 & 0 & 0 & 0 \\
      		$\mathbf{t}_{S_1 \# S_1 \# S_3, \alpha}$ & 1 & 0 & 0 & 8 & 0 & 1 & 0 & 2 & 0 & 0 & 1 & 0 & 0 \\
      		$\mathbf{t}_{S_2 \# S_2 \# S_3, \alpha}$ & 0 & 1 & 0 & 0 & 8 & 1 & 0 & 0 & 2 & 0 & 0 & 1 & 0 \\
      		$\mathbf{t}_{S_1 \# S_2 \# S_3, \alpha}$ & 0 & 0 & 0 & 1 & 1 & 1 & 1 & 1 & 1 & 0 & 0 & 0 & 1
      	\end{tabular}
      \end{table}
    
      \begin{Thm}\label{thm:connectedsumofproduct}
      	For $g \geq 1$ the mapping torus construction $\rho : \mathcal{I}(M_g) \rightarrow {\Omega}^{\mathrm{Spin}}_6((\mathbb{CP}^{\infty})^g)$ is an isomorphism, thus $\mathcal{I}(M_g)$ is isomorphic to $\mathbb{Z}^{2g + 2{\binom{g}{2}} + {\binom{g}{3}}}$. Moreover:
      	\begin{itemize}
      		\item[(1)] A basis of $\mathcal{I}(M_g)$ is given by the following Dehn twists:
      		\begin{align*}
      			& \mathbf{t}_{S_k, \alpha}, & 1 \leq k \leq g; \\
      			& \mathbf{t}_{S_k \# S_k, \alpha}, & 1 \leq k \leq g; \\
      			& \mathbf{t}_{S_k \# S_l, \alpha}, & 1 \leq k < l \leq g; \\
      			& \mathbf{t}_{S_k \# S_k \# S_l, \alpha}, & 1 \leq k < l \leq g; \\
      			& \mathbf{t}_{S_k \# S_l \# S_h, \alpha}, & 1 \leq k < l < h \leq g.
      		\end{align*}
      		
      		\item[(2)] A ``standard" basis of $\mathcal{I}(M_g)$, mapping to the standard basis of $\mathbb{Z}^{2g + 2{\binom{g}{2}} + {\binom{g}{3}}}$ via the composition of $\rho$ and the bordism invariants described below Lemma \ref{lem:homologyeffect}, is given by the following diffeomorphisms:
      		\begin{align*}
      			& \mathbf{t}_{S_k \# S_k, \alpha} - 8\mathbf{t}_{S_k, \alpha}, & 1 \leq k \leq g; \\
      			& \mathbf{t}_{S_k, \alpha}, & 1 \leq k \leq g; \\
      			& \mathbf{t}_{S_k \# S_l, \alpha} - (\mathbf{t}_{S_k, \alpha} + \mathbf{t}_{S_l, \alpha}), & 1 \leq k < l \leq g; \\
      			& \mathbf{t}_{S_k \# S_k \# S_l, \alpha} + 2\mathbf{t}_{S_k, \alpha} + \mathbf{t}_{S_l, \alpha} - (\mathbf{t}_{S_k \# S_k, \alpha} + 2\mathbf{t}_{S_k \# S_l, \alpha}), & 1 \leq k < l \leq g; \\
      			& \mathbf{t}_{S_k \# S_l \# S_h, \alpha} + \mathbf{t}_{S_k, \alpha} + \mathbf{t}_{S_l, \alpha} + \mathbf{t}_{S_h, \alpha} - (\mathbf{t}_{S_k \# S_l, \alpha} + \mathbf{t}_{S_k \# S_h, \alpha} + \mathbf{t}_{S_l \# S_h, \alpha}), & 1 \leq k < l < h \leq g.
      		\end{align*}
      	\end{itemize}
      \end{Thm}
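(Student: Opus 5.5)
By Lemma \ref{lem:mtc} and Corollary \ref{cor:forgetful}, $\rho : \mathcal{I}(M_g) \rightarrow \Omega^{\mathrm{Spin}}_6((\mathbb{CP}^{\infty})^g)$ is an injective homomorphism. By Lemma \ref{lem:homologyeffect}, each listed Dehn twist lies in $\mathcal{I}(M_g)$. So it suffices to show that the images of the listed Dehn twists form a basis of the target. Then $\rho$ is surjective, hence an isomorphism, and (1) follows at once.

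To identify the images I will use the bordism invariants listed below Lemma \ref{lem:homologyeffect}. By Proposition \ref{prop:bordismfree} these give an isomorphism onto $\mathbb{Z}^{2g + 2\binom{g}{2} + \binom{g}{3}}$. For a Dehn twist $\mathbf{t}_{S,\alpha}$ with $[S] = \sum d_k [S_k]$, Lemma \ref{lem:pontryaginclass} with $n = 1$ gives $\langle p_1 \cup \overline{x}_k, [\,\cdot\,]\rangle = 4d_k$. Lemma \ref{lem:cubic} supplies the cubic form, and from these two inputs I compute $\hat{A}_k = (-4d_k + 4\overline{x}_k^3)/24$. For $\mathbf{t}_{S_k,\alpha}$ this gives $(-4+4)/24 = 0$. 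For $\mathbf{t}_{S_k\#S_k,\alpha}$ it gives $(-8+32)/24 = 1$. For $\mathbf{t}_{S_k\#S_l,\alpha}$ the contribution to both $\hat A_k$ and $\hat A_l$ is $0$. For $\mathbf{t}_{S_k\#S_k\#S_l,\alpha}$ it gives $\hat{A}_k = (-8+32)/24 = 1$ and $\hat{A}_l = (-4+4)/24 = 0$. For the triple twist all the $\hat{A}$'s vanish. These values reproduce the tables.

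The main step is then a triangularity argument. I order the coordinates as in the tables and the twists accordingly. The resulting integer matrix is block triangular with unimodular diagonal blocks, so it is invertible over $\mathbb{Z}$:
\begin{itemize}
\item[(a)] The twists $\mathbf{t}_{S_k\#S_l\#S_h,\alpha}$ are the only ones with nonzero $\overline{x}_k\overline{x}_l\overline{x}_h$, namely $1$.
\item[(b)] The twists $\mathbf{t}_{S_k\#S_k\#S_l,\alpha}$ are the only ones with nonzero $(\overline{x}_k^2\overline{x}_l)'$, namely $1$. For the pair twist $\mathbf{t}_{S_k\#S_l,\alpha}$ this invariant is $(1-1)/2 = 0$. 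For the triple twists it is $0$ too, since both relevant products equal $1$.
\item[(c)] Among the remaining twists, $\mathbf{t}_{S_k\#S_l,\alpha}$ is the only one contributing $1$ to $\overline{x}_k\overline{x}_l^2$.
\item[(d)] The $\hat{A}_k$-coordinate isolates $\mathbf{t}_{S_k\#S_k,\alpha}$.
\item[(e)] The $\overline{x}_k^3$-coordinate isolates $\mathbf{t}_{S_k,\alpha}$.
\end{itemize}
Concretely, I subtract off contributions in the order of the standard basis in (2). The combinations in (2) are exactly what this elimination produces. I would verify them coordinate by coordinate using Lemma \ref{lem:cubic}. For example, $\mathbf{t}_{S_k\#S_l,\alpha} - \mathbf{t}_{S_k,\alpha} - \mathbf{t}_{S_l,\alpha}$ kills $\overline{x}_k^3$ and $\overline{x}_l^3$ and leaves $\overline{x}_k\overline{x}_l^2 = 1$. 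The combination in (2) for $\mathbf{t}_{S_k\#S_k\#S_l,\alpha}$ leaves only $(\overline{x}_k^2\overline{x}_l)' = 1$: $\hat A_k$ gives $1-1=0$, $\overline{x}_k^3$ gives $8+2-8-2=0$, $\overline{x}_l^3$ gives $1+1-2=0$, and $\overline{x}_k\overline{x}_l^2$ gives $2-2=0$. This uses additivity of $\rho$, that is, the homomorphism property in Lemma \ref{lem:mtc}.

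The only real obstacle is bookkeeping. I need the sign and normalization of the invariants, in particular the half-integer coordinate $(\overline{x}_k^2\overline{x}_l)'$ and the sign of $p_1$ in $\hat{A}_k$. Here I rely on Lemma \ref{lem:dual}, which removes the sign ambiguity of the Poincaré dual. I also need to confirm that the matrix is unimodular and not merely of full rank. The block-triangular form with identity diagonal blocks guarantees unimodularity.
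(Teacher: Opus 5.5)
Your proposal is correct and follows the paper's own argument. Injectivity of $\rho$ comes from Lemma \ref{lem:mtc} and Corollary \ref{cor:forgetful}, the invariants of the listed Dehn twists come from Lemma \ref{lem:pontryaginclass} and Lemma \ref{lem:cubic}, and their images form a basis of $\Omega^{\mathrm{Spin}}_6((\mathbb{CP}^{\infty})^g)$. The paper asserts the basis property from the tabulated values (given for $g \leq 3$), so your block-triangular elimination with identity diagonal blocks fills in that step for general $g$ rather than taking a different route.
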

      
      The action of $\mathcal{I}(M_g)$ on $\pi_3(M_g)$ is determined since the actions of the Dehn twists, which form a basis of $\mathcal{I}(M_g)$, are determined in Lemma \ref{lem:homotopyclass}. For instance we describe the action of $\mathcal{I}(S^2 \times S^3) = \mathbb{Z}[\mathbf{t}_{S_1, \alpha}] \oplus \mathbb{Z}[\mathbf{t}_{S_1 \# S_1, \alpha} - 8\mathbf{t}_{S_1, \alpha}]$ on $\pi_3(S^2 \times S^3) = \mathbb{Z}[S_1 \circ \eta] \oplus \mathbb{Z}[T_1]$. The direct summand $\mathbb{Z}[\mathbf{t}_{S_1 \# S_1, \alpha} - 8\mathbf{t}_{S_1, \alpha}]$ acts trivially on $\pi_3(S^2 \times S^3)$, and the direct summand $\mathbb{Z}[\mathbf{t}_{S_1, \alpha}]$ acts on $\pi_3(S^2 \times S^3)$ by \[{(n\mathbf{t}_{S_1, \alpha})}_*[S_1 \circ \eta] = [S_1 \circ \eta], \quad {(n\mathbf{t}_{S_1, \alpha})}_*[T_1] = n[S_1 \circ \eta] + [T_1].\] The result for general $g$ is more complicated, so we omit the explicit description. However we note the following.
      
      \begin{Cor}
      	Let $\overline{K}(M_g)$ be the rank $g$ free abelian group generated by $\mathbf{t}_{S_k \# S_k, \alpha} - 8\mathbf{t}_{S_k, \alpha}$, where $1 \leq k \leq g$. Then $\overline{K}(M_g)$ is the subgroup of $\mathcal{I}(M_g)$ consisting of all diffeomorphisms that act trivially on $\pi_3(M_g)$.
      \end{Cor}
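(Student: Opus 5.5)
We prove the Corollary by expressing the action on $\pi_3(M_g)$ as an explicit homomorphism on $\mathcal{I}(M_g)$ and computing its kernel. By Theorem \ref{thm:connectedsumofproduct}, $\mathcal{I}(M_g)$ is free abelian on the Dehn twists listed there, so it is also free on the ``standard'' basis of part (2). Since every element of $\mathcal{I}(M_g)$ fixes $[S_k \circ \eta]$ and $[S_k, S_l]$, the action of $f \in \mathcal{I}(M_g)$ on $\pi_3(M_g)$ is determined by the differences $f_*[T_k] - [T_k]$. Each such difference lies in the subgroup $P$ freely spanned by the $[S_l \circ \eta]$ and $[S_l, S_h]$.

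First we check that $f \mapsto (f_*[T_k] - [T_k])_{k}$ is a homomorphism $\mathcal{I}(M_g) \to P^g$. This holds because $(f \circ f')_*[T_k] = f_*([T_k] + v) = f_*[T_k] + v$ for $v \in P$, as $f$ fixes $P$. The subgroup in question is therefore the kernel of this homomorphism, and it suffices to evaluate the map on a basis.

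The images of the basic Dehn twists are given by the list following Lemma \ref{lem:homotopyclass}. Comparing $\mathbf{t}_{S_k\#S_k,\alpha}$ with $\mathbf{t}_{S_k,\alpha}$, the first sends $[T_k] \mapsto 8[S_k\circ\eta] + [T_k]$ and the second sends $[T_k] \mapsto [S_k\circ\eta] + [T_k]$. Hence $\mathbf{t}_{S_k\#S_k,\alpha} - 8\mathbf{t}_{S_k,\alpha}$ maps to $0$, so $\overline{K}(M_g)$ lies in the kernel.

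For the reverse inclusion we use the standard basis of Theorem \ref{thm:connectedsumofproduct}(2), with the generators of $\overline{K}(M_g)$ removed. We compute the image of each remaining element by linearity from the list:
\begin{itemize}
\item[(i)] $\mathbf{t}_{S_k,\alpha}$ gives $[S_k\circ\eta]$ in the $k$-th coordinate.
\item[(ii)] $\mathbf{t}_{S_k\#S_l,\alpha} - \mathbf{t}_{S_k,\alpha} - \mathbf{t}_{S_l,\alpha}$ gives, in coordinate $k$, $[S_l\circ\eta] + [S_k,S_l]$, and in coordinate $l$, $[S_k\circ\eta] + [S_k,S_l]$.
\item[(iii)] The type IV and type V combinations likewise produce vectors whose leading terms are $[S_k,S_l]$-type and triple-mixed contributions in coordinates $k$, $l$ (and $h$).
\end{itemize}

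The main obstacle is showing that these $2g + \binom{g}{2} + \binom{g}{3}$ image vectors in $P^g$ are linearly independent. We organize this by an ordering argument.
\begin{itemize}
\item[(a)] The $\mathbf{t}_{S_k,\alpha}$-images are the only ones with a $[S_k\circ\eta]$ entry in coordinate $k$ alone, without Whitehead products.
\item[(b)] The (ii)-images contain $[S_k,S_l]$ in both coordinates $k$ and $l$, with coefficient $1$.
\item[(c)] The (iii)-images for $\mathbf{t}_{S_k\#S_k\#S_l,\alpha}$ give, after subtraction, an asymmetric combination. Computing from the list, the coordinate-$l$ entry has $[S_k,S_l]$ coefficient $2 - 2 = 0$ while coordinate $k$ retains a nonzero multiple.
\item[(d)] For $S_k\#S_l\#S_h$, coordinate $k$ picks up $[S_l,S_h]$, which no other listed element produces in coordinate $k$.
\end{itemize}
Projecting onto suitable coordinates ($[S_l,S_h]$ in slot $k$; $[S_k,S_l]$ in slots $k$ and $l$; then $[S_k\circ\eta]$ in slot $k$) yields a triangular system with nonzero diagonal. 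Hence any linear combination in the kernel has zero coefficients on these basis elements.

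It follows that the kernel is exactly the span of the $\mathbf{t}_{S_k\#S_k,\alpha} - 8\mathbf{t}_{S_k,\alpha}$, which is $\overline{K}(M_g)$, free of rank $g$ as a subset of a basis. The delicate step is the bookkeeping in (c), namely verifying the exact coefficients of the type IV combination. If a coefficient turns out to vanish unexpectedly there, we would instead use the $[S_l\circ\eta]$ entry in slot $k$ as the pivot.
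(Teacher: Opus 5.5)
Your proposal is correct, but it takes a different route from the paper. The paper never computes the $\pi_3$-action on a basis. Its argument runs as follows. If $f\in\mathcal{I}(M_g)$ acts trivially on $\pi_3(M_g)$, then all coefficients $d_{k,l}$, $d_{k,l,h}$ in Lemma \ref{lem:cup} vanish. Hence every product $\overline{x}_k\overline{x}_l$ in the mapping torus is zero, the cubic form vanishes, and ${\nu_f}_*([(M_g)_f])=0$. Since $\rho$ is an isomorphism, and the classes with vanishing Hurewicz image are exactly the span of $\rho(\mathbf{t}_{S_k\#S_k,\alpha}-8\mathbf{t}_{S_k,\alpha})$ (the $\hat{A}$-part of the standard basis), it follows that $f\in\overline{K}(M_g)$.

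So the paper's point is that the fundamental-class invariant factors through the $\pi_3$-action, which costs one line once Lemma \ref{lem:cup} is available. Your approach bypasses the cup-product lemma and uses only the action list after Lemma \ref{lem:homotopyclass} plus the basis from Theorem \ref{thm:connectedsumofproduct}. The price is explicit linear algebra; the gain is an explicit formula for the action homomorphism $\mathcal{I}(M_g)\to P^g$, which the paper omits for general $g$.

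The linear algebra does close up. In the notation $\mathbf{f}_{*}$ of Subsection \ref{5.3}, the images are:
\begin{itemize}
\item $\mathbf{f}_k$ gives $[S_k\circ\eta]$ in slot $k$;
\item $\mathbf{f}_{k,l}$ gives $[S_l\circ\eta]+[S_k,S_l]$ in slot $k$ and $[S_k\circ\eta]+[S_k,S_l]$ in slot $l$;
\item $\mathbf{f}_{k,k,l}$ gives $2[S_k,S_l]$ in slot $k$ and $2[S_k\circ\eta]$ in slot $l$;
\item $\mathbf{f}_{k,l,h}$ gives $[S_l,S_h]$, $[S_k,S_h]$, $[S_k,S_l]$ in slots $k,l,h$ respectively.
\end{itemize}
Your pivots therefore work:
\begin{itemize}
\item $[S_l,S_h]$ in slot $k$ is hit only by $\mathbf{f}_{k,l,h}$;
\item the $[S_k,S_l]$-coordinates in slots $k$ and $l$ are hit only by $\mathbf{f}_{k,l}$ and $\mathbf{f}_{k,k,l}$, with rows $(1,1)$ and $(2,0)$ of determinant $-2$;
\item $[S_k\circ\eta]$ in slot $k$ is hit only by $\mathbf{f}_k$.
\end{itemize}

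Two corrections. First, the number of remaining basis elements is $g+2\binom{g}{2}+\binom{g}{3}$, not $2g+\binom{g}{2}+\binom{g}{3}$. Second, your fallback pivot for $\mathbf{f}_{k,k,l}$ (the $[S_l\circ\eta]$ entry in slot $k$) would not work, because that coefficient is $2-2=0$. The primary pivot succeeds, so the fallback is never needed.
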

      
      \begin{proof}
      	Clearly $\overline{K}(M_g)$ acts trivially on $\pi_3(M_g)$. Now suppose $f$ is a diffeomorphism acting trivially on $H_*(M_g)$ and $\pi_3(M_g)$. By Lemma \ref{lem:cup} the cubic form on $H^2({(M_g)}_f)$ is zero, thus ${\nu_f}_*([{(M_g)}_f]) = 0$ and $f$ lies in $\overline{K}(M_g)$.
      \end{proof}

    \section{Torelli groups of spin 5-manifolds}\label{sec:5}
    
      Throughout this section $M$ will denote a simply connected closed spin 5-manifold unless otherwise stated. We compute the Torelli group of $M$ when $H_2(M)$ has no 2-torsion, determine the generators, and prove Theorem \ref{thm:1}, using Kreck's modified surgery theory.
    
    \subsection{The mapping class group of $({\natural}^g(S^2 \times D^3), \partial)$}
    
      As the first step we compute the relative mapping class group $\mathrm{MCG}({\natural}^g(S^2 \times D^3), \partial)$ of the $g$-fold boundary connected sum of $S^2 \times D^3$.
      
      Note that $M_g = {\natural}^g(S^2 \times D^3) \cup_{id} \overline{{\natural}^g(S^2 \times D^3)}$ is a twisted double, thus there is a well-defined homomorphism \[\mathrm{MCG}({\natural}^g(S^2 \times D^3), \partial) \rightarrow \mathcal{I}(M_g)\] given by extending a diffeomorphism of ${\natural}^g(S^2 \times D^3)$ by the identity map on the second copy of ${\natural}^g(S^2 \times D^3)$. This homomorphism is surjective, since by Theorem \ref{thm:connectedsumofproduct} $\mathcal{I}(M_g)$ is generated by Dehn twists about certain embeddings of $S^2$ which, up to isotopy, are supported in ${\natural}^g(S^2 \times D^3)$. This proves the following.
      
      \begin{Lem}\label{lem:firstsurjection}
      	Extending a diffeomorphism by identity defines a surjective homomorphism \[\mathrm{MCG}({\natural}^g(S^2 \times D^3), \partial) \rightarrow \mathcal{I}(M_g).\]
      \end{Lem}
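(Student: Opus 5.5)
The plan has three parts: check that the map is well defined, check that it is a homomorphism, and check that it is surjective.

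\emph{Well-definedness.} Write $V = {\natural}^g(S^2 \times D^3)$, so that $M_g = V \cup_{id} \overline{V}$. Let $f$ be a diffeomorphism of $V$ fixing a collar of $\partial V$ pointwise. Gluing $f$ to $id_{\overline{V}}$ gives a smooth diffeomorphism $\hat f$ of $M_g$, because $f$ is the identity near the gluing region. An isotopy of $f$ rel a collar extends by the identity over $\overline{V}$, so the isotopy class of $\hat f$ depends only on $[f]$. The rule $f \mapsto \hat f$ clearly respects composition, so we get a homomorphism $\mathrm{MCG}(V, \partial) \rightarrow \mathrm{MCG}(M_g)$.

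\emph{Landing in $\mathcal{I}(M_g)$.} By the Mayer\textendash Vietoris sequence, $H_2(M_g)$ is generated by the images of the cores $S^2 \times \{0\}$ of the summands. These cores lie in the copy of $\overline{V}$ (one may push them there through the collar), and $\hat f$ is the identity on $\overline{V}$. Hence $\hat f$ acts trivially on $H_2$, and by Poincar\'e duality it acts trivially on all of $H_*(M_g)$.

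\emph{Surjectivity.} By Theorem \ref{thm:connectedsumofproduct}, $\mathcal{I}(M_g)$ is generated by the Dehn twists $\mathbf{t}_{S, \alpha}$, where $S$ runs over the embeddings $S_k$, $S_k \# S_k$, $S_k \# S_l$, $S_k \# S_k \# S_l$ and $S_k \# S_l \# S_h$. Each such twist is supported in a tubular neighbourhood $S(S^2 \times D^3)$, and its isotopy class depends only on the isotopy class of the core sphere. The standard spheres $S_k$ sit in the interior of $V$. Connected sums of them are formed along arcs joining the $S_k$, and these arcs can be chosen inside the interior of $V$, since $V$ is connected and of dimension $5 > 2\cdot 2$. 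So every generating embedding is isotopic to one whose image lies in $\mathring V$ away from a collar of $\partial V$. The corresponding Dehn twist is then the extension by the identity of a Dehn twist of $V$ relative to $\partial$. The image is therefore a subgroup containing a generating set, so it is all of $\mathcal{I}(M_g)$.

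The only step that needs care is this isotopy of the cores into $\mathring V$. It is elementary general position, and it relies on Haefliger's theorem, quoted in \ref{results}, that such embeddings are determined up to isotopy by their homotopy class.
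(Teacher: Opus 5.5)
Your proposal is correct and follows the paper's argument. The paper likewise defines the map by extending by the identity on the second copy of ${\natural}^g(S^2 \times D^3)$, and it deduces surjectivity from Theorem \ref{thm:connectedsumofproduct}, since the generating Dehn twists are, up to isotopy, supported in ${\natural}^g(S^2 \times D^3)$. You additionally spell out steps the paper leaves implicit: well-definedness, that the image lies in $\mathcal{I}(M_g)$, and the general-position isotopy of the connected-sum cores into the interior.
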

      
      We shall now construct a surjective homomorphism ${\Omega}^{\mathrm{Spin}}_6((\mathbb{CP}^{\infty})^g) \rightarrow \mathrm{MCG}({\natural}^g(S^2 \times D^3), \partial)$ as follows. The normal 2-type of ${\natural}^g(S^2 \times D^3)$ is the same as that of $M_g$, which is \[B \coloneqq {({\mathbb{CP}}^{\infty})}^g \times B\mathrm{Spin} \rightarrow B\mathrm{Spin} \rightarrow B\mathrm{SO},\] and a normal 2-smoothing $\overline{\nu} : {\natural}^g(S^2 \times D^3) \rightarrow B$ is given by the restriction of the normal 2-smoothing of $M_g$ defined in Section \ref{sec:2}. This defines a 6-dimensional normal $B$-bordism class $[{\natural}^g(S^2 \times D^3) \times I, \overline{\nu} \times I]$. Let $[N, \nu] \in {\Omega}^{\mathrm{Spin}}_6((\mathbb{CP}^{\infty})^g)$, then there is a 6-dimensional normal $B$-bordism class \[[W, \nu] \coloneqq [{\natural}^g(S^2 \times D^3) \times I, \overline{\nu} \times I] + [N, \nu]\] which is a normal $B$-bordism between $({\natural}^g(S^2 \times D^3), \overline{\nu})$ and itself.
      
      \begin{Lem}\label{lem:surgery}
      	The bordism class $[W, \nu]$ can be represented by an $h$-cobordism of $({\natural}^g(S^2 \times D^3), \overline{\nu})$ to itself, thus yields an orientation preserving diffeomorphism of ${\natural}^g(S^2 \times D^3)$. This defines a well-defined map \[\Phi : {\Omega}^{\mathrm{Spin}}_6((\mathbb{CP}^{\infty})^g) \rightarrow \mathrm{MCG}({\natural}^g(S^2 \times D^3), \partial).\]
      \end{Lem}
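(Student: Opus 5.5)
We plan to apply Kreck's modified surgery to the relative bordism $W$ rel boundary, exactly as in the injectivity half of Lemma \ref{lem:mtc}, with $V \coloneqq {\natural}^g(S^2 \times D^3)$ playing the role of $M$. First we make sense of the sum: take $V \times I$ and perform an interior connected sum with $N$. A sphere $S^1 \subset N$ and a small arc in the interior of $V \times I$ can be used to do this along an embedded $0$-handle/$1$-handle, and $\nu$ is changed within its normal $B$-bordism class so that the $B$-structures agree near the gluing region. The result is a compact $6$-manifold $W$ with $\partial W = (V \times \{0\}) \cup (\partial V \times I) \cup (V \times \{1\})$, with $B$-structure restricting to $\overline{\nu}$ on both copies of $V$ and to the product structure on $\partial V \times I$. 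Since $\overline{\nu} : V \rightarrow B$ is a normal $2$-smoothing (it is $3$-connected: $V$ is simply connected, $H_2(V) \cong \mathbb{Z}^g$ maps isomorphically to $H_2((\mathbb{CP}^{\infty})^g)$, and spin), $W$ is a normal $B$-bordism rel $\partial V \times I$ between two normal $2$-smoothings in dimension $6 = 2q$ with $q = 3$.

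The main step is to invoke \cite[Theorems 4 and 5]{Kre99}: the obstruction $\theta(W,\nu) \in l_6(\mathbb{Z}[\pi_1]) = l_6(\mathbb{Z})$ is defined after surgery below the middle dimension, and for odd $q$ with $\pi_1 = 1$ the relevant statement (as used in Lemma \ref{lem:mtc}) says that the obstruction is elementary. Hence we may do surgery on the interior of $W$ to obtain an $h$-cobordism rel boundary between $V$ and itself. This is the same input as in Lemma \ref{lem:mtc}, now with nonempty boundary, so I would check that Kreck's theorem applies relative to $\partial V \times I$. It does, because $\partial V = {\#}^g(S^2 \times S^2)$ is simply connected and the structure there is fixed. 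I expect this check, together with confirming that elementarity holds for $q=3$, to be the main obstacle.

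Given such an $h$-cobordism $W'$, which is simply connected of dimension $6$, the relative $h$-cobordism theorem gives a diffeomorphism $W' \cong V \times I$ that is the identity on $V \times \{0\} \cup \partial V \times I$. Restricting this diffeomorphism to $V \times \{1\}$ yields a diffeomorphism $F$ of $V$ that is the identity near $\partial V$. It is orientation preserving, since $W'$ is oriented compatibly with $V \times I$.

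It remains to check that $\Phi([N,\nu]) \coloneqq [F]$ is well defined. Suppose we have two choices: a representative $(N',\nu')$ of the same bordism class, and a different $h$-cobordism $W''$, giving a diffeomorphism $F'$. Glue $W'$ and $-W''$ along $V \times \{0\}$ and along $\partial V \times I$. This produces a normal $B$-bordism rel boundary from $V$ to itself whose two ends are identified via $F$ and $F'$, and it represents $[N,\nu] - [N',\nu'] = 0$ plus a cylinder. Equivalently, the mapping cylinder of $F'^{-1} \circ F$ is $B$-bordant rel boundary to $V \times I$. Applying the same surgery argument once more produces an $h$-cobordism rel boundary, hence a diffeomorphism $V \times I \times I$-type structure. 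The $h$-cobordism theorem then gives a pseudoisotopy rel $\partial$ from $F'^{-1}\circ F$ to the identity, and Cerf's pseudoisotopy theorem \cite{Cer71} upgrades it to an isotopy rel $\partial$. Thus $[F] = [F']$ in $\mathrm{MCG}(V, \partial)$, and $\Phi$ is well defined.
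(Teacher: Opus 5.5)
Your overall architecture is the paper's: use modified surgery to replace $W$ by an $h$-cobordism rel boundary, apply the relative $h$-cobordism theorem, and prove well-definedness by a second surgery step plus Cerf. The decisive step, however, is unsupported.

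Kreck's Theorems 4 and 5 in \cite{Kre99} concern bordisms of odd dimension $2q+1$. That is why they work in Lemma~\ref{lem:mtc}, where the coboundary of the $6$-dimensional mapping torus is $7$-dimensional. Your $W$ is $6$-dimensional, so the applicable result is Kreck's Theorem 3: $W$ is $B$-bordant rel boundary to an $h$-cobordism if and only if $\theta(W,\nu)$ is elementary. In even dimensions there is no general principle that this holds when $q$ is odd and $\pi_1=1$. For framed bordisms, for instance, the Arf--Kervaire invariant in $L_6(\mathbb{Z})\cong\mathbb{Z}/2$ is a genuine obstruction (take connected sum with $S^3\times S^3$ carrying the Lie group framing). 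So what you call ``the main obstacle'' is exactly the content of the lemma, and it is left unproved.

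What is missing is an argument specific to this $B$; the paper gives two.

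(i) Since $w_4$ is nonzero in $H^4(B\mathrm{Spin};\mathbb{Z}/2)$, $\theta(W,\nu)$ lies in $l_6^{\sim}(e)$, and no quadratic refinement has to be controlled. After making $\nu$ $3$-connected, $\theta(W,\nu)$ becomes $(H_3(W)\leftarrow\pi_3(W)\rightarrow H_3(W),\lambda)$ with Hurewicz maps, using $H_3(W)\cong H_3(W,N_i)$. Here $H_3(W)$ is free and $\lambda$ is skew-symmetric and non-singular, so half of a symplectic basis spans the required submodule (Appendix~\ref{a:obstruction}).

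(ii) Alternatively, via Kreck's refined obstruction group, a quotient of $L_6(\mathbb{Z})$ detected by the Arf--Kervaire invariant, a possibly nonzero value is killed by connected sum with $S^3\times S^3$ carrying a Kervaire-invariant-one framing. This does not change the class in $\Omega^{\mathrm{Spin}}_6((\mathbb{CP}^{\infty})^g)$.

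Your well-definedness argument, by contrast, is essentially the paper's. There the bordism between the two $h$-cobordisms is $7$-dimensional, so Theorems 4 and 5 are exactly the right input. The ends are normal $2$-smoothings because they are homotopy equivalent to ${\natural}^g(S^2\times D^3)$ and $\pi_3(B)=0$. One correction: glue $W'$ and $-W''$ only along $V\times\{0\}$, so that the result is again a bordism from $V$ to $V$ with side $\partial V\times[-1,1]$. Gluing along $\partial V\times I$ as well destroys that structure.
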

      
      \begin{proof}
      	First we show that the normal $B$-manifold $(W, \nu) = ({\natural}^g(S^2 \times D^3) \times I, \overline{\nu} \times I) \sqcup (N, \nu)$ is normal $B$-bordant, relative to boundary, to an $h$-cobordism of $({\natural}^g(S^2 \times D^3), \overline{\nu})$ to itself. By \cite[Theorem 3]{Kre99}, this is true if and only if the modified surgery obstruction $\theta(W, \nu)$ is elementary (see \cite[Page 730]{Kre99} for definition), which lies in the monoid $l_6^{\sim}(e)$ since the image of the universal Stiefel\textendash Whitney class $w_4 \in H^4(B\mathrm{SO}; \mathbb{Z}/2)$ in $H^4(B\mathrm{Spin}; \mathbb{Z}/2)$ is nonzero by \cite{Tho62}. We show that $\theta(W, \nu)$ vanishes, thus is elementary. In fact, by \cite[Theorem 5.2 (b)]{Kre}, $\theta(W, \nu)$ lies in a certain abelian group $L_6^{s, \tau}(e, \mathbb{Z}e)$. By \cite[Lemma 4.1]{Kre} and the fact that the Whitehead group of the trivial group is trivial, there is a surjective homomorphism $L_6(\mathbb{Z}) \rightarrow L_6^{s, \tau}(e, \mathbb{Z}e)$, so $L_6^{s, \tau}(e, \mathbb{Z}e)$ is isomorphic to $0$ or $\mathbb{Z}/2$. If $L_6^{s, \tau}(e, \mathbb{Z}e)$ is nontrivial, then it is isomorphic to $L_6(\mathbb{Z})$ and the nontrivial element is detected by the Arf\textendash Kervaire invariant \cite{Wal99}. Since $S^3 \times S^3$ admits a framing with nontrivial Kervaire invariant, we may form a connected sum of $W$ and $S^3 \times S^3$ (the trace of this connected sum operation is a normal $B$-bordism relative to boundary) which has vanishing modified surgery obstruction, if $\theta(W, \nu) \neq 0$. Since $\theta(W, \nu)$ is a bordism invariant it must vanish.
      	
      	Therefore $(W, \nu)$ is normal $B$-bordant, relative to boundary, to an $h$-cobordism of $({\natural}^g(S^2 \times D^3), \overline{\nu})$ to itself. Such an $h$-cobordism yields an orientation preserving diffeomorphism of ${\natural}^g(S^2 \times D^3)$ which restricts to the identity on the boundary. This diffeomorphism defines an element of $\mathrm{MCG}({\natural}^g(S^2 \times D^3), \partial)$. To show that this defines a well-defined map from ${\Omega}^{\mathrm{Spin}}_6((\mathbb{CP}^{\infty})^g)$ to $\mathrm{MCG}({\natural}^g(S^2 \times D^3), \partial)$ we need to show that the diffeomorphisms given by different representatives of the same bordism class in ${\Omega}^{\mathrm{Spin}}_6((\mathbb{CP}^{\infty})^g)$ are isotopic relative to boundary.
      	
      	Let $(N, \nu)$ and $(N', \nu')$ be normal $B$-manifolds such that $[N, \nu] = [N', \nu'] \in {\Omega}^{\mathrm{Spin}}_6((\mathbb{CP}^{\infty})^g)$. Choose a normal $B$-bordism between $(N, \nu)$ and $(N', \nu')$, the disjoint union of this bordism with $({\natural}^g(S^2 \times D^3) \times I \times I, \overline{\nu} \times I \times I)$ form a normal $B$-bordism between $(W, \nu) = ({\natural}^g(S^2 \times D^3) \times I, \overline{\nu} \times I) \sqcup (N, \nu)$ and $(W', \nu') = ({\natural}^g(S^2 \times D^3) \times I, \overline{\nu} \times I) \sqcup (N', \nu')$. The corresponding $h$-cobordisms are thus normal $B$-bordant and we denote the normal $B$-bordism between them by $(V, \tilde{\nu})$. After smoothing the corners we may view $V$ as a compact 7-manifold with boundary given by gluing the two $h$-cobordisms (which are both homotopy equivalent to ${\natural}^g(S^2 \times D^3)$) along the boundary via the identity map, and by \cite[Corollary 1]{Kre99} we may assume that the normal $B$-structures of these $h$-cobordisms are 3-connected. By \cite[Theorem 5]{Kre99} we can assume that the modified surgery obstruction $\theta(V, \tilde{\nu}) \in l_7^{\sim}(e)$ is elementary, thus by \cite[Theorem 4]{Kre99} we can assume that $(V, \tilde{\nu})$ is an $h$-cobordism relative to boundary. The $h$-cobordism theorem then implies that the diffeomorphisms given by $(N, \nu)$ and $(N', \nu')$ are pseudoisotopic relative to boundary, and hence are isotopic relative to boundary by Cerf's pseudoisotopy theorem. Therefore our construction yields a well-defined map $\Phi : {\Omega}^{\mathrm{Spin}}_6((\mathbb{CP}^{\infty})^g) \rightarrow \mathrm{MCG}({\natural}^g(S^2 \times D^3), \partial)$.
      \end{proof}
      
      \begin{Rmk}
      	\rm In the proof of Lemma \ref{lem:surgery} we used the unpublished work of Kreck \cite{Kre} to show that the surgery obstruction $\theta(W, \nu)$ vanishes. In fact it suffices to show that $\theta(W, \nu)$ is elementary, and this can be proved directly without using \cite{Kre}. We present such a proof in Appendix \ref{a:obstruction}.
      \end{Rmk}
      
      \begin{Lem}\label{lem:secondsurjection}
      	The map $\Phi : {\Omega}^{\mathrm{Spin}}_6((\mathbb{CP}^{\infty})^g) \rightarrow \mathrm{MCG}({\natural}^g(S^2 \times D^3), \partial)$ is a surjective homomorphism.
      \end{Lem}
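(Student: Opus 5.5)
The plan is to prove the homomorphism property by stacking the two $h$-cobordisms, and surjectivity by realizing each diffeomorphism through its mapping cylinder.

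\emph{Homomorphism.} Let $[N,\nu],[N',\nu'] \in \Omega^{\mathrm{Spin}}_6((\mathbb{CP}^{\infty})^g)$. By Lemma \ref{lem:surgery} these give $h$-cobordisms $W_1,W_2$ from $({\natural}^g(S^2\times D^3),\overline{\nu})$ to itself, relative to boundary, with diffeomorphisms $\Phi([N,\nu])$ and $\Phi([N',\nu'])$. Gluing $W_1$ and $W_2$ along the common end gives an $h$-cobordism whose associated diffeomorphism is the composite, with the order fixed by the convention for reading off a diffeomorphism from an $h$-cobordism. As a normal $B$-bordism rel boundary, this stacked manifold represents
\[[{\natural}^g(S^2\times D^3)\times I,\overline{\nu}\times I] + [N,\nu] + [N',\nu'].\]
This holds because each $W_i$ is $B$-bordant rel boundary to the cylinder disjoint union the closed piece, and stacking cylinders gives a cylinder. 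Since $\Phi$ is well defined on bordism classes (Lemma \ref{lem:surgery}), we get $\Phi([N,\nu]+[N',\nu'])=\Phi([N,\nu])\circ\Phi([N',\nu'])$. It remains to check that the normal $B$-structures glue compatibly along the middle copy. This holds because both restrict to $\overline{\nu}$ on the ends, and $\overline{\nu}$ is a 2-smoothing, so the $(\mathbb{CP}^\infty)^g$ component is determined up to homotopy by $H_2$.

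\emph{Surjectivity.} Let $f$ be a diffeomorphism of $X={\natural}^g(S^2\times D^3)$ fixing a collar of $\partial X$. Let $C_f$ be its mapping cylinder, an $h$-cobordism from $X$ to itself rel boundary (in fact $X\times I$, with one end identified via $f$). First we show that $C_f$ admits a normal $B$-structure restricting to $\overline{\nu}$ on both ends. The spin part follows from $H^1(X;\mathbb{Z}/2)=0$ together with fixing the structure near the fixed boundary. The $(\mathbb{CP}^\infty)^g$ part requires $f_*=\mathrm{id}$ on $H_2(X)$. This holds because $f$ is the identity on $\partial X$, which contains $\#^g(S^2\times S^2)$-type spheres $S^2\times\{pt\}$ generating $H_2(X)$, so $f$ fixes generators of $H_2(X)\cong H_2(\partial X)\to H_2(X)$.

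Next, glue $C_f$ to $X\times I$ (the inverse cylinder) along both ends and along the boundary $\partial X\times I$, where $f$ is the identity. The result is a closed spin 6-manifold $N$ with a map $\nu$ to $(\mathbb{CP}^\infty)^g$. Then $[C_f]=[X\times I]+[N,\nu]$ as $B$-bordism classes rel boundary. Indeed, $C_f$ is obtained from $(X\times I)\sqcup N$ by cutting $N$ open along a copy of $X\times I$ and regluing, and this cut-and-paste is realized by a $B$-bordism rel boundary: glue $X\times I\times I$ appropriately, as in the proof of Lemma \ref{lem:mtc}. Hence the $h$-cobordism $C_f$ represents the class used to define $\Phi([N,\nu])$. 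By the uniqueness part of Lemma \ref{lem:surgery}, any two $h$-cobordisms representing the same class give isotopic diffeomorphisms, so $\Phi([N,\nu])=[f]$.

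The main obstacle is making this cut-and-paste bordism argument precise, including matching normal $B$-structures along the gluing. A closely related alternative is to note that $N$ is essentially the double $X_f\cup_{\partial}\overline{X\times S^1}$-type mapping torus, namely the mapping torus of $f\cup\mathrm{id}$ on $M_g$, up to bordism. One would then use directly that $\rho(f\cup\mathrm{id})$ and $\Phi$ are compatible, via the argument from Lemma \ref{lem:mtc}.
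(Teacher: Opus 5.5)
Your proposal is correct and is essentially the paper's proof. The homomorphism property comes from stacking the two $h$-cobordisms. Surjectivity comes from putting a normal $B$-structure on the mapping cylinder of $f$ (using $f_*=\mathrm{id}$ on $H_2$, which you correctly derive from $f|_{\partial}=\mathrm{id}$) and showing it is bordant rel boundary to ${\natural}^g(S^2\times D^3)\times I$ plus a closed manifold.

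The only difference is which closed manifold you use. The paper uses the mapping torus of $f\cup\mathrm{id}$ on $M_g$ and writes down an explicit 7-dimensional bordism, in a form it reuses in the proof of Theorem \ref{thm:torelli}. Your $N=C_f\cup_\partial\overline{X\times I}$ is instead the twisted double of ${\natural}^g(S^2\times D^4)$ along $f\cup\mathrm{id}$ from Section \ref{sec:6}. For this choice the rel-boundary bordism is simply $N\times I$: decompose $N\times\{0\}$ as $C_f\cup(\partial C_f\times I)\cup\overline{X\times I}$. So the cut-and-paste step you flagged as the main obstacle is actually trivial.
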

      
      \begin{proof}
      	That the map $\Phi$ is a homomorphism follows from the observation that \begin{align*}
      		& [{\natural}^g(S^2 \times D^3) \times I, \overline{\nu} \times I] + [N, \nu] + [N', \nu'] \\
      		& = \Bigl[\bigl(({\natural}^g(S^2 \times D^3) \times [0, 1], \overline{\nu} \times [0, 1]) + (N, \nu)\bigr) \\
      		& \quad \bigcup_{{\natural}^g(S^2 \times D^3) \times \{1\}} \bigl(({\natural}^g(S^2 \times D^3) \times [1, 2], \overline{\nu} \times [1, 2]) + (N', \nu')\bigr)\Bigr]
      	\end{align*} is normal $B$-bordant, relative to boundary, to the result of gluing the two $h$-cobordisms along ${\natural}^g(S^2 \times D^3) \times \{1\}$ via the identity map. The latter $h$-cobordism yields the composition of the diffeomorphisms given by $[N, \nu]$ and $[N', \nu']$.
      	
      	Now we show that $\Phi$ is surjective. Let $f$ be a diffeomorphism representing an element of $\mathrm{MCG}({\natural}^g(S^2 \times D^3), \partial)$. By obstruction theory there is a map ${\natural}^g(S^2 \times D^3) \times I \rightarrow B\mathrm{SO}$ extending \[({\natural}^g(S^2 \times D^3) \times \{0\}) \cup (\partial ({\natural}^g(S^2 \times D^3)) \times I) \cup ({\natural}^g(S^2 \times D^3) \times \{1\}) \xrightarrow{\nu \cup (\nu \times I) \cup (\nu \circ f)} B\mathrm{SO}.\] This is a normal Gauss map because it is homotopic to $\nu \times I : {\natural}^g(S^2 \times D^3) \times I \rightarrow B\mathrm{SO}$. Since ${\natural}^g(S^2 \times D^3) \times I$ has a unique spin structure, there is a map ${\natural}^g(S^2 \times D^3) \times I \rightarrow B\mathrm{Spin}$ lifting this normal Gauss map. Again by obstruction theory the map \[({\natural}^g(S^2 \times D^3) \times \{0\}) \cup (\partial ({\natural}^g(S^2 \times D^3)) \times I) \cup ({\natural}^g(S^2 \times D^3) \times \{1\}) \xrightarrow{\overline{\nu} \cup (\overline{\nu} \times I) \cup (\overline{\nu} \circ f)} {(\mathbb{CP}^{\infty})}^g\] extends to a map ${\natural}^g(S^2 \times D^3) \times I \rightarrow {(\mathbb{CP}^{\infty})}^g$. These maps assemble into a normal $B$-structure $H : {\natural}^g(S^2 \times D^3) \times I \rightarrow B$ extending \[({\natural}^g(S^2 \times D^3) \times \{0\}) \cup (\partial ({\natural}^g(S^2 \times D^3)) \times I) \cup ({\natural}^g(S^2 \times D^3) \times \{1\}) \xrightarrow{\overline{\nu} \cup (\overline{\nu} \times I) \cup (\overline{\nu} \circ f)} B.\] Extending $f$ by identity yields a diffeomorphism of $M_g$ which we still denote by $f$. The normal $B$-structure $H$ extends by $\overline{\nu} \times I$ to a map $M_g \times I \rightarrow B$, which induces a normal $B$-structure $\tilde{H} : {(M_g)}_f \rightarrow B$. This defines a normal $B$-bordism class $[{(M_g)}_f, \tilde{H}] \in {\Omega}^{\mathrm{Spin}}_6((\mathbb{CP}^{\infty})^g)$ (which equals to $\rho([f]) = [{(M_g)}_f, \nu_f]$ since the spin structures and the effects on the second homology groups of these two normal $B$-structures coincide). Now the normal $B$-manifold \begin{align*}
      		& \Bigl({\natural}^g(S^2 \times D^3) \times [0, 1] \times I, \overline{\nu} \times I \times I\Bigr) \\
      		& \bigcup_F \Bigl({\natural}^g(S^2 \times D^3) \times [1, 2] \times I, \bigl(\overline{\nu} \times I \times I\bigr) \cup \bigl(H \times I\bigr) \cup \bigl((\overline{\nu} \circ f) \times I \times I\bigr)\Bigr) \\
      		& \bigcup_{id} \Bigl({\natural}^g(S^2 \times D^3) \times D^2, \overline{\nu} \times D^2\Bigr),
      	\end{align*} where the first gluing map $F$ is the disjoint union of \begin{align*}
      	f \times id : & \; {\natural}^g(S^2 \times D^3) \times \{1\} \times [0, 1/3] \rightarrow {\natural}^g(S^2 \times D^3) \times \{1\} \times [0, 1/3], \\
      	id : & \; {\natural}^g(S^2 \times D^3) \times \{1\} \times [2/3, 1] \rightarrow {\natural}^g(S^2 \times D^3) \times \{1\} \times [2/3, 1]
      	\end{align*} and the second gluing map is \[id : \partial ({\natural}^g(S^2 \times D^3)) \times S^1 \rightarrow \partial ({\natural}^g(S^2 \times D^3)) \times S^1,\] is a normal $B$-coboundary between \[\Bigl({\natural}^g(S^2 \times D^3) \times I, \overline{\nu} \times I\Bigr) + \Bigl({(M_g)}_f, \tilde{H}\Bigr)\] and \[\Bigl(\bigl({\natural}^g(S^2 \times D^3) \times I\bigr) \cup_f \bigl({\natural}^g(S^2 \times D^3) \times I\bigr), (\overline{\nu} \times I) \cup \bigl((\overline{\nu} \circ f) \times I\bigr)\Bigr).\] Thus $\Phi([{(M_g)}_f, \tilde{H}]) = [f]$. Hence $\Phi$ is surjective.
      \end{proof}
      
      By Lemma \ref{lem:firstsurjection} and Lemma \ref{lem:secondsurjection} the composition \[{\Omega}^{\mathrm{Spin}}_6((\mathbb{CP}^{\infty})^g) \xrightarrow{\Phi} \mathrm{MCG}({\natural}^g(S^2 \times D^3), \partial) \rightarrow \mathcal{I}(M_g)\] is a surjective homomorphism \[\mathbb{Z}^{2g + 2{\binom{g}{2}} + {\binom{g}{3}}} \rightarrow \mathbb{Z}^{2g + 2{\binom{g}{2}} + {\binom{g}{3}}},\] which is necessarily bijective. This proves the following.
      
      \begin{Thm}\label{thm:boundaryconnectedsum}
      	Extending a diffeomorphism by identity induces an isomorphism \[\mathrm{MCG}({\natural}^g(S^2 \times D^3), \partial) \xrightarrow{\cong} \mathcal{I}(M_g).\]
      \end{Thm}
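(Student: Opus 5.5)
The plan is to sandwich $\mathrm{MCG}({\natural}^g(S^2 \times D^3), \partial)$ between two free abelian groups of the same finite rank using the two surjections already constructed. Write $E : \mathrm{MCG}({\natural}^g(S^2 \times D^3), \partial) \rightarrow \mathcal{I}(M_g)$ for the extension-by-identity homomorphism. Lemma \ref{lem:firstsurjection} says $E$ is surjective. Lemma \ref{lem:secondsurjection} gives a surjective homomorphism $\Phi : {\Omega}^{\mathrm{Spin}}_6((\mathbb{CP}^{\infty})^g) \rightarrow \mathrm{MCG}({\natural}^g(S^2 \times D^3), \partial)$. Hence the composite $E \circ \Phi$ is a surjective homomorphism.

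Next I identify the source and target of $E \circ \Phi$. By Proposition \ref{prop:bordismfree}, ${\Omega}^{\mathrm{Spin}}_6((\mathbb{CP}^{\infty})^g) \cong \mathbb{Z}^{N}$ with $N = 2g + 2\binom{g}{2} + \binom{g}{3}$. By Theorem \ref{thm:connectedsumofproduct}, $\mathcal{I}(M_g) \cong \mathbb{Z}^{N}$ as well. A surjective endomorphism of a finitely generated free abelian group of rank $N$ is injective: tensor with $\mathbb{Q}$ to see that the kernel has rank $0$, and since the kernel is a subgroup of a free group it is torsion-free, hence zero. So $E \circ \Phi$ is an isomorphism.

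Finally I read off that $E$ is an isomorphism. Because $E \circ \Phi$ is injective, $\Phi$ is injective. Since $\Phi$ is also surjective, $\Phi$ is bijective, and then $E = (E \circ \Phi) \circ \Phi^{-1}$ is a composite of isomorphisms. As a byproduct, $\mathrm{MCG}({\natural}^g(S^2 \times D^3), \partial)$ is abelian, free of rank $N$, and generated by the Dehn twists of Theorem \ref{thm:connectedsumofproduct}.

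There is no real obstacle at this stage, since all the substance sits in the earlier lemmas. The two points to check are that $\Phi$ is indeed a homomorphism of groups, so that the composite is a group homomorphism (this is part of Lemma \ref{lem:secondsurjection}), and the Hopfian property of $\mathbb{Z}^N$ used above.
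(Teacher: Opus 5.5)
Your proposal is correct and follows essentially the same route as the paper: compose the surjections $\Phi$ (Lemma \ref{lem:secondsurjection}) and extension-by-identity (Lemma \ref{lem:firstsurjection}) to get a surjective homomorphism $\mathbb{Z}^{2g + 2\binom{g}{2} + \binom{g}{3}} \rightarrow \mathbb{Z}^{2g + 2\binom{g}{2} + \binom{g}{3}}$. Such a map is necessarily bijective, and hence both factors are isomorphisms.
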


    \subsection{Extending diffeomorphisms by identity}
    
      According to Barden's construction \cite[Theorem 2.3]{Bar65}, a simply connected closed spin 5-manifold $M$ is a twisted double of a boundary connected sum of $S^2 \times D^3$. Precisely, we may assume \[H_2(M) \cong \mathbb{Z}^g \oplus (\mathbb{Z}/{{p_1}^{r_1}})^2 \oplus \cdots \oplus (\mathbb{Z}/{{p_s}^{r_s}})^2,\] where $p_i$ are prime numbers, then $M$ is a twisted double of ${\natural}^{g+2s}(S^2 \times D^3)$. This implies that there is a well-defined homomorphism \[\mathrm{MCG}({\natural}^{g+2s}(S^2 \times D^3), \partial) \rightarrow \mathcal{I}(M)\] given by extending a diffeomorphism by identity.
      
      From now on $M$ is assumed to contain no 2-torsion in homology. By Corollary \ref{cor:forgetful} the mapping torus construction $\rho$ can be viewed as an injective homomorphism from $\mathcal{I}(M)$ to the bordism group ${\Omega}^{\mathrm{Spin}}_6(K(H_2(M), 2))$. We identify $H_2(M)$ with $\mathbb{Z}^g \oplus (\mathbb{Z}/{{p_1}^{r_1}})^2 \oplus \cdots \oplus (\mathbb{Z}/{{p_s}^{r_s}})^2$ via the standard basis given by the fundamental classes of the embedded spheres $S_k(S^2)$, where $S_k : S^2 = S^2 \times \{*\} \hookrightarrow {\natural}^{g+2s}(S^2 \times D^3)$ is the standard embedding in the $k$-th boundary connected summand. So the canonical quotient homomorphism $\delta : \mathbb{Z}^{g+2s} \rightarrow H_2(M)$ is well-defined.
      
      The main result of this section is the following.
    
      \begin{Thm}\label{thm:torelli}
    	  The homomorphism $\mathrm{MCG}({\natural}^{g+2s}(S^2 \times D^3), \partial) \rightarrow \mathcal{I}(M)$ given by extending diffeomorphisms by identity fits into a commutative diagram \[\begin{tikzcd}
    		  \mathrm{MCG}({\natural}^{g+2s}(S^2 \times D^3), \partial) \arrow[d, "\cong"] \arrow[dr] & \\
    		  \mathcal{I}(M_{g+2s}) \arrow[d, "\cong", "\rho"'] & \mathcal{I}(M) \arrow[d, "\rho"'] \\
    		  {\Omega}^{\mathrm{Spin}}_6((\mathbb{CP}^{\infty})^{g+2s}) \arrow[r, "\delta_*"] & {\Omega}^{\mathrm{Spin}}_6(K(H_2(M), 2)).
    	  \end{tikzcd}\]
    	  It follows that the mapping torus construction \[\rho : \mathcal{I}(M) \rightarrow {\Omega}^{\mathrm{Spin}}_6(K(H_2(M), 2))\] is an isomorphism, and the homomorphism $\mathrm{MCG}({\natural}^{g+2s}(S^2 \times D^3), \partial) \rightarrow \mathcal{I}(M)$ is surjective.
      \end{Thm}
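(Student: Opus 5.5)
The plan is to prove commutativity of the diagram directly on representatives, and then deduce both consequences formally from results already established.

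\emph{Commutativity.} Let $f$ be a diffeomorphism of ${\natural}^{g+2s}(S^2 \times D^3)$ fixing a collar of the boundary. Write $f_{g+2s}$ for its extension by the identity to $M_{g+2s}$, and $f_M$ for its extension to $M$ (via Barden's twisted double). Both extensions are the identity on the second copy of ${\natural}^{g+2s}(S^2 \times D^3)$. Hence the mapping tori decompose as
\[(M_{g+2s})_{f_{g+2s}} = \bigl({\natural}^{g+2s}(S^2 \times D^3)\bigr)_f \cup \bigl(\overline{{\natural}^{g+2s}(S^2 \times D^3)} \times S^1\bigr)\]
and similarly for $(M)_{f_M}$, where the two pieces are glued along $\partial({\natural}^{g+2s}(S^2 \times D^3)) \times S^1$ by the respective gluing maps $id$ and the Barden twist $\varphi$. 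I will build a spin bordism between $((M_{g+2s})_{f_{g+2s}}, \delta \circ \nu_{f_{g+2s}})$ and $((M)_{f_M}, \nu_{f_M})$. The bordism is obtained by gluing a product cylinder on the common piece $\bigl({\natural}^{g+2s}(S^2 \times D^3)\bigr)_f \times I$ to a spin bordism $Z$ between $\overline{{\natural}^{g+2s}(S^2 \times D^3)} \times S^1$ and itself that realizes the change of gluing. The natural candidate is $Z = X \times S^1$, where $X$ is the relative bordism (rel boundary) between the two doubles $M_{g+2s}$ and $M$ coming from the twist. Concretely, take $X = (M_{g+2s} \times I) \cup (\text{trace})$; equivalently, $X$ is the standard spin bordism between $M_{g+2s} \sqcup \overline{M}$ obtained by gluing $({\natural}\times I)$ pieces. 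This gives
\[[(M_{g+2s})_{f_{g+2s}}] - [(M)_{f_M}] = [M_{g+2s} \times S^1] - [M \times S^1]\]
in the relevant bordism group, and both terms on the right are zero because the identity maps to $0$ under $\rho$.

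A cleaner way to say this: the difference of the two mapping tori is the cut-and-paste of $\bigl({\natural}\bigr)_f$ against two different complements that are both products with $S^1$. The SK-type argument of Kreck (compare the proof of Lemma \ref{lem:secondsurjection}, where a coboundary was built from ${\natural}\times[0,2]\times I \cup {\natural}\times D^2$) gives exactly such a $B$-bordism. I would therefore reuse that construction: glue $\bigl({\natural}^{g+2s}(S^2 \times D^3)\bigr)_f \times I$ with $\overline{{\natural}} \times D^2$ to show that both mapping tori are bordant to a common object, namely $({\natural}^{g+2s}(S^2\times D^3)\times I)\cup_f({\natural}\times I)$ closed up by $\partial {\natural} \times D^2$.

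\emph{The normal structures.} The maps to $K(\,\cdot\,,2)$ must be compatible. On the common piece, $\nu_{f_M}$ restricts to $\delta$ composed with the $(\mathbb{CP}^\infty)^{g+2s}$-structure, because $H_2({\natural}^{g+2s}(S^2 \times D^3)) = \mathbb{Z}^{g+2s} \to H_2(M)$ is exactly $\delta$ in the chosen basis of spheres $S_k$. The spin structures are the preferred ones, trivial on $S^1 \times D$. I expect the main obstacle to be checking that the $K(H_2(M),2)$-map extends over the bordism. For this I would use that $H^2$ of the bordism with $H_2(M)$ coefficients is controlled by the $H_2$ of ${\natural}$, together with obstruction theory as in Lemma \ref{lem:secondsurjection}.

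\emph{Consequences.} Once the diagram commutes, the consequences follow formally. By Theorem \ref{thm:boundaryconnectedsum} and Theorem \ref{thm:connectedsumofproduct}, the left vertical maps are isomorphisms. By Proposition \ref{prop:quotient}, $\delta_*$ is surjective. Hence $\rho:\mathcal{I}(M)\to\Omega^{\mathrm{Spin}}_6(K(H_2(M),2))$ is surjective, and it is injective by Lemma \ref{lem:mtc} together with Corollary \ref{cor:forgetful}. So $\rho$ is an isomorphism, and the diagonal map equals $\rho^{-1}\circ\delta_*\circ(\text{isos})$, which is surjective.
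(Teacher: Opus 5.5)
Your deduction of the two consequences from commutativity is exactly the paper's; it uses Theorems~\ref{thm:boundaryconnectedsum} and~\ref{thm:connectedsumofproduct}, Proposition~\ref{prop:quotient}, and Lemma~\ref{lem:mtc} with Corollary~\ref{cor:forgetful}. The identity $[(M_{g+2s})_f]-[M_{f'}]=[M_{g+2s}\times S^1]-[M\times S^1]$ that you aim for is also true. The gap is in the bordism meant to prove it.

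Write $P={\natural}^{g+2s}(S^2\times D^3)$ and $P_1,P_2$ for the two halves of $M$. Write $f'$ for the extension of $f$ to $M$, and $\varphi$ for Barden's gluing map of ${\#}^{g+2s}(S^2\times S^2)$.

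\emph{The proposed bordism.} Your candidate is $Z=X\times S^1$, with $X$ a bordism between the closed manifolds $M_{g+2s}$ and $M$. Its boundary is $M_{g+2s}\times S^1\sqcup\overline{M}\times S^1$. That is not two copies of $\overline{P}\times S^1$ joined along $\partial P\times S^1\times I$, so $Z$ cannot be glued to $P_f\times I$ as you describe.

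Moreover, no $Z$ of product form $X\times S^1$ can carry the required normal $B$-structure when $H_2(M)$ has torsion. Restricting to $X\times\{\mathrm{pt}\}$ would give a normal $B$-null-bordism of $(M,\nu_M)$, either directly or via $M_{g+2s}=\partial\,{\natural}^{g+2s}(S^2\times D^4)$. But $\nu_{M*}[M]\neq0$ in $H_5(K(H_2(M),2))$, because evaluating cup products of classes pulled back from $K(H_2(M),2)$ on $[M]$ recovers the nonsingular linking form. For example, if $H_2(M)\cong(\mathbb{Z}/p)^2$, then $\langle x_1\cup\beta x_2,[M]\rangle\neq0$ for suitable $x_i\in H^2(M;\mathbb{Z}/p)$.

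\emph{The fallback.} Reusing the coboundary of Lemma~\ref{lem:secondsurjection} is the paper's route, but two things are off. First, the common object there is the relative class of $(P\times I)\cup_f(P\times I)$ rel boundary, compared with $[P\times I]+[\text{mapping torus}]$; it is not a closed manifold. Second, the step you leave to ``obstruction theory'' is where the paper needs a specific input. To put a normal $B$-structure on the piece $P\times D^2$ glued in by $\varphi\times S^1$, the paper homotopes $\nu_{f'}\circ\varphi$ to $\nu_{f'}$ on ${\#}^{g+2s}(S^2\times S^2)$, using Barden's description of $\varphi$ on $H_2$.

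\emph{The repair.} You can stay within your first framework: do not split off the circle, and take $Z=M\times D^2$ instead.
\begin{itemize}
\item Its boundary is $M\times S^1=(P_1\times S^1)\cup_{\varphi\times S^1}(\overline{P_2}\times S^1)$. With the interface thickened to a collar $\partial P\times S^1\times I$, this is exactly a bordism between two copies of $P\times S^1$ that realizes the change of gluing.
\item Glue $P_f\times I$ along $\partial P_f\times I=\partial P\times S^1\times I$ to this collar. The result is a $7$-manifold $W$ with $\partial W=(M_{g+2s})_f\sqcup\overline{M_{f'}}$.
\item Put $\nu_{f'}|_{P_f}\times I$ on the first piece and $\nu_M\times D^2$ on $Z$. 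On $\partial P\times S^1$ both have the preferred spin structure and induce the inclusion $H_2(\partial P)\to H_2(M)$. Since $H_1(\partial P\times S^1)\cong\mathbb{Z}$ is free, this determines the map to $K(H_2(M),2)$ up to homotopy, so after a homotopy extension the two structures glue.
\item On the two ends, the resulting structures have the preferred spin structures. On $H_2$ they induce $\delta$ and $i_*^{-1}$ respectively, since both $H_2$'s are generated by spheres in $P_1$. Because $H_1$ of these mapping tori is $\mathbb{Z}$, they are homotopic to $\delta\circ\nu_f$ and $\nu_{f'}$.
\end{itemize}

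\emph{Comparison with the paper.} The paper builds two explicit coboundaries to the twisted cylinder. Your repaired argument proves $[(M_{g+2s})_f,\delta\circ\nu_f]=[M_{f'},\nu_{f'}]$ with a single gluing. It also needs no information about $\varphi$ on homology: $\varphi$ enters only through $M\times D^2$, whose structure $\nu_M\times D^2$ comes for free.
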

      
      \begin{proof}
      	By Proposition \ref{prop:quotient} it suffices to show that the diagram commutes. Namely, let $f$ be a diffeomorphism representing an element of $\mathrm{MCG}({\natural}^{g+2s}(S^2 \times D^3), \partial)$, still denote its image in $\mathcal{I}(M_{g+2s})$ by $f$ and denote its image in $\mathcal{I}(M)$ by $f'$, and denote their mapping torus constructions by $[{(M_{g+2s})}_f, \nu_f]$ and $[M_{f'}, \nu_{f'}]$ respectively, we will show that \[[{(M_{g+2s})}_f, \delta \circ \nu_f] = [M_{f'}, \nu_{f'}] \in {\Omega}^{\mathrm{Spin}}_6(K(H_2(M), 2)).\] It follows from the proof of Lemma \ref{lem:secondsurjection} that \begin{align*}
      		& \Bigl[{\natural}^{g+2s}(S^2 \times D^3) \times I, (\delta \circ \overline{\nu}) \times I\Bigr] + \Bigl[{(M_{g+2s})}_f, \delta \circ \nu_f\Bigr] \\
      		& = \Bigl[\bigl({\natural}^{g+2s}(S^2 \times D^3) \times I\bigr) \cup_f \bigl({\natural}^{g+2s}(S^2 \times D^3) \times I\bigr), \bigl((\delta \circ \overline{\nu}) \times I\bigr) \cup \bigl((\delta \circ \overline{\nu} \circ f) \times I\bigr)\Bigr]
      	\end{align*} so it suffices to show that \begin{align*}
      	& \Bigl[{\natural}^{g+2s}(S^2 \times D^3) \times I, (\delta \circ \overline{\nu}) \times I\Bigr] + \Bigl[M_{f'}, \nu_{f'}\Bigr] \\
      	& = \Bigl[\bigl({\natural}^{g+2s}(S^2 \times D^3) \times I\bigr) \cup_f \bigl({\natural}^{g+2s}(S^2 \times D^3) \times I\bigr), \bigl((\delta \circ \overline{\nu}) \times I\bigr) \cup \bigl((\delta \circ \overline{\nu} \circ f) \times I\bigr)\Bigr]
      	\end{align*} since we can glue the two normal $B$-bordisms along their common boundary to obtain a normal $B$-bordism between $[{(M_{g+2s})}_f, \delta \circ \nu_f]$ and $[M_{f'}, \nu_{f'}]$, here $B$ denotes $K(H_2(M), 2) \times B\mathrm{Spin}$.
      	
      	Similar to the proof of Lemma \ref{lem:secondsurjection} we construct a normal $B$-bordism explicitly. Denote the gluing diffeomorphism of the twisted double $M$ by $f_M : {\#}^{g+2s}(S^2 \times S^2) \rightarrow {\#}^{g+2s}(S^2 \times S^2)$. Then we have \[M_{f'} = {({\natural}^{g+2s}(S^2 \times D^3))}_f \cup_{f_M \times S^1} ({\natural}^{g+2s}(S^2 \times D^3) \times S^1).\] We first construct a normal $B$-structure $\tilde{\nu} : {\natural}^{g+2s}(S^2 \times D^3) \times D^2 \rightarrow B$ whose restriction on $\partial ({\natural}^{g+2s}(S^2 \times D^3)) \times \partial D^2$ is $\nu_{f'} \circ (f_M \times S^1)$. Note that the maps $\nu_{f'} : {\#}^{g+2s}(S^2 \times S^2) \rightarrow B$ and $\nu_{f'} \circ f_M : {\#}^{g+2s}(S^2 \times S^2) \rightarrow B$ are homotopic via a homotopy lifting the normal Gauss map of ${\#}^{g+2s}(S^2 \times S^2) \times I$, since the spin structures and the effects on the second homology groups of the two maps coincide (the effect of $f_M$ on homology is given by \cite[Section 1]{Bar65}). We choose such a homotopy and use it to define a normal $B$-structure on $\partial({\natural}^{g+2s}(S^2 \times D^3)) \times \partial D^2 \times I \times I$, a product neighborhood of the corner of ${\natural}^{g+2s}(S^2 \times D^3) \times D^2$, which restricts to $\nu_{f'} \circ f_M$ on $\partial({\natural}^{g+2s}(S^2 \times D^3)) \times \partial D^2$ and restricts to $\nu_{f'} \times \partial D^2 \times (\{1\} \times I \cup I \times \{1\})$ on $\partial({\natural}^{g+2s}(S^2 \times D^3)) \times \partial D^2 \times (\{1\} \times I \cup I \times \{1\})$. We extend this normal $B$-structure to the entire ${\natural}^{g+2s}(S^2 \times D^3) \times D^2$ by $\nu_{f'} \times D^2$. Let $\tilde{\nu} : {\natural}^{g+2s}(S^2 \times D^3) \times D^2 \rightarrow B$ be the resulting normal $B$-structure. As in the proof of Lemma \ref{lem:secondsurjection} we have a normal $B$-structure $H' : {\natural}^{g+2s}(S^2 \times D^3) \times I \rightarrow B$ extending the map $\nu_{f'} \cup (\nu_{f'} \times I) \cup (\nu_{f'} \circ f)$ on the subspace \[({\natural}^{g+2s}(S^2 \times D^3) \times \{0\}) \cup (\partial ({\natural}^{g+2s}(S^2 \times D^3)) \times I) \cup ({\natural}^{g+2s}(S^2 \times D^3) \times \{1\}).\] The normal $B$-bordism class $[M_{f'}, \tilde{H}']$, where $\tilde{H}'$ is given by extending $H'$ to $M_{f'}$ by $\nu_{f'} \times S^1$, is equal to $[M_{f'}, \nu_{f'}]$. The normal $B$-manifold \begin{align*}
      		& \Bigl({\natural}^{g+2s}(S^2 \times D^3) \times [0, 1] \times I, \nu_{f'} \times I \times I\Bigr) \\
      		& \bigcup_{F'} \Bigl({\natural}^{g+2s}(S^2 \times D^3) \times [1, 2] \times I, \bigl(\nu_{f'} \times I \times I\bigr) \cup \bigl(H' \times I\bigr) \cup \bigl((\nu_{f'} \circ f) \times I \times I\bigr)\Bigr) \\
      		& \bigcup_{f_M \times S^1} \Bigl({\natural}^{g+2s}(S^2 \times D^3) \times D^2, \tilde{\nu}\Bigr),
      	\end{align*} where the first gluing map $F'$ is the disjoint union of \begin{align*}
      	f \times id : & \; {\natural}^g(S^2 \times D^3) \times \{1\} \times [0, 1/3] \rightarrow {\natural}^g(S^2 \times D^3) \times \{1\} \times [0, 1/3], \\
      	id : & \; {\natural}^g(S^2 \times D^3) \times \{1\} \times [2/3, 1] \rightarrow {\natural}^g(S^2 \times D^3) \times \{1\} \times [2/3, 1],
      	\end{align*} is a normal $B$-coboundary between \[\Bigl({\natural}^{g+2s}(S^2 \times D^3) \times I, \nu_{f'} \times I\Bigr) + \Bigl(M_{f'}, \tilde{H}'\Bigr)\] and \[\Bigl(\bigl({\natural}^{g+2s}(S^2 \times D^3) \times I\bigr) \cup_f \bigl({\natural}^{g+2s}(S^2 \times D^3) \times I\bigr), (\nu_{f'} \times I) \cup \bigl((\nu_{f'} \circ f) \times I\bigr)\Bigr).\] The proof is completed by noticing that the maps $\delta \circ \overline{\nu} : {\natural}^{g+2s}(S^2 \times D^3) \rightarrow B$ and $\nu_{f'} : {\natural}^{g+2s}(S^2 \times D^3) \rightarrow B$ are homotopic via a homotopy lifting the normal Gauss map, since the spin structures and the effects on the second homology groups of the two maps coincide.
      \end{proof}

    \subsection{Generators of $\mathcal{I}(M)$}\label{5.3}
    
      It follows from Theorem \ref{thm:torelli} that, for a simply connected closed spin 5-manifold $M$ with no 2-torsion in homology, $\mathcal{I}(M)$ is generated by certain Dehn twists. More precisely, suppose $H_2(M)$ is isomorphic to \[\mathbb{Z}^g \oplus \left({(\mathbb{Z}/{p_1^{r_{1,1}}})}^2 \oplus \cdots \oplus {(\mathbb{Z}/{p_1^{r_{1,s_1}}})}^2\right) \oplus \cdots \oplus \left({(\mathbb{Z}/{p_t^{r_{t,1}}})}^2 \oplus \cdots \oplus {(\mathbb{Z}/{p_t^{r_{t,s_t}}})}^2\right), \tag{3}\label{isotype}\] where $g$, $t$, $s_1, \cdots, s_t$ are non-negative integers such that $g + 2(s_1 + \cdots + s_t) > 0$; $p_1, \cdots, p_t$ are prime numbers such that $3 \leq p_1 < \cdots < p_t$; and $r_{i,1}, \cdots, r_{i,s_i}$ are positive integers such that $r_{i,1} \leq \cdots \leq r_{i,s_i}$ for each $i$. Then $M$ is a twisted double of ${\natural}^{g+2(s_1 + \cdots + s_t)}(S^2 \times D^3)$. For $1 \leq k \leq g+2(s_1 + \cdots + s_t)$ let $S_k : S^2 \hookrightarrow {\natural}^{g+2(s_1 + \cdots + s_t)}(S^2 \times D^3) \hookrightarrow M$ be the standard embedding in the first copy of ${\natural}^{g+2(s_1 + \cdots + s_t)}(S^2 \times D^3)$. Then the Dehn twists \[\mathbf{t}_{S_k, \alpha}, \quad \mathbf{t}_{S_k \# S_k, \alpha}, \quad \mathbf{t}_{S_k \# S_l, \alpha}, \quad \mathbf{t}_{S_k \# S_k \# S_l, \alpha}, \quad \mathbf{t}_{S_k \# S_l \# S_h, \alpha}\] generate $\mathcal{I}(M)$. Another set of generators are given by diffeomorphisms of the form listed in the part (2) of Theorem \ref{thm:connectedsumofproduct}, and we denote them by \[\mathbf{f}_{k,k}, \quad \mathbf{f}_k, \quad \mathbf{f}_{k,l}, \quad \mathbf{f}_{k,k,l}, \quad \mathbf{f}_{k,l,h}\] for brevity. Here the correspondence is via the subscript, for instance, $\mathbf{f}_{k,k}$ stands for the diffeomorphism $\mathbf{t}_{S_k \# S_k, \alpha} - 8\mathbf{t}_{S_k, \alpha}$.
      
      The group $H_M$ defined by (\ref{fundamentalclass}) can be calculated using K\"unneth theorem. When $p_1 \geq 5$ it is isomorphic to \begin{align*}
      	\mathbb{Z}^{g + 2{\binom{g}{2}} + {\binom{g}{3}}} & \oplus {(\mathbb{Z}/{p_1^{r_{1,1}}})}^{4s_1^2 + 4g(s_1-1) + 5g + 2{\binom{g}{2}}} \oplus {(\mathbb{Z}/{p_1^{r_{1,2}}})}^{4(s_1-1)^2 + 4g(s_1-2) + 5g + 2{\binom{g}{2}}} \\
      	& \oplus \cdots \oplus {(\mathbb{Z}/{p_1^{r_{1,s_1-1}}})}^{16 + 9g + 2{\binom{g}{2}}} \oplus {(\mathbb{Z}/{p_1^{r_{1,s_1}}})}^{4 + 5g + 2{\binom{g}{2}}} \\
      	& \oplus {(\mathbb{Z}/{p_t^{r_{t,1}}})}^{4s_t^2 + 4g(s_t-1) + 5g + 2{\binom{g}{2}}} \oplus {(\mathbb{Z}/{p_t^{r_{t,2}}})}^{4(s_t-1)^2 + 4g(s_t-2) + 5g + 2{\binom{g}{2}}} \\
      	& \oplus \cdots \oplus {(\mathbb{Z}/{p_t^{r_{t,s_t-1}}})}^{16 + 9g + 2{\binom{g}{2}}} \oplus {(\mathbb{Z}/{p_t^{r_{t,s_t}}})}^{4 + 5g + 2{\binom{g}{2}}}.
      \end{align*} So the isomorphism type of $\mathcal{I}(M)$ is determined for $M$ with no 2- and 3-torsion in homology. Furthermore, the commutative diagrams in Theorem \ref{thm:torelli} and in the proof of Proposition \ref{prop:quotient} determine, for $M$ with no 2- and 3-torsion in homology, the order of each generator and which generators form a basis of $\mathcal{I}(M)$. We summarize the results into the following theorem, whose proof is a straightforward verification.
      
      \begin{Thm}
      	Let $M$ be a simply connected closed spin 5-manifold with $H_2(M)$ isomorphic to the group described in (\ref{isotype}). Assume in addition that $p_1 \geq 5$. For brevity, suppose an integer $k$ satisfies $g + 2(s_1 + \cdots + s_{i-1}) +1 \leq k \leq g + 2(s_1 + \cdots + s_i)$ for some $i$, denote \[[k] \coloneqq \left\lceil \frac{k - (g + 2(s_1 + \cdots + s_{i-1}))}{2} \right\rceil,\] and write $k \in \Delta_i$. If $1 \leq k \leq g$, write $k \in \Delta_{\infty}$. In the following we always assume $k < l < h$.
      	\begin{itemize}
      		\item[(1)] The following diffeomorphisms form a basis of $\mathcal{I}(M)$.
      		\begin{table}[h]
      			\centering
      			\begin{tabular}{c|c|c}
      				& \rm{range of subscript} & \rm{order} \\
      				\hline
      				$\mathbf{f}_k, \; \mathbf{f}_{k,k}$ & $k \in \Delta_{\infty}$ & $\infty$ \\ [1ex]
      				& $k \in \Delta_i$ & $p_i^{r_{i, [k]}}$ \\ [1ex]
      				\hline
      				$\mathbf{f}_{k,l}, \; \mathbf{f}_{k,k,l}$ & $k,l \in \Delta_{\infty}$ & $\infty$ \\ [1ex]
      				& $k \in \Delta_{\infty}, \quad l \in \Delta_i$ & $p_i^{r_{i, [l]}}$ \\ [1ex]
      				& $k,l \in \Delta_i$ & $p_i^{r_{i, [k]}}$ \\ [1ex]
      				\hline
      				$\mathbf{f}_{k,l,h}$ & $k,l,h \in \Delta_{\infty}$ & $\infty$ \\ [1ex]
      				& $\quad k,l \in \Delta_{\infty}, \quad h \in \Delta_i \quad$ & $p_i^{r_{i, [h]}}$ \\ [1ex]
      				& $k \in \Delta_{\infty}, \quad l,h \in \Delta_i$ & $p_i^{r_{i, [l]}}$ \\ [1ex]
      				& $k,l,h \in \Delta_i$ & $\qquad p_i^{r_{i, [k]}} \qquad$ \\ [1ex]
      			\end{tabular}
      		\end{table}
      		
      		All the diffeomorphisms $\mathbf{f}_*$ with other subscripts are isotopic to $id_M$.
      		
      		\item[(2)] The first of the following two tables provides a basis of $\mathcal{I}(M)$ consisting of Dehn twists, and the isotopy classes of other Dehn twists are given by the second table.
      		\begin{table}[h]
      			\centering
      			\begin{tabular}{c|c|c}
      				& \rm{range of subscript} & \rm{order} \\
      				\hline
      				$\mathbf{t}_{S_k, \alpha}, \; \mathbf{t}_{S_k \# S_k, \alpha}$ & $k \in \Delta_{\infty}$ & $\infty$ \\ [1ex]
      				& $k \in \Delta_i$ & $p_i^{r_{i, [k]}}$ \\ [1ex]
      				\hline
      				$\mathbf{t}_{S_k \# S_l, \alpha}, \; \mathbf{t}_{S_k \# S_k \# S_l, \alpha}$ & $k,l \in \Delta_{\infty}$ & $\infty$ \\ [1ex]
      				& $k \in \Delta_{\infty}, \quad l \in \Delta_i$ & $\infty$ \\ [1ex]
      				& $k,l \in \Delta_i$ & $p_i^{r_{i, [l]}}$ \\ [1ex]
      				\hline
      				$\mathbf{t}_{S_k \# S_l \# S_h, \alpha}$ & $k,l,h \in \Delta_{\infty}$ & $\infty$ \\ [1ex]
      				& $\quad k,l \in \Delta_{\infty}, \quad h \in \Delta_i \quad$ & $\infty$ \\ [1ex]
      				& $k \in \Delta_{\infty}, \quad l,h \in \Delta_i$ & $\infty$ \\ [1ex]
      				& $k,l,h \in \Delta_i$ & $\qquad p_i^{r_{i, [h]}} \qquad$ \\ [1ex]
      			\end{tabular}
      		\end{table}
      		
      		\begin{table}[h]
      			\centering
      			\begin{tabular}{c|c|c}
      				& \rm{range of subscript} & \rm{isotopy class} \\
      				\hline
      				$\mathbf{t}_{S_k \# S_l, \alpha}$ & $k \in \Delta_i, \quad l \in \Delta_j, \quad i < j$ & $\mathbf{t}_{S_k, \alpha} + \mathbf{t}_{S_l, \alpha}$ \\ [1ex]
      				\hline
      				$\mathbf{t}_{S_k \# S_k \# S_l, \alpha}$ & $k \in \Delta_i, \quad l \in \Delta_j, \quad i < j$ & $\mathbf{t}_{S_k \# S_k, \alpha} + \mathbf{t}_{S_l, \alpha}$ \\ [1ex]
      				\hline
      				$\mathbf{t}_{S_k \# S_l \# S_h, \alpha}$ & $k \in \Delta_{\infty}, \quad l \in \Delta_i, \quad h \in \Delta_j, \quad i < j$ & $\ \mathbf{t}_{S_k \# S_l, \alpha} + \mathbf{t}_{S_k \# S_h, \alpha} - \mathbf{t}_{S_k, \alpha} \ $ \\ [1ex]
      				& $\ k \in \Delta_i, \quad l \in \Delta_j, \quad h \in \Delta_n, \quad i < j < n \ $ & $\mathbf{t}_{S_k, \alpha} + \mathbf{t}_{S_l, \alpha} + \mathbf{t}_{S_h, \alpha}$ \\ [1ex]
      				& $k,l \in \Delta_i, \quad h \in \Delta_j, \quad i < j$ & $\mathbf{t}_{S_k \# S_l, \alpha} + \mathbf{t}_{S_h, \alpha}$ \\ [1ex]
      				& $k \in \Delta_i, \quad l,h \in \Delta_j, \quad i < j$ & $\mathbf{t}_{S_k, \alpha} + \mathbf{t}_{S_l \# S_h, \alpha}$ \\ [1ex]
      			\end{tabular}
      		\end{table}
      	\end{itemize}
      \end{Thm}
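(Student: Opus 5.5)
The plan is to compute everything inside $\Omega^{\mathrm{Spin}}_6(K(H_2(M),2))$, using the isomorphism $\rho$ from Theorem \ref{thm:torelli}. Two facts make this possible. First, by the commutative diagram of Theorem \ref{thm:torelli}, the class of a Dehn twist (or of one of the $\mathbf{f}_*$) in $\mathcal{I}(M)$ equals $\delta_*$ applied to the class of the corresponding element of $\mathcal{I}(M_{g+2s})$. Second, Theorem \ref{thm:connectedsumofproduct}(2) tells us that the $\mathbf{f}_*$ map to the standard basis of $\Omega^{\mathrm{Spin}}_6((\mathbb{CP}^\infty)^{g+2s})\cong \mathbb{Z}^{g+2s}\oplus H_{g+2s}$. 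So the whole statement reduces to understanding $\delta_*$ on each standard basis vector and reading off the result in the invariants of Proposition \ref{prop:bordism}(1), which apply since $p_1\geq 5$.

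The first step is to compute $\delta_*$ on the two components.

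On the $\hat A$ part, the basis element dual to $\hat A_k$ is $\mathbf{f}_{k,k}$. For $k\in\Delta_\infty$ it maps to the $k$-th $\mathbb{Z}$ summand. For $k\in\Delta_i$ it maps into the torsion summand $H_2(K((\mathbb{Z}/p_i^{r})^2,2))$ via the generator of $\mathbb{Z}/p_i^{r_{i,[k]}}$. This follows from the left square in the proof of Proposition \ref{prop:quotient}, together with the fact that $a_{p,r}$ is a unit.

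On the Hurewicz part, $\delta_*:H_{g+2s}\to H_M$ is computed factor by factor via the Künneth theorem and Lemma \ref{lem:cyclic}. The monomial duals $(x_k^a x_l^b x_h^c)^*$ go to generators of tensor products of cyclic groups. Such a tensor product is $\mathbb{Z}/\gcd$ of the orders: it is zero when two different primes occur, and otherwise has order the minimum prime power, i.e.\ $p_i^{r_{i,[\min]}}$ by the ordering $r_{i,1}\le\cdots$. A slight subtlety is the pairs $(k,k+1)$ inside one $(\mathbb{Z}/p^r)^2$ block. There $H_6$ of the block contributes the mixed classes and has order $p^r$, and one checks via the table for $K((\mathbb{Z}/p^r)^2,2)$ that the indexed classes map to independent generators.

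The second step is to assemble these computations. The images of the $\mathbf{f}_*$ are then exactly the standard generators of the cyclic summands of $\mathbb{Z}^g\oplus T\oplus H_M$, or zero when primes mix. This gives part (1) with the tabulated orders. The count of nonzero images matches the rank/exponent decomposition of $H_M$ displayed above the theorem, which I would check as a consistency test.

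The third step handles the Dehn twist basis and table in part (2). I will invert the unitriangular change of basis of Theorem \ref{thm:connectedsumofproduct}(2), expressing each $\mathbf{t}$ as an integral combination of $\mathbf{f}$'s. For instance $\mathbf{t}_{S_k\#S_l,\alpha}=\mathbf{f}_{k,l}+\mathbf{f}_k+\mathbf{f}_l$. The order of such a combination is the lcm of the orders of the surviving summands. For example, when $k\in\Delta_\infty$ and $l\in\Delta_i$, the component $\mathbf{f}_k$ has infinite order, so the combination has infinite order, as listed. When $k,l$ lie in different prime blocks, $\mathbf{f}_{k,l}=0$, which yields relations like $\mathbf{t}_{S_k\#S_l,\alpha}=\mathbf{t}_{S_k,\alpha}+\mathbf{t}_{S_l,\alpha}$; the remaining rows of the second table come from the same expansion. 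That the first table is a basis follows because the transition matrix from the $\mathbf{f}$-basis is unitriangular, modulo the vanishing entries.

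The main obstacle I expect is bookkeeping rather than conceptual. Specifically, I must verify that in mixed cases the triangular change of basis stays invertible over each cyclic summand. For example, $\mathbf{t}_{S_k\#S_k\#S_l,\alpha}$ with $k\in\Delta_\infty$, $l\in\Delta_i$ involves $\mathbf{f}_{k,k,l}$ of finite order together with infinite-order $\mathbf{f}_k,\mathbf{f}_{k,k}$. I would handle this by ordering the generators by "degree" (singletons, doubles, triples) and checking that the diagonal entries are $\pm1$.
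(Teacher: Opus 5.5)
Your route is the paper's own. You push the standard basis of Theorem \ref{thm:connectedsumofproduct}(2) through the commutative diagram of Theorem \ref{thm:torelli}, using the diagrams in the proof of Proposition \ref{prop:quotient}, Lemma \ref{lem:cyclic} and the K\"unneth theorem; the paper just calls this a straightforward verification. However, two of your intermediate claims are wrong as stated and need repair.

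\emph{First gap: $\delta_*$ is not block diagonal.} It does not respect the splittings $\mathbb{Z}^{g+2s}\oplus H_{g+2s}$ and $(\mathbb{Z}^g\oplus T)\oplus H_M$. The torsion entries of the isomorphism in Proposition \ref{prop:bordism}(1) are $a_{p,r}{\pi_{p,r}}_*\nu_*(p_1(N)\cap[N])$, and these are not determined by the $\hat{A}_k$. On $\Omega^{\mathrm{Spin}}_6((\mathbb{CP}^\infty)^{g+2s})$ one has $\langle p_1(N)\cup\nu^*(x_k),[N]\rangle=4\langle\nu^*(x_k^3),[N]\rangle-24\hat{A}_k$ (compare Lemma \ref{lem:pontryaginclass}).

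Take $k\in\Delta_i$, $r=r_{i,[k]}$, and let $e_k$ generate the $k$-th cyclic summand of $T$. The $e_k$-component of $\rho(\mathbf{f}_{k,k})$ is $-24a_{p_i,r}e_k=\tfrac12 e_k$. The $e_k$-component of $\rho(\mathbf{f}_k)=\rho(\mathbf{t}_{S_k,\alpha})$ is $4a_{p_i,r}e_k\equiv-\tfrac1{12}e_k\neq0$, and $\rho(\mathbf{f}_k)$ also has an $H_M$-component $\gamma_k$ generating the summand $H_6(K(\mathbb{Z}/p_i^{r},2))$. This component vanishes for all other $\mathbf{f}$'s, so the images are not exactly the standard generators.

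Part (1) still survives, for three reasons.
\begin{itemize}
\item $e_k$ and $\gamma_k$ have the same order $p_i^{r}$, so $\rho(\mathbf{f}_{k,k})$ and $\rho(\mathbf{f}_k)$ still form a basis of $\langle e_k\rangle\oplus\langle\gamma_k\rangle$.
\item To conclude $\mathbf{f}_{k,l}=0$ (and likewise $\mathbf{f}_{k,k,l}$, $\mathbf{f}_{k,l,h}$) for mixed primes, you need this $p_1$-coordinate to vanish, not only the tensor product.
\item You should state explicitly that $2$ is a unit mod $p_i$. This makes $\tfrac12e_k$ a generator. It also makes $\delta_*(2(x_k^2x_l)^*)$ and $\delta_*((x_k^2x_l)^*+(x_kx_l^2)^*)$ a basis of the two cyclic summands for every pair $k<l$ with $l$ a torsion index, not only for adjacent indices in one block.
\end{itemize}

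\emph{Second gap: the part (2) basis claim.} A unitriangular substitution preserves generation. In a group with torsion, it does not preserve being a basis, i.e.\ a direct sum of the cyclic subgroups generated. The orders you correctly compute show the first table is not a basis in that sense. For $k\in\Delta_\infty$, $l\in\Delta_i$, the element $\mathbf{t}_{S_k\#S_l,\alpha}-\mathbf{t}_{S_k,\alpha}=\mathbf{f}_{k,l}+\mathbf{f}_l$ has finite order. So $p_i^{r_{i,[l]}}\mathbf{t}_{S_k\#S_l,\alpha}=p_i^{r_{i,[l]}}\mathbf{t}_{S_k,\alpha}$ is a relation between two infinite-order members. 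Moreover, when $g,s\geq1$ the table has more infinite-order members than $\mathrm{rank}\,\mathcal{I}(M)=2g+2\binom{g}{2}+\binom{g}{3}$.

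Your argument proves a weaker statement, and this is the only sense in which the claim can hold. The first table is a generating set in bijection with the nonzero $\mathbf{f}$'s, obtained from them by a unitriangular integral change of generators. Its element orders are as listed, computed as you do as the lcm over the surviving $\mathbf{f}$-components. You should state that this is what you prove. Under that reading, your expansion of each $\mathbf{t}$ in the $\mathbf{f}$-basis and your derivation of the second table are correct.
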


    \subsection{The group extension and the action of $\mathrm{Aut}(H_2(M), L_M)$ on $\mathcal{I}(M)$}
    
      We are at the position to complete the proof of Theorem \ref{thm:1}.
      
      Let $M$ be a simply connected closed 5-manifold (which is not required to be spin). Let $\mathrm{Aut}(H_2(M), L_M, w_2(M))$ be the group of automorphisms of the group $H_2(M)$ that preserve the linking form $L_M$ and the second Stiefel\textendash Whitney class $w_2(M)$ (viewed as a homomorphism $H_2(M) \rightarrow \mathbb{Z}/2$). There is a homomorphism $\phi : \mathrm{MCG}(M) \rightarrow \mathrm{Aut}(H_2(M), L_M, w_2(M))$ given by $[f] \mapsto f_*$, and $\mathrm{Aut}(H_2(M), L_M, w_2(M))$ is $\mathrm{Aut}(H_2(M), L_M)$ when $M$ is spin. An immediate consequence of \cite[Theorem 2.2]{Bar65} is the following.
      
      \begin{Thm}[Barden]\label{thm:barden}
      	There is a short exact sequence of groups \[1 \rightarrow \mathcal{I}(M) \rightarrow \mathrm{MCG}(M) \xrightarrow{\phi} \mathrm{Aut}(H_2(M), L_M, w_2(M)) \rightarrow 1.\]
      \end{Thm}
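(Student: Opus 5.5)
The plan is to check the three things the sequence asserts: $\phi$ is well defined, its kernel is $\mathcal{I}(M)$, and it is surjective.

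\textbf{Well-definedness and image.} An orientation preserving diffeomorphism $f$ of $M$ induces an automorphism $f_*$ of $H_2(M)$. This automorphism depends only on the isotopy class of $f$ and respects composition, so $\phi$ is a homomorphism. We must check that $f_*$ lands in $\mathrm{Aut}(H_2(M), L_M, w_2(M))$.
\begin{itemize}
\item The linking form $L_M$ on the torsion of $H_2(M)$ is defined from intersection and linking numbers in the oriented manifold $M$, so it is preserved by orientation preserving diffeomorphisms.
\item The class $w_2(M)$ is natural under diffeomorphisms, since $f^*TM \cong TM$. Hence $w_2(M)\circ f_* = w_2(M)$ as homomorphisms $H_2(M)\to\mathbb{Z}/2$.
\end{itemize}

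\textbf{Exactness in the middle.} Since $M$ is simply connected and closed of dimension 5, its homology is concentrated in degrees $0,2,3,5$. Orientation preserving maps act trivially on $H_0$ and $H_5$. By Poincar\'e duality and the universal coefficient theorem, $H_3(M)\cong H^2(M)\cong \mathrm{Hom}(H_2(M),\mathbb{Z})$. The action on $H_3$ is therefore determined by the action on $H_2$, and it is trivial whenever the action on $H_2$ is trivial. So $f$ acts trivially on $H_*(M)$ if and only if $f_*=\mathrm{id}$ on $H_2(M)$, which gives $\ker\phi=\mathcal{I}(M)$.

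\textbf{Surjectivity.} This is the substantive input and the main obstacle, and we take it from Barden \cite[Theorem 2.2]{Bar65}. That theorem states: for simply connected closed 5-manifolds $M, M'$, every isomorphism $H_2(M)\to H_2(M')$ preserving the linking forms and the classes $w_2$ (Barden's invariant $i$ is determined by $w_2$ and the torsion data) is realized by a diffeomorphism. We apply it with $M'=M$ and an arbitrary $h\in\mathrm{Aut}(H_2(M),L_M,w_2(M))$, which yields a diffeomorphism $f$ with $f_*=h$.

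One subtlety is orientation, since we need $f$ orientation preserving. Barden's realization proceeds through the handle decomposition and twisted-double presentation of $M$; the diffeomorphism is built on ${\natural}(S^2\times D^3)$ compatibly with the gluing map, and it can be chosen to preserve the chosen orientation. We would verify this by inspecting Barden's construction. If the construction instead produced an orientation reversing $f$, we would correct it as follows. The linking form condition forces $f$ to reverse $L_M$ when it reverses orientation, but $L_M$ is preserved by $h$. On the torsion part this gives a contradiction, except on elements of order 2, where $L_M$ and $-L_M$ agree. On the free part, if $M$ admits an orientation reversing diffeomorphism $r$ acting trivially on $H_2$, we compose $f$ with $r$. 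In $M_g$ such an $r$ is obtained from a reflection of the $S^3$ factors. We expect the citation handles this, since Barden's classification is stated for oriented manifolds.

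For $M$ spin, $w_2(M)=0$, so the target group is $\mathrm{Aut}(H_2(M),L_M)$. This recovers part (1) of Theorem \ref{thm:1}.
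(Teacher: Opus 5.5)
Your proposal follows the paper, which also just cites Barden's Theorem 2.2 for surjectivity and takes the realizing diffeomorphism to be orientation preserving; your checks that $\phi$ lands in $\mathrm{Aut}(H_2(M),L_M,w_2(M))$ and that $\ker\phi=\mathcal{I}(M)$ (via $H_3(M)\cong\mathrm{Hom}(H_2(M),\mathbb{Z})$) are correct routine additions. Your orientation fallback is unnecessary given the citation, and it is also incomplete: when the torsion of $H_2(M)$ is killed by $2$, you construct an orientation reversing $r$ acting trivially on $H_2(M)$ only for $M_g$, not for a general $M$.
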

      
      The first part of Theorem \ref{thm:1} is a special case of this theorem. To study this extension problem the following action of $\mathrm{Aut}(H_2(M), L_M, w_2(M))$ on $\mathcal{I}(M)$ is crucial (see for example \cite[Chapter IV]{Bro82}). Let $h \in \mathrm{Aut}(H_2(M), L_M, w_2(M))$ be an automorphism of $H_2(M)$, and let $f_h$ be an orientation preserving diffeomorphism such that ${f_h}_* = h$. The action of $\mathrm{Aut}(H_2(M), L_M, w_2(M))$ on $\mathcal{I}(M)$ is defined by \[h \cdot [f] \coloneqq [f_h \circ f \circ {f_h}^{-1}].\]
      
      In the remaining part of this subsection let $M$ be a simply connected closed spin 5-manifold with no 2- and 3-torsion in homology. For such $M$ isotopy invariants of diffeomorphisms in $\mathcal{I}(M)$ are defined, and the action of $\mathrm{Aut}(H_2(M), L_M)$ on $\mathcal{I}(M)$ is thus calculable.
      
      The Torelli group $\mathcal{I}(M)$ is isomorphic to the bordism group $\Omega^{\mathrm{Spin}}_6(K(H_2(M), 2))$. Recall that in \ref{results} we defined the following \emph{standard action} of $\mathrm{Aut}(H_2(M), L_M)$ on $\Omega^{\mathrm{Spin}}_6(K(H_2(M), 2))$. Given $h \in \mathrm{Aut}(H_2(M), L_M)$, let $\delta_h : K(H_2(M), 2) \rightarrow K(H_2(M), 2)$ be the map inducing $h$ on the second homology, which is unique up to homotopy. Then $[N, \delta_h \circ \nu]$ is a well-defined bordism class in $\Omega^{\mathrm{Spin}}_6(K(H_2(M), 2))$ and the standard action is defined by $h \cdot [N, \nu] \coloneqq [N, \delta_h \circ \nu]$.
      
      \begin{Thm}\label{thm:action}
      	Let $M$ be a simply connected closed spin 5-manifold with no 2- and 3-torsion in $H_2(M)$. The action of $\mathrm{Aut}(H_2(M), L_M)$ on $\mathcal{I}(M)$ and the standard action of $\mathrm{Aut}(H_2(M), L_M)$ on $\Omega^{\mathrm{Spin}}_6(K(H_2(M), 2))$ coincide via the mapping torus construction $\rho$.
      \end{Thm}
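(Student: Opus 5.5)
The plan is to compare the mapping tori directly. Let $h \in \mathrm{Aut}(H_2(M), L_M)$ and let $f_h$ be an orientation preserving diffeomorphism with ${f_h}_* = h$; such an $f_h$ exists by Theorem \ref{thm:barden}. For $[f] \in \mathcal{I}(M)$ we must show
\[\rho([f_h \circ f \circ f_h^{-1}]) = [M_f, \delta_h \circ \nu_f].\]

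The map $F \colon M \times I \to M \times I$, $(x,t) \mapsto (f_h(x), t)$, descends to a diffeomorphism
\[\Psi \colon M_f \to M_{f_h f f_h^{-1}},\]
since $(x,0) \sim (f(x),1)$ is carried to $(f_h x, 0) \sim (f_h f f_h^{-1}(f_h x), 1)$. The map $\Psi$ preserves orientation. Hence $[M_{f_h f f_h^{-1}}, \overline{\nu}] = [M_f, \overline{\nu} \circ \Psi]$, and it suffices to identify the pulled-back normal $B$-structure $\overline{\nu} \circ \Psi$ with $(\delta_h \circ \nu_f, \nu_{\mathrm{Spin}})$.

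The $K(H_2(M),2)$-component is handled as follows. Since $\Psi \circ i = i' \circ f_h$, where $i, i'$ are the fibre inclusions, we get on $H_2$
\[(\nu_{f_h f f_h^{-1}} \circ \Psi)_* \, i_* = {i'_*}^{-1} \, i'_* \, {f_h}_* = h.\]
So this component induces $h \circ i_*^{-1} = (\delta_h \circ \nu_f)_*$ on $H_2(M_f)$. Maps to an Eilenberg–MacLane space are determined up to homotopy by their effect on $H^2$, and $H^2(M_f) = \mathrm{Hom}(H_2(M_f), H_2(M))$ because $H_1(M_f) = \mathbb{Z}$ is free. Therefore the two components are homotopic.

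The spin component needs more care, because $M_f$ has two spin structures differing by the generator of $H^1(M_f;\mathbb{Z}/2)$. The preferred structure is the one trivial on $S^1 \times D$. Its pullback under $\Psi$ is trivial on $S^1 \times f_h^{-1}(D)$. By the disc theorem we may isotope $f_h$ so that $f_h(D) = D$, and by Corollary \ref{cor:forgetful} we may take $f$ to fix $D$ pointwise, so $\mathcal{I}(M,D) = \mathcal{I}(M)$. The restriction $f_h|_D$ is orientation preserving, so it is isotopic to a rotation. It remains to check that the induced framing on $S^1 \times D$, which is constant in the $S^1$-direction, agrees with the trivial spin structure. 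I expect this to be automatic because $\Psi$ is a product in the $S^1$-direction.

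In any case, the two spin structures differ by a class pulled back from $S^1$. Changing the spin structure along the $S^1$-direction changes the bordism class by an element of order at most 2. Since $\Omega^{\mathrm{Spin}}_6(K(H_2(M),2))$ has no 2-torsion by Proposition \ref{prop:bordism}, I would argue that the bordism classes agree. Concretely, the difference between $[N, \sigma]$ and $[N, \sigma + \pi^*\epsilon]$ is computed by the same bordism invariants, namely $\hat{A}_k$, $p_1$ and $\nu_*[N]$ from Proposition \ref{prop:bordism}(1), and these do not depend on the spin structure. This route is cleaner and avoids the isotopy argument, so I will use it. The hypothesis of no 2- and 3-torsion is what makes these invariants complete.

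The main obstacle is therefore checking that the complete invariants of Proposition \ref{prop:bordism}(1) are insensitive to the choice of spin structure. The quantities $p_1(N)$, the cup products of pulled-back classes, and the twisted $\hat{A}$-genus formula all involve only the underlying oriented manifold and the map to $K(H_2(M),2)$. With that verified, naturality of these invariants under $\Psi$ gives exactly the standard action, and the proof is complete.
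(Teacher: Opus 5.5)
Your argument is correct and is essentially the paper's proof: the paper builds the same kind of diffeomorphism $M_f \to M_{f_h\circ f\circ f_h^{-1}}$ (inserting $f_h$ on a middle slab rather than globally). It then uses $F\circ i_f = i_{h\cdot f}\circ f_h$ to get ${\nu_{h\cdot f}}_*(p_1\cap[\,\cdot\,]) = h\circ{\nu_f}_*(p_1\cap[\,\cdot\,])$, does the same for the fundamental class, and concludes from the completeness of the invariants in Proposition \ref{prop:bordism}(1). Your explicit observation that these invariants do not depend on the spin structure is what the paper uses tacitly; the aside that changing the spin structure along $S^1$ alters the class by an element of order at most $2$ is stated without justification and is not needed, so you can drop it.
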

      
      \begin{proof}
      	It suffices to show that $\rho(h \cdot [f]) = h \cdot \rho([f])$, namely \[[M_{f_h \circ f \circ {f_h}^{-1}}, \nu_{f_h \circ f \circ {f_h}^{-1}}] = [M_f, \delta_h \circ \nu_f].\] We prove this by calculating their bordism invariants. For brevity we write $h \cdot f$ for $f_h \circ f \circ {f_h}^{-1}$ in the following. Note that \begin{align*}
      		M_f = \biggl(M \times \Bigl([0, 1/5] \sqcup [2/5, 3/5] \sqcup [4/5, 1]\Bigr) \biggr) \Big/ & (x, 0) \sim (f(x), 1), \\
      		& (x, 1/5) \sim (x, 2/5), \\
      		& (x, 3/5) \sim (x, 4/5); \\
      		M_{h \cdot f} = \biggl(M \times \Bigl([0, 1/5] \sqcup [2/5, 3/5] \sqcup [4/5, 1]\Bigr) \biggr) \Big/ & (x, 0) \sim (f(x), 1), \\
      		& ({f_h}^{-1}(x), 1/5) \sim (x, 2/5), \\
      		& ({f_h}(x), 3/5) \sim (x, 4/5).
      	\end{align*} There is an orientation preserving diffeomorphism \[F = id_{M \times [0, 1/5]} \cup (f_h \times id_{[2/5, 3/5]}) \cup id_{M \times [4/5, 1]} : M_f \rightarrow M_{h \cdot f}.\] Let $i_f$ and $i_{h \cdot f}$ be the inclusion of $M = M \times \{\frac{1}{2}\}$ in $M_f$ and $M_{h \cdot f}$ respectively. Then we have $F \circ i_f = i_{h \cdot f} \circ f_h$. It follows that \[{\nu_{h \cdot f}}_*(p_1(M_{h \cdot f}) \cap [M_{h \cdot f}]) = h \circ {\nu_f}_*(p_1(M_f) \cap [M_f]) = {(\delta_h \circ {\nu_f})}_*(p_1(M_f) \cap [M_f])\] and \[{\nu_{h \cdot f}}_*([M_{h \cdot f}]) = {\nu_{h \cdot f}}_*F_*([M_f]) = h \circ {\nu_f}_*([M_f]) = {(\delta_h \circ {\nu_f})}_*([M_f]).\] Thus $[M_{h \cdot f}, \nu_{h \cdot f}] = [M_f, \delta_h \circ \nu_f]$.
      \end{proof}
      
      The proof of Theorem \ref{thm:1} is thus completed.
      
      \begin{Rmk}
      	\rm In \cite[Section 4]{Wal63-form} Wall gave a complete algebraic description of the linking form $L_M$. Wall's result reduces the determination of $\mathrm{Aut}(H_2(M), L_M)$ to linear algebra. In general, suppose \[H_2(M) \cong \mathbb{Z}^g \oplus {(\mathbb{Z}/{p_1^{r_1}})^2} \oplus \cdots \oplus {(\mathbb{Z}/{p_s^{r_s}})^2},\] then $L_M$ is a skew-symmetric non-singular $\mathbb{Q}/{\mathbb{Z}}$-valued bilinear form over the finite abelian group ${(\mathbb{Z}/{p_1^{r_1}})^2} \oplus \cdots \oplus {(\mathbb{Z}/{p_s^{r_s}})^2}$ which is represented by the block diagonal matrix \[\begin{pmatrix}
      	0 & \frac{1}{{p_1^{r_1}}} & & & \\
      	-\frac{1}{{p_1^{r_1}}} & 0 & & & \\
      	& & \ddots & & \\
      	& & & 0 & \frac{1}{{p_s^{r_s}}} \\
      	& & & -\frac{1}{{p_s^{r_s}}} & 0 \\
      	\end{pmatrix}\] under the standard basis of ${(\mathbb{Z}/{p_1^{r_1}})^2} \oplus \cdots \oplus {(\mathbb{Z}/{p_s^{r_s}})^2}$. Then $\mathrm{Aut}(H_2(M), L_M)$ is the group of automorphisms of the abelian group $H_2(M)$ preserving this matrix. For instance, when $H_2(M) \cong {(\mathbb{Z}/{p^r})}^2$, the group $\mathrm{Aut}(H_2(M), L_M)$ is isomorphic to $\mathrm{SL}_2(\mathbb{Z}/{p^r})$.
      \end{Rmk}

    \subsection{Diffeomorphisms acting nontrivially on homology are not Dehn twists}\label{5.5}
    
      Let $M$ be a simply connected closed spin 5-manifold with no 2-torsion in homology. We show that a diffeomorphism of $M$ acting nontrivially on $H_*(M)$ is not isotopic to any Dehn twist. This is achieved by determining all possible Dehn twists of $M$.
      
      The normal bundle of an embedding $S^k \hookrightarrow M$ is classified by its clutching function which is an element of $\pi_{k-1}(\mathrm{SO}(5-k))$, so it is trivial (when $k = 2$, use the fact that $M$ is spin). Thus for $1 \leq k \leq 4$, $S^k \times D^{5-k}$ embeds in $M$ if $S^k$ embeds in $M$. Assume that this is the case and let $S : S^k \times D^{5-k} \hookrightarrow M$ be an embedding. To define a Dehn twist about $S$ one needs to choose an element of $\pi_{5-k}(\mathrm{SO}(k+1))$, which is nontrivial only if $k = 2, 4$ ($\pi_3(\mathrm{SO}(3)) \cong \mathbb{Z}$ and $\pi_1(\mathrm{SO}(5)) \cong \mathbb{Z}/2$), so we have proved the following.
      
      \begin{Prop}\label{homologyeffect2}
      	A Dehn twist about an embedded 1-sphere or 3-sphere of a closed 5-manifold is isotopic to the identity.
      \end{Prop}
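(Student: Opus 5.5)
The plan is to reduce everything to the twisting parameter. For an embedding $S : S^k \times D^{5-k} \hookrightarrow M$ the Dehn twist $\mathbf{t}_{S,\beta}$ needs $\beta \in \pi_{5-k}(\mathrm{SO}(k+1))$. For $k=1$ this group is $\pi_4(\mathrm{SO}(2)) = 0$, and for $k=3$ it is $\pi_2(\mathrm{SO}(4)) = 0$, since $\pi_2$ of any Lie group vanishes. So in both cases $\beta$ is the trivial class, and we may represent it by the constant map $f \equiv \mathrm{id}$. With that choice $\mathbf{t}_{S,\beta}$ is literally $id_M$.

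The only thing to check is that the isotopy class of $\mathbf{t}_{S,\beta}$ depends only on the homotopy class of $\beta$ through maps $D^{5-k} \to \mathrm{SO}(k+1)$ that are the identity near $\partial D^{5-k}$. This was already noted in the definition in \ref{results}. I will spell it out: a homotopy $f_s$ rel a neighborhood of the boundary gives a smooth path of diffeomorphisms
\[S(x,y) \mapsto S(f_s(y)\cdot x, y),\]
each extended by the identity outside $S(S^k \times D^{5-k})$. This path is an isotopy from $\mathbf{t}_{S,\beta}$ to $id_M$.

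The one point needing care, and the main obstacle, is that the relevant homotopy set is homotopy classes of maps $(D^{5-k}, \partial) \to (\mathrm{SO}(k+1), I)$ rel boundary. These are exactly $\pi_{5-k}(\mathrm{SO}(k+1))$, identified via $D^{5-k}/\partial D^{5-k} \cong S^{5-k}$, so vanishing of this group gives the null-homotopy rel boundary. If the homotopy is only continuous, I will smooth it by standard approximation, keeping it fixed near the boundary, so that the resulting family of diffeomorphisms is smooth.

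Note that no hypothesis on $M$ such as simple connectivity or spin is used, only that $M$ is a closed 5-manifold and $S$ is an embedding of $S^1 \times D^4$ or $S^3 \times D^2$. This matches the generality of the statement.
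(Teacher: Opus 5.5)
Your proposal is correct and takes the same approach as the paper: the twisting parameter lies in $\pi_4(\mathrm{SO}(2)) = 0$ (for $k=1$) or $\pi_2(\mathrm{SO}(4)) = 0$ (for $k=3$), and the isotopy class of $\mathbf{t}_{S,\beta}$ depends only on the homotopy class of $\beta$. Your explicit isotopy $S(x,y) \mapsto S(f_s(y)\cdot x, y)$ simply spells out the remark the paper makes when it defines Dehn twists.
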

      
      By Lemma \ref{lem:homologyeffect} a Dehn twist about an embedded 2-sphere lies in the Torelli group of $M$. It remains to consider Dehn twists about embedded 4-spheres, which are of order 1 or 2 in $\mathrm{MCG}(M)$ since $\pi_1(\mathrm{SO}(5)) \cong \mathbb{Z}/2$.
      
      Let $k = 4$, then $S$ is an embedding of $S^4 \times D^1$ in $M$. A natural way to produce such an embedding is to realize it as the thickened meridian sphere of a 0-surgery (for example, the ``neck" of a connected sum). In fact an embedding of $S^4$ in $M$ always arises in this way.
      
      \begin{Prop}
      	Let $M$ be a simply connected closed spin 5-manifold, and let $S : S^4 \hookrightarrow M$ be an embedding. Then there are two simply connected closed spin 5-manifolds, such that $M$ is the connected sum of them, and $S(S^4)$ is the meridian sphere of this connected sum operation. In particular, if $H_2(M)$ is isomorphic to $\mathbb{Z}$ or ${(\mathbb{Z}/{p^r})}^2$, where $p$ is a prime number, then $S(S^4)$ is the boundary of an embedded 5-disc.
      \end{Prop}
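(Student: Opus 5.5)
The plan is to cut $M$ open along $\Sigma \coloneqq S(S^4)$ and identify the two sides. First, $\Sigma$ separates $M$: since $H_1(M) = 0$, the class $[\Sigma] \in H_4(M) \cong H^1(M) = 0$ vanishes, so $\Sigma$ is null-homologous. A connected, closed, two-sided hypersurface with zero mod 2 intersection number against every loop must separate, and $\Sigma$ is two-sided because its normal bundle is trivial ($\pi_3(\mathrm{SO}(1))$ is trivial and $M$ is orientable). Hence $M \setminus \mathring{N}(\Sigma) = W_1 \sqcup W_2$, where each $W_i$ is a compact 5-manifold with $\partial W_i = S^4$. Let $M_i \coloneqq W_i \cup_{S^4} D^5$ be obtained by capping off with a 5-disc. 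This is well defined up to diffeomorphism because every orientation preserving diffeomorphism of $S^4$ extends over $D^5$ up to isotopy; indeed $\Theta_5 = 0$ by \cite{KM63}, so $\pi_0\mathrm{Diff}^+(S^4) = 0$. By construction $M = M_1 \# M_2$, and $\Sigma$ is the meridian sphere of this connected sum.

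Next I check that each $M_i$ is simply connected and spin. Van Kampen applied to $M = W_1 \cup_{S^4 \times I} W_2$ gives $\pi_1(M) = \pi_1(W_1) * \pi_1(W_2)$, since $S^4$ is simply connected. Therefore both $\pi_1(W_i)$ are trivial. Attaching a 5-disc along $S^4$ does not change $\pi_1$, so each $M_i$ is simply connected. For spin: $W_i$ is a codimension-0 submanifold of the spin manifold $M$, so $w_2(W_i) = 0$. The restriction $H^2(M_i;\mathbb{Z}/2) \to H^2(W_i;\mathbb{Z}/2)$ is an isomorphism, because $M_i$ is $W_i$ with a 5-cell attached. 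Hence $w_2(M_i) = 0$, and orientability is inherited from $M$.

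For the final claim, Mayer--Vietoris gives $H_2(M) \cong H_2(M_1) \oplus H_2(M_2)$, and the linking form splits orthogonally accordingly. By the Smale--Barden classification \cite{Sma62, Bar65}, the torsion of $H_2$ of a simply connected closed spin 5-manifold has the form $T \oplus T$. Consider each case:
\begin{itemize}
\item If $H_2(M) \cong \mathbb{Z}$, the only possible splitting has one summand equal to $0$.
\item If $H_2(M) \cong (\mathbb{Z}/p^r)^2$, a nontrivial splitting would force $\mathbb{Z}/p^r \oplus \mathbb{Z}/p^r$ to split into two nonzero groups each of the form $T \oplus T$. This is impossible, because each nonzero such group has at least two cyclic summands, which would require at least four in total.
\end{itemize}
So one summand, say $M_2$, has $H_2(M_2) = 0$. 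Then $M_2$ is a homotopy 5-sphere, hence diffeomorphic to $S^5$ (again by $\Theta_5 = 0$). Consequently $W_2 \cong S^5 \setminus \mathring{D}^5 \cong D^5$, and $\Sigma = \partial W_2$ bounds an embedded 5-disc.

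The step I expect to be the main obstacle is the separation argument together with making the capping-off canonical. For separation, the cleanest route is to note that if $\Sigma$ did not separate, there would be a loop meeting $\Sigma$ transversally in exactly one point, which contradicts $[\Sigma] = 0$. For canonical capping, the point is that the diffeomorphism type of $W_i \cup D^5$ must not depend on the chosen identification $\partial W_i \cong S^4$, which is what $\Theta_5 = 0$ guarantees.
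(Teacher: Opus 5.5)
Your argument is correct and is essentially the paper's proof: cutting $M$ along $S(S^4)$ and capping both sides with 5-discs is exactly the surgery on $S^4\times D^1$ that the paper performs. You detect separation via $[\Sigma]=0\in H_4(M;\mathbb{Z}/2)$ where the paper computes $H_0(M')\cong\mathbb{Z}^2$, and you spell out the splitting of $H_2$ that the paper leaves to the classification. One small correction: $\Theta_5=0$ only gives that every orientation-preserving diffeomorphism of $S^4$ extends to a diffeomorphism of $D^5$, not that $\pi_0\mathrm{Diff}^+(S^4)$ is trivial (which does not follow and is a well-known open problem), and in any case a canonical choice of capping is not needed for the existence statement.
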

      
      \begin{proof}
      	Since $\pi_3(\mathrm{SO}(1)) = 0$ we may extend $S$ to an embedding $S^4 \times D^1 \hookrightarrow M$. We perform a surgery on this embedding and denote the resulting manifold by $M'$. The homology groups of $M'$ can be calculated using homology exact sequence of pairs and are as follows.
      	\begin{table}[h]
      		\centering
      		\begin{tabular}{c|cccccc}
      			$i$ & $0$ & $1$ & $2$ & $3$ & $4$ & $5$ \\
      			\hline
      			$H_i(M')$ & $\mathbb{Z}^2$ & $0$ & $H_2(M)$ & $H_3(M)$ & $0$ & $\mathbb{Z}^2$ \\
      		\end{tabular}
      	\end{table}
      	
      	Thus $M'$ has two path components. By Seifert\textendash van Kampen theorem each component is a simply connected closed spin 5-manifold. Now $M$ is the connected sum of these two manifolds and $S(S^4)$ is the meridian sphere of this connected sum operation. If $H_2(M)$ is isomorphic to $\mathbb{Z}$ or ${(\mathbb{Z}/{p^r})}^2$, by the classification of simply connected 5-manifolds \cite{Sma62, Bar65}, $M'$ is diffeomorphic to $M \sqcup S^5$.
      \end{proof}
      
      \begin{Cor}
      	Only the trivial element of $\pi_4(S^2 \times S^3) \cong {(\mathbb{Z}/2)}^2$ can be represented by an embedding of $S^4$.
      \end{Cor}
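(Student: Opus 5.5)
We want to show that if $S : S^4 \hookrightarrow S^2 \times S^3$ is an embedding, then $[S] = 0 \in \pi_4(S^2 \times S^3)$. The plan is to invoke the preceding Proposition with $M = S^2 \times S^3$, whose second homology is $H_2(S^2 \times S^3) \cong \mathbb{Z}$. That Proposition then says $S(S^4)$ is the boundary of an embedded $5$-disc $D \subset S^2 \times S^3$.

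Given such a disc, the embedding $S$ factors as $S^4 \xrightarrow{\;\cong\;} \partial D \hookrightarrow D \hookrightarrow S^2 \times S^3$. Since $D$ is contractible, $S$ is null-homotopic, so $[S]$ is the trivial element of $\pi_4(S^2 \times S^3)$. Basepoints play no role, because the target is simply connected and free and based homotopy classes therefore agree.

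It remains to identify the group $\pi_4(S^2 \times S^3)$ stated in the corollary. We have
\[
\pi_4(S^2 \times S^3) \cong \pi_4(S^2) \oplus \pi_4(S^3) \cong \mathbb{Z}/2 \oplus \mathbb{Z}/2,
\]
generated by $\eta \circ \Sigma\eta$ on the $S^2$ factor and $\Sigma\eta$ on the $S^3$ factor. Both generators are nonzero, so the group is not trivial, and the content of the corollary is that none of its three nonzero classes is realized by an embedding.

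The only step needing care is the hypothesis check for the Proposition. The manifold $S^2 \times S^3$ is simply connected, closed and spin, and its $H_2$ is $\mathbb{Z}$, so it falls under the stated special case. Inside that Proposition, the key input is that the surgered manifold $M'$ must be $M \sqcup S^5$: the classification of simply connected $5$-manifolds forces one component to have trivial $H_2$, hence to be $S^5$. We may rely on that result as stated.
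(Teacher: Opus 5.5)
Your proposal is correct and matches the paper's argument: the corollary is the special case $H_2(S^2 \times S^3) \cong \mathbb{Z}$ of the preceding Proposition. There $S(S^4)$ bounds an embedded $5$-disc, so the embedding factors through a contractible set and is null-homotopic. One small point of attribution: it is the splitting $H_2(M') \cong H_2(M)$ over the two components that forces one component to have trivial $H_2$. The classification (or $\Theta_5 = 0$) then identifies that component with $S^5$.
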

      
      \begin{Cor}
      	Let $M$ be a simply connected closed spin 5-manifold, and let $S : S^4 \times D^1 \hookrightarrow M$ be an embedding.
      	\begin{itemize}
      		\item[(1)] The Dehn twist about $S$ acts trivially on $H_*(M)$.
      		
      		\item[(2)] If $H_2(M)$ has no 2-torsion, then the Dehn twist about $S$ is isotopic to $id_M$.
      		
      		\item[(3)] If $H_2(M)$ is isomorphic to $\mathbb{Z}$ or ${(\mathbb{Z}/{p^r})}^2$, where $p$ is a prime number, then the Dehn twist about $S$ is isotopic to $id_M$.
      	\end{itemize}
      \end{Cor}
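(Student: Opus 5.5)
We treat the three parts of the corollary in turn, using the preceding proposition, which presents $S(S^4)$ as the meridian sphere of a decomposition $M = M_1 \# M_2$ into simply connected closed spin 5-manifolds.

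For part (1), the Dehn twist $\mathbf{t}_S$ is supported in the neck $S^4 \times D^1$. Take an embedded $S^2$ representing a class in $H_2(M) = \pi_2(M)$, as in Lemma \ref{lem:homologyeffect}. Using $H_2(M) = H_2(M_1) \oplus H_2(M_2)$, we can isotope such spheres into $M_1 \setminus D$ or $M_2 \setminus D$, away from the neck. They are then fixed pointwise by $\mathbf{t}_S$, so $\mathbf{t}_S$ acts trivially on $H_2(M)$. Triviality on all of $H_*(M)$ then follows from Poincar\'e duality, together with the facts that $H_1 = H_4 = 0$ and $H_3 \cong \mathrm{Hom}(H_2,\mathbb{Z}) \oplus \mathrm{Ext}$, which is determined by the $H_2$-action through universal coefficients and duality. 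Orientation is clearly preserved.

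For part (2), part (1) places $[\mathbf{t}_S]$ in $\mathcal{I}(M)$. Since $\pi_1(\mathrm{SO}(5)) \cong \mathbb{Z}/2$, we have $2[\mathbf{t}_S] = 0$. Corollary \ref{cor:forgetful} and Lemma \ref{lem:mtc} embed $\mathcal{I}(M)$ into $\Omega^{\mathrm{Spin}}_6(K(H_2(M),2))$. By Proposition \ref{prop:bordism}(2) this group has no 2-torsion when $H_2(M)$ has none: it is an extension of $H_M$ by $\mathbb{Z}^g \oplus T/\mathrm{im}\, d^4_{7,0}$, and every piece is 2-torsion free. Hence $[\mathbf{t}_S] = 0$. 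The main point to check is that the $\mathbb{Z}/2$-order claim really holds in $\mathrm{MCG}(M)$. It does, because $\mathbf{t}_S \circ \mathbf{t}_S$ is the twist with clutching loop $2\beta$, which is null-homotopic and hence gives a twist isotopic to the identity.

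For part (3), the preceding proposition shows that $S(S^4)$ bounds an embedded 5-disc $D$. The twist is then supported in a collar $S^4 \times I$ of $\partial D$ inside $D$. Rotating this collar by the loop $\beta(t)$ is isotopic, relative to the outside, to the diffeomorphism of $D$ that rotates the inner disc by a fixed element of $\mathrm{SO}(5)$. Since $\mathrm{SO}(5)$ is connected, that rotation is isotopic to the identity. Concretely, the isotopy is $H_s(x,t) = (\beta(st)\cdot x, t)$ on the collar, extended by $\beta(s)$ on the inner disc and by the identity outside. Each $H_s$ is a diffeomorphism, with $H_0 = \mathrm{id}$ and $H_1 = \mathbf{t}_S$ because $\beta(1) = \mathrm{id}$. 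This handles the possibly 2-torsion cases, e.g. $p = 2$, without any bordism input.
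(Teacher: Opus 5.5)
Your proposal is correct. Parts (1) and (2) match what the paper calls ``clear''. For (1), you rightly see that, unlike in Lemma \ref{lem:homologyeffect}, transversality cannot push a $2$-sphere off a $4$-sphere in a $5$-manifold. The splitting $M=M_1\# M_2$ from the preceding proposition supplies embedded spheres generating $H_2(M)=H_2(M_1)\oplus H_2(M_2)$ away from the neck. Strictly, it is generators of each summand, not arbitrary classes, that you place in $M_i\setminus D$. For (2), the twist has order at most $2$, and $\mathcal{I}(M)=\mathcal{I}(M,D)$ injects via $\rho$ into a bordism group without $2$-torsion; this is exactly the paper's reasoning.

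For (3) you take a different route. The paper notes that the twist is supported in an embedded $5$-disc and kills it using $\mathrm{MCG}(D^5,\partial)\cong\Theta_6=0$, i.e.\ Cerf's pseudoisotopy theorem plus Kervaire--Milnor. You instead write down an explicit isotopy that unwinds the loop $\beta$ by rotating the inner disc.

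Your argument is elementary and works in every dimension. It shows that a twist about any sphere bounding an embedded disc is isotopic to the identity. That is precisely why the composite $\pi_1(\mathrm{SO}(n))\to\mathrm{MCG}(M,D)\to\mathrm{MCG}(M)$ in Lemma \ref{lem:wallseq} is trivial, so you could equally cite that exactness. The paper's argument is shorter given its deep input, and it disposes of any diffeomorphism supported in a disc, not only this twist.

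One technical repair is needed. As written, $H_s(x,t)=(\beta(st)\cdot x,t)$ glued to the constant rotation $\beta(s)$ on the inner disc is only a homeomorphism: its radial derivative jumps at $t=1$ whenever $\beta'(s)\neq 0$. Use $\beta(s\lambda(t))$ instead, with $\lambda$ smooth, $\lambda=0$ near $t=0$ and $\lambda=1$ near $t=1$. Then every $H_s$ is a diffeomorphism, and $H_1$ is the twist along a reparametrization of $\beta$, hence isotopic to $\mathbf{t}_S$.
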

      
      \begin{proof}
      	(1) and (2) are clear. To prove (3), note that this Dehn twist is supported in an embedded 5-disc, and thus its isotopy class lies in the image of $\mathrm{MCG}(D^5, \partial) \rightarrow \mathrm{MCG}(M)$, where $\mathrm{MCG}(D^5, \partial) = \Theta_6 = 0$.
      \end{proof}
      
      \begin{Cor}
      	All the Dehn twists of a simply connected closed spin 5-manifold $M$ act trivially on $H_*(M)$. If $H_2(M)$ has no 2-torsion or is isomorphic to ${(\mathbb{Z}/{2^r})}^2$, then Dehn twists about embedded $k$-spheres with $k \neq 2$ are isotopic to $id_M$.
      \end{Cor}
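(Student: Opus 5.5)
The statement has two parts. First, every Dehn twist of $M$ acts trivially on $H_*(M)$. Second, under the homology hypotheses, Dehn twists about $k$-spheres with $k\neq 2$ are isotopic to $id_M$. I would assemble both from the case analysis already carried out in this subsection.

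A Dehn twist about $S^k\times D^{5-k}$ requires an element of $\pi_{5-k}(\mathrm{SO}(k+1))$. For $k=0$ or $k=5$ the statement is degenerate: the $k=5$ case does not embed in the interior of $M$, and for $k=0$ we have $\pi_5(\mathrm{SO}(1))=0$. For $k=1,3$, Proposition \ref{homologyeffect2} gives isotopy to the identity. For $k=2$, Lemma \ref{lem:homologyeffect} shows the twist lies in $\mathcal{I}(M)$. For $k=4$, part (1) of the preceding corollary shows it acts trivially on homology. The underlying reason is that the twist is supported near the meridian sphere of a connected sum decomposition, which is disjoint from embedded $2$-spheres generating $H_2(M)$; Poincaré duality then handles all degrees. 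Together these give the first assertion.

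For the second assertion only $k=4$ remains. If $H_2(M)$ has no 2-torsion, part (2) of the preceding corollary applies. The argument is that the twist lies in $\mathcal{I}(M,D)$ and, via Lemma \ref{lem:wallseq}, is the image of the generator of $\pi_1(\mathrm{SO}(5))\cong\mathbb{Z}/2$. It therefore has order at most $2$ in $\mathcal{I}(M)$. Since $\mathcal{I}(M)$ embeds via $\rho$ into a bordism group with no 2-torsion (Corollary \ref{cor:forgetful} and Proposition \ref{prop:bordism}(2)), the twist is trivial.

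If $H_2(M)\cong(\mathbb{Z}/2^r)^2$, I would use the previous proposition. It shows that $S(S^4)$ bounds an embedded $5$-disc, by the classification, since one side of the connected sum must be a homotopy sphere and hence $S^5$. Then the twist is supported in a disc and factors through $\mathrm{MCG}(D^5,\partial)\cong\Theta_6=0$, exactly as in part (3) of the preceding corollary.

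The main point needing care is the disc argument in this last case. I must check that when $H_2(M)$ is indecomposable as a linking-form summand, the summand receiving $H_2$ is all of $M$ and the other is $S^5$, by the Smale--Barden classification. I also need the twist about the meridian sphere, viewed inside the disc summand, to extend over the disc. It does, because the collar of the boundary of the $5$-disc lies inside that disc.
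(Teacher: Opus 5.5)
Your proposal is correct and assembles the statement the same way the paper does: $k=1,3$ from Proposition~\ref{homologyeffect2}, $k=2$ from Lemma~\ref{lem:homologyeffect}, and $k=4$ from the preceding corollary (order at most $2$ together with the absence of $2$-torsion in $\mathcal{I}(M)\hookrightarrow\Omega^{\mathrm{Spin}}_6(K(H_2(M),2))$, and for $(\mathbb{Z}/2^r)^2$ the disc argument via the preceding proposition with $p=2$). One small correction: Lemma~\ref{lem:wallseq} does not identify the twist about an arbitrary embedded $S^4$ with the image of the generator of $\pi_1(\mathrm{SO}(5))$, since that image is the twist about $\partial D$ for the fixed disc and is already trivial in $\mathcal{I}(M)$; the bound on the order should instead come from $\mathbf{t}_{S,\beta}^2=\mathbf{t}_{S,2\beta}$ with $2\beta=0$ in $\pi_1(\mathrm{SO}(5))\cong\mathbb{Z}/2$.
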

      
      We have seen in \ref{results} that the isotopy classes of Dehn twists about embedded 2-spheres are determined by elements of $\pi_2(M)$ and $\pi_3(\mathrm{SO}(3))$. Thus we have determined all possible Dehn twists of $M$ when $H_2(M)$ has no 2-torsion or is isomorphic to ${(\mathbb{Z}/{2^r})}^2$.
      
      \begin{Rmk}
      	\rm The results in this subsection are formulated for spin manifolds. They remain valid in the nonspin case after mild modification of the statements.
      \end{Rmk}

    \section{The mapping class group of $M_g$ and its abelianization}\label{sec:6}
    
      In this section we focus on $M_g$. We show that $\mathrm{MCG}(M_g)$ is a semi-direct product of $\mathcal{I}(M_g)$ and $\mathrm{GL}_g(\mathbb{Z})$, and compute the abelianization of $\mathrm{MCG}(M_g)$. We also determine $\mathrm{MCG}^{\pm}(S^2 \times S^3)$, the \emph{full} mapping class group of $S^2 \times S^3$.
    
    \subsection{Splitting the group extension for $M_g$}
    
      Recall that, given a short exact sequence of groups \[0 \rightarrow A \rightarrow E \xrightarrow{\phi} Q \rightarrow 1\] with $A$ abelian, the group $Q$ acts on $A$ by $g \cdot a = s(g)as(g)^{-1}$, where $s : Q \rightarrow E$ is a set theoretic cross section. This action is independent to the choice of $s$ and is used to define the group structure of the semi-direct product $A \rtimes Q$ (see \cite[Chapter IV]{Bro82}). We note the following.
      
      \begin{Lem}\label{lem:split}
      	If there is a map $\chi : E \rightarrow A$ which restricts to identity on $A$, and \[\chi(g_1g_2) = \chi(g_1) + g_1\chi(g_2)g_1^{-1}\] for $g_1, g_2 \in E$, then the short exact sequence splits.
      \end{Lem}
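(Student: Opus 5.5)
The plan is to build the splitting homomorphism directly out of $\chi$. I will show that the kernel of $\chi$, written $K \coloneqq \chi^{-1}(0) \subset E$, is a subgroup mapped isomorphically onto $Q$ by $\phi$. Its inverse is then a section, and the extension splits.

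First I check that $K$ is a subgroup. Put $g_1 = g_2 = 1$ in the cocycle identity to get $\chi(1) = 2\chi(1)$, so $\chi(1) = 0$; this is also forced by $\chi|_A = \mathrm{id}$, since $1 \in A$. Next, for $g_1, g_2 \in K$ the identity gives $\chi(g_1g_2) = 0 + g_1 \cdot 0 \cdot g_1^{-1} = 0$, where conjugation by $g_1$ preserves the normal abelian subgroup $A$ and fixes its identity. Finally, applying the identity to $g\,g^{-1} = 1$ gives $0 = \chi(g) + g\chi(g^{-1})g^{-1}$. Hence $\chi(g^{-1}) = -g^{-1}\chi(g)g$, which vanishes when $g \in K$. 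So $K$ is closed under products and inverses.

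Second, I show $\phi|_K$ is injective. If $g \in K \cap A$, then $g = \chi(g) = 0$, the identity, so $K \cap A$ is trivial. Since $A = \ker \phi$, the restriction $\phi|_K$ is injective.

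Third, I show $\phi|_K$ is surjective, and this is where the formula is used in a more substantive way. Given $q \in Q$, pick any $e \in E$ with $\phi(e) = q$ and set $e' \coloneqq \chi(e)^{-1} e$, writing $A$ multiplicatively inside $E$. Then $\phi(e') = q$. Applying the identity with $g_1 = \chi(e)^{-1} \in A$ and $g_2 = e$, and noting that conjugation by an element of the abelian group $A$ is trivial on $A$, gives
\[\chi(e') = \chi(\chi(e)^{-1}) + \chi(e) = -\chi(e) + \chi(e) = 0,\]
so $e' \in K$. The only points needing care here are the convention mixing additive and multiplicative notation in $A$, and the fact that $g_1 a g_1^{-1} = a$ for $g_1, a \in A$.

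Therefore $\phi|_K : K \to Q$ is an isomorphism, and $s \coloneqq (\phi|_K)^{-1}$ is a homomorphic section of $\phi$. The sequence splits, giving $E \cong A \rtimes Q$ with the action described before the lemma. None of the steps is a serious obstacle; the surjectivity step is the one that genuinely relies on the hypothesis that $\chi$ restricts to the identity on $A$.
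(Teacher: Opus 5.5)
Your argument is correct, but it is organized differently from the paper's proof. The paper reads the cocycle identity as saying exactly that $(\chi,\phi)\colon E\to A\rtimes Q$ is a homomorphism, using the multiplication $(a,q)(a',q')=(a+q\cdot a',\,qq')$. This map restricts to the identity on $A$ and covers the identity on $Q$. The short five lemma then makes it an isomorphism of extensions, so the splitting is inherited from $A\rtimes Q$.

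You instead locate the complement inside $E$ directly. Your $K=\chi^{-1}(0)$ is precisely the preimage of $\{0\}\times Q$ under the paper's isomorphism. Likewise, your element $\chi(e)^{-1}e$ is $(\chi,\phi)^{-1}(0,\phi(e))$, and in particular it does not depend on the chosen lift $e$.

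The two routes buy different things:
\begin{itemize}
\item The paper's route is shorter and gives the identification $E\cong A\rtimes Q$, with the conjugation action, in one step. The cost is invoking the five lemma for non-abelian groups.
\item Your route is fully elementwise and produces an explicit homomorphic section $q\mapsto \chi(e)^{-1}e$. The isomorphism $E\cong A\rtimes Q$ then follows from the standard fact that a split extension with abelian kernel is the semidirect product for the induced action.
\end{itemize}

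One small correction to your closing remark: the hypothesis $\chi|_A=\mathrm{id}$ is also used in your injectivity step, to show $K\cap A$ is trivial, not only in the surjectivity step.
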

      
      \begin{proof}
      	The map $(\chi, \phi) : E \rightarrow A \rtimes Q$ is a homomorphism which fits into the following commutative diagram \[\begin{tikzcd}
      		0 \arrow[r] & A \arrow[r] \arrow[d, "="] & E \arrow[r, "\phi"] \arrow[d, "{(\chi, \phi)}"] & Q \arrow[r] \arrow[d, "="] & 1 \\
      		0 \arrow[r] & A \arrow[r] & A \rtimes Q \arrow[r] & Q \arrow[r] & 1
      	\end{tikzcd}\] and therefore is an isomorphism.
      \end{proof}
      
      We call such a map $\chi$ a \emph{crossed splitting}.
      
      For $M = M_g$ the short exact sequence in Theorem \ref{thm:barden} takes the form \[0 \rightarrow \mathcal{I}(M_g) \rightarrow \mathrm{MCG}(M_g) \rightarrow \mathrm{GL}_g(\mathbb{Z}) \rightarrow 1\] if we identify $H_2(M_g)$ with $\mathbb{Z}^g$ via the standard basis ${S_k}_*([S^2])$. The action of $\mathrm{GL}_g(\mathbb{Z})$ on $\mathcal{I}(M_g)$ is described by Theorem \ref{thm:action}. We now construct a crossed splitting $\chi : \mathrm{MCG}(M_g) \rightarrow \mathcal{I}(M_g)$. We identify $\mathcal{I}(M_g)$ with ${\Omega}^{\mathrm{Spin}}_6((\mathbb{CP}^{\infty})^g)$ via the mapping torus construction $\rho$.
      
      Let $f$ be an orientation preserving diffeomorphism of $M_g$. Consider the twisted double \[X_f \coloneqq {\natural}^g(S^2 \times D^4) \cup_f \overline{{\natural}^g(S^2 \times D^4)}.\] This is a simply connected closed oriented 6-manifold. Using the homology exact sequence we see that the inclusion maps induce isomorphisms $H_2(M_g) \cong H_2({\natural}^g(S^2 \times D^4)) \cong H_2(X_f) \cong \mathbb{Z}^g$. This implies that $X_f$ is a spin manifold with unique spin structure, and we could find a map $\tilde{\nu}_f : X_f \rightarrow {(\mathbb{CP}^{\infty})}^g$ such that the diagram \[\begin{tikzcd}
      	{H_2}({X}_f) \arrow[r, "{\tilde{\nu}_{f *}}"] & {H_2}({(\mathbb{CP}^{\infty})}^g) \\
      	{H_2}({M}_g) \arrow[u] \arrow[r, "{i_* \circ f_*}"] & {H_2}({({M}_g)}_f) \arrow[u, "{{\nu}_{f *}}"]
      \end{tikzcd}\] commutes, where the left vertical map is induced by the inclusion map  which factors through the inclusion $M_g = \partial ({\natural}^g(S^2 \times D^4)) \hookrightarrow {\natural}^g(S^2 \times D^4)$ into the first copy of ${\natural}^g(S^2 \times D^4)$. This determines a bordism class $[X_f, \tilde{\nu}_f] \in {\Omega}^{\mathrm{Spin}}_6((\mathbb{CP}^{\infty})^g)$. The map \begin{align*}
      \chi : \mathrm{MCG}(M_g) & \rightarrow {\Omega}^{\mathrm{Spin}}_6((\mathbb{CP}^{\infty})^g) \\
      [f] & \mapsto [X_f, \tilde{\nu}_f]
      \end{align*} is clearly well-defined.
      
      \begin{Lem}\label{lem:double1}
      	If $[f] \in \mathcal{I}(M_g)$, then $\chi([f]) = \rho([f])$.
      \end{Lem}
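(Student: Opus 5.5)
The plan is to build an explicit spin bordism, compatible with the maps to $(\mathbb{CP}^{\infty})^g$, between $(X_f, \tilde{\nu}_f)$ and $((M_g)_f, \nu_f) \sqcup (X_{id}, \tilde{\nu}_{id})$. The proof then finishes by checking that $[X_{id}, \tilde{\nu}_{id}] = 0$.

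\textbf{The term $X_{id}$.} Write $U \coloneqq {\natural}^g(S^2 \times D^4)$. Then $X_{id} = U \cup_{id} \overline{U}$ is the double of $U$, so it is the boundary of $U \times I$ after smoothing corners. The inclusion $U \hookrightarrow U \times I$ is a homotopy equivalence. Hence $U \times I$ is spin, and its spin structure restricts to the unique one on $X_{id}$. Moreover, $\tilde{\nu}_{id}$ extends over $U \times I$: the class in $H^2(X_{id}; \mathbb{Z}^g)$ defining it comes from $H^2(U)$, because $H_2(M_g) \cong H_2(U) \cong H_2(X_{id})$. Therefore $[X_{id}, \tilde{\nu}_{id}] = 0$.

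\textbf{The bordism $W$.} Choose a collar $M_g \times [-1,1] \subset X_f$ of the gluing hypersurface, so that $X_f = U \cup (M_g \times [-1,1]) \cup \overline{U}$. The gluing is by $id$ at $-1$ and by $f$ at $1$, after reparametrising $f$ into the collar. I will build $W$ from two pieces:
\begin{itemize}
\item $X_{id} \times [0,1]$;
\item the $M_g$-bundle $E$ over a pair of pants $P$ from the proof of Lemma \ref{lem:mtc}, with monodromies $id$ and $f$ on the two legs and $f$ on the waist.
\end{itemize}
The gluing runs along a trivialised piece $M_g \times [-1,1] \times \{1\} \subset X_{id} \times \{1\}$ and the corresponding trivial $M_g \times [-1,1]$-portion of $\partial E$. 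The boundary of $W$ is then $X_{id} \sqcup X_f \sqcup (M_g)_f$, up to orientation, with $X_f$ appearing as $X_{id}$ cut along $M_g$ and reglued by $f$. Equivalently, I would write the construction fiberwise over a 2-dimensional base: the base is a disc with two holes, and $U$, $\overline{U}$ are capped off on the outer boundary.

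\textbf{Spin structure and the map to $(\mathbb{CP}^{\infty})^g$.} By the Mayer--Vietoris and Leray--Serre arguments used in Lemma \ref{lem:mtc}, $H^2(W; \mathbb{Z}/2) \cong H^2(M_g; \mathbb{Z}/2)$. Hence $W$ is spin, and the spin structure can be chosen to restrict to the unique one on $X_f$ and $X_{id}$ and to the preferred one on $(M_g)_f$. For the last point I use the trivial spin structure on $P \times D \subset E$.

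Next I will show that each boundary inclusion induces an isomorphism on $H_2$, with all three identified with $H_2(M_g)$ via the inclusion of a fiber. Here the hypothesis $f_* = id$ is used: it makes the identifications through the $f$-glued and $id$-glued ends agree. Then a single map $W \to (\mathbb{CP}^{\infty})^g$, determined up to homotopy by the class in $H^2(W; \mathbb{Z}^g)$, restricts to $\tilde{\nu}_f$, $\tilde{\nu}_{id}$ and $\nu_f$ respectively. For $X_f$ I must check the commutative square defining $\tilde{\nu}_f$, which involves $i_* \circ f_*$; since $f_* = id$, this agrees with the fiber identification.

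\textbf{Conclusion and main obstacle.} These steps give $\chi([f]) = \rho([f]) + \chi([id]) = \rho([f])$, with orientation signs fixed by comparing with the gluing conventions of Lemma \ref{lem:mtc}. I expect the main obstacle to be the careful construction of $W$ with corners and orientations. In particular I must make sure that the boundary really is $X_f$ and not $X_{f^{-1}}$. If the orientation comes out reversed, I will correct it using $\rho([f^{-1}]) = -\rho([f])$ and replace $P$ by its mirror image.
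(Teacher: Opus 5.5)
Your argument is correct in substance and is essentially the paper's proof, carried out in two steps. The paper uses a single bordism $V = {\natural}^g(S^2 \times D^4) \times I/{\sim}$, in which the collars $M_g \times [0,1]$ at the two ends are identified by $f \times id_{[0,1]}$. Equivalently, $V = \bigl((M_g)_f \times [0,1]\bigr) \cup_{M_g \times I} (U \times I)$: an $M_g$-bundle over an annulus whose fibres over an arc of the outer circle are capped off by $U$. Thus $\partial V = X_f \sqcup (M_g)_f$ up to orientation. If you take your $W$ with an annulus as base and cap $X_{id} \times \{0\}$ by the null-bordism $U \times I$, you obtain exactly $V$. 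So your separate step $[X_{id}, \tilde{\nu}_{id}] = 0$ is this capping done by hand. Your spin-structure and $H_2$ checks are the same ones the paper makes, with $f_* = id$ used at the same place.

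One correction: the pair of pants is unnecessary, and with it your boundary count is wrong. Gluing $X_{id} \times [0,1]$ to $E$ merges only one of the three boundary components of $E$ with $X_{id} \times \{1\}$, so $\partial W$ has four components, not three. To produce $X_f$ you must glue along a component with monodromy $f$; gluing along the $id$-leg just reproduces $X_{id}$. The $id$-leg $M_g \times S^1$ then survives as an additional boundary component. It is harmless: with the spin structure trivial on $P \times D$, it represents $\rho([id_{M_g}]) = 0$. But it has to be accounted for. More simply, replace $E$ by $(M_g)_f \times [0,1]$, and then $\partial W = X_{id} \sqcup X_f \sqcup (M_g)_f$ as you claim.

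Also, the relative sign of $[X_f]$ and $[(M_g)_f]$ in the resulting relation is forced by the construction. It must be computed from the gluing conventions, not adjusted afterwards by mirroring $P$.
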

      
      \begin{proof}
      	Let $B \coloneqq {(\mathbb{CP}^{\infty})}^g \times B\mathrm{Spin}$. It suffices to construct a normal $B$-bordism between $(X_f, \tilde{\nu}_f)$ and $({(M_g)}_f, \nu_f)$. Choose a collar neighborhood $M_g \times [0, 1] \subset {\natural}^g(S^2 \times D^4)$. The manifold \[V \coloneqq {\natural}^g(S^2 \times D^4) \times I/{\sim},\] obtained by identifying $M_g \times [0, 1] \times \{0\}$ with $M_g \times [0, 1] \times \{1\}$ via the gluing map $f \times id_{[0, 1]} : M_g \times [0, 1] \times \{0\} \rightarrow M_g \times [0, 1] \times \{1\}$, is an oriented bordism between $X_f$ and ${(M_g)}_f$. We show that $V$ can be made into a normal $B$-bordism. Using Mayer\textendash Vietoris sequences we see that $V$ is a spin manifold having two spin structures which restrict to the two spin structures of ${(M_g)}_f$ respectively. Since $X_f$ has a unique spin structure, we can choose the spin structure of $V$ such that $V$ is a spin bordism. Moreover, from Mayer\textendash Vietoris sequences we see that the inclusion maps $X_f \hookrightarrow V$ and ${(M_g)}_f \hookrightarrow V$ induce isomorphisms \[H_2(X_f) \cong H_2(V) \cong H_2({(M_g)}_f),\] consequently by universal coefficient theorem there are isomorphisms \[\begin{tikzcd}
      	\mathrm{Hom}_{\mathbb{Z}}(H_2(X_f), \mathbb{Z}^g) \arrow[d, "="] & \mathrm{Hom}_{\mathbb{Z}}(H_2(V), \mathbb{Z}^g) \arrow[l, "\cong"'] \arrow[r, "\cong"] \arrow[d, "="] & \mathrm{Hom}_{\mathbb{Z}}(H_2({(M_g)}_f), \mathbb{Z}^g) \arrow[d, "="] \\
      	{[{X_f}, {(\mathbb{CP}^{\infty})}^g]} & {[V, {(\mathbb{CP}^{\infty})}^g] \arrow[l, "\cong"']} \arrow[r, "\cong"] & {[{{(M_g)}_f}, {(\mathbb{CP}^{\infty})}^g]}.
      	\end{tikzcd}\] Since $f$ lies in the Torelli group, by our definition of $\tilde{\nu}_f$ there is a map $\overline{\nu}_f : V \rightarrow {(\mathbb{CP}^{\infty})}^g$ extends $\tilde{\nu}_f \sqcup \nu_f$ up to homotopy. By gluing collar neighborhoods to $V$ if needed, we obtain the desired normal $B$-bordism between $(X_f, \tilde{\nu}_f)$ and $({(M_g)}_f, \nu_f)$.
      \end{proof}
      
      \begin{Lem}\label{lem:double2}
      	The map $\chi : \mathrm{MCG}(M_g) \rightarrow {\Omega}^{\mathrm{Spin}}_6((\mathbb{CP}^{\infty})^g)$ is a crossed splitting.
      \end{Lem}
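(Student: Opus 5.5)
We verify the cocycle identity of Lemma \ref{lem:split} directly. By Lemma \ref{lem:double1}, $\chi$ restricts to $\rho$ on $\mathcal{I}(M_g)$, which is the identity under our identification. By Theorem \ref{thm:action}, the conjugation action $[f]\mapsto [f_1\circ f\circ f_1^{-1}]$ of $\mathrm{MCG}(M_g)$ on $\mathcal{I}(M_g)$ corresponds under $\rho$ to the standard action of $h_1={f_1}_*$. So it suffices to show, for all $[f_1],[f_2]\in\mathrm{MCG}(M_g)$,
\[
[X_{f_1\circ f_2},\tilde\nu_{f_1\circ f_2}] \;=\; [X_{f_1},\tilde\nu_{f_1}] + [X_{f_2},\delta_{h_1}\circ\tilde\nu_{f_2}] \in \Omega^{\mathrm{Spin}}_6((\mathbb{CP}^\infty)^g).
\]
We will prove this by constructing an explicit normal $B$-bordism, $B=(\mathbb{CP}^\infty)^g\times B\mathrm{Spin}$. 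The order of composition may need adjusting to match the gluing conventions, but the argument is symmetric.

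\textbf{The bordism.} Write $U={\natural}^g(S^2\times D^4)$, so that $X_f=U\cup_f\overline U$. Start from $(X_{f_1}\sqcup X_{f_2})\times I$. Attach $U\times I$ along $(\overline U\subset X_{f_1})\times\{1\}$ and $(U\subset X_{f_2})\times\{1\}$, i.e.\ glue the second copy of $U$ in $X_{f_1}$ to the first copy of $U$ in $X_{f_2}$ by the identity; this is a cut-and-paste along codimension-zero pieces of the boundary. After smoothing corners we obtain an oriented $7$-manifold $W$. Its remaining boundary component is
\[
U\cup_{f_1}(M_g\times I)\cup_{f_2}\overline U\cong X_{f_1\circ f_2}.
\]
Hence $W$ is an oriented bordism from $X_{f_1}\sqcup X_{f_2}$ to $X_{f_1\circ f_2}$. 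This is the twisted-double analogue of the pair-of-pants bordism in Lemma \ref{lem:mtc}.

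\textbf{The normal $B$-structure.} A Mayer\textendash Vietoris computation (each piece has $H_1=0$ and $H_2\cong\mathbb{Z}^g$ coming from $M_g$) shows that $W$ is simply connected with $H_2(W)\cong\mathbb{Z}^g$ and $H^2(W;\mathbb{Z}/2)\to H^2(X_\bullet;\mathbb{Z}/2)$ compatible. So $W$ is spin, and since each boundary component has a unique spin structure, $W$ is a spin bordism. The inclusions of $X_{f_1}$, $X_{f_2}$ and $X_{f_1\circ f_2}$ induce isomorphisms on $H_2$. Identify each with $H_2(M_g)$ via the boundary of the first copy of $U$, as in the definition of $\tilde\nu$.

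The key bookkeeping is as follows. The first copy of $U$ in $X_{f_2}$ is glued to the second copy in $X_{f_1}$, and the identification of $H_2(X_{f_1})$ with $H_2(M_g)$ passes through $f_1$ when moving from the first copy of $U$ to the second. Therefore the composite identification $H_2(X_{f_2})\cong H_2(W)\cong H_2(X_{f_1})\cong\mathbb{Z}^g$ differs from the one used for $\tilde\nu_{f_2}$ by $h_1$. On $X_{f_1}$ and on $X_{f_1\circ f_2}$, whose first copy of $U$ is that of $X_{f_1}$, the identifications agree with $\tilde\nu_{f_1}$ and $\tilde\nu_{f_1\circ f_2}$.

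Since $[W,(\mathbb{CP}^\infty)^g]=\mathrm{Hom}(H_2(W),\mathbb{Z}^g)$, there is a map $W\to(\mathbb{CP}^\infty)^g$ restricting up to homotopy to $\tilde\nu_{f_1}$, $\delta_{h_1}\circ\tilde\nu_{f_2}$ and $\tilde\nu_{f_1\circ f_2}$ on the three boundary pieces. This gives the displayed identity.

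\textbf{Main obstacle.} The geometry of $W$ is routine. The delicate step is the precise homology bookkeeping: checking the direction of the twist ($h_1$ rather than $h_1^{-1}$ or $h_2$) against the convention in the definition of $\tilde\nu_f$ via $i_*\circ f_*$ and ${\nu_f}_*$, and against the convention $g_1\chi(g_2)g_1^{-1}$ in Lemma \ref{lem:split}. I would carry this out first on explicit generators of $H_2$ given by the standard spheres $S_k(S^2)\subset M_g=\partial U$, following them through each gluing map.
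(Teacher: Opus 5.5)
Your proposal is correct and is essentially the paper's argument. The paper builds the same pair-of-pants spin bordism, written with explicit collars as $Y=Y_{f'}\cup_{F'}(Z\times I)\cup_F Y_f$ (your handle $U\times I$), joining the second copy of $U$ in $X_{f'}$ to the first copy in $X_f$; it extends the maps to $(\mathbb{CP}^\infty)^g$ across the handle by a homotopy from $\delta_h\circ(\tilde\nu_{f'})_2$ to $(\tilde\nu_f)_1$, which is exactly your $H_2$-bookkeeping.

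The one point to pin down is the normalization you flagged. The paper's diagram makes $\tilde\nu_f$ induce $f_*$ on the first copy of $U$ and the identity on the second. With your reading (identity on the first copy), your displayed identity holds verbatim if $X_f$ identifies $y\in\partial\overline{U}$ with $f(y)\in\partial U$. Under the opposite gluing convention you instead get a cocycle for the right action, and replacing $\chi(g)$ by $\phi(g)\cdot\chi(g)$ recovers the paper's normalization.
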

      
      \begin{proof}
      	By Lemma \ref{lem:double1} it suffices to show that $\chi([f \circ f']) = \chi([f]) + [f \circ \chi([f']) \circ f^{-1}]$. Namely, let $h \coloneqq f_* \in \mathrm{GL}_g(\mathbb{Z})$, let $\delta_h : (\mathbb{CP}^{\infty})^g \rightarrow (\mathbb{CP}^{\infty})^g$ be the map inducing $h$ on $\mathbb{Z}^g$, we show that \[[X_{f \circ f'}, \tilde{\nu}_{f \circ f'}] = [X_f, \tilde{\nu}_f] + [X_{f'}, \delta_h \circ \tilde{\nu}_{f'}].\]
      	
      	We construct a spin bordism $Y$ between $X_f \sqcup X_{f'}$ and $X_{f \circ f'}$ as follows. Let \begin{align*}
      		& Y_{f'} \coloneqq \bigl({\natural}^g(S^2 \times D^4) \times [0, 2]\bigr) \cup_{f' \times id_{[0, 1]}} \bigl({\natural}^g(S^2 \times D^4) \times [0, 1]\bigr), \\
      		& Y_f \coloneqq \bigl({\natural}^g(S^2 \times D^4) \times [0, 1]\bigr) \cup_{f \times id_{[0, 1]}} \bigl({\natural}^g(S^2 \times D^4) \times [0, 2]\bigr).
      	\end{align*} Let $Z$ be the manifold obtained by gluing $M_g \times [1, 2]$ and ${\natural}^g(S^2 \times D^4)$ via the identification $M_g \times \{1\} = \partial ({\natural}^g(S^2 \times D^4))$. Clearly $Z$ is diffeomorphic to ${\natural}^g(S^2 \times D^4)$. Let \[Y \coloneqq Y_{f'} \cup_{F'} (Z \times I) \cup_F Y_f\] where $F'$ is the map from $(\partial ({\natural}^g(S^2 \times D^4)) \times [1, 2]) \cup_{f'} ({\natural}^g(S^2 \times D^4) \times \{1\}) \subset \partial Y_{f'}$ to $Z \times \{0\}$ given by the union of \begin{align*}
      	f' \times id_{[1, 2]} & : \partial ({\natural}^g(S^2 \times D^4)) \times [1, 2] \rightarrow M_g \times [1, 2] \times \{0\}, \\
      	id & : {\natural}^g(S^2 \times D^4) \times \{1\} \rightarrow {\natural}^g(S^2 \times D^4) \times \{0\},
      	\end{align*} and $F$ is the map from $Z \times \{1\}$ to $({\natural}^g(S^2 \times D^4) \times \{1\}) \cup_f (\partial ({\natural}^g(S^2 \times D^4)) \times [1, 2]) \subset \partial Y_f$ given by the union of \begin{align*}
      	f \times id_{[1, 2]} & : M_g \times [1, 2] \times \{1\} \rightarrow \partial ({\natural}^g(S^2 \times D^4)) \times [1, 2], \\
      	id & : {\natural}^g(S^2 \times D^4) \times \{1\} \rightarrow {\natural}^g(S^2 \times D^4) \times \{1\}.
      	\end{align*} Clearly $Y$ is a simply connected bordism between $X_f \sqcup X_{f'}$ and $X_{f \circ f'}$. Using Mayer\textendash Vietoris sequence we see that $Y$ is spin. Then the spin structure of $Y$ is unique and thus $Y$ is a spin bordism.
      	
      	We shall construct a map $\tilde{\nu} : Y \rightarrow {(\mathbb{CP}^{\infty})}^g$ extending $\tilde{\nu}_{f \circ f'} \sqcup \tilde{\nu}_f \sqcup \delta_h \circ \tilde{\nu}_{f'}$, thus making $(Y, \tilde{\nu})$ into a normal $B$-bordism (where $B \coloneqq {(\mathbb{CP}^{\infty})}^g \times B\mathrm{Spin}$) and as a consequence $[X_{f \circ f'}, \tilde{\nu}_{f \circ f'}] = [X_f, \tilde{\nu}_f] + [X_{f'}, \delta_h \circ \tilde{\nu}_{f'}]$. Denote the restriction of $\tilde{\nu}_f$ on the first and on the second copy of ${\natural}^g(S^2 \times D^4)$ by ${(\tilde{\nu}_f)}_1$ and ${(\tilde{\nu}_f)}_2$ respectively. Similar notations apply to $\tilde{\nu}_{f'}$ and $\tilde{\nu}_{f \circ f'}$. Note that by our definition of $\tilde{\nu}_f$ we have \[\delta_h \circ {{(\tilde{\nu}_{f'})}_2}_* = {{(\tilde{\nu}_f)}_1}_* : H_2({\natural}^g(S^2 \times D^4)) \rightarrow H_2({(\mathbb{CP}^{\infty})}^g),\] thus $\delta_h \circ {(\tilde{\nu}_{f'})}_2$ is homotopic to ${(\tilde{\nu}_f)}_1$. Choose a homotopy $H : {\natural}^g(S^2 \times D^4) \times I \rightarrow {(\mathbb{CP}^{\infty})}^g$ from $\delta_h \circ {(\tilde{\nu}_{f'})}_2$ to ${(\tilde{\nu}_f)}_1$, and denote the restriction of $H$ on $\partial ({\natural}^g(S^2 \times D^4)) \times I$ by $\partial H$. The map $\tilde{\nu} : Y \rightarrow {(\mathbb{CP}^{\infty})}^g$ is defined as the union of the following maps \begin{align*}
      		\bigl((\delta_h \circ {(\tilde{\nu}_{f'})}_1) \times [0, 2]\bigr) \cup \bigl((\delta_h \circ {(\tilde{\nu}_{f'})}_2) \times [0, 1]\bigr) & : Y_{f'} \rightarrow {(\mathbb{CP}^{\infty})}^g, \\
      		H \cup \bigl(\partial H \times [1, 2]\bigr) & : Z \times I \rightarrow {(\mathbb{CP}^{\infty})}^g, \\
      		\bigl({(\tilde{\nu}_f)}_1 \times [0, 1]\bigr) \cup \bigl({(\tilde{\nu}_f)}_2 \times [0, 2]\bigr) & : Y_f \rightarrow {(\mathbb{CP}^{\infty})}^g.
      	\end{align*} Since the effects of $\delta_h \circ {(\tilde{\nu}_{f'})}_1$ and ${(\tilde{\nu}_{f \circ f'})}_1$ on the second homology groups coincide, $(X_{f \circ f'}, \tilde{\nu}_{f \circ f'})$ and $(X_{f \circ f'}, \tilde{\nu})$ are normal $B$-bordant, so $(Y, \tilde{\nu})$ can be viewed as a normal $B$-bordism between $(X_{f \circ f'}, \tilde{\nu}_{f \circ f'})$ and $(X_f, \tilde{\nu}_f) \sqcup (X_{f'}, \delta_h \circ \tilde{\nu}_{f'})$. This proves the lemma.
      \end{proof}
      
      Therefore $\mathrm{MCG}(M_g) \cong \mathcal{I}(M_g) \rtimes \mathrm{GL}_g(\mathbb{Z})$. Moreover, let $\mathrm{Top}^{+}(M)$ be the topological group of orientation preserving homeomorphisms of an oriented manifold $M$, and let $G^{+}(M)$ be the topological monoid of orientation preserving homotopy equivalences of $M$, both equipped with compact-open topology. The \emph{topological mapping class group} of $M$, \[\mathrm{MCG}^{\mathrm{Top}}(M) \coloneqq \pi_0\mathrm{Top}^{+}(M),\] is the group of topological isotopy classes of orientation preserving homeomorphisms of $M$, and the \emph{homotopy automorphism group} of $M$, \[\mathrm{haut}(M) \coloneqq \pi_0G^{+}(M),\] is the group of homotopy classes of orientation preserving homotopy equivalences of $M$. Recall that we have defined in \ref{results} the \emph{homotopy Torelli group} $\mathcal{I}^h(M)$ of $M$ consisting of homotopy classes of homotopy equivalences acting trivially on $H_*(M)$. Similarly the \emph{topological Torelli group} $\mathcal{I}^{\mathrm{Top}}(M)$ of $M$ is the subgroup of $\mathrm{MCG}^{\mathrm{Top}}(M)$ consisting of homeomorphisms acting trivially on $H_*(M)$. If $M$ is a simply connected closed 5-manifold, Barden's result \cite[Theorem 2.2]{Bar65} implies that there is a commutative diagram of short exact sequences \[\begin{tikzcd}
      0 \arrow[r] & \mathcal{I}(M) \arrow[r] \arrow[d] & \mathrm{MCG}(M) \arrow[r] \arrow[d] & \mathrm{Aut}(H_2(M), L_M, w_2(M)) \arrow[r] \arrow[d, "="] & 1 \\
      1 \arrow[r] & \mathcal{I}^{\mathrm{Top}}(M) \arrow[r] \arrow[d] & \mathrm{MCG}^{\mathrm{Top}}(M) \arrow[r] \arrow[d] & \mathrm{Aut}(H_2(M), L_M, w_2(M)) \arrow[r] \arrow[d, "="] & 1 \\
      1 \arrow[r] & \mathcal{I}^h(M) \arrow[r] & \mathrm{haut}(M) \arrow[r] & \mathrm{Aut}(H_2(M), L_M, w_2(M)) \arrow[r] & 1
      \end{tikzcd}\] where the vertical maps are forgetful homomorphisms. Combining this with Lemma \ref{lem:split} and Lemma \ref{lem:double2} we have the following.
      
      \begin{Thm}\label{thm:mcg}
      	There are splitting short exact sequences \[0 \rightarrow \mathcal{I}(M_g) \rightarrow \mathrm{MCG}(M_g) \rightarrow \mathrm{GL}_g(\mathbb{Z}) \rightarrow 1,\] \[1 \rightarrow \mathcal{I}^{\mathrm{Top}}(M_g) \rightarrow \mathrm{MCG}^{\mathrm{Top}}(M_g) \rightarrow \mathrm{GL}_g(\mathbb{Z}) \rightarrow 1,\] \[1 \rightarrow \mathcal{I}^h(M_g) \rightarrow \mathrm{haut}(M_g) \rightarrow \mathrm{GL}_g(\mathbb{Z}) \rightarrow 1.\]
      \end{Thm}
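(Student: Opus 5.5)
The first sequence splits by Lemma \ref{lem:split}, whose crossed splitting is $\chi([f]) = [X_f, \tilde{\nu}_f]$. Lemma \ref{lem:double1} shows $\chi$ restricts to $\rho$ on $\mathcal{I}(M_g)$, and $\rho$ is an isomorphism by Theorem \ref{thm:connectedsumofproduct}. Lemma \ref{lem:double2} gives the cocycle identity, where the action is the standard one by Theorem \ref{thm:action}. So $(\chi,\phi)$ identifies $\mathrm{MCG}(M_g)$ with $\mathcal{I}(M_g)\rtimes \mathrm{GL}_g(\mathbb{Z})$. For the remaining two rows I would not construct new crossed splittings. Instead I would push forward an explicit section of $\phi$ along the forgetful maps in Barden's commutative diagram.

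Concretely, the splitting produces a homomorphism $s : \mathrm{GL}_g(\mathbb{Z}) \rightarrow \mathrm{MCG}(M_g)$ with $\phi\circ s = id$, namely $s(h) = (0,h)$ under $(\chi,\phi)^{-1}$. Equivalently, $s(h) = [f_h]\cdot\rho^{-1}(\chi([f_h]))^{-1}$ for any lift $f_h$. Composing $s$ with the forgetful homomorphism $\mathrm{MCG}(M_g)\rightarrow \mathrm{MCG}^{\mathrm{Top}}(M_g)$ gives a homomorphism $s^{\mathrm{Top}}$. The diagram commutes, and its right vertical maps are identities on $\mathrm{Aut}(H_2(M_g), L_{M_g}) = \mathrm{GL}_g(\mathbb{Z})$. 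Hence the projection $\mathrm{MCG}^{\mathrm{Top}}(M_g)\rightarrow \mathrm{GL}_g(\mathbb{Z})$ composed with $s^{\mathrm{Top}}$ is the identity, so the second sequence splits. The same argument with the composite forgetful map to $\mathrm{haut}(M_g)$ splits the third row. In each case a homomorphic section of a surjection with kernel the relevant Torelli group gives the semidirect product decomposition.

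The only points to check are the exactness of the lower two rows and the commutativity of the diagram. Both are quoted from Barden \cite[Theorem 2.2]{Bar65}, together with the observation that the horizontal maps all record the induced automorphism on $H_2$. This is the step I expect to need the most care. One must confirm that Barden's realization result gives surjectivity onto $\mathrm{Aut}(H_2(M), L_M, w_2(M))$ in the topological and homotopy categories, which follows because the smooth map already surjects and factors through them. One must also confirm that the kernels are exactly $\mathcal{I}^{\mathrm{Top}}$ and $\mathcal{I}^h$. For a simply connected $5$-manifold, an orientation-preserving self-equivalence is determined on $H_*$ by its effect on $H_2$ via Poincaré duality and universal coefficients, so triviality on $H_2$ forces triviality on $H_*$. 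Once these are granted, the splitting is formal.
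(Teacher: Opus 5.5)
Your proposal is correct and follows the paper's argument. The crossed splitting $\chi$ of Lemma~\ref{lem:double2} splits the smooth sequence via Lemma~\ref{lem:split}, and composing the resulting homomorphic section with the forgetful maps in Barden's diagram (which is the identity on $\mathrm{GL}_g(\mathbb{Z})$) splits the topological and homotopy sequences.

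One small slip: under the paper's cocycle rule $\chi(g_1g_2)=\chi(g_1)+g_1\chi(g_2)g_1^{-1}$, the element over $h$ with $\chi=0$ is $\rho^{-1}(\chi([f_h]))^{-1}\cdot[f_h]$, not $[f_h]\cdot\rho^{-1}(\chi([f_h]))^{-1}$. The latter has $\chi$-value $a-h\cdot a$ with $a=\rho^{-1}\chi([f_h])$, and it depends on the choice of lift. Your primary definition $s(h)=(\chi,\phi)^{-1}(0,h)$ is the right one, so the argument is unaffected.
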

      
      Here for a short exact sequence \[0 \rightarrow A \rightarrow E \xrightarrow{\phi} Q \rightarrow 1\] with $A$ not necessarily an abelian group, we say that this short exact sequence \emph{splits} if there is a group theoretic cross section, namely a homomorphism $s : Q \rightarrow E$ such that $\phi \circ s = id_Q$.
      
      By Theorem \ref{thm:mcg} the group structure of $\mathrm{MCG}(M_g)$ is determined. This completes the proof of Theorem \ref{thm:2}.
      
      \begin{Rmk}
      	\rm The determination of the group structures of $\mathrm{MCG}^{\mathrm{Top}}(M_g)$ and $\mathrm{haut}(M_g)$ leaves open. (However, see \cite{BB96} for various results on $\mathrm{haut}(M_g)$.)
      \end{Rmk}
      
      \begin{Rmk}
      	\rm Given an orientation preserving diffeomorphism $f$ of $M_g$ one may also consider the simply connected closed spin 6-manifolds \[{\natural}^g(S^3 \times D^3) \cup_f \overline{{\natural}^g(S^3 \times D^3)}, \quad {\natural}^g(S^2 \times D^4) \cup_f \overline{{\natural}^g(S^3 \times D^3)}.\] By computing their homology groups and applying \cite[Theorem 1]{Wal66-classification5} and \cite{KM63}, one sees that the first manifold is diffeomorphic to ${\#}^g(S^3 \times S^3)$, and the second manifold is diffeomorphic to $S^6$.
      \end{Rmk}

    \subsection{The abelianization of $\mathrm{MCG}(M_g)$}
    
      The Hochschild\textendash Serre spectral sequence of the exact sequence \[0 \rightarrow \mathcal{I}(M_g) \rightarrow \mathrm{MCG}(M_g) \xrightarrow{\phi} \mathrm{GL}_g(\mathbb{Z}) \rightarrow 1\] with trivial coefficient module $\mathbb{Z}$ yields the following exact sequence \[H_2(\mathrm{MCG}(M_g)) \xrightarrow{\phi_*} H_2(\mathrm{GL}_g(\mathbb{Z})) \rightarrow {\mathcal{I}(M_g)}_{\mathrm{GL}_g(\mathbb{Z})} \rightarrow H_1(\mathrm{MCG}(M_g)) \xrightarrow{\phi_*} H_1(\mathrm{GL}_g(\mathbb{Z})) \rightarrow 0.\] Here ${\mathcal{I}(M_g)}_{\mathrm{GL}_g(\mathbb{Z})}$ denotes the coinvariant, which is the quotient group of $\mathcal{I}(M_g)$ modulo the additive subgroup generated by \[\{h \cdot [f] - [f] \; | \; h \in \mathrm{GL}_g(\mathbb{Z}), [f] \in \mathcal{I}(M_g)\}.\] It follows from Theorem \ref{thm:mcg} that $\phi_* : H_2(\mathrm{MCG}(M_g)) \rightarrow H_2(\mathrm{GL}_g(\mathbb{Z}))$ is surjective, and there is a splitting short exact sequence \[0 \rightarrow {\mathcal{I}(M_g)}_{\mathrm{GL}_g(\mathbb{Z})} \rightarrow H_1(\mathrm{MCG}(M_g)) \xrightarrow{\phi_*} H_1(\mathrm{GL}_g(\mathbb{Z})) \rightarrow 0.\] It is a well-known fact that
      \[H_1(\mathrm{GL}_g(\mathbb{Z})) \cong \begin{cases}
      	{(\mathbb{Z}/2)}^2, & g = 2; \\
      	\mathbb{Z}/2, & g \neq 2.
      \end{cases}\]
      Thus, to compute the abelianization $H_1(\mathrm{MCG}(M_g))$ of $\mathrm{MCG}(M_g)$, it suffices to compute the coinvariant ${\mathcal{I}(M_g)}_{\mathrm{GL}_g(\mathbb{Z})}$.
      
      \begin{Lem}\label{lem:coinvariant}
      	We have
      	\[{\mathcal{I}(M_g)}_{\mathrm{GL}_g(\mathbb{Z})} \cong \begin{cases}
      		{(\mathbb{Z}/2)}^2, & g = 1; \\
      		\mathbb{Z}/2, & g = 2, 3; \\
      		0, & g \geq 4.
      	\end{cases}\]
      \end{Lem}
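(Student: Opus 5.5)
The plan is to compute the coinvariants on the bordism side. By Theorem~\ref{thm:connectedsumofproduct} the mapping torus construction $\rho$ is an isomorphism, and by Theorem~\ref{thm:action} it intertwines the conjugation action with the standard action $[N,\nu]\mapsto[N,\delta_h\circ\nu]$. So it suffices to compute $\Omega^{\mathrm{Spin}}_6((\mathbb{CP}^\infty)^g)_{\mathrm{GL}_g(\mathbb{Z})}$.

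I would first repackage the invariants of Proposition~\ref{prop:bordismfree} in a $\mathrm{GL}_g(\mathbb{Z})$-equivariant way. To $[N,\nu]$ attach the linear form $\ell(y)=\langle p_1(N)\cup\nu^*y,[N]\rangle$ and the cubic form $c(y)=\langle(\nu^*y)^3,[N]\rangle$ on $H^2((\mathbb{CP}^\infty)^g)=\mathbb{Z}^g$. By Proposition~\ref{prop:bordismfree} and the description of $H_g$, the map $[N,\nu]\mapsto(\ell,c)$ identifies the bordism group with the lattice $L$ of pairs for which $(4c(y)-\ell(y))/24\in\mathbb{Z}$ for every $y$ and the $x_k^2x_l$ and $x_kx_l^2$ coefficients of $c$ are congruent mod $2$. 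On $L$ the standard action is just precomposition with $h^{T}$. In this language the values of $\hat{A}_k$ on the Dehn twists are transparent. By Lemmas~\ref{lem:pontryaginclass} and~\ref{lem:cubic}, the twist about a sphere with class $d=(d_1,\dots,d_g)$ and $\beta=\alpha$ has $\ell(y)=4\langle d,y\rangle$ and $c(y)=\langle d,y\rangle^3$. Hence $h$ acts on these twists by $d\mapsto hd$, which is conjugation of $\mathbf{t}_{S,\alpha}$ into $\mathbf{t}_{f_h\circ S,\alpha}$.

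Next I would exploit generators of $\mathrm{GL}_g(\mathbb{Z})$: permutations, the sign change $\epsilon_k$ ($x_k\mapsto -x_k$), and the transvection $x_k\mapsto x_k+x_l$. The sign change $\epsilon_k$ negates exactly those coordinates of odd degree in $x_k$, namely $\ell$'s $k$-th entry and the cubic monomials of odd $x_k$-degree. Thus every standard basis element of Theorem~\ref{thm:connectedsumofproduct}(2) becomes $2$-torsion in the coinvariants. For $g\ge 2$ the element $\mathbf{f}_{k,k,l}$ has even degree in $x_k$ but odd degree in $x_l$, so it is also $2$-torsion. Permutations then identify all $\mathbf{f}_k$, all $\mathbf{f}_{k,k}$, and so on. The transvections give the remaining relations, most conveniently on Dehn twists: conjugating $\mathbf{t}_{S_k,\alpha}$ by a diffeomorphism realizing $x_k\mapsto x_k+x_l$ yields $\mathbf{t}_{S_k\# S_l,\alpha}$, and similarly $\mathbf{t}_{S_k\#S_k,\alpha}\mapsto \mathbf{t}_{S_k\#S_k\#S_l\#S_l,\alpha}$, whose invariants I would expand via the $(\ell,c)$ description in the standard basis. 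Concretely, for $g=2$ I expect $\mathbf{f}_1$ and $\mathbf{f}_{1,2}$ to be forced to be $0$ mod these relations, leaving a single $\mathbb{Z}/2$, probably generated by $\mathbf{f}_{1,1}$ (the element with $\hat A=1$ and zero cubic form). For $g=3$ I expect the new class $\mathbf{f}_{1,2,3}$ to be expressible through the others, so the answer stays $\mathbb{Z}/2$. For $g\ge 4$ an extra transvection involving a fourth index, applied to a triple product such as $x_1x_2x_4$, should produce a relation killing $\mathbf{f}_{1,1}$ as well. For $g=1$ only $\pm 1$ acts, giving $(\mathbb{Z}/2)^2$ generated by $\mathbf{f}_1$ and $\mathbf{f}_{1,1}$.

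For the lower bounds I would produce $\mathrm{GL}_g(\mathbb{Z})$-invariant homomorphisms $L\to\mathbb{Z}/2$ that detect the surviving classes. For $g=1$ these are $\hat A_1$ mod $2$ and $c(x_1)$ mod $2$. For $g=2,3$ the natural candidate is a mod $2$ reduction of something like $\sum_k\hat A_k$ corrected by cubic coefficients. I would test invariance on the generators of $\mathrm{GL}_g(\mathbb{Z})$ and then show that a fourth coordinate destroys any such invariant.

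The main obstacle is the bookkeeping of how $\hat A_k$ transforms. It is not linear in the coordinates, since $\hat A_k$ of $h\cdot v$ involves $(\nu^*\sum_j h_{kj}x_j)^3$ and so mixes in cubic-form entries with denominators $24$. Finding the correct invariant mod $2$ functional for $g=2,3$, and checking that it genuinely vanishes identically on coboundaries but fails to exist once $g\ge 4$, is where I expect the real work.
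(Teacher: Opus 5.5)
Your framework is the same as the paper's: work on $\Omega^{\mathrm{Spin}}_6((\mathbb{CP}^\infty)^g)$ via $\rho$ and Theorem~\ref{thm:action}, get relations from generators of $\mathrm{GL}_g(\mathbb{Z})$, and detect survivors by invariant mod-$2$ functionals. Your $(\ell,c)$ model and your treatment of $g=1$ are correct. For the $2$-torsion claim, the simplest argument is that $-I$ acts as $-1$ on everything, since $\ell$ and $c$ are odd. A single $\epsilon_k$ does not negate $\mathbf{f}_{k,l}$; it sends it to $-\mathbf{f}_{k,l}+\mathbf{f}_{k,k,l}$, and that is exactly how the $\mathbf{f}_{k,k,l}$ die.

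There is a genuine gap for $g\ge 2$: your predictions of what survives are inverted, and the lower-bound functional you propose cannot exist. You are missing the observation the paper's proof starts from. The $\mathbf{f}_{k,k}$ span the kernel of the Hurewicz map $h$ (zero cubic form, $\ell\in 24\mathbb{Z}^g$). This kernel is a $\mathrm{GL}_g(\mathbb{Z})$-submodule on which the action is the standard action on $H_2$. Concretely, for the transvection $\tau\colon [S_1]\mapsto[S_1]+[S_2]$, your own Dehn-twist rule gives $\tau\cdot\mathbf{f}_{1,1}=\mathbf{t}_{(S_1\#S_2)\#(S_1\#S_2),\alpha}-8\mathbf{t}_{S_1\#S_2,\alpha}=\mathbf{f}_{1,1}+\mathbf{f}_{2,2}$. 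Hence $\ker(h)$ has trivial coinvariants for $g\ge2$, and by right exactness $\mathcal{I}(M_g)_{\mathrm{GL}_g(\mathbb{Z})}\cong (H_g)_{\mathrm{GL}_g(\mathbb{Z})}$. This has three consequences for your plan:
\begin{itemize}
\item $\mathbf{f}_{1,1}=0$ in the coinvariants.
\item Every invariant homomorphism to $\mathbb{Z}/2$ vanishes on all $\mathbf{f}_{k,k}$, so nothing of the form ``$\sum_k \hat A_k$ corrected by cubic terms'' can be invariant.
\item Your ``main obstacle'' disappears, because on $H_g$ the action is plain substitution in the cubic form, with no denominators.
\end{itemize}

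What actually survives is the cubic class. With the same $\tau$ you get $\tau\cdot\mathbf{f}_1=\mathbf{f}_1+\mathbf{f}_2+\mathbf{f}_{1,2}$ and $\tau\cdot\mathbf{f}_{1,3}=\mathbf{f}_{1,3}+\mathbf{f}_{2,3}+\mathbf{f}_{1,2,3}$. Together with permutations, all $\mathbf{f}_k$, $\mathbf{f}_{k,l}$, $\mathbf{f}_{k,l,h}$ therefore have one common image, of order at most $2$. To detect it, use the homomorphism $\sum_k\overline{x}_k^3+\sum_{k<l}\overline{x}_k\overline{x}_l^2+\sum_{k<l<h}\overline{x}_k\overline{x}_l\overline{x}_h$ mod $2$, which depends only on $\nu_*[N]$. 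It is invariant under permutations (using $\overline{x}_k^2\overline{x}_l\equiv\overline{x}_k\overline{x}_l^2$ mod $2$) and under sign changes, and it takes the value $1$ on $\mathbf{f}_1$. Under $\tau$ it changes by $\sum_{3\le l<h}\overline{x}_1\overline{x}_l\overline{x}_h$ mod $2$, which vanishes identically exactly when $g\le 3$. So for $g=2,3$ the generator is $\mathbf{f}_1=\mathbf{t}_{S_1,\alpha}$, not $\mathbf{f}_{1,1}$. Contrary to your expectation, $\mathbf{f}_1$, $\mathbf{f}_{1,2}$ (and $\mathbf{f}_{1,2,3}$ when $g=3$) are nonzero. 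For $g\ge 4$, what the fourth index must kill is this cubic class, for example via $\tau\cdot\mathbf{f}_{1,3,4}=\mathbf{f}_{1,3,4}+\mathbf{f}_{2,3,4}$; the class $\mathbf{f}_{1,1}$ is already zero.
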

      
      \begin{proof}
      	The proof is a straightforward calculation using the action of $\mathrm{GL}_g(\mathbb{Z})$ on $\mathcal{I}(M_g)$ determined in Theorem \ref{thm:action}. For $g = 1$ the result is clear. For $g \geq 2$, since $\mathcal{I}(M_g) \cong \mathbb{Z}^g \oplus H_g$ and $\mathrm{GL}_g(\mathbb{Z})$ acts on the $\mathbb{Z}^g$ summand in the standard way (this follows from the definition of the bordism invariants), we have ${\mathcal{I}(M_g)}_{\mathrm{GL}_g(\mathbb{Z})} = {(H_g)}_{\mathrm{GL}_g(\mathbb{Z})}$. Recall that the general linear group $\mathrm{GL}_g(\mathbb{Z})$ is generated by the (block) diagonal matrices (where the empty entries are 1 on the diagonal and 0 otherwise) \[\begin{pmatrix}
      		1 & 1 & 0 & & \\
      		0 & 1 & 0 & & \\
      		0 & 0 & 1 & & \\
      		& & & \ddots & \\
      		& & & & 1 \\
      	\end{pmatrix}, \;
      	\begin{pmatrix}
      		-1 & 0 & 0 & & \\
      		0 & 1 & 0 & & \\
      		0 & 0 & 1 & & \\
      		& & & \ddots & \\
      		& & & & 1 \\
      	\end{pmatrix}, \;
      	\begin{pmatrix}
      		0 & 1 & 0 & & \\
      		1 & 0 & 0 & & \\
      		0 & 0 & 1 & & \\
      		& & & \ddots & \\
      		& & & & 1 \\
      	\end{pmatrix}, \cdots,
      	\begin{pmatrix}
      		1 & & & & \\
      		& \ddots & & & \\
      		& & 1 & 0 & 0 \\
      		& & 0 & 0 & 1 \\
      		& & 0 & 1 & 0 \\
      	\end{pmatrix}.\] It suffices to check the action of these matrices on the generators of $H_g$, which can be calculated using the universal coefficient theorem.
      	
      	For $g = 2, 3$ we see that the generators $2(x_k^2x_l)^*$ are zero in the coinvariant, and the remaining generators are mapped to the same element, which is of order at most 2, in the coinvariant. Thus the coinvariant for $g = 2, 3$ is $0$ or $\mathbb{Z}/2$. Consider the homomorphism $H_g \rightarrow \mathbb{Z}/2$ defined by adding the coordinates except those correspond to the generators $2(x_k^2x_l)^*$. By checking on the generators of $\mathrm{GL}_g(\mathbb{Z})$ we see that this is a surjective homomorphism which is invariant under the action of $\mathrm{GL}_g(\mathbb{Z})$, thus it induces a surjection from the coinvariant to $\mathbb{Z}/2$, it follows that the coinvariant is $\mathbb{Z}/2$ for $g = 2, 3$. For $g \geq 4$ the aforementioned effects on the generators of $H_g$ are still true, namely, $2(x_k^2x_l)^*$ are zero in the coinvariant, and the remaining generators (including $(x_kx_lx_h)^*$) are mapped to the same element, which is of order at most 2, in the coinvariant. Moreover, the generators $(x_kx_lx_h)^*$ are mapped to zero, which force all generators of $H_g$ to be zero in the coinvariant. Thus the coinvariant is trivial for $g \geq 4$.
      \end{proof}
      
      \begin{proof}[Proof of Corollary \ref{cor:abelian}]
      	Part (1) is evident. Part (2) follows from the well-known fact that given a topological group $G$, one has $\pi_1(BG) \cong \pi_0(G)$. Therefore $H_1(BG) \cong H_1(\pi_0(G))$.
      \end{proof}

    \subsection{The full mapping class group of $S^2 \times S^3$}\label{6.3}
    
      Barden's theorem \cite[Theorem 2.2]{Bar65} implies that for each simply connected closed 5-manifold $M$ there is an exact sequence \[1 \rightarrow \mathrm{MCG}(M) \rightarrow \mathrm{MCG}^{\pm}(M) \rightarrow \mathbb{Z}/2 \rightarrow 1,\] where $\mathrm{MCG}^{\pm}(M) \rightarrow \mathbb{Z}/2$ maps an isotopy class to its degree. Whether this exact sequence splits or not is unknown in general. For $M = S^2 \times S^3$ a (group theoretic) cross section $\mathbb{Z}/2 \rightarrow \mathrm{MCG}^{\pm}(S^2 \times S^3)$ is provided by the orientation reversing diffeomorphism $r : S^2 \times S^3 \rightarrow S^2 \times S^3$ defined as $(x, y) \mapsto (-x, y)$, where $-1$ denotes the antipodal map. As a generalization of Corollary \ref{cor:product} we have the following computation of $\mathrm{MCG}^{\pm}(S^2 \times S^3)$.
      
      \begin{Thm}
      	The full mapping class group $\mathrm{MCG}^{\pm}(S^2 \times S^3)$ has the following presentation: \[\left\langle a, b, v, w | ab = ba, v^2 = 1, vav = a^{-1}, vbv = b^{-1}, w^2 = 1, vw = wv, aw = wa, bw = wb \right\rangle,\] where $a$, $b$, $v$, $w$ can be represented by $\mathbf{t}_{S_1, \alpha}$, $\mathbf{t}_{S_1 \# S_1, \alpha}$, $R$ and $r$ respectively.
      \end{Thm}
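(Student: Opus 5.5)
The plan is to present $\mathrm{MCG}^{\pm}(S^2 \times S^3)$ as an internal direct product of $\mathrm{MCG}(S^2 \times S^3)$ and the order two subgroup generated by $[r]$. Corollary \ref{cor:product} (equivalently Theorem \ref{thm:2} with $g=1$) already gives the presentation $\langle a,b,v \mid ab=ba, v^2=1, vav=a^{-1}, vbv=b^{-1}\rangle$ for $\mathrm{MCG}(S^2\times S^3)$, with $a=[\mathbf{t}_{S_1,\alpha}]$, $b=[\mathbf{t}_{S_1\# S_1,\alpha}]$, $v=[R]$. By Barden's exact sequence $1\to\mathrm{MCG}(M)\to\mathrm{MCG}^{\pm}(M)\to\mathbb{Z}/2\to 1$ and the cross section $w=[r]$ with $r^2=id$, the full group is a semidirect product $\mathrm{MCG}(S^2\times S^3)\rtimes\langle w\rangle$. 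The presentation of a semidirect product is obtained by adjoining $w$, the relation $w^2=1$, and the relations expressing $w x w^{-1}$ for each generator $x\in\{a,b,v\}$. So it remains to prove $wvw=v$, $waw=a$, $wbw=b$.

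For $v$: $r$ and $R$ commute on the nose, since $R$ negates the coordinates $x_1,y_1$ and $r$ negates all $x_i$, so $rR=Rr$ as maps. For $a$ and $b$ I would avoid explicit isotopies and instead use the injectivity of $\rho$ (Lemma \ref{lem:mtc} with Corollary \ref{cor:forgetful}). The element $r a r^{-1}$ lies in $\mathcal{I}(S^2\times S^3)$, since conjugation preserves the Torelli group. Its mapping torus is identified with that of $a$ via the diffeomorphism $F$ built exactly as in the proof of Theorem \ref{thm:action}, using $f_h=r$. Now $r$ reverses orientation, and $r_*$ acts as $-1$ on $H_2$. Hence $F$ is orientation reversing, $F_*[M_f]=-[M_{rfr^{-1}}]$, and $\nu_{rfr^{-1}}\circ F$ induces $-1$ composed with $\nu_f$ on $H_2$.

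Next I would evaluate the invariants of Proposition \ref{prop:bordismfree}. The number $\langle \nu^*(x^3),[N]\rangle$ changes by $(-1)^3$ from the class and by $-1$ from the orientation, so it is preserved. The twisted $\hat A$-genus is a sum of terms $p_1\cup\nu^*x$ and $\nu^*x^3$; here $p_1$ is orientation independent, $x\mapsto -x$ gives a sign, and the fundamental class gives another, so it is also preserved. Therefore $\rho(rfr^{-1})=\rho(f)$ for all $f$ in the Torelli group, which yields $waw=a$ and $wbw=b$. Alternatively, one could check directly that $r$ commutes with the Dehn twists up to isotopy, since $r\circ S_1$ is $S_1$ precomposed with the antipodal map. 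That route requires tracking how $\beta$ changes under the antipodal conjugation, so I prefer the bordism argument.

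The main obstacle is the sign and spin-structure bookkeeping. The preferred spin structure on $M_{rfr^{-1}}$ must correspond under $F$ to that on $M_f$ with reversed orientation. This is where I would use that the spin structure on the mapping torus is unique up to the preferred choice, together with the absence of 2-torsion. Once the relations are established, standard semidirect-product presentation theory finishes the proof.
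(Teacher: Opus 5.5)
Your proposal is correct and follows essentially the paper's route: you take $w=[r]$ as a cross section of $\mathrm{MCG}^{\pm}\to\mathbb{Z}/2$, note that $rR=Rr$, and show that $r f r^{-1}$ is isotopic to $f$ for every Torelli element $f$. For the last step you transport the mapping torus along the orientation-reversing diffeomorphism $F$ and check that the sign changes coming from $[N]$ and from $\nu^*x$ cancel in both bordism invariants, exactly as the paper does. The spin-structure bookkeeping you single out as the main obstacle is unnecessary: the invariants of Proposition \ref{prop:bordismfree} do not depend on the spin structure and already detect the bordism class, so the injectivity of $\rho$ finishes the argument directly.
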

      
      \begin{proof}
      	We only need to prove that, given a diffeomorphism $f$ whose isotopy class lies in the Torelli group $\mathcal{I}(S^2 \times S^3)$, we have $r \circ f \circ r$ isotopic to $f$. The proof is similar to that of Theorem \ref{thm:action}. In the following we write $r \cdot f$ for $r \circ f \circ r$. Note that \begin{align*}
      		(S^2 \times S^3)_f = \bigl((S^2 \times S^3 \times I) \sqcup (S^2 \times S^3)\bigr) \Big/ & (x, 0) \sim f(x), \\
      		& (x, 1) \sim x; \\
      		(S^2 \times S^3)_{r \cdot f} = \bigl((S^2 \times S^3 \times I) \sqcup (S^2 \times S^3)\bigr) \Big/ & (x, 0) \sim f \circ r(x), \\
      		& (x, 1) \sim r(x).
      	\end{align*} There is an orientation reversing diffeomorphism \[F = (r \times id_I) \cup id_{S^2 \times S^3} : (S^2 \times S^3)_f \rightarrow (S^2 \times S^3)_{r \cdot f}.\] We have $F_*([(S^2 \times S^3)_f]) = -[(S^2 \times S^3)_{r \cdot f}]$ and $p_1((S^2 \times S^3)_f) = F^*(p_1((S^2 \times S^3)_{r \cdot f}))$. By calculating the bordism invariants from these facts we see that $r \cdot f$ is isotopic to $f$.
      \end{proof}
      
      Using the geometric interpretation of the Hopf invariant, one easily calculates the induced homomorphisms of $R$ and $r$ on $\pi_3(S^2 \times S^3)$, and obtain that $R_*([S_1 \circ \eta]) = [S_1 \circ \eta]$ and $R_*([T_1]) = -[T_1]$, and $r_* = id$. Thus the action of $\mathrm{MCG}^{\pm}(S^2 \times S^3)$ on $\pi_3(S^2 \times S^3)$ is completely determined. This will be used in Section \ref{sec:9}.

    \section{Stabilization of $\mathrm{MCG}(M)$}\label{sec:7}
    
      In this section $M$ denotes a simply connected closed spin manifold with no 2-torsion in homology. We study the stabilization of $\mathrm{MCG}(M)$ with respect to connected sum with $S^2 \times S^3$. Let $D \subset M$ be an embedded 5-disc, and let $W \coloneqq M \setminus \mathring{D}$ be the compact 5-manifold obtained by digging out the interior of $D$ from $M$. Corollary \ref{cor:forgetful} implies that \[\mathrm{MCG}(M) = \mathrm{MCG}(M, D) = \mathrm{MCG}(W, \partial)\] and \[\mathcal{I}(M) = \mathcal{I}(M, D) = \mathcal{I}(W, \partial).\] Fix two disjoint embedded 5-discs $D_1, D_2 \subset S^2 \times S^3$. The manifold $(S^2 \times S^3) \setminus (\mathring{D_1} \sqcup \mathring{D_2})$ has two boundary components, each of which is diffeomorphic to $S^4$. Identify one of the boundary components of $(S^2 \times S^3) \setminus (\mathring{D_1} \sqcup \mathring{D_2})$ with the boundary of $W$, and glue $(S^2 \times S^3) \setminus (\mathring{D_1} \sqcup \mathring{D_2})$ with $W$ via this identification, we obtain a manifold diffeomorphic to $(M \# (S^2 \times S^3)) \setminus \mathring{D}$. Let $f$ be a diffeomorphism in $\mathrm{MCG}(W, \partial)$, extending $f$ to $(M \# (S^2 \times S^3)) \setminus \mathring{D}$ by $id_{(S^2 \times S^3) \setminus (\mathring{D_1} \sqcup \mathring{D_2})}$ yields a diffeomorphism in $\mathrm{MCG}((M \# (S^2 \times S^3)) \setminus \mathring{D}, \partial)$. This induces a well-defined homomorphism \[\mathrm{MCG}(W, \partial) \rightarrow \mathrm{MCG}((M \# (S^2 \times S^3)) \setminus \mathring{D}, \partial),\] which can be viewed as a homomorphism \[\mathrm{MCG}(M) \rightarrow \mathrm{MCG}((M \# (S^2 \times S^3)))\] mapping an isotopy class $[f]$ to the well-defined isotopy class $[f \# id_{S^2 \times S^3}]$, independent to the choice of $D \subset M$ by the disc theorem. Clearly it restricts to a homomorphism \[\mathcal{I}(M) \rightarrow \mathcal{I}(M \# (S^2 \times S^3)).\] We call these homomorphisms the \emph{stabilization maps}. Two orientation preserving diffeomorphisms of $M$ are \emph{stably isotopic} if their isotopy classes are mapped to the same element by a composition of stabilization maps. Denote $\mathrm{MCG}(M \# M_g)$ and $\mathcal{I}(M \# M_g)$ by $\mathrm{MCG}_g(M)$ and $\mathcal{I}_g(M)$ respectively. The stabilization maps fit into the following commutative diagram of short exact sequence \[\begin{tikzcd}
      0 \arrow[r] & \mathcal{I}(M) \arrow[r] \arrow[d] & \mathrm{MCG}(M) \arrow[r, "\phi"] \arrow[d] & \mathrm{Aut}(H_2(M), L_M) \arrow[r] \arrow[d] & 1 \\
      0 \arrow[r] & \mathcal{I}_1(M) \arrow[r] \arrow[d] & \mathrm{MCG}_1(M) \arrow[r, "\phi"] \arrow[d] & \mathrm{Aut}(H_2(M) \oplus \mathbb{Z}, L_M) \arrow[r] \arrow[d] & 1 \\
      & \vdots \arrow[d] & \vdots \arrow[d] & \vdots \arrow[d] & \\
      0 \arrow[r] & \mathcal{I}_g(M) \arrow[r] \arrow[d] & \mathrm{MCG}_g(M) \arrow[r, "\phi"] \arrow[d] & \mathrm{Aut}(H_2(M) \oplus \mathbb{Z}^g, L_M) \arrow[r] \arrow[d] & 1 \\
      0 \arrow[r] & \mathcal{I}_{g+1}(M) \arrow[r] \arrow[d] & \mathrm{MCG}_{g+1}(M) \arrow[r, "\phi"] \arrow[d] & \mathrm{Aut}(H_2(M) \oplus \mathbb{Z}^{g+1}, L_M) \arrow[r] \arrow[d] & 1 \\
      & \vdots & \vdots & \vdots &
      \end{tikzcd}\] where $\mathrm{Aut}(H_2(M) \oplus \mathbb{Z}^g, L_M) \rightarrow \mathrm{Aut}(H_2(M) \oplus \mathbb{Z}^{g+1}, L_M)$ is given by taking direct sum with $id : \mathbb{Z} \rightarrow \mathbb{Z}$. This is clearly injective for each $g$. By the computation of $\mathcal{I}(M)$ we see that the stabilization map $\mathcal{I}_g(M) \rightarrow \mathcal{I}_{g+1}(M)$ is injective for each $g$, thus the stabilization map $\mathrm{MCG}_g(M) \rightarrow \mathrm{MCG}_{g+1}(M)$ is injective for each $g$. In other words, two orientation preserving diffeomorphisms of $M$ are not stably isotopic if they are not isotopic. This proves Corollary \ref{cor:stable}.
      
      In particular, the stabilization map for $M_g$ fits into the following commutative diagram \[\begin{tikzcd}
      	\mathrm{MCG}(M_g) \arrow[r, "\phi"] \arrow[d] & \mathrm{GL}_g(\mathbb{Z}) \arrow[d] \\
      	\mathrm{MCG}(M_{g+1}) \arrow[r, "\phi"] & \mathrm{GL}_{g+1}(\mathbb{Z}).
      \end{tikzcd}\] By Lemma \ref{lem:coinvariant}, $\phi_* : H_1(\mathrm{MCG}(M_g)) \rightarrow H_1(\mathrm{GL}_g(\mathbb{Z}))$ is an isomorphism for $g \geq 4$. By the homological stability of $\mathrm{GL}_g(\mathbb{Z})$ \cite[Theorem 3.2]{Cha80}, the stabilization map induces an isomorphism $H_1(\mathrm{GL}_g(\mathbb{Z})) \cong H_1(\mathrm{GL}_{g+1}(\mathbb{Z}))$ for $g \geq 3$. Thus, for $g \geq 4$, the stabilization map induces an isomorphism \[H_1(\mathrm{MCG}(M_g)) \xrightarrow{\cong} H_1(\mathrm{MCG}(M_{g+1})).\] This proves Corollary \ref{cor:homologystable}.
      
      \begin{Rmk}
      	\rm Let $W_g \coloneqq M_g \setminus \mathring{D}$. Then one can define the stabilization map for relative diffeomorphism groups \[\mathrm{Diff}(W_g, \partial) \rightarrow \mathrm{Diff}(W_{g+1}, \partial)\] which induces a stabilization map on the classifying spaces \[B\mathrm{Diff}(W_g, \partial) \rightarrow B\mathrm{Diff}(W_{g+1}, \partial).\] The homological stability of $B\mathrm{Diff}(W_g, \partial)$ under the stabilization map is an important topic in geometric topology (see for example \cite{MW07, GRW18-stability1, GRW17-stability2, Per16-product, Per16-odd, ER24}). For dimension 5 the homological stability result has not been established. Our result shows that $H_1(B\mathrm{Diff}(W_g, \partial))$ stabilizes under the stabilization map when $g \geq 4$.
      \end{Rmk}
      
      Finally we consider the colimits of the direct system of mapping class groups formed by the stabilization map. Denote \begin{align*}
      	\mathrm{MCG}_{\infty}(M) & \coloneqq \mathrm{colim}_{g \to \infty} \mathrm{MCG}_g(M), \\
      	\mathcal{I}_{\infty}(M) & \coloneqq \mathrm{colim}_{g \to \infty} \mathcal{I}_g(M), \\
      	\mathrm{Aut}_{\infty}(H_2(M), L_M) & \coloneqq \mathrm{colim}_{g \to \infty} \mathrm{Aut}(H_2(M) \oplus \mathbb{Z}^g, L_M).
      \end{align*} We call $\mathrm{MCG}_{\infty}(M)$ the \emph{stable mapping class group} of $M$. Similarly $\mathcal{I}_{\infty}(M)$ is called the \emph{stable Torelli group} of $M$. The following result is evident.
      
      \begin{Prop}
      	There is a short exact sequence \[0 \rightarrow \mathcal{I}_{\infty}(M) \rightarrow \mathrm{MCG}_{\infty}(M) \rightarrow \mathrm{Aut}_{\infty}(H_2(M), L_M) \rightarrow 1.\] The stable Torelli group $\mathcal{I}_{\infty}(M)$ is an infinitely generated abelian group.
      \end{Prop}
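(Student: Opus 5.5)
The short exact sequence comes from passing to colimits, and the infinite generation from the rank computation in Theorem \ref{thm:torelli}.

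\emph{Exactness.} For each $g$ we have the short exact sequence
\[0 \rightarrow \mathcal{I}_g(M) \rightarrow \mathrm{MCG}_g(M) \xrightarrow{\phi} \mathrm{Aut}(H_2(M) \oplus \mathbb{Z}^g, L_M) \rightarrow 1\]
given by Theorem \ref{thm:barden}. The stabilization maps make these sequences into a commutative ladder, as in the diagram above. Filtered colimits of groups over $\mathbb{N}$ preserve exactness: kernels and images of compatible maps are computed levelwise, and every element of a colimit comes from some finite stage. So the colimit sequence is exact. I would verify exactness at the middle term directly. Suppose an element $[f]$ of $\mathrm{MCG}_g(M)$ becomes trivial in $\mathrm{Aut}_\infty$. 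Since the maps $\mathrm{Aut}(H_2(M)\oplus\mathbb{Z}^g, L_M) \rightarrow \mathrm{Aut}(H_2(M)\oplus\mathbb{Z}^{g+1}, L_M)$ are injective, $\phi([f])$ is already trivial at stage $g$. Hence $[f] \in \mathcal{I}_g(M)$.

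Injectivity of $\mathcal{I}_\infty(M) \rightarrow \mathrm{MCG}_\infty(M)$ follows the same way, using injectivity at each finite stage. Surjectivity onto $\mathrm{Aut}_\infty$ holds because each finite-stage $\phi$ is surjective.

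\emph{Infinite generation.} $\mathcal{I}_\infty(M)$ is abelian because it is a colimit of abelian groups (Theorem \ref{thm:1}(2)). Here I would use Theorem \ref{thm:torelli}: for $M \# M_g$, whose $H_2$ is $H_2(M)\oplus\mathbb{Z}^g$ with no $2$-torsion, we have
\[\mathcal{I}_g(M) \cong \Omega^{\mathrm{Spin}}_6(K(H_2(M)\oplus\mathbb{Z}^g,2)).\]
By Proposition \ref{prop:bordism}(2), this group surjects onto $H_M$, which contains $H_{g'}$ with $g'$ the rank of the free part. So the rank of $\mathcal{I}_g(M)$ is at least $2(g'+g)+2\binom{g'+g}{2}+\binom{g'+g}{3}$, which tends to infinity with $g$.

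Since the stabilization maps are injective (Corollary \ref{cor:stable} and the discussion preceding the statement), $\mathcal{I}_\infty(M)$ is an increasing union of these groups. It therefore contains free abelian subgroups of arbitrarily large rank. A finitely generated abelian group has bounded rank, so $\mathcal{I}_\infty(M)$ is not finitely generated.

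Explicitly, the Dehn twists $\mathbf{t}_{S_k,\alpha}$ for $k=1,2,\dots$ in the successive $S^2\times S^3$ summands give infinitely many independent elements. Their bordism invariants $\overline{x}_k^3$ are independent by Lemma \ref{lem:cubic}, and they are preserved under stabilization.

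The only point that needs care is compatibility of the mapping torus identification with stabilization. However, the rank argument above only uses injectivity of the stabilization maps, which is already established, so no further work is needed there.
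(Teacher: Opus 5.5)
Your proposal is correct and gives the argument the paper leaves implicit when it calls this proposition evident. Exactness passes to the colimit of the ladder of Barden sequences. Non-finite generation then follows because the stabilization maps are injective and the rank of $\mathcal{I}_g(M) \cong \Omega^{\mathrm{Spin}}_6(K(H_2(M)\oplus\mathbb{Z}^g,2))$ is unbounded; by Proposition \ref{prop:bordism} this rank comes from the $\mathbb{Z}^{g'+g}$ summand together with $H_{g'+g}$ inside $H_{M\# M_g}$. Your closing illustration with the twists $\mathbf{t}_{S_k,\alpha}$ is not needed, and for general $M$ it would rely on the compatibility of $\rho$ with stabilization that you rightly avoid using.
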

      
      For example, denote $\mathrm{MCG}_{\infty}(S^5)$ and $\mathcal{I}_{\infty}(S^5)$ by $\mathrm{MCG}_{\infty}$ and $\mathcal{I}_{\infty}$ respectively, then the short exact sequence takes the form \[0 \rightarrow \mathcal{I}_{\infty} \rightarrow \mathrm{MCG}_{\infty} \rightarrow \mathrm{GL}_{\infty}(\mathbb{Z}) \rightarrow 1.\] The stable Torelli group $\mathcal{I}_{\infty} = \mathrm{colim}_{g \to \infty} \mathcal{I}(M_g)$ is isomorphic to $\mathbb{Z}^{\infty}$.

    \section{Homotopy equivalences of $M$}\label{sec:8}
    
      In this section $M$ denotes a simply connected closed spin 5-manifold with no 2-torsion in homology, unless otherwise specified. We compare diffeomorphisms and homotopy equivalences of $M$ using the forgetful homomorphism $\psi : \mathcal{I}(M) \rightarrow \mathcal{I}^h(M)$. To formulate the main result of this section we need to recall some previous works.
      
      The homotopy Torelli group has a subgroup $\mathrm{haut}(M | \dot{M})$ consisting of elements which can be represented by homotopy equivalences restricting to the identity map on the complement of the interior of a fixed 5-cell. This group was completely computed by Baues and Buth in \cite[Theorem B]{BB96}. Moreover, in \cite{BB96} Baues\textendash Buth made an extensive investigation of the (unoriented) homotopy automorphism group of $M$. In particular, in \cite[page 581]{BB96} they defined a homomorphism \[t : \mathcal{I}^h(M) \rightarrow H_6(K(H_2(M), 2)),\] which is a homotopic version of the mapping torus construction $\rho$ defined in Section \ref{sec:2}, and proved that (for $M$ satisfying our condition) the kernel of $t$ is $\mathrm{haut}(M | \dot{M})$ \cite[Theorem D]{BB96}. We denote the image of $t$ by $H_M^h$.
      
      We also recall that St\"ocker has proved that there is a bijection $\mathcal{S}(M) \rightarrow H_3(M; \mathbb{Z}/2)$ \cite[Theorem 1.2]{Stö82-mfds}, where $\mathcal{S}(M)$ is the surgery structure set of $M$.
      
      From now on we suppose $H_2(M)$ has the isomorphism type of (\ref{isotype}). For such $M$ the aforementioned results of Baues\textendash Buth and St\"ocker can be stated as follows.
      
      \begin{Thm}[Baues\textendash Buth]\label{thm:BB}
      	There is a short exact sequence \[0 \rightarrow \mathrm{haut}(M | \dot{M}) \rightarrow \mathcal{I}^h(M) \xrightarrow{t} H_M^h \rightarrow 0.\] If $H_2(M)$ has no 3-torsion, then $\mathrm{haut}(M | \dot{M}) \cong {(\mathbb{Z}/2)}^{2g}$. If $H_2(M)$ has nontrivial 3-torsion, then $\mathrm{haut}(M | \dot{M}) \cong {(\mathbb{Z}/2)}^{2g} \oplus {(\mathbb{Z}/3)^{2s_1}}$.
      \end{Thm}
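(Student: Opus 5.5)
This statement is attributed to Baues\textendash Buth, so the plan is to assemble it from \cite{BB96} and check that their hypotheses and notation match ours.

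\textbf{The exact sequence.} Baues\textendash Buth define $t : \mathcal{I}^h(M) \rightarrow H_6(K(H_2(M), 2))$ and prove in \cite[Theorem D]{BB96} that, for simply connected 5-manifolds satisfying our hypotheses (spin, $H_2(M)$ without 2-torsion), its kernel is $\mathrm{haut}(M | \dot{M})$. Since $H_M^h$ is defined as the image of $t$, the sequence
\[0 \rightarrow \mathrm{haut}(M | \dot{M}) \rightarrow \mathcal{I}^h(M) \xrightarrow{t} H_M^h \rightarrow 0\]
is exact by definition. The one point to verify is that \cite{BB96} work with unoriented homotopy automorphisms. Restricting to maps that act as the identity on $H_*(M)$ forces degree $1$, so their kernel statement restricts to our $\mathcal{I}^h(M)$. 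Here I would check that $\mathrm{haut}(M|\dot M)$ lies in the homotopy Torelli group. This holds because such maps agree with the identity on the 4-skeleton $M \setminus \mathring{e}^5$ and have degree $1$.

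\textbf{Identifying $\mathrm{haut}(M | \dot{M})$.} Next I would read off $\mathrm{haut}(M|\dot M)$ from \cite[Theorem B]{BB96}. Their answer is essentially the image of $\pi_5(M)$ (or of $\pi_5$ of the 4-skeleton) in the group of homotopy classes rel $\dot M$, modulo an indeterminacy. For $M$ with $H_2(M)$ as in (\ref{isotype}), I would compute this by decomposing the 4-skeleton. It is a wedge of $g$ copies of $S^2\vee S^3$ together with Moore-space pieces $P^3(p^r)\vee P^4(p^r)$, where $P^k(p^r)$ denotes the Moore space $S^{k-1}\cup_{p^r}e^k$ with reduced homology $\mathbb{Z}/p^r$ in degree $k-1$.

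The free part contributes $\pi_4$ and $\pi_5$-type classes. Composition with the stable $\eta$ on the $S^2$ and $S^3$ cells gives $(\mathbb{Z}/2)^2$ for each of the $g$ summands, hence $(\mathbb{Z}/2)^{2g}$. The odd-torsion pieces contribute only through the first $p$-primary stable stem, $\pi_3^s \supset \mathbb{Z}/3$, which is nonzero only for $p=3$. This yields $(\mathbb{Z}/3)^{2s_1}$ exactly when $p_1=3$, and nothing otherwise.

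\textbf{Main obstacle.} The hard step is translating Baues\textendash Buth's invariant description into the concrete counts $(\mathbb{Z}/2)^{2g}$ and $(\mathbb{Z}/3)^{2s_1}$. This requires matching their decomposition of $H_2(M)$ with (\ref{isotype}) and confirming that no further Whitehead-product contributions survive the quotient. I would do this by specializing their formula directly rather than recomputing homotopy groups, and by cross-checking the case $M = S^2\times S^3$, where the expected answer is $(\mathbb{Z}/2)^2$.
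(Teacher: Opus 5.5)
Your plan is the same as the paper's. The paper does not prove this statement; it quotes it from \cite[Theorems B and D]{BB96}, with $H_M^h$ defined as the image of $t$. So injectivity on the left holds because $\mathrm{haut}(M|\dot M)$ is a subgroup, exactness in the middle is Theorem D, and surjectivity on the right holds by definition, exactly as you say.

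The part to fix is your heuristic count, which should not be offered as part of the argument.

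First, the $4$-skeleton of $M$ is $\bigvee_g(S^2\vee S^3)\vee\bigvee(S^2\cup_{p_i^{r_i}}e^3)$, with two such Moore spaces for each summand $(\mathbb{Z}/p_i^{r_i})^2$. It contains no pieces $S^3\cup_{p^r}e^4$. Such a piece would put torsion into $H_3$ of the $4$-skeleton, and that group maps isomorphically onto $H_3(M)\cong\mathbb{Z}^g$, which is torsion-free.

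Second, the claim that only $p=3$ contributes cannot be seen from $\pi_5$ of the pieces. $\pi_5(S^2\cup_{p^r}e^3)$ is nonzero for every odd $p$; for $p\ge 5$ it is $\mathbb{Z}/p^r$. Likewise $\pi_5(\bigvee_g(S^2\vee S^3))$ contains many Whitehead-product classes besides the $\eta^3$ and $\eta^2$ classes on the $S^2$ and $S^3$ cells.

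The answers $(\mathbb{Z}/2)^{2g}$ and $(\mathbb{Z}/3)^{2s_1}$ therefore come only from Baues--Buth's determination, in Theorem B, of the quotient of $\pi_5(M)$ that gives $\mathrm{haut}(M|\dot M)$. They do not come from a piece-by-piece count.
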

      
      \begin{Thm}[St\"ocker]
      	There is a bijection $\mathcal{S}(M) \cong {(\mathbb{Z}/2)}^g$.
      \end{Thm}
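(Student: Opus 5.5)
The statement is Stöcker's theorem, specialized to $H_2(M)$ of the form (\ref{isotype}). My plan is to derive it from the bijection $\mathcal{S}(M) \to H_3(M;\mathbb{Z}/2)$ of \cite[Theorem 1.2]{Stö82-mfds}, quoted just above. After that, it only remains to compute $H_3(M;\mathbb{Z}/2)$ for $M$ with no 2-torsion in homology.

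The computation goes as follows. Since $M$ is simply connected, closed and oriented of dimension 5, Poincaré duality and the universal coefficient theorem give
\[H_3(M) \cong H^2(M) \cong \mathrm{Hom}(H_2(M),\mathbb{Z}) \oplus \mathrm{Ext}(H_1(M),\mathbb{Z}) \cong \mathbb{Z}^g.\]
Applying the universal coefficient theorem for homology with $\mathbb{Z}/2$ coefficients then yields
\[H_3(M;\mathbb{Z}/2) \cong H_3(M)\otimes \mathbb{Z}/2 \oplus \mathrm{Tor}(H_2(M),\mathbb{Z}/2).\]
Because $H_2(M) \cong \mathbb{Z}^g \oplus T$ with $T$ of odd order (all $p_i \geq 3$ in (\ref{isotype})), the Tor term vanishes. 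The first term is $(\mathbb{Z}/2)^g$, so $H_3(M;\mathbb{Z}/2)\cong(\mathbb{Z}/2)^g$. Composing with Stöcker's bijection gives the bijection $\mathcal{S}(M)\cong(\mathbb{Z}/2)^g$.

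Nothing here is a real obstacle once Stöcker's result is accepted. The only point to watch is that the hypotheses of \cite[Theorem 1.2]{Stö82-mfds} (simply connected closed 5-manifold, smooth structure set) match our setting. It also helps to record, for use in Theorem \ref{thm:3}, that for $M=M_g$ this set has $2^g$ elements.

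If one wanted an independent argument, I would run the smooth surgery exact sequence
\[L_6(\mathbb{Z}) \to \mathcal{S}(M) \to [M, G/O] \to L_5(\mathbb{Z}) = 0.\]
The action of $L_6(\mathbb{Z})=\mathbb{Z}/2$ on $\mathcal{S}(M)$ is trivial, since the Kervaire element is realized on a closed manifold. This gives $\mathcal{S}(M)\cong[M,G/O]$. One would then compute $[M,G/O]$ from the low Postnikov stages of $G/O$: $\pi_2=\mathbb{Z}/2$, $\pi_3=0$, $\pi_4=\mathbb{Z}$, $\pi_5=0$, with the known $k$-invariant.

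The hardest step in that alternative is showing that the contributions $H^2(M;\mathbb{Z}/2)$ and $H^4(M;\mathbb{Z})$ assemble, for $M$ spin with no 2-torsion, into a group of order $2^g$. I would not pursue this, and would simply cite Stöcker.
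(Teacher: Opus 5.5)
Your proposal is correct and matches the paper. The paper likewise specializes St\"ocker's bijection $\mathcal{S}(M)\to H_3(M;\mathbb{Z}/2)$ to $H_2(M)$ of the form (\ref{isotype}), leaving implicit the universal coefficient computation you write out, and your optional surgery route parallels its remark for $M_g$. The step you call hardest in that alternative is in fact immediate: $H^4(M;\mathbb{Z})\cong H_1(M)=0$, and $\delta\mathrm{Sq}^2$ lands in the torsion-free group $H^5(M;\mathbb{Z})\cong\mathbb{Z}$, so $[M,G/O]\cong H^2(M;\mathbb{Z}/2)$ directly.
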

      
      \begin{Rmk}
      	\rm For $M = M_g$ the existence of the bijection $\mathcal{S}(M_g) \cong {(\mathbb{Z}/2)}^g$ can be proved without using St\"ocker's work. By surgery exact sequence \cite{Wal99} and the facts that $L_5(\mathbb{Z}) = 0$ and $\Theta_5 = 0$, we have a bijection \[\mathcal{S}(M) \cong [M, G/\mathrm{O}].\] The homotopy exact sequence of the homotopy fibration $G/\mathrm{O} \rightarrow B\mathrm{O} \rightarrow BG$ takes the form \[\cdots \rightarrow \pi_n(\mathrm{O}) \xrightarrow{J_n} \pi_n^S \rightarrow \pi_n(G/\mathrm{O}) \rightarrow \pi_{n-1}(\mathrm{O}) \rightarrow \cdots\] where $J_n : \pi_n(\mathrm{O}) \rightarrow \pi_n^S$ is the stable $J$-homomorphism. Using Adams' work on $J_n$ \cite{Ada66-J4} and the fact that the Adams conjecture is true \cite{Qui71-Adams}, one could calculate the homotopy groups of $G/\mathrm{O}$, and it follows from the Puppe sequence (and the CW structure of $M_g$) that there is a bijection \[\mathcal{S}(M_g) \cong [\vee_gS^2, G/\mathrm{O}] \cong {(\pi_2(G/\mathrm{O}))}^g \cong {(\mathbb{Z}/2)}^g.\]
      \end{Rmk}
      
      Now we fix some notations. Our computation of $\mathcal{I}(M)$ shows that there is a surjective homomorphism $\mathcal{I}(M) \rightarrow H_M$, given by $[f] \mapsto {\nu_f}_*([M_f])$. By the definition of $t$ there is a commutative diagram \[\begin{tikzcd}
      \mathcal{I}(M) \arrow[r, "\psi"] \arrow[d] & \mathcal{I}^h(M) \arrow[d, "t"] \\
      H_M \arrow[r, "\subset"] & H_M^h.
      \end{tikzcd}\] We denote the kernel of $\mathcal{I}(M) \rightarrow H_M$ by $\overline{K}(M)$. According to \ref{5.3}, $\overline{K}(M)$ is generated by \[\mathbf{t}_{S_1 \# S_1, \alpha} - 8\mathbf{t}_{S_1, \alpha}, \; \cdots, \; \mathbf{t}_{S_{g+2(s_1 + \cdots + s_t)} \# S_{g+2(s_1 + \cdots + s_t)}, \alpha} - 8\mathbf{t}_{S_{g+2(s_1 + \cdots + s_t)}, \alpha}.\] When $H_2(M)$ has no 3-torsion, $\overline{K}(M)$ is isomorphic to $H_2(M)$ and these generators form a basis; when $H_2(M)$ has nontrivial 3-torsion, $\overline{K}(M)$ is isomorphic to $H_2(M)$ modulo a subgroup of its 3-primary subgroup. Let $K(M)$ be the subgroup of $\overline{K}(M)$ generated by \begin{align*}
      & 2(\mathbf{t}_{S_1 \# S_1, \alpha} - 8\mathbf{t}_{S_1, \alpha}), \; \cdots, \; 2(\mathbf{t}_{S_g \# S_g, \alpha} - 8\mathbf{t}_{S_g, \alpha}), \\
      & \mathbf{t}_{S_{g+1} \# S_{g+1}, \alpha} - 8\mathbf{t}_{S_{g+1}, \alpha}, \; \cdots, \; \mathbf{t}_{S_{g+2(s_1 + \cdots + s_t)} \# S_{g+2(s_1 + \cdots + s_t)}, \alpha} - 8\mathbf{t}_{S_{g+2(s_1 + \cdots + s_t)}, \alpha}.
      \end{align*} It has the same isomorphism type as $\overline{K}(M)$.
      
      The main result of this section is the following.
      
      \begin{Thm}\label{thm:diagram}
      	Let $M$ be a simply connected closed spin 5-manifold with no 2- and 3-torsion in homology, then there is a commutative diagram of exact sequences (of based sets at $\mathcal{S}(M)$, of groups at other terms) \[\begin{tikzcd}
      		& & 0 \arrow[d] & 0 \arrow[d] & & \\
      		0 \arrow[r] & K(M) \arrow[r, "\subset"] \arrow[d, "="] & \overline{K}(M) \arrow[r, "\psi"] \arrow[d] & \mathrm{haut}(M | \dot{M}) \arrow[r] \arrow[d] & \mathcal{S}(M) \arrow[d, "="] \arrow[r] & 0 \\
      		0 \arrow[r] & K(M) \arrow[r] & \mathcal{I}(M) \arrow[r, "\psi"] \arrow[d] & \mathcal{I}^h(M) \arrow[r] \arrow[d] & \mathcal{S}(M) \arrow[r] & 0 \\
      		& & H_M \arrow[r, "="] \arrow[d] & H_M^h \arrow[d] & & \\
      		& & 0 & 0 & &
      	\end{tikzcd}\] where the based map $\mathcal{I}^h(M) \rightarrow \mathcal{S}(M)$ maps a homotopy class $[f]$ to the homotopy smoothing $[f : M \rightarrow M]$.
      \end{Thm}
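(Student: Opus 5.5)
The plan is to assemble the diagram from three inputs: the computation of $\mathcal{I}(M)$ (Theorem \ref{thm:torelli} and \ref{5.3}), the Baues\textendash Buth sequence (Theorem \ref{thm:BB}), and the surgery exact sequence for $M$.

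\textbf{Columns.} The middle column is the definition of $\overline{K}(M)$ as the kernel of the surjection $\mathcal{I}(M)\to H_M$, $[f]\mapsto{\nu_f}_*([M_f])$. The right column is Theorem \ref{thm:BB}. The displayed square commutes, so $\psi$ carries $\overline{K}(M)$ into $\ker t=\mathrm{haut}(M|\dot M)$. This gives the top-left square.

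\textbf{$H_M=H_M^h$.} The inclusion $H_M\subset H_M^h$ is the image of $t\circ\psi$. I would show that $t$ maps into $H_M$, i.e.\ that ${\nu}_*([N])\in H_M$ already constrains homotopy mapping tori. The easiest route is via the surgery sequence below: if every element of $\mathcal{I}^h(M)$ becomes, after multiplying by an element of $\mathrm{haut}(M|\dot M)$, an element in the image of $\psi$, then $t$ has image $t(\mathrm{im}\,\psi)=H_M$.

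\textbf{Surgery exact sequence.} For simply connected $M^5$ there is the exact sequence
\[L_6(\mathbb{Z}) \to \mathcal{S}(M\times I,\partial)\to [\Sigma M, G/\mathrm{O}]\to \dots\]
This gives the action of $\mathcal{I}^h(M)$ on $\mathcal{S}(M)$, with orbit map $[f]\mapsto[f:M\to M]$. The class $[f]$ is trivial exactly when $f$ is homotopic to a diffeomorphism; by Barden it can then be taken in $\mathcal{I}(M)$. This gives exactness at $\mathcal{I}^h(M)$.

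\textbf{Surjectivity onto $\mathcal{S}(M)$.} Use St\"ocker's bijection $\mathcal{S}(M)\cong(\mathbb{Z}/2)^g$, realized by normal invariants on the $S^2$-cells. I would check that elements of $\mathrm{haut}(M|\dot M)\cong(\mathbb{Z}/2)^{2g}$ realize all these normal invariants. The $\mathbb{Z}/2$'s correspond to pinch maps $M\to M\vee S^5\to M$ with $\pi_5(S^2)$, $\pi_5(S^3)$ classes. The normal invariant of such a pinch map is detected by a $\pi_2(G/\mathrm{O})=\mathbb{Z}/2$ Kervaire-type invariant on each $S^2$-summand. Concretely, the composite $S^5\to S^2\vee S^3$ with $\eta^3$ on $S^2$ should give the nontrivial normal invariant. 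The same argument gives exactness of the top row at $\mathcal{S}(M)$, and the bottom row follows by a diagram chase.

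\textbf{Main obstacle: kernel of $\psi$.} The hardest step is showing $\ker\psi=K(M)$ and $\psi(\overline{K}(M))$ has the right index. By the snake lemma, $\ker\psi\subset\overline{K}(M)$, since $H_M\to H_M^h$ is injective. Now $\overline{K}(M)\cong H_2(M)$ has basis $\mathbf{f}_{k,k}$, and its $p$-torsion part maps trivially to the $2$-group $\mathrm{haut}(M|\dot M)$. So one must show that $\psi(\mathbf{f}_{k,k})$ for $k\le g$ is nonzero of order $2$ in $\mathrm{haut}(M|\dot M)$, and that these images are independent.

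I would do this by computing the homotopy effect of $\mathbf{f}_{k,k}$. It acts trivially on $\pi_3$, so its difference from the identity is a class in $[M,M]$ supported on the top cell, i.e.\ an element of $\pi_5(M)$. Its component in $\pi_5(S^3)=\mathbb{Z}/2$ along $T_k$ can be detected by the $\hat A_k$ invariant mod $2$: the twisted $\hat A$-genus reduces to a mod $2$ index, matching the Baues\textendash Buth invariant. Equivalently, use that $\mathbf{t}_{S_k\#S_k,\alpha}-8\mathbf{t}_{S_k,\alpha}$ differs from the identity by the image of $\pi_3(\mathrm{SO}(3))\to\pi_5$ via a $J$-type map. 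Here the generator maps to $\nu$-type classes whose reduction is $\eta^2$, nontrivial of order $2$.

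Once independence holds, $\ker\psi=K(M)$, and the image of $\overline{K}(M)$ is a $(\mathbb{Z}/2)^g$ inside $(\mathbb{Z}/2)^{2g}$. Its cokernel $(\mathbb{Z}/2)^g$ then matches $\mathcal{S}(M)$, giving exactness of the top row by counting.
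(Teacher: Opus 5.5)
The bookkeeping in your proposal agrees with the paper:
\begin{itemize}
\item the two columns;
\item $\ker\psi\subset\overline{K}(M)$, because $H_M\to H_M^h$ is injective;
\item the fibres of $\mathcal{I}^h(M)\to\mathcal{S}(M)$ are cosets of $\mathrm{im}\,\psi$;
\item $H_M=H_M^h$, once $\mathcal{I}^h(M)=\mathrm{haut}(M|\dot{M})\cdot\mathrm{im}\,\psi$ is known.
\end{itemize}

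The gap is in the two steps you plan to do by direct homotopy-theoretic computation: surjectivity of $\mathrm{haut}(M|\dot{M})\to\mathcal{S}(M)$, and independence of the $\psi(\mathbf{f}_{k,k})$ for $k\le g$. Neither is carried out, and the mechanisms you suggest are wrong. Already for $M=S^2\times S^3$, the paper's closing analysis settles this. It combines Sawashita's action $a[\eta]\cdot([x],[y])=([x]+[(a\eta)\circ y],[y])$ with commutativity of $\mathcal{I}(S^2\times S^3)$. The conclusion is that $\psi(\mathbf{f}_{1,1})$ is the generator of $\pi_5(S^2)$, i.e.\ the pinch map with $\eta^3$ on $S^2$, and its $\pi_5(S^3)$-component is zero, even though $\hat{A}_1(\mathbf{f}_{1,1})=1$. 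Consequences:
\begin{itemize}
\item Your claim that the $\pi_5(S^3)$-component along $T_k$ is detected by $\hat{A}_k$ mod $2$ is false, and so is the ``$J$-type map, reduction $\eta^2$'' heuristic.
\item The $\eta^3$-on-$S^2$ pinch map is homotopic to a diffeomorphism, so it represents the base point of $\mathcal{S}(M)$, not the nontrivial normal invariant. The nontrivial structures come from the $\pi_5(S^3)$-classes.
\end{itemize}

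The missing idea is that no such computation is needed: the counting you only do at the end already proves both facts. Your orbit description gives injectivity of $\mathrm{haut}(M|\dot{M})/\psi(\overline{K}(M))\to\mathcal{S}(M)$. Indeed, if $a,b\in\mathrm{haut}(M|\dot{M})$ give the same structure, then $a=b\,\psi(d)$ with $d\in\mathcal{I}(M)$. Applying $t$ gives $t\psi(d)=0$, i.e.\ $d\in\overline{K}(M)$.

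Now use the sizes. We have $\overline{K}(M)\cong\mathbb{Z}^g\oplus T$ with $T$ of odd order, and $\mathrm{haut}(M|\dot{M})\cong(\mathbb{Z}/2)^{2g}$ by Baues--Buth. So $\psi(\overline{K}(M))$ is a quotient of $\overline{K}(M)\otimes\mathbb{Z}/2\cong(\mathbb{Z}/2)^g$, hence $\lvert\psi(\overline{K}(M))\rvert\le 2^g$. On the other hand, $\mathcal{S}(M)$ has $2^g$ elements by St\"ocker, so the injection gives $2^{2g}/\lvert\psi(\overline{K}(M))\rvert\le 2^g$. Therefore $\lvert\psi(\overline{K}(M))\rvert=2^g$, which yields two things at once:
\begin{itemize}
\item $\ker\psi$ is spanned by the $2\mathbf{f}_{k,k}$ ($k\le g$) together with the torsion, i.e.\ $\ker\psi=K(M)$;
\item the injection into $\mathcal{S}(M)$ is a bijection, giving exactness of the top row at $\mathcal{S}(M)$, and hence $H_M=H_M^h$ by your coset argument.
\end{itemize}
This is the paper's proof. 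With it, the normal-invariant and $\hat{A}$-mod-$2$ computations can be dropped.
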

      
      Theorem \ref{thm:3} follows from Theorem \ref{thm:diagram} as a special case. Before presenting the proof of Theorem \ref{thm:diagram} we elaborate this theorem by some remarks (in which $M$ is assumed to satisfy the conditions in Theorem \ref{thm:diagram}).
      
      \begin{Rmk}
      	\rm By Theorem \ref{thm:diagram} $K(M)$ is the group of isotopy classes of diffeomorphisms of $M$ that are homotopic to $id_M$, which is (abstractly) isomorphic to $H_2(M)$ and we have given explicit constructions of its generators. A result of Browder \cite[Theorem 6]{Bro67} implies that such group of a simply connected closed 5-manifold with positive second Betti number contains a free cyclic subgroup. Our computation of $K(M)$ can be viewed as a supplement of Browder's result in dimension 5.
      \end{Rmk}
      
      \begin{Rmk}
      	\rm By our computation of $\mathcal{I}(M)$, the left vertical short exact sequence in the diagram splits, thus so does the right vertical short exact sequence. Thus the homotopy Torelli group $\mathcal{I}^h(M)$ is a semi-direct product of $\mathrm{haut}(M | \dot{M})$ and $H_M$. The action of $H_M$ on the elements of $\mathrm{haut}(M | \dot{M})$ that cannot be represented by diffeomorphisms is unknown, so the group structure of $\mathcal{I}^h(M)$ (for example, whether it is abelian) is unknown. For $M = S^2 \times S^3$, the homotopy Torelli group $\mathcal{I}^h(S^2 \times S^3)$ is not abelian (\cite{Saw75}, see Theorem \ref{thm:product}).
      \end{Rmk}
      
      \begin{Rmk}\label{rmk:homotopyconstruction}
      	\rm According to Baues (see \cite[page 580]{BB96}) there is a surjective homomorphism \[\pi_5(M) \rightarrow \mathrm{haut}(M | \dot{M})\] mapping a homotopy class, represented by a map $\gamma : S^5 \rightarrow M$, to the homotopy class of the homotopy equivalence \[M \rightarrow M \vee S^5 \xrightarrow{id_M \vee {\gamma}} M,\] where $M \rightarrow M \vee S^5$ is the quotient map collapsing the boundary of the 5-cell to a point. In fact $\gamma$ can be chosen to factor through the 4-skeleton of $M$. Combining with Theorem \ref{thm:diagram}, this shows that each of the diffeomorphisms \[\mathbf{t}_{S_1 \# S_1, \alpha} - 8\mathbf{t}_{S_1, \alpha}, \; \cdots, \; \mathbf{t}_{S_g \# S_g, \alpha} - 8\mathbf{t}_{S_g, \alpha}\] is homotopic to such a map. It is unknown that which homotopy classes do these diffeomorphisms correspond to (except for $S^2 \times S^3$, see below).
      \end{Rmk}
      
      Now we prove Theorem \ref{thm:diagram}.
      
      \begin{proof}[Proof of Theorem \ref{thm:diagram}]
      	Clearly the diagram of the vertical short exact sequences commutes. By the definition of surgery structure set, the sequence \[0 \rightarrow \ker(\psi) \rightarrow \mathcal{I}(M) \xrightarrow{\psi} \mathcal{I}^h(M) \rightarrow \mathcal{S}(M)\] is exact, and the induced map $\mathcal{I}^h(M)/{\mathrm{im}(\psi)} \rightarrow \mathcal{S}(M)$ is injective. This induces a commutative diagram of exact sequences (note that $\ker(\psi)$ is contained in $\overline{K}(M)$ since $H_M \rightarrow H_M^h$ is injective) \[\begin{tikzcd}
      		0 \arrow[r] & \ker(\psi) \arrow[r, "\subset"] \arrow[d, "="] & \overline{K}(M) \arrow[r, "\psi"] \arrow[d] & \mathrm{haut}(M | \dot{M}) \arrow[r] \arrow[d] & \mathcal{S}(M) \arrow[d, "="] \\
      		0 \arrow[r] & \ker(\psi) \arrow[r] & \mathcal{I}(M) \arrow[r, "\psi"] & \mathcal{I}^h(M) \arrow[r] & \mathcal{S}(M)
      	\end{tikzcd}\] and the map $\mathrm{haut}(M | \dot{M})/{\mathrm{im}(\psi)} \rightarrow \mathcal{S}(M)$ is also injective. By Baues\textendash Buth's result (Theorem \ref{thm:BB}) $\mathrm{haut}(M | \dot{M}) \cong {(\mathbb{Z}/2)}^{2g}$, so the torsion subgroup of $\overline{K}(M)$ lies in $\ker(\psi)$. Thus the upper horizontal exact sequence induces the following exact sequence \[0 \rightarrow \ker(\psi)/{\text{torsion}} \rightarrow \mathbb{Z}^g \xrightarrow{\psi} {(\mathbb{Z}/2)}^{2g} \rightarrow {(\mathbb{Z}/2)}^g.\] It follows that $\ker(\psi)/{\text{torsion}} \cong \mathbb{Z}^g$, containing the isotopy classes of \[2(\mathbf{t}_{S_1 \# S_1, \alpha} - 8\mathbf{t}_{S_1, \alpha}), \; \cdots, \; 2(\mathbf{t}_{S_g \# S_g, \alpha} - 8\mathbf{t}_{S_g, \alpha}).\] This implies that ${(\mathbb{Z}/2)}^{2g}/{\psi(\mathbb{Z}^g)}$ has at least $2^g$ elements. Since ${(\mathbb{Z}/2)}^{2g}/{\psi(\mathbb{Z}^g)}$ admits an injection to ${(\mathbb{Z}/2)}^g$, it contains precisely $2^g$ elements, and $\mathrm{haut}(M | \dot{M}) \rightarrow \mathcal{S}(M)$ is surjective. Consequently we have $\ker(\psi) = K(M)$, and $H_M$ is equal to $H_M^h$. The proof is completed.
      \end{proof}
      
      \begin{Rmk}
      	\rm If $M$ has nontrivial 3-torsion in homology, we still have the commutative diagram in the proof of Theorem \ref{thm:diagram}. It follows that $\overline{K}(M)$ has nontrivial 3-torsion. For instance, if $H_2(M) \cong {(\mathbb{Z}/3)}^2$, then $\overline{K}(M)$ is isomorphic to ${(\mathbb{Z}/3)}^2$. This implies that the differential $d^4_{7,0}$ of the Atiyah\textendash Hirzebruch spectral sequence of $\Omega^{\mathrm{Spin}}_*(K({(\mathbb{Z}/3)}^2, 2))$ is trivial. Consequently, we have $\Omega^{\mathrm{Spin}}_7(K({(\mathbb{Z}/3)}^2, 2)) \cong H_7(K({(\mathbb{Z}/3)}^2, 2)) \cong {(\mathbb{Z}/3)}^4$.
      \end{Rmk}
      
      We close this section by discussing the case $M = S^2 \times S^3$, where we can use Sawashita's work \cite{Saw75} to obtain more concrete information about the group $\mathrm{haut}(S^2 \times S^3 | \dot{S^2 \times S^3})$. We will need the following result of Sawashita which is extracted from \cite[Proposition 2.4, Theorem 2.6, Lemma 4.1, Theorem 4.3]{Saw75}.
      
      \begin{Thm}[Sawashita]\label{thm:product}
      	Given a space $X$, let $\mathcal{I}^h(X)$ be the group of homotopy classes of homotopy equivalences of $X$ that act trivially on $H_*(X)$.
      	\begin{itemize}
      		\item[(1)] There is an isomorphism \[\pi_3(S^2) \xrightarrow{\cong} \mathcal{I}^h(S^2 \vee S^3)\] mapping a homotopy class $a[\eta]$ to the homotopy class of the homotopy equivalence \[S^2 \vee S^3 \rightarrow S^2 \vee S^3 \vee S^3 \xrightarrow{i_1 \vee a\eta \vee j_1} S^2 \vee S^3,\] where $S^2 \vee S^3 \rightarrow S^2 \vee S^3 \vee S^3$ is the quotient map collapsing the equator of $S^3$ to a point, $i_1 : S^2 \hookrightarrow S^2 \vee S^3$ and $j_1 : S^3 \hookrightarrow S^2 \vee S^3$ are the inclusion maps, and $a\eta$ denotes a map representing the homotopy class $a[\eta]$.
      		
      		\item[(2)] Identify $S^2 \vee S^3$ with the 4-skeleton of $S^2 \times S^3$. Then there is a splitting short exact sequence \[0 \rightarrow \pi_5(S^2 \times S^3) \rightarrow \mathcal{I}^h(S^2 \times S^3) \rightarrow \mathcal{I}^h(S^2 \vee S^3) \rightarrow 0\] where $\mathcal{I}^h(S^2 \times S^3) \rightarrow \mathcal{I}^h(S^2 \vee S^3)$ is induced by restriction on the 4-skeleton, and $\pi_5(S^2 \times S^3) \rightarrow \mathcal{I}^h(S^2 \times S^3)$ is the homomorphism defined in Remark \ref{rmk:homotopyconstruction}. The action of $\mathcal{I}^h(S^2 \vee S^3)$ on $\mathcal{I}^h(S^2 \times S^3)$ is as follows: identify $\mathcal{I}^h(S^2 \vee S^3)$ with $\pi_3(S^2)$ via the isomorphism in (1), let $a[\eta] \in \pi_3(S^2)$ and $([x], [y]) \in \pi_5(S^2) \oplus \pi_5(S^3) = \pi_5(S^2 \times S^3)$ be homotopy classes, then \[a[\eta] \cdot ([x], [y]) = ([x] + [(a\eta) \circ y], [y]).\]
      	\end{itemize}
      \end{Thm}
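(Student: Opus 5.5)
The statement is Sawashita's theorem, extracted from \cite{Saw75}. I would prove both parts by obstruction theory and the Barcus\textendash Barratt style analysis of self-equivalences of a two-cell-type complex, treating $S^2 \vee S^3$ first and then attaching the top cell.

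For (1), I would start from the identification $[S^2 \vee S^3, S^2 \vee S^3] = \pi_2(S^2 \vee S^3) \times \pi_3(S^2 \vee S^3)$. A self-map acting trivially on homology has the form $i_1 \vee \sigma$ with $i_1$ on the $S^2$ summand, where $\sigma \in \pi_3(S^2 \vee S^3) = \pi_3(S^2) \oplus \pi_3(S^3)$ has $S^3$-component $[j_1]$. Such a map is automatically a homotopy equivalence by Whitehead. So $\mathcal{I}^h(S^2 \vee S^3)$ is in bijection with $\pi_3(S^2) \cong \mathbb{Z}$ via $a[\eta] \mapsto i_1 \vee (a\eta + j_1)$, and this is exactly the map in the statement, since the pinch followed by $a\eta \vee j_1$ represents the sum.

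The key step is to check that composition corresponds to addition. Composing $i_1 \vee (a\eta + j_1)$ with $i_1 \vee (b\eta + j_1)$ gives on $S^3$ the class $(i_1 \vee (a\eta+j_1))_*(b[\eta] + [j_1]) = b[\eta] + a[\eta] + [j_1]$. This uses that $i_1 \circ \eta$ is unchanged and that the map induced on $\pi_3$ is a homomorphism. Hence the bijection is a group isomorphism.

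For (2), I would use the cofibration $S^4 \xrightarrow{w} S^2 \vee S^3 \to S^2 \times S^3$, with $w$ the Whitehead product $[i_1, j_1]$. Every element of $\mathcal{I}^h(S^2 \times S^3)$ restricts to the 4-skeleton, up to cellular approximation, giving the map to $\mathcal{I}^h(S^2 \vee S^3)$. I would argue surjectivity by realizing the generator directly as $\mathrm{id}_{S^2} \times$-type maps $(x,y) \mapsto (x \cdot$?$)$. More concretely, I would extend $i_1 \vee (a\eta + j_1)$ over the top cell. The obstruction is $[i_1, a\eta + j_1] = a[i_1, i_1 \circ \eta] + [i_1, j_1]$, and this becomes null in $S^2 \times S^3$ because $[i_1, i_1\circ\eta] = [i_1,i_1]\circ \Sigma\eta$, and $[i_1,i_1]$ dies in $S^2\times S^3$ as it factors through $S^2$. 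This extension is the splitting, once one shows it is multiplicative, which follows by the same composition computation as in (1).

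The kernel consists of maps equal to the identity on the 4-skeleton. By the Puppe/Barcus\textendash Barratt sequence these are the maps $\mathrm{id} + \gamma\circ$pinch with $\gamma \in \pi_5(S^2 \times S^3)$, as in Remark \ref{rmk:homotopyconstruction}. The homomorphism $\pi_5 \to \mathcal{I}^h$ is injective because the indeterminacy, namely the image of $\pi_1$-type operations $\pi_5 \leftarrow [\Sigma(S^2\vee S^3), S^2\times S^3]$ under the dual of $w$, vanishes. I expect this injectivity and the homomorphism property to be the main obstacle; I would compute them using the distributivity formula $(\mathrm{id}+\gamma)\circ(\mathrm{id}+\gamma') = \mathrm{id}+\gamma+\gamma'$ plus a correction of the form $\gamma' \circ$ top cell, which is trivial for degree reasons.

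Finally, the action is computed by conjugating $\mathrm{id}+(x,y)$ by the lift $f_a$ of $a[\eta]$. The conjugate is $\mathrm{id} + f_{a*}(x,y)$. Since $f_a$ is the identity on $S^2$ and sends the $S^3$ factor to $a\eta + j_1$, we get $f_{a*}(x,y) = (x + (a\eta)\circ y,\ y)$, which is the stated formula.
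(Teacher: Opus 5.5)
The paper gives no proof of this statement; it quotes it from \cite{Saw75}. Your part (1) is fine, and the architecture of (2) is reasonable: lift over the top cell, describe the kernel as maps $\mathrm{id}+\gamma$, then conjugate. However, two steps of (2) fail as written, and both rest on a fact you never establish: $[\iota_2,\pi_n(S^2)]=0$ for $n\ge 3$.

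\emph{Surjectivity.} The identity $[i_1,i_1\circ\eta]=[i_1,i_1]\circ\Sigma\eta$ is not a general fact. Since $\eta$ is not a suspension, the composition formula for Whitehead products picks up a Hopf-invariant correction. More seriously, the reason you give is false: $[i_1,i_1]=\pm 2\,i_1\circ\eta$ does not die in $S^2\times S^3$. The $S^2$ factor is a retract, so $\pi_3(S^2)\to\pi_3(S^2\times S^3)$ is injective; in a product only Whitehead products of classes from \emph{different} factors vanish.

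The conclusion $[\iota_2,\eta]=0$ is nevertheless true. One way to see it is the action map $\mu\colon S^2\times S^3\to S^2$, $(x,g)\mapsto g\cdot x$ via $S^3=\mathrm{SU}(2)\to\mathrm{SO}(3)$, which restricts to $\iota_2\vee(\pm\eta)$. More generally $\mu\circ(1\times\alpha)$ gives $[\iota_2,\eta\circ\alpha]=0$ for every $\alpha\in\pi_n(S^3)$. Even more directly, the maps $(x,y)\mapsto(\beta(y)\cdot x,y)$ with $\beta\in\pi_3(\mathrm{SO}(3))$ are explicit lifts forming a homomorphism. These are the powers of $\mathbf{t}_{S_1,\alpha}$, whose effect on $\pi_3$ is Lemma \ref{lem:homotopyeffect}, and this is the route you started and abandoned. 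Separately, the composition computation of (1) only controls the $4$-skeleton, not the $\pi_5$-ambiguity of the extensions. It is not needed for the splitting anyway: the quotient is $\mathbb{Z}$, so any lift of a generator splits.

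\emph{Injectivity.} Injectivity of $\pi_5(S^2\times S^3)\to\mathcal{I}^h(S^2\times S^3)$ is only asserted, and the indeterminacy is not correctly identified. If ``dual of $w$'' means precomposition with $\Sigma w$, that vanishes for free, since $\Sigma w=0$ as the suspension of a Whitehead product. But that is the isotropy at the constant map. At the inclusion of the $4$-skeleton, the Barcus--Barratt isotropy is the subgroup of Whitehead products $[\xi,\iota_{S^3}]\pm[\iota_{S^2},\zeta]$ with $\xi\in\pi_3(S^2\times S^3)$ and $\zeta\in\pi_4(S^2\times S^3)$. Its vanishing reduces to two facts:
\begin{itemize}
\item[(i)] $[\iota_3,\pi_3(S^3)]=0$, because $S^3$ is an H-space;
\item[(ii)] $[\iota_2,\pi_4(S^2)]=0$, which again comes from $\mu$.
\end{itemize}
This is not a formality. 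Injectivity is exactly what the paper uses after Theorem \ref{thm:product} to conclude that the classes $(0,1)$ and $(1,1)$ give homotopy equivalences not homotopic to diffeomorphisms.

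Your homomorphism check and conjugation formula are essentially right, but each needs its reason stated:
\begin{itemize}
\item[(a)] $(\mathrm{id}+\gamma)_*$ is the identity on $\pi_5$ because $\pi_5$ is carried by the $4$-skeleton.
\item[(b)] $(f+\delta)\circ h\simeq f\circ h+\delta$ for cellular degree-one $h$, because $X\vee S^5\to X\times S^5$ is $6$-connected while $X$ is $5$-dimensional.
\item[(c)] $(a\,i_1\eta+j_1)\circ y=(a\,i_1\eta)\circ y+j_1\circ y$, because $\pi_5(S^3)\cong\mathbb{Z}/2$ is generated by the suspension $\Sigma(\eta\circ\Sigma\eta)$.
\end{itemize}
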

      
      Combining Theorem \ref{thm:product} with Baues' result stated in Remark \ref{rmk:homotopyconstruction}, we see that $\pi_5(S^2 \times S^3)$ is isomorphic to $\mathrm{haut}(S^2 \times S^3 | \dot{S^2 \times S^3})$. It follows from Theorem \ref{thm:diagram} that $\mathcal{I}^h(S^2 \vee S^3)$ is generated by the restriction of $\mathbf{t}_{S_1, \alpha}$ on $S^2 \vee S^3$ (this can also be seen from the effect of $\mathbf{t}_{S_1, \alpha}$ on $\pi_3(S^2 \times S^3)$), and $\mathrm{haut}(S^2 \times S^3 | \dot{S^2 \times S^3})$ contains $\mathbf{t}_{S_1 \# S_1, \alpha} - 8\mathbf{t}_{S_1, \alpha}$. Since $\mathcal{I}(S^2 \times S^3)$ is abelian, $\mathbf{t}_{S_1, \alpha}$ acts on $\mathbf{t}_{S_1 \# S_1, \alpha} - 8\mathbf{t}_{S_1, \alpha}$ trivially, thus $\mathbf{t}_{S_1 \# S_1, \alpha} - 8\mathbf{t}_{S_1, \alpha}$ corresponds to the generator of $\pi_5(S^2)$ via the isomorphism \[\pi_5(S^2) \oplus \pi_5(S^3) = \pi_5(S^2 \times S^3) \cong \mathrm{haut}(S^2 \times S^3 | \dot{S^2 \times S^3}).\] It follows that the homotopy equivalences defined by the homotopy classes $(0, 1), (1, 1) \in \pi_5(S^2) \oplus \pi_5(S^3) = \mathbb{Z}/2 \oplus \mathbb{Z}/2$ are not homotopic to any diffeomorphism of $S^2 \times S^3$.

    \section{Embeddings of $S^3$ in $S^2 \times S^3$}\label{sec:9}
    
      In this section we prove Theorem \ref{thm:4} and Corollary \ref{cor:fibration}.
      
    \begin{proof}[Proof of Theorem \ref{thm:4}]
      In Section \ref{sec:4} we determined the action of $\mathcal{I}(M_g)$ on $\pi_3(M_g)$. For $g = 1$, combining this with the computation of $\mathrm{MCG}^{\pm}(S^2 \times S^3)$ in \ref{6.3}, we see that a homotopy class $a[S_1 \circ \eta] + b[T_1] \in \pi_3(S^2 \times S^3) = \mathbb{Z}[S_1 \circ \eta] \oplus \mathbb{Z}[T_1]$ (denoted by $(a, b)$ for simplicity) can be represented by $f \circ T_1$ for some diffeomorphism $f \in \mathrm{Diff}(S^2 \times S^3)$ if and only if $\left\vert b \right\vert = 1$. This proves (1).
      
      Now we prove (2). We shall prove a slightly more general result relating embeddings of $S^3$ in $S^2 \times S^3$ to closed 5-manifolds with certain homology. Assume a homotopy class $(a, b)$ can be represented by an embedding of $S^3$, where $b \neq 0$. Since $\pi_2(\mathrm{SO}(2)) = 0$, the normal bundle of this embedding is trivial, so there is an embedding $T' : S^3 \times D^2 \hookrightarrow S^2 \times S^3$ extending it. (This extension is in fact unique up to isotopy since $\pi_3(\mathrm{SO}(2)) = 0$.) We perform a surgery on $T'$ and denote the resulting closed spin 5-manifold by $M'$. Using Seifert\textendash van Kampen theorem, Lefschetz duality and homology exact sequence of pairs, we see that $\pi_1(M') \cong \mathbb{Z}/{\left\vert b \right\vert}$ and the homology groups of $M'$ are given as follows.
      \begin{table}[h]
      	\centering
      	\begin{tabular}{c|cccccc}
      		$i$ & $0$ & $1$ & $2$ & $3$ & $4$ & $5$ \\
      		\hline
      		$H_i(M')$ & $\mathbb{Z}$ & $\mathbb{Z}/{\left\vert b \right\vert}$ & $0$ & $\mathbb{Z}/{\left\vert b \right\vert}$ & $0$ & $\mathbb{Z}$ \\
      	\end{tabular}
      \end{table}
      
      Conversely, let $M'$ be a closed spin 5-manifold with such homology and fundamental group. Choose a generator of $\pi_1(M')$, it can be represented by an embedding $S^1 \times D^4 \hookrightarrow M'$, and we perform a surgery on this embedding. The resulting manifold is a simply connected closed spin 5-manifold with homology isomorphic to $H_*(S^2 \times S^3)$, so by the classification of simply connected 5-manifolds \cite{Sma62, Bar65} it is diffeomorphic to $S^2 \times S^3$. The meridian sphere of this surgery represents an element of $\pi_3(S^2 \times S^3)$, which is $(a, b)$ or $(a, -b)$ for some $a \in \mathbb{Z}$. By the computation of $\mathrm{MCG}^{\pm}(S^2 \times S^3)$, the homotopy classes $(a + kb, b)$ and $(a + kb, -b)$ can be represented by embeddings of $S^3$ for each integer $k$. For $\left\vert b \right\vert = 2$ such a manifold is provided by the manifold $Q_0$ constructed in \cite[page 1168]{GT98}, and for $\left\vert b \right\vert \geq 3$ odd such a manifold is provided by a 5-dimensional lens space $L_{\left\vert b \right\vert}(\ell_1, \ell_2, \ell_3)$ (see for example \cite[Example 2.43]{Hat02}). This proves (2).
    \end{proof}
    
    \begin{Rmk}
    	\rm By a theorem of Whitney (see for example \cite[page 169]{Ran02}), each element of $\pi_3(S^2 \times S^3)$ can be represented by an immersion of $S^3$.
    \end{Rmk}
    
    \begin{proof}[Proof of Corollary \ref{cor:fibration}]
    	For any diffeomorphism $f \in \mathrm{Diff}(S^2 \times S^3)$, the absolute value of the coefficient of the $[T_1]$-factor of $[f \circ T']$ is the same as that of $[T']$. This shows that the map \[\mathrm{MCG}^{\pm}(S^2 \times S^3) = \pi_0\mathrm{Diff}(S^2 \times S^3) \rightarrow \pi_0\mathrm{Emb}(S^3, S^2 \times S^3)\] induced by the restriction map $f \mapsto f \circ T'$ is not surjective.
    \end{proof}

    \appendix
    
    \section{A direct proof of the claim that $\theta(W, \nu)$ is elementary}\label{a:obstruction}
    
    We use the notations in the proof of Lemma \ref{lem:surgery}. Write $N_0 = {\natural}^g(S^2 \times D^3) \times \{0\}$ and $N_1 = {\natural}^g(S^2 \times D^3) \times \{1\}$. By \cite[Corollary 1]{Kre99} we may assume that the normal $B$-structure $\nu : W \rightarrow B$ is 3-connected. Now the obstruction $\theta(W, \nu)$ is represented by the following data \[\Bigl(H_3(W, N_0) \leftarrow \mathrm{im}(d : \pi_4(B, W) \rightarrow \pi_3(W)) \rightarrow H_3(W, N_1), \lambda, \tilde{\mu}\Bigr)\] where $d$ is the connecting homomorphism in the homotopy exact sequence of $\nu : W \rightarrow B$, and $\mathrm{im}(d : \pi_4(B, W) \rightarrow \pi_3(W)) \rightarrow H_3(W, N_i)$ is given by $\pi_3(W) \rightarrow \pi_3(W, N_i) \rightarrow H_3(W, N_i)$. Using this exact sequence and the fact that $\nu : W \rightarrow B$ is 3-connected, we deduce that $\mathrm{im}(d : \pi_4(B, W) \rightarrow \pi_3(W)) = \pi_3(W)$ and there is a commutative diagram \[\begin{tikzcd}
    	\pi_3(W) \arrow[r] \arrow[d] & \pi_3(W, N_i) \arrow[d, "\cong"] \\
    	H_3(W) \arrow[r, "\cong"] & H_3(W, N_i)
    \end{tikzcd}\] where the vertical maps are Hurewicz homomorphisms. Thus \[\theta(W, \nu) = \Bigl(H_3(W) \leftarrow \pi_3(W) \rightarrow H_3(W), \lambda\Bigr)\] where $\lambda$ is the intersection form on $H_3(W)$ and the maps are Hurewicz homomorphisms ($\tilde{\mu} = 0$ since it maps into $\mathbb{Z}/1 = 0$ by the definition of $l_6^{\sim}(e)$). Viewing $W$ as a simply connected compact 6-manifold with boundary $M_g$, we deduce from the homology exact sequence of $(W, M_g)$ that $H_3(W)$ is free. The Hurewicz homomorphism $\pi_3(W) \rightarrow H_3(W)$ is surjective (which can be seen from, for example, the homotopy exact sequence of $(W, N_i)$), thus $H_3(W)$ is a direct summand of $\pi_3(W)$. By Poincar\'e duality, $\lambda$ is a skew-symmetric non-singular bilinear form on $H_3(W)$, thus $H_3(W)$ has a symplectic basis $(e_1, \cdots, e_n, f_1, \cdots, f_n)$, such that the nonzero values of $\lambda$ on them are \[\lambda(e_i, f_i) = 1, \quad \lambda(f_i, e_i) = -1, \quad i = 1, \cdots, n.\] The rank $n$ free abelian group $U$ generated by $e_1, \cdots, e_n$ is a direct summand of $H_3(W)$. View $U$ as a direct summand of $\pi_3(W)$ and let $U_0 = U_1$ be the image of $U$ under the Hurewicz homomorphism $\pi_3(W) \rightarrow H_3(W)$. One checks that $U$ and $U_i$ satisfy the conditions in the definition in \cite[Page 730]{Kre99}, proving that $\theta(W, \nu)$ is elementary.

    \bibliographystyle{amsalpha}
    \bibliography{MCGv1}

\end{document}